\documentclass[11pt,reqno]{amsart}

\usepackage[T1]{fontenc}
\usepackage{lmodern}
\usepackage{amsmath,amssymb,mathtools,mathrsfs}
\usepackage{microtype}
\usepackage[colorlinks=true,linkcolor=blue,citecolor=blue,urlcolor=blue]{hyperref}
\usepackage{tikz-cd}

\numberwithin{equation}{section}
\allowdisplaybreaks

\newtheorem{theorem}{Theorem}[section]
\newtheorem{proposition}[theorem]{Proposition}
\newtheorem{lemma}[theorem]{Lemma}
\newtheorem{corollary}[theorem]{Corollary}
\theoremstyle{definition}

\theoremstyle{remark}
\newtheorem{remark}[theorem]{Remark}
\newtheorem{assumption}[theorem]{Assumption}

\DeclareMathOperator{\Ext}{Ext}
\DeclareMathOperator{\Area}{Area}

\hypersetup{
  pdftitle={Quasi-convexity of energy functions along Teichm\"uller geodesics},
  pdfauthor={Inkang Kim, Xueyuan Wan, Genkai Zhang},
  pdfsubject={Teichmuller theory, harmonic maps, energy functions},
  pdfkeywords={Teichmuller geodesics, harmonic maps, harmonic-map energy, hyperbolic length, quasi-convexity, extremal length, Weil-Petersson geometry}
}

\title[Quasi-convexity of energy functions]
{Quasi-convexity of energy functions along Teichm\"uller geodesics}

\author{Inkang Kim}
\address{School of Mathematics, Korea Institute for Advanced Study (KIAS),
85 Hoegiro, Dongdaemun-gu, Seoul 02455, Republic of Korea}
\email{inkang@kias.re.kr}

\author{Xueyuan Wan}
\address{Mathematical Science Research Center, Chongqing University of Technology,
Chongqing 400054, China}
\email{xwan@cqut.edu.cn}

\author{Genkai Zhang}
\address{Mathematical Sciences, Chalmers University of Technology, University of Gothenburg, 41296
Gothenburg, Sweden}
\email{genkai@chalmers.se}

\date{}

\makeatletter
\@namedef{subjclassname@2020}{\textup{2020} Mathematics Subject Classification}
\makeatother

\begin{document}

\subjclass[2020]{Primary 32G15; Secondary 30F60, 58E20}
\keywords{Teichm\"uller geodesics, harmonic maps, harmonic-map energy, hyperbolic length, quasi-convexity, extremal length, Weil--Petersson geometry}

\maketitle

\begin{center}
\small\emph{In memory of Misha Kapovich}
\end{center}

\begin{abstract}
Hyperbolic length functions are among the most fundamental ones on
Teichm\"uller space,  and they are quasi-convex along Teichm\"uller geodesics.
In this  paper we investigate the same question for 
energy functions of harmonic maps
in two natural settings, which may be viewed as nonlinear
and higher-dimensional analogs of the length functions.
For a fixed domain and a
varying hyperbolic target, we prove the energy is quasi-convex along Teichm\"uller
geodesics under a filling hypothesis. Furthermore
we generalize Masur's result
on asymptotic growth of the length function along
the Teichm\"u{}ller geodesic determined by
a Jenkins-Strebel differential
to the energy functions.
 We also prove the quasi-convexity
for
covering maps between closed hyperbolic surfaces with fixed target
and varying domains.
We derive first and second
variation formulas of energy functions along Teichm\"uller geodesics and explain why the natural
global statement is quasi-convexity rather than genuine convexity.
\end{abstract}

\section{Introduction}
\label{sec:introduction}

The  geometry of Teichm\"u{}ller space
of a surface $S$ is
mostly encoded by properties
of some naturally defined quantities of $S$.  Among the most classic examples are hyperbolic
length functions
\(
  X\mapsto \ell_X(\alpha),
\)
where \(\alpha\) is an essential closed curve on the underlying surface $S$.
These functions are indispensable in both the analytic and coarse geometry of
Teichm\"uller space.  Along Weil--Petersson geodesics there is a rich
convexity theory, while along Teichm\"uller geodesics genuine convexity is too
rigid to be expected in general. One replacement for convexity, discovered by
Lenzhen and Rafi, is quasi-convexity: for every simple closed curve
\(\alpha\), both the hyperbolic length \(\ell_X(\alpha)\) and the extremal
length \(\operatorname{Ext}_X(\alpha)\) are quasi-convex along Teichm\"uller
geodesics \cite{LenzhenRafi2011}.  Their theorem also implies that
Teichm\"uller balls are uniformly quasi-convex.

The purpose of this paper is to investigate how far this phenomenon extends
from length functions to harmonic-map energy functions.  This is a natural
question, because length squares are
one-dimensional examples of energy functions. Indeed, if \(S_L^1\) is a circle of length \(L\) and
\(u_X\colon S_L^1\to X\) is the constant-speed geodesic representative of the
free homotopy class \(\alpha\), then, with the normalization used throughout
this paper,
\(
  E_X(u_X)
  =
  (1/2L)\ell_X(\alpha)^2.
\)
We would like to
study the quasi-convexity along
a Teichm\"uller geodesic for energy functions
for general harmonic maps.

The answer obtained here is affirmative under natural topological hypotheses.
However, the proof of the result is not a direct second-variation argument.
For Weil--Petersson geodesics, energy convexity is closely related
to the positivity of the second variation of the target hyperbolic metric; see
Yamada's theorem and the later refinements in
\cite{Yamada1999,KimWanZhang2022}.  Along Teichm\"uller geodesics, however,
the second variation contains additional terms and a Jacobi
relaxation term coming from the fact that the harmonic representative itself
moves.  These terms have no uniform sign.
We prove  that the pointwise convexity
does not hold and that quasi-convexity is a coarse global phenomenon: 
it is proved here by
comparison with length functions and extremal lengths.

Throughout the paper, a positive function \(F\) on Teichm\"uller space is
called \emph{multiplicatively \(K\)-quasi-convex along Teichm\"uller
geodesics} if
\begin{equation}
\label{eq:def-multiplicative-quasi-convexity}
  F(\gamma(t))\le K\max\{F(\gamma(s)),F(\gamma(r))\}
  \qquad (s<t<r) 
\end{equation}
for every Teichm\"uller geodesic \(\gamma\). Equivalently, \(\log F\) is
additively quasi-convex, with additive constant \(\log K\).

We also study the asymptotic growth of the energy function along the Teichm\"uller geodesic. For the precise description of the limit, we need to use \(\mathbb R\)-tree action, which is the limit of scaled metrics on the hyperbolic plane.
This action is already manifested in the study of limit representations in Kleinian group actions \cite{Kapovich, Otal, Kim-Lecuire-Ohshika} and harmonic maps \cite{DaskalopoulosWentworth2007,KorevaarSchoen1997, Wolf1996}, etc.

We consider two complementary families of energy functions.

\medskip
\noindent\textbf{Energy functions with varying target.}
Let \((M,g)\) be a closed, connected Riemannian manifold,  \(S\)  a closed oriented
surface of genus at least \(2\), and 
\(
  u_0\colon M\to S
\)
 a continuous map.
For each \(X\in\mathcal T(S)\), let
\(\sigma_X\) denote the hyperbolic metric in the conformal class \(X\), and let
\(
  u_X\colon (M,g)\to (S,\sigma_X)
\)
be the harmonic map homotopic to \(u_0\).  The associated energy function is
\begin{equation}
  \mathcal E_{u_0}(X)
  :=
  \frac12\int_M |du_X|^2
  \,d\mu_g.
  \label{eq:intro-target-energy}
\end{equation}
Here and in the text below
$|du_X|^2=|du_X|_{\mathrm{HS}}^2$
denotes the Hilbert-Schmidt norm
of $du_X$ as a section of $\text{End}(TM, TS)$.
The existence and uniqueness theory for such harmonic representatives is
classical under the non-elementary hypotheses used below
\cite{EellsSampson1964,Hartman1967,Sampson1978}.
Since the resulting minimum-energy function depends only
on the homotopy class, we shall fix this smooth representative $u_X$.
Energy functions of this
kind with varying targets have played a central role in the analytic approach to Teichm\"uller
theory, notably in the work of Tromba, Wolf, Minsky, Yamada, and others
\cite{Tromba1992,Wolf1989,Minsky1992,Yamada1999,DaskalopoulosWentworth2007}.

We shall say that \(u_0\) satisfies the \emph{filling hypothesis} if
there exists a finite collection of essential simple closed curves
\(
  \Gamma=\{\alpha_1,\ldots,\alpha_N\}
\)
which can be realized in minimal position with connected union and disk
complementary components, and, for each \(j\), there is a fixed piecewise
smooth closed curve \(\beta_j\) in \(M\) such that \(u_0\circ\beta_j\) is
freely homotopic to \(\alpha_j\). We regard
\(
  \mathfrak F=(\Gamma,\{\beta_j\}_{j=1}^N)
\)
as part of the filling data. This formulation is independent of basepoints
and is the one used in the proof below. The filling hypothesis automatically
forces \((u_0)_*\pi_1(M)\) to be nontrivial and noncyclic. Indeed, an
essential simple closed curve on \(S\) is not a proper power. If the image
were cyclic, all simple conjugacy classes represented by the loops
\(u_0\circ\beta_j\) would therefore coincide with the same primitive
generator, up to inversion and conjugacy, which is incompatible with the fact
that \(\Gamma\) fills \(S\). Thus the uniqueness and nondegeneracy hypotheses
used below are automatic under the filling hypothesis. A useful sufficient
condition is that
\(u_{0*}\colon\pi_1(M)\twoheadrightarrow\pi_1(S)\) be surjective.

\begin{theorem}
\label{thm:intro-varying-target-quasiconvexity}
Assume that \(u_0\) satisfies the filling hypothesis.  Then there exists
\(K_E\ge1\), depending only on \(M,g,u_0,\mathfrak F\), and \(S\), such
that, for every Teichm\"uller geodesic
\(\gamma\colon I\to\mathcal T(S)\) and every
\(s<t<r\) in \(I\),
\begin{equation}
  \mathcal E_{u_0}(\gamma(t))
  \le
  K_E
  \max\bigl\{
    \mathcal E_{u_0}(\gamma(s)),
    \mathcal E_{u_0}(\gamma(r))
  \bigr\}.
  \label{eq:intro-target-energy-quasiconvexity}
\end{equation}
Equivalently, \(\log\mathcal E_{u_0}\) is additively quasi-convex along
Teichm\"uller geodesics.
\end{theorem}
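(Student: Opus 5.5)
The idea is to compare $\mathcal E_{u_0}$ with a finite sum of hyperbolic lengths (or squared lengths) of the filling curves $\alpha_j$, and then invoke the Lenzhen--Rafi quasi-convexity of $\ell_X(\alpha)$ \cite{LenzhenRafi2011}. Concretely, we aim for a two-sided comparison
\begin{equation*}
  c^{-1}\,L(X)^2\;\le\;\mathcal E_{u_0}(X)\;\le\;c\,L(X)^2,
  \qquad L(X):=\sum_{j=1}^N \ell_X(\alpha_j),
\end{equation*}
with $c$ depending only on $M,g,u_0,\mathfrak F,S$. Granting this, quasi-convexity follows formally. Each $\ell_X(\alpha_j)$ is $K_0$-quasi-convex along Teichm\"uller geodesics. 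Hence
\[
L(\gamma(t))\le K_0\sum_j\max\{\ell_{\gamma(s)}(\alpha_j),\ell_{\gamma(r)}(\alpha_j)\}\le K_0\bigl(L(\gamma(s))+L(\gamma(r))\bigr)\le 2K_0\max\{L(\gamma(s)),L(\gamma(r))\}.
\]
Squaring this and inserting the comparison gives the statement with $K_E=4K_0^2c^2$.

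\emph{Lower bound.} Let $\ell_g(\beta_j)$ be the $g$-length of $\beta_j$. Since $u_X\circ\beta_j$ is freely homotopic to $\alpha_j$, we have
\[
\ell_X(\alpha_j)\le \int_{\beta_j}|du_X|\le \ell_g(\beta_j)\,\sup_M|du_X|.
\]
The sup must then be controlled by the energy. Because the target is nonpositively curved, the Eells--Sampson Bochner formula gives $\Delta e(u_X)\ge -C_g\, e(u_X)$, where $C_g$ depends only on the Ricci curvature of $g$. Moser iteration on the closed manifold $M$ then yields $\sup_M e(u_X)\le C(M,g)\,\mathcal E_{u_0}(X)$. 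The constant is uniform in $X$ because the target curvature enters with a good sign. Combining the two estimates gives $\ell_X(\alpha_j)^2\le C\,\mathcal E_{u_0}(X)$, and summing over $j$ gives the lower bound.

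\emph{Upper bound.} This is the step I expect to be the main obstacle. We need a competitor map in the homotopy class whose energy is $\lesssim L(X)^2$. I would fix a smooth reference map $f\colon M\to S$ homotopic to $u_0$ and compose it with a suitable map $\phi_X\colon S\to(S,\sigma_X)$ in the identity class, so that $E(u_X)\le E(\phi_X\circ f)\le \|df\|_\infty^2\,\mathrm{Vol}(M)\,\mathrm{Lip}(\phi_X)^2$. It then remains to build $\phi_X$ with $\mathrm{Lip}(\phi_X)\lesssim L(X)$. For this, use the filling hypothesis. Realize $\Gamma$ by $\sigma_X$-geodesics; their union is a connected graph whose complementary regions are disks. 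Fix a combinatorial model graph $G\subset S$ isotopic to $\Gamma$, with complementary disk polygons, independent of $X$. Define $\phi_X$ on $G$ by sending each edge to the corresponding geodesic arc at constant speed; the Lipschitz constant on edges is then at most a constant times $L(X)$. Extend $\phi_X$ over each complementary disk by coning or geodesic filling in the universal cover, say by barycentric extension. Each such disk is a hyperbolic polygon with a bounded number of sides and perimeter $\le 2L(X)$, so its diameter is $\lesssim L(X)$, and a Lipschitz extension with constant $\lesssim L(X)$ exists. The delicate points are uniformity, since the number of sides depends only on $\Gamma$, and the fact that $L(X)$ is bounded below by the collar lemma (filling systems have length $\ge\epsilon_S>0$), which absorbs additive constants. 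If this Lipschitz extension turns out to be awkward, a fallback is to bound $\mathcal E_{u_0}(X)$ by the energy of the harmonic map $\mathrm{id}\colon X_0\to X$, and then use the known comparison of that energy with $\sum\ell_X(\alpha_j)^2$ for filling systems (e.g. via Wolf/Minsky estimates). Either route gives $\mathcal E_{u_0}(X)\le c\,L(X)^2$, which completes the argument.
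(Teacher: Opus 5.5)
Your architecture is the paper's. You compare \(\mathcal E_{u_0}\) with squared lengths of the filling curves; your \(L(X)^2\) and the paper's \(\Lambda_\Gamma(X)\) agree up to a factor \(N\). You get the lower bound from the Bochner inequality, a mean-value sup bound and the fixed loops \(\beta_j\). You get the upper bound from a competitor obtained by composing a fixed map with a map of Lipschitz constant \(\lesssim L(X)\), and you finish with Lenzhen--Rafi. The lower bound and the final deduction (with \(K_E=4K_0^2c^2\)) are fine.

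\emph{The gap is in your construction of \(\phi_X\).} You fix one model graph \(G\subset S\), independent of \(X\), and send each edge to ``the corresponding geodesic arc''. This presupposes that for every \(X\) the geodesic realization \(m_X^{-1}\bigl(\bigcup_j\alpha_j^X\bigr)\) is marked ambient-isotopic to \(G\). That means the same vertices, the same cyclic order of intersection points along each curve, and an identification isotopic to the identity. Nothing guarantees this. Several of the geodesics may pass through a common point. Curves in pairwise minimal position can also form non-isotopic configurations related by triangle moves; the paper cites Hass--Scott precisely to explain why it does not use a single model.

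This breaks your construction in two ways. If the order of intersection points along some \(\alpha_j^X\) differs from that in \(G\), there is no edge-to-arc correspondence at all. If the labeled ribbon types agree but the two embeddings differ by a mapping class \(h\) fixing every \(\alpha_j\), your map lands in the homotopy class of \(m_X\circ h\) instead of \(m_X\). Your Lipschitz constant depends on the edge lengths and face parametrizations of the model. So uniformity over all configurations that actually occur is exactly what has to be proved.

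The paper supplies this in the proof of its Lipschitz-marking lemma. After perturbing multiple points, only finitely many \(\Gamma\)-labeled ribbon types occur. Each type has only finitely many marked configurations, because two marked realizations of the same type differ by an element of the pointwise stabilizer of \(\Gamma\) in \(\operatorname{Mod}(S)\). That stabilizer is finite by Kerckhoff's properness of \(L_\Gamma\) and proper discontinuity. The paper then defines an equivariant map on a fixed one-vertex triangulation of \(X_*\). Each edge generator goes to a segment whose length is bounded by a combinatorial loop of uniformly bounded length in the relevant configuration, and the map is coned over the two-simplices as you propose. You need an argument of this kind, or an equivariant substitute in \(\widetilde X\) built from the marking-determined correspondence between lifts.

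Your fallback does not fill the gap as written. The energy of the harmonic map \(h_X\colon X_0\to X\) controls \(\mathcal E_{u_0}(X)\) only through a sup bound on \(|dh_X|\); the Bochner argument on the fixed domain \(X_0\) does provide that. But the bound \(E(h_X)\lesssim L(X)^2\) is asserted, not proved. It could be obtained from Minsky's length--energy comparison, as quoted in the paper, together with the chord estimate \(\ell_X(\lambda)\le L(X)\sum_j i(\lambda,\alpha_j)\) in the convex complementary polygons. That argument is missing.

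Finally, \(\phi_X\circ f\) is only Lipschitz. You therefore also need that \(u_X\) minimizes energy among Lipschitz maps in its homotopy class; the paper cites White for this.
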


The proof of Theorem~\ref{thm:intro-varying-target-quasiconvexity} reduces the
energy to the length functions studied by Lenzhen and Rafi.  More precisely,
under the filling hypothesis we prove that there are constants
\(0<c_0\le C_0<\infty\), independent of \(X\), such that
\begin{equation}
  c_0\sum_{j=1}^N\ell_X(\alpha_j)^2
  \le
  \mathcal E_{u_0}(X)
  \le
  C_0\sum_{j=1}^N\ell_X(\alpha_j)^2.
  \label{eq:intro-energy-length-comparison}
\end{equation}
The lower bound follows by applying a Bochner--mean-value estimate to fixed
loops in \(M\) representing the filling curves under \(u_0\), while the upper
bound is obtained by constructing explicit competitor maps from a filling
geodesic graph.  Applying the Lenzhen--Rafi
quasi-convexity theorem to each \(\ell_X(\alpha_j)\) then gives
\eqref{eq:intro-target-energy-quasiconvexity}.

A classical result of Masur gives the precise asymptotic growth of
hyperbolic length functions along Jenkins--Strebel rays. Let
\(q\in\mathcal Q^1(X_0)\) be a Jenkins--Strebel quadratic differential of unit area,
let \(X_t=\gamma_q(t)\) be the associated unit-speed Teichm\"uller ray,
normalized by \(d_{\mathrm T}(X_0,X_t)=t\), and let
\(\delta_1,\ldots,\delta_r\) be the core curves of the maximal cylinders
in the contracting foliation. Set
\(\Delta_q:=\delta_1+\cdots+\delta_r\). Then, for every essential simple
closed curve \(\alpha\),
\begin{equation}
\label{eq:introduction-masur-length-asymptotic}
  \lim_{t\to\infty}
  \frac{\ell_{X_t}(\alpha)}{4t}
  =
  i(\Delta_q,\alpha)
  =
  \sum_{j=1}^{r}i(\delta_j,\alpha).
\end{equation}
See \cite[Theorem~1.1]{Masur1982} and the final estimates in the proof.
Thus the leading linear growth of \(\ell_{X_t}(\alpha)\) is determined
entirely by the intersection of \(\alpha\) with the cylinder multicurve
\(\Delta_q\).

Let \(T_{\Delta_q}\) be the dual \(\mathbb R\)-tree, normalized by
\(\ell_{T_{\Delta_q}}(\gamma)=i(\Delta_q,\gamma)\) for
\(\gamma\in\pi_1(S)\). Given a continuous map \(u_0:M\to S\), let
\(\pi_1(M)\) act on \(T_{\Delta_q}\) through
\((u_0)_*:\pi_1(M)\to\pi_1(S)\), and define
\[
  \mathscr E_{\Delta_q}(u_0)
  :=
  \inf_U
  \frac12\int_D |dU|^2\,d\mu_g,
\]
where \(D\subset\widetilde M\) is a fundamental domain and the infimum is
taken over all equivariant
\(W_{\mathrm{loc}}^{1,2}\)-maps
\(U:\widetilde M\to T_{\Delta_q}\).

\begin{theorem}
\label{Intro-thm:energy-masur-asymptotic}
Let \((M,g)\) be a closed Riemannian manifold, let \(u_0:M\to S\) satisfy
the filling hypothesis, and let
\(q\in\mathcal Q^1(X_0)\) be Jenkins--Strebel. With the notation above,
\begin{equation}
\label{eq:introduction-energy-masur-asymptotic}
  \lim_{t\to\infty}
  \frac{\mathcal E_{u_0}(X_t)}{t^2}
  =
  16\,\mathscr E_{\Delta_q}(u_0).
\end{equation}
In particular, the limit exists and is strictly positive.
\end{theorem}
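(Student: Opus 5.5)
The plan is to show that the rescaled targets $(\widetilde S,\sigma_{X_t}/(16t^2))$, with the $\pi_1(S)$ action, converge in the equivariant Gromov--Hausdorff sense (Korevaar--Schoen, Wolf) to the tree $T_{\Delta_q}$. We then identify the limit of the normalized energies $\mathcal E_{u_0}(X_t)/t^2$ with the energy of an equivariant harmonic map into the tree. Energy scales quadratically with the metric, so the factor $16=4^2$ is exactly the square of Masur's constant $4$ in \eqref{eq:introduction-masur-length-asymptotic}. Write $d_t:=d_{\sigma_{X_t}}/(4t)$ and let $\tilde u_t$ be the lift of $u_{X_t}$. Then $\widetilde{\mathcal E}_t:=\mathcal E_{u_0}(X_t)/(16t^2)$ is the energy of $\tilde u_t$ viewed as a map into $(\widetilde S,d_t)$.

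\textbf{Step 1 (uniform bounds and length convergence).} By \eqref{eq:intro-energy-length-comparison} and \eqref{eq:introduction-masur-length-asymptotic}, we have $c\le \widetilde{\mathcal E}_t\le C$ for large $t$. The lower bound uses that $\Gamma$ fills, so some $\alpha_j$ meets $\Delta_q$ and $i(\Delta_q,\alpha_j)>0$. Applying Masur's theorem to all simple closed curves, and by density or homogeneity of translation length functions, the rescaled length functions $\ell_{X_t}/(4t)$ converge to $i(\Delta_q,\cdot)=\ell_{T_{\Delta_q}}$. By Culler--Morgan/Paulin (the Thurston compactification picture), $(\widetilde S,d_t)$ then converges equivariantly to the minimal tree $T_{\Delta_q}$ dual to the multicurve.

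\textbf{Step 2 (liminf).} The Bochner/Cheng-type estimate used for the lower bound in \eqref{eq:intro-energy-length-comparison} gives uniform Lipschitz bounds for $\tilde u_t$ with respect to $d_t$. Korevaar--Schoen compactness therefore produces a subsequential equivariant limit $U_\infty:\widetilde M\to T_{\Delta_q}$ (after passing to an ultralimit cone, which contains $T_{\Delta_q}$ via equivariant projection). Lower semicontinuity of Korevaar--Schoen energy under this convergence gives $\liminf \widetilde{\mathcal E}_t\ge E(U_\infty)\ge \mathscr E_{\Delta_q}(u_0)$. Here we need that the nearest-point projection of the limit cone onto the minimal subtree does not increase energy. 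Moreover $\mathscr E_{\Delta_q}(u_0)>0$, since a zero-energy map would be constant and would force $i(\Delta_q,\alpha_j)=0$ for all $j$, contradicting filling.

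\textbf{Step 3 (limsup, the main obstacle).} The goal is to build competitors. Take a near-minimizing equivariant Lipschitz $U$ into $T_{\Delta_q}$; Lipschitz maps are dense in energy by mollification in the tree. We need to lift $U$ to maps into $(\widetilde S,d_t)$ with energy at most $E(U)+o(1)$. We use the explicit geometry of the Jenkins--Strebel ray. The surface $X_t$ consists of $r$ flat cylinders of height growing like $e^{t}$ (in $q_t$-metric), whose hyperbolic core-collar widths are asymptotically $4t\cdot(\text{weight})$; the complementary thick parts have bounded diameter. The tree $T_{\Delta_q}$ is the quotient of $\widetilde S$ collapsing complementary regions to vertices and cylinders to edges. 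We therefore define $V_t$ as follows: on preimages of edges, $V_t$ maps to the corresponding lifted collar, moving across it along the geodesic orthogonal to $\delta_j$ at the rescaled depth $U$ prescribes, with the angular coordinate chosen equivariantly and Lipschitz with $O(1)$ constant before rescaling. Near vertices, $V_t$ maps into the thick pieces. The error contributions are $O(1/t)$ after dividing by $4t$, so $\limsup\widetilde{\mathcal E}_t\le E(U)+o(1)$. The delicate points are making this lift equivariant and Lipschitz with controlled constant, and handling the set where $U$ passes through vertices, where we interpolate across a bounded-diameter region. I expect this to be the hardest step. The first approach I would try is a partition of unity on $M$ together with the filling-graph competitor construction already used for the upper bound in \eqref{eq:intro-energy-length-comparison}.

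Combining Steps 2 and 3 gives existence of the limit and $\lim \mathcal E_{u_0}(X_t)/t^2=16\,\mathscr E_{\Delta_q}(u_0)>0$.
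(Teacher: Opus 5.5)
Your Steps~1--2 give only the lower bound $\liminf_{t\to\infty}\mathcal E_{u_0}(X_t)/t^2\ge16\,\mathscr E_{\Delta_q}(u_0)$, and that is the easy direction. It needs no compactness at all. The equivariant map $\widetilde X_t\to T_{\Delta_q}$ that collapses the lifted thick pieces, and projects each lifted standard collar of $\delta_i$ linearly onto its edge, is $(4t-O(1))^{-1}$-Lipschitz for $\sigma_{X_t}$, because $\ell_{X_t}(\delta_i)\le\pi\operatorname{Ext}_{X_t}(\delta_i)\le\pi c_ie^{-2t}/h_i$. Composing it with $\tilde u_t$ already gives an admissible competitor for $\mathscr E_{\Delta_q}(u_0)$.

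The genuine gap is Step~3. That step decides both the existence of the limit and its value $16\,\mathscr E_{\Delta_q}(u_0)$, and you only sketch it. The tool you propose, the filling-graph competitor of Lemma~\ref{lem:lipschitz-marking-filling-lengths}, only yields $\mathcal E_{u_0}\le NC_{\Gamma,*}^2\mathcal E_{u_0}(X_*)\Lambda_\Gamma$. That constant is uncontrolled, so it cannot produce a sharp coefficient.

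More fundamentally, $U$ alone does not determine a point of $\widetilde X_t$. In general (already for $u_0=\mathrm{id}_S$) the edge and vertex stabilizers of $T_{\Delta_q}$ are nontrivial and act freely on $\mathbb H^2$. Hence there is no equivariant section $T_{\Delta_q}\to\widetilde X_t$ to compose with. Two pieces of data must be supplied by an auxiliary $G$-equivariant map, such as a lift $\tilde u_*:\widetilde M\to\widetilde X_*$ of a fixed representative: the angular coordinate in each lifted collar, and the position inside the lifted thick regions (which are unbounded in $\mathbb H^2$ and cocompact only under the vertex stabilizers).

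You would then have to prove four things:
\begin{enumerate}
\item the auxiliary map is coherent with $U$ near vertices, for instance via boundedness of the invariant function $d_T(U,c_\Delta\circ\tilde u_*)$;
\item the thick parts of $X_t$ are uniformly bi-Lipschitz to fixed models compatibly with the markings (convergence of the Jenkins--Strebel ray in augmented Teichm\"uller space);
\item the collar widths are $4t+O(1)$, which needs a matching lower bound on $\ell_{X_t}(\delta_i)$;
\item twisting across the collars costs only $o(t^2)$.
\end{enumerate}
None of this is carried out, so $\limsup_{t\to\infty}\mathcal E_{u_0}(X_t)/t^2\le16\,\mathscr E_{\Delta_q}(u_0)$ is not established.

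The paper avoids constructing competitors altogether. It rescales the exact minimizers to energy one, passes to a Korevaar--Schoen pullback limit, and invokes \cite[Theorem~3.9]{KorevaarSchoen1997}. That theorem says a limit of exact minimizers is again a minimizer, with no loss of energy, so $1=E(U)=\mathscr E_T(u_0)$. By \cite{DaskalopoulosDostoglouWentworth1998} and Masur's formula, $T$ is a minimal $\mathbb R$-tree with $\ell_T=(4/\sqrt c)\,i(\Delta_q,(u_0)_*\,\cdot\,)$. The Culler--Morgan rigidity of Lemma~\ref{lemma-rigidity}, where the filling hypothesis enters through irreducibility, then gives $T\cong(4/\sqrt c)\,T^{\min}_{\Delta_q}$. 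Hence $c=16\,\mathscr E_{\Delta_q}(u_0)$ for every subsequential limit $c$. The minimality of the limit map is exactly what replaces your Step~3 and yields both inequalities at once. To complete your own route, you would have to carry out the explicit collar and thick-part construction described above.
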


Theorem~\ref{Intro-thm:energy-masur-asymptotic} may be viewed as a nonlinear
and higher-dimensional extension of Masur's formula \eqref{eq:introduction-masur-length-asymptotic}. Hyperbolic
length is replaced by harmonic-map energy, while the
intersection number with \(\Delta_q\) is replaced by the equivariant
energy into the dual tree. The compatibility is exact
when the domain is one-dimensional; see Remark \ref{rem:circle-energy-masur-compatibility}.

In the case of a covering map between closed surfaces, the
coefficient in
\eqref{eq:introduction-energy-masur-asymptotic} admits a more explicit
description.

\begin{corollary}
\label{cor:covering-energy-masur-asymptotic}
Let \(u_0:\Sigma\to S\) be an orientation-preserving covering map
between closed surfaces, and assume that \(u_0\) satisfies the filling
hypothesis. Let \(X_g\in\mathcal T(\Sigma)\) be the conformal structure
determined by the fixed domain metric \(g\), and regard \(u_0^*\Delta_q\) as
the pullback measured foliation on \(\Sigma\). Then
\begin{equation}
\label{eq:introduction-covering-tree-energy}
  \mathscr E_{\Delta_q}(u_0)
  =
  \frac12\,
  \operatorname{Ext}_{X_g}\!\left(u_0^*\Delta_q\right),
\end{equation}
and consequently
\begin{equation}
\label{eq:introduction-covering-energy-asymptotic}
  \lim_{t\to\infty}
  \frac{\mathcal E_{u_0}(X_t)}{t^2}
  =
  8\,\operatorname{Ext}_{X_g}\!\left(u_0^*\Delta_q\right).
\end{equation}

If moreover, \(u_0\) has degree \(d\) and \(X_g=u_0^*X_0\), then
\begin{equation}
\label{eq:introduction-covering-energy-degree-asymptotic}
  \lim_{t\to\infty}
  \frac{\mathcal E_{u_0}(X_t)}{t^2}
  =
  8d\,\operatorname{Ext}_{X_0}(\Delta_q).
\end{equation}
Equivalently, let
\(\Phi_{\Delta_q,X_0}\) be the Hubbard--Masur differential whose vertical
measured foliation is \(\Delta_q\), and let \(M_i\) be the conformal modulus
of its characteristic cylinder with core curve \(\delta_i\). Then
\begin{equation}
\label{eq:introduction-covering-energy-moduli-asymptotic}
  \lim_{t\to\infty}
  \frac{\mathcal E_{u_0}(X_t)}{t^2}
  =
{8d}  \sum_{i=1}^{r}\frac{1}
  {M_i}.
\end{equation}

\end{corollary}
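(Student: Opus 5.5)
The plan is to derive Corollary~\ref{cor:covering-energy-masur-asymptotic} from Theorem~\ref{Intro-thm:energy-masur-asymptotic}. Everything reduces to computing the tree energy $\mathscr E_{\Delta_q}(u_0)$ when $M=\Sigma$ is a closed surface and $u_0$ is a covering. Then \eqref{eq:introduction-covering-energy-asymptotic} follows immediately from \eqref{eq:introduction-energy-masur-asymptotic} and \eqref{eq:introduction-covering-tree-energy}, since $16\cdot\frac12=8$.

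\textbf{Step 1: identify the tree.} Because $u_0$ is a covering, $(u_0)_*$ is injective onto a finite-index subgroup of $\pi_1(S)$, and we have $\widetilde\Sigma=\widetilde S$. Restricting the action to this subgroup, $T_{\Delta_q}$ is the dual tree of the lifted measured foliation $u_0^*\Delta_q$ on $\Sigma$. Its translation lengths are $i(u_0^*\Delta_q,\gamma)=i(\Delta_q,u_0\circ\gamma)$.

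\textbf{Step 2: equivariant energy equals half the extremal length.} Here I invoke the Hubbard--Masur/Wolf correspondence for surfaces. For a closed Riemann surface $X_g$ and a measured foliation $F$, the energy-minimizing equivariant map $\widetilde X_g\to T_F$ is the projection along the vertical leaves of the Hubbard--Masur differential $\Phi_{F,X_g}$. Its Hopf differential is $\Phi/4$ (up to the normalization in use), and its energy is $\frac12\int|\Phi|=\frac12\operatorname{Ext}_{X_g}(F)$. The surface energy is conformally invariant, so only the conformal class $X_g$ of $g$ matters.

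The main obstacle is matching constants. The $\frac12$ convention in the energy and the normalization of the tree metric by $i(\Delta_q,\cdot)$ (as opposed to $|\mathrm{Re}\sqrt\Phi|$) must fit together. I would settle this with the model case of a single flat cylinder of height $1$ and circumference $1/M_i$, projected onto its height coordinate. That map has $|dU|^2=1$ and energy $\frac12\cdot\mathrm{area}=\frac12\cdot\frac1{M_i}$. This matches $\frac12\operatorname{Ext}$ for a cylinder of modulus $M_i$.

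For the lower bound (minimality), I would use either Wolf's/Korevaar--Schoen's identification of the harmonic map to a tree with a projection, or a direct length-area argument. In the length-area argument, each vertical leaf crossing a horizontal segment forces $\int|dU|\ge$ the transverse measure, and Cauchy--Schwarz then gives the bound.

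\textbf{Step 3: the degree-$d$ case.} If $X_g=u_0^*X_0$, then the Hubbard--Masur differential of $u_0^*\Delta_q$ on $X_g$ is $u_0^*\Phi_{\Delta_q,X_0}$. This holds by uniqueness: the pullback of a differential realizing $\Delta_q$ is a holomorphic differential realizing the pulled-back foliation. Its area is $d\int_{X_0}|\Phi|$, which gives $\operatorname{Ext}_{X_g}(u_0^*\Delta_q)=d\operatorname{Ext}_{X_0}(\Delta_q)$ and hence \eqref{eq:introduction-covering-energy-degree-asymptotic}.

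Finally, since $\Delta_q$ is a weighted multicurve with unit weights, $\Phi_{\Delta_q,X_0}$ is Jenkins--Strebel with characteristic cylinders of height $1$ and circumferences $1/M_i$. Its area is therefore $\sum_i 1/M_i$, which yields \eqref{eq:introduction-covering-energy-moduli-asymptotic}.
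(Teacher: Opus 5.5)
This is correct and is essentially the paper's proof. You identify \(T_{\Delta_q}\), via the lifted diffeomorphism \(\widetilde u_0\), with the dual tree of \(u_0^*\Delta_q\), so that \(\mathscr E_{\Delta_q}(u_0)=\mathscr E_{u_0^*\Delta_q}(\mathrm{id}_\Sigma)=\frac12\operatorname{Ext}_{X_g}(u_0^*\Delta_q)\) by the Hubbard--Masur leaf-space projection (the paper gets its minimality from Wolf-harmonicity, semisimplicity and Daskalopoulos--Wentworth's Theorem~3.8). You then handle the degree-\(d\) case by Hubbard--Masur uniqueness for \(u_0^*\Phi_{\Delta_q,X_0}\) and the unit-height cylinder area count. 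The only loose point is your optional length--area alternative: for an arbitrary equivariant competitor, a horizontal segment bounds \(\int|dU|\) below by its transverse measure only up to an error governed by the distance from \(U\) to the projection, so that argument must be run asymptotically along long horizontal arcs; the cited minimality makes it unnecessary.
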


\medskip
\noindent\textbf{Energy functions with varying domain.}
We also consider a dual situation in which the target is fixed and the domain
Riemann surface varies.  Let
\(
  u_0\colon \Sigma\to (S,h)
\)
be an orientation-preserving covering map between closed hyperbolic
surfaces.  For \(X\in\mathcal T(\Sigma)\), let
\(
  u_X\colon X\to (S,h)
\)
be the harmonic map homotopic to \(u_0\), and define
\begin{equation}
  E_{u_0}(X)
  :=
  \frac12\int_X |du_X|^2\,dA_X.
  \label{eq:intro-domain-energy}
\end{equation}
Set
\(
  Y=(\Sigma,u_0^*h)\in\mathcal T(\Sigma),
  \) \(
  A_Y=\operatorname{Area}(Y).
\)
Then \(u_0\colon Y\to(S,h)\) is a local isometry.  The covering structure
implies that \(E_{u_0}\) factors through the energy of the harmonic
diffeomorphism \(X\to Y\).  The unique critical point and the positive
Weil--Petersson Hessian at that point were established in the covering-map
setting by Kim--Wan--Zhang \cite{KimWanZhang2024}.  There is also a broader
complex-analytic theory for varying-domain harmonic-map energy with general
nonpositively curved targets: Toledo proved plurisubharmonicity of the energy
for targets of non-positive Hermitian sectional curvature, and
Kim--Wan--Zhang proved plurisuperharmonicity of the reciprocal energy together
with plurisubharmonicity of \(\log E\) and \(E\)
\cite{Toledo2012,KimWanZhang2020}.  Our coarse quasi-convexity theorem below,
however, uses the special geometry of a covering of hyperbolic
surfaces.  Here we prove the following global coarse strengthening along
Teichm\"uller geodesics.

\begin{theorem}
\label{thm:intro-varying-domain-quasiconvexity}
Let \(u_0\colon\Sigma\to(S,h)\) be an orientation-preserving 
covering map and let
\(Y=(\Sigma,u_0^*h)\).  Then there are constants depending only on \(Y\) such
that
\begin{equation}
  \frac{\eta_Y}{2}e^{2d_{\mathrm T}(X,Y)}
  \le
  E_{u_0}(X)
  \le
  A_Ye^{2d_{\mathrm T}(X,Y)}
  \label{eq:intro-domain-energy-distance-comparison}
\end{equation}
for every \(X\in\mathcal T(\Sigma)\), where
\(
  \eta_Y
  :=
  \min_{[\lambda]\in\mathbb P\mathcal{ML}(\Sigma)}
  \frac{\ell_Y(\lambda)^2}{\operatorname{Ext}_Y(\lambda)}
  >0.
\)

Consequently, there exists \(K_E\ge1\) such that, whenever
\(X_a,X_b,X_c\) occur in this order on a Teichm\"uller geodesic in
\(\mathcal T(\Sigma)\),
\begin{equation}
  E_{u_0}(X_b)
  \le
  K_E\max\{E_{u_0}(X_a),E_{u_0}(X_c)\}.
  \label{eq:intro-domain-energy-quasiconvexity}
\end{equation}
In particular,
\begin{equation}
  \frac12\log E_{u_0}(X)
  =
  d_{\mathrm T}(X,Y)+O_Y(1).
  \label{eq:intro-half-log-energy-distance}
\end{equation}
\end{theorem}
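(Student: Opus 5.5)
We reduce everything to the energy of the harmonic diffeomorphism $w_X\colon X\to Y$ homotopic to the identity. Since $u_0\colon Y\to(S,h)$ is a local isometry, the map $u_0\circ w_X$ is harmonic into $(S,h)$ and homotopic to $u_0$. By uniqueness of harmonic maps into negatively curved targets, $u_X=u_0\circ w_X$, and therefore $E_{u_0}(X)=E(w_X)$, the energy of $w_X\colon X\to Y$. So it suffices to prove \eqref{eq:intro-domain-energy-distance-comparison} for $E(X):=E(w_X)$. Once that is done, \eqref{eq:intro-half-log-energy-distance} follows by taking logarithms. The quasi-convexity \eqref{eq:intro-domain-energy-quasiconvexity} then follows from quasi-convexity of $X\mapsto e^{2d_{\mathrm T}(X,Y)}$ along Teichm\"uller geodesics, i.e.\ uniform quasi-convexity of Teichm\"uller balls (Lenzhen--Rafi). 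Concretely, $d_{\mathrm T}(X_b,Y)\le\max\{d_{\mathrm T}(X_a,Y),d_{\mathrm T}(X_c,Y)\}+C_Y$, which gives $K_E=(2A_Y/\eta_Y)e^{2C_Y}$.

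\emph{Upper bound.} Let $K=e^{2d_{\mathrm T}(X,Y)}$ and let $f\colon X\to Y$ be the Teichm\"uller extremal map, which is $K$-quasiconformal. Pointwise, with $J$ the Jacobian with respect to the hyperbolic metrics, we have $\tfrac12|df|^2\,dA_X\le\tfrac12(K+K^{-1})J_f\,dA_X\le K J_f\,dA_X$. This uses the conformal invariance of the two-dimensional energy and the inequality $|\partial f|^2+|\bar\partial f|^2\le\frac{K+K^{-1}}{2}\cdot(\ldots)$ in terms of the dilatation. Integrating gives $E(f)\le K\,\mathrm{Area}(Y)$. Since $w_X$ minimizes energy in its homotopy class, $E(X)\le A_Ye^{2d_{\mathrm T}(X,Y)}$. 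The only technical point is that $f$ is merely $W^{1,2}$ with cone singularities; this is handled by the fact that the harmonic map minimizes among $W^{1,2}$ maps in the homotopy class.

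\emph{Lower bound.} We use Kerckhoff's formula $e^{2d_{\mathrm T}(X,Y)}=\sup_\lambda \operatorname{Ext}_Y(\lambda)/\operatorname{Ext}_X(\lambda)$, with the supremum attained at some $\lambda\in\mathcal{ML}$. We then need the Minsky-type inequality $\ell_Y(\lambda)^2\le 2E(X)\operatorname{Ext}_X(\lambda)$, which should follow from the standard length--energy estimate. For a measured foliation realized by a horizontal foliation of a quadratic differential $\phi$ on $X$, the $Y$-length of $\lambda$ is at most the length of the image of the leaves. Cauchy--Schwarz then gives $\ell_Y(\lambda)\le\int_X|dw_X|_{\text{along leaves}}\,|\phi|^{1/2}\le (2E)^{1/2}(\int|\phi|)^{1/2}$. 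Choosing $\phi$ to be the Hubbard--Masur differential gives $\int|\phi|=\operatorname{Ext}_X(\lambda)$. One proves this first for weighted simple closed curves, via cylinders and moduli, and extends by density and continuity of $\ell_Y$ and $\operatorname{Ext}$; the constant normalization must match the paper's convention for $E$. Combining,
\[
E(X)\ge \frac{\ell_Y(\lambda)^2}{2\operatorname{Ext}_X(\lambda)}=\frac{\ell_Y(\lambda)^2}{2\operatorname{Ext}_Y(\lambda)}\cdot\frac{\operatorname{Ext}_Y(\lambda)}{\operatorname{Ext}_X(\lambda)}\ge\frac{\eta_Y}{2}e^{2d_{\mathrm T}(X,Y)}.
\]
Finally, $\eta_Y>0$ because the ratio $\ell_Y^2/\operatorname{Ext}_Y$ is continuous, positive, and scale-invariant on the compact space $\mathbb P\mathcal{ML}$.

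The main obstacle is the lower-bound inequality $\ell_Y(\lambda)^2\le 2E\operatorname{Ext}_X(\lambda)$ with the exact constant for general measured laminations. Getting the normalization consistent with $\tfrac12|du|^2$ and with the definition of extremal length requires care, as does justifying the approximation argument. I would prove it for simple closed curves by working on the maximal annulus, using the flat cylinder structure of the Jenkins--Strebel differential, and then pass to the limit.
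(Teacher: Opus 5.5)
Your proposal is correct and follows the paper's proof. The steps match: the covering factorization $E_{u_0}(X)=E(f_X)$; the Teichm\"uller map $X\to Y$ as the competitor for the upper bound; the inequality $\ell_Y(\lambda)^2\le 2E_{u_0}(X)\operatorname{Ext}_X(\lambda)$, proved for weighted simple closed curves and extended by density, combined with Kerckhoff's formula for the lower bound; and Lenzhen--Rafi quasi-convexity of Teichm\"uller balls, giving $K_E=(2A_Y/\eta_Y)e^{2c_\Sigma}$.

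The only differences are in how the two estimates are computed. For the upper bound, the paper writes the competitor's energy as $e^{-2T}H(v_0)+e^{2T}V(v_0)$ in $q$-natural coordinates, where you use the pointwise dilatation bound; both give $\cosh(2T)A_Y$. For the lower bound, the paper inserts the conformal metric $|df_X|^2\sigma_X$ into the variational definition of $\operatorname{Ext}_X$. Your Jenkins--Strebel cylinder and Cauchy--Schwarz argument also works, with exactly the constant $2$.
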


The lower bound in \eqref{eq:intro-domain-energy-distance-comparison} is based
on extremal lengths of measured laminations.  For every measured lamination \(\lambda\), the pullback
pseudometric associated to the harmonic map \(X\to Y\) gives
\(
  \ell_Y(\lambda)^2
  \le
  2E_{u_0}(X)\operatorname{Ext}_X(\lambda),
\)
and Kerckhoff's formula then converts this estimate into the exponential lower
bound in \eqref{eq:intro-domain-energy-distance-comparison}
\cite{Kerckhoff1980}.  The upper bound follows by using the minimizing map at
one point of a Teichm\"uller geodesic as a competitor at another point.
Finally, the quasi-convexity of
Teichm\"uller balls, again due to Lenzhen and Rafi \cite{LenzhenRafi2011},
turns the two-sided comparison with \(e^{2d_{\mathrm T}(X,Y)}\) into
\eqref{eq:intro-domain-energy-quasiconvexity}. 

The coarse comparison with Teichm\"uller distance determines the exponential
growth rate of the covering-map energy, but it does not identify its leading
coefficient. The fixed-domain expression for the energy yields a sharper
asymptotic statement.

Let
\(
  u_0:\Sigma\to(S,h)
\)
be an orientation-preserving  covering map, and set
\(
  Y=(\Sigma,u_0^*h)\in\mathcal T(\Sigma).
\)
Let
\(
  X_t=\gamma_q(t)
\)
be the unit-speed Teichm\"uller ray determined by
\(q\in\mathcal Q^1(X_0)\). In a \(q\)-natural coordinate \(z=x+iy\), define the
vertical energy of \(v\in\mathscr H_{\mathrm{id}}\) by
\[
  V_q(v)
  :=
  \frac12
  \int_{X_0\setminus Z(q)}
  |v_y|_{u_0^*h}^2\,dx\,dy,
\]
and set
\[
  \mathscr V_q(Y)
  :=
  \inf_{v\in\mathscr H_{\mathrm{id}}}V_q(v).
\]
Here \(\mathscr H_{\mathrm{id}}\) denotes the marking-preserving homotopy class
of Lipschitz maps \(v:X_0\to Y\). 

\begin{proposition}
\label{prop:intro-covering-energy-leading-coefficient}
With the notation above, the normalized energy is nonincreasing and satisfies
\begin{equation}
  e^{-2t}E_{u_0}(X_t)
  \searrow
  \mathscr V_q(Y)
  \qquad
  \text{as }t\to\infty.
  \label{eq:intro-normalized-covering-energy-limit}
\end{equation}
In particular,
\begin{equation}
  \lim_{t\to\infty}
  \frac{E_{u_0}(X_t)}{e^{2t}}
  =
  \mathscr V_q(Y),
  \qquad
  E_{u_0}(X_t)
  =
  \mathscr V_q(Y)e^{2t}
  +
  o(e^{2t}).
  \label{eq:intro-covering-energy-asymptotic}
\end{equation}
Moreover,
\(
  0<\mathscr V_q(Y)<\infty.
\)

If \(q\) is Jenkins--Strebel and its vertical foliation is the contracting
foliation of the ray, let
\(
  C_1,\ldots,C_r
\)
be its maximal flat cylinders. Write
\(
  C_i
  \cong
  (0,h_i)\times
  \bigl(\mathbb R/c_i\mathbb Z\bigr),
\)
where the closed vertical trajectories have \(q\)-length \(c_i\), the
transverse height is \(h_i\), and \(\delta_i\) is the core curve of \(C_i\).
Then
\begin{equation}
  \mathscr V_q(Y)
  =
  \frac12
  \sum_{i=1}^r
  \operatorname{Mod}(C_i)\,
  \ell_Y(\delta_i)^2
  =
  \frac12
  \sum_{i=1}^r
  \frac{h_i}{c_i}\,
  \ell_Y(\delta_i)^2.
  \label{eq:intro-strebel-covering-energy-coefficient}
\end{equation}
Consequently,
\begin{equation}
  \lim_{t\to\infty}
  \frac{E_{u_0}(X_t)}{e^{2t}}
  =
  \frac12
  \sum_{i=1}^r
  \operatorname{Mod}(C_i)\,
  \ell_Y(\delta_i)^2.
  \label{eq:intro-strebel-covering-energy-limit}
\end{equation}
\end{proposition}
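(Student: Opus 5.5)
Since $u_0\colon Y\to(S,h)$ is a local isometry and the energy density is local, $E_{u_0}(X)$ equals the energy of the harmonic diffeomorphism $X\to Y$ in the identity class. We therefore work in the fixed-domain picture. The Teichm\"uller ray $X_t$ is obtained from $X_0$ by the affine stretch $x+iy\mapsto e^{-t}x+ie^{t}y$ in $q$-natural coordinates; with our convention the vertical foliation is the contracting one, so it is the $y$-direction that gets stretched. The natural quasiconformal map $f_t\colon X_0\to X_t$ therefore identifies, for every Lipschitz $v\in\mathscr H_{\mathrm{id}}$, the energy of $v\circ f_t^{-1}\colon X_t\to Y$. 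Because energy is conformally invariant in dimension two, this energy can be computed in the flat metric $e^{-2t}dx^2+e^{2t}dy^2$ on $X_0$, and a direct computation gives
\[
  E_{X_t}(v\circ f_t^{-1})=\frac12\int_{X_0\setminus Z(q)}\bigl(e^{-2t}|v_x|^2+e^{2t}|v_y|^2\bigr)\,dx\,dy .
\]
Hence
\[
  e^{-2t}E_{u_0}(X_t)=\inf_{v\in\mathscr H_{\mathrm{id}}}\Bigl(e^{-4t}H(v)+V_q(v)\Bigr),
\]
where $H(v)=\frac12\int|v_x|^2$. The finite zero set $Z(q)$ has zero capacity, so it can be ignored. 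This formula is the key identity.

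\textbf{Monotonicity and the limit.} For each fixed $v$ the function $e^{-4t}H(v)+V_q(v)$ is nonincreasing in $t$, so the infimum over $v$ is nonincreasing as well. The limit is at least $\mathscr V_q(Y)$, since each term dominates $V_q(v)$. For the reverse inequality, take $v$ with $V_q(v)<\mathscr V_q(Y)+\varepsilon$ and $H(v)<\infty$ (Lipschitz maps have finite energy), and let $t\to\infty$; this gives $\lim_{t\to\infty} e^{-2t}E_{u_0}(X_t)=\mathscr V_q(Y)$. Finiteness of $\mathscr V_q(Y)$ is then clear. Positivity follows from the lower bound in Theorem~\ref{thm:intro-varying-domain-quasiconvexity}: $E_{u_0}(X_t)\ge \frac{\eta_Y}{2}e^{2d_{\mathrm T}(X_t,Y)}$, and $d_{\mathrm T}(X_t,Y)\ge t-d_{\mathrm T}(X_0,Y)$, so $e^{-2t}E_{u_0}(X_t)$ is bounded below by a positive constant.

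\textbf{The Jenkins--Strebel formula.} Here $X_0\setminus Z(q)$ is, up to measure zero, the union of the cylinders $C_i=(0,h_i)\times\mathbb R/c_i\mathbb Z$, with $y$ the vertical circle coordinate of length $c_i$ and $x\in(0,h_i)$.

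For the lower bound, fix $v\in\mathscr H_{\mathrm{id}}$. For each $x$, the loop $y\mapsto v(x,y)$ is homotopic to $\delta_i$, so Cauchy--Schwarz gives
\[
  \int_0^{c_i}|v_y|^2\,dy\ge \frac{\ell_Y(\delta_i)^2}{c_i}.
\]
Integrating in $x$ and summing over $i$ yields $V_q(v)\ge\frac12\sum_i\frac{h_i}{c_i}\ell_Y(\delta_i)^2$.

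For the upper bound, we seek maps $v$ that nearly realize equality. On each cylinder, away from thin collars $x\in(0,\epsilon)\cup(h_i-\epsilon,h_i)$, let $v$ map every vertical circle at constant speed onto the closed $Y$-geodesic $\delta_i^*$. In the collars, interpolate Lipschitzly to a fixed map on the critical graph, so that the result stays in the homotopy class $\mathscr H_{\mathrm{id}}$. The collars contribute $O(\epsilon)$ to $V_q$, because there $v_y$ stays bounded while $x$ ranges over a set of measure $O(\epsilon)$. We then let $\epsilon\to0$. The substitution $\mathrm{Mod}(C_i)=h_i/c_i$ gives the stated formula.

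\textbf{Main obstacle.} The delicate step is this interpolation. We must build a Lipschitz map homotopic to the identity that, on the bulk of each cylinder, equals the geodesic parametrization. We also need $|v_y|$ to stay bounded in the collars, which requires matching the boundary circles of the cylinders to a fixed realization of the critical graph compatibly with the marking. We would first fix a Lipschitz representative $v_0\in\mathscr H_{\mathrm{id}}$. On each collar we then homotope along geodesic straight-line homotopies in $\widetilde Y$, between $v_0$ on the circle $x=\epsilon$ and the geodesic parametrization of the circle. These circles are freely homotopic, so the straight-line homotopy is Lipschitz with uniform constants, and it can be reparametrized in $x$ over an interval of length $\epsilon$ without affecting the bound on $|v_y|$.
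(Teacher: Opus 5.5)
Your overall route is the same as the paper's. You use the penalized-infimum identity for $e^{-2t}E_{u_0}(X_t)$, then monotonicity plus an $\varepsilon$-almost-minimizer to get the limit. For the Jenkins--Strebel case you use Cauchy--Schwarz on the closed vertical trajectories for the lower bound and a collar interpolation for the upper bound. Your interpolation is fine: keep $v_0$ near the critical graph, use the geodesic homotopy between lifts equivariant under the same deck element, and bound $|v_y|$ by CAT(0) convexity. Your positivity argument is also correct: you use the lower bound of Theorem~\ref{thm:intro-varying-domain-quasiconvexity} together with $d_{\mathrm T}(X_t,Y)\ge t-d_{\mathrm T}(X_0,Y)$. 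This is what Proposition~\ref{cor:asymptotic-growth-two-energy-settings}(1) supplies, though the paper's proof does not spell it out.

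However, the derivation of your key identity is wrong as written. The ray $\gamma_q$ is built from $A_t(x+iy)=e^tx+ie^{-t}y$, so $F_t^*|q_t|=e^{2t}dx^2+e^{-2t}dy^2$, and the $y$-direction is \emph{contracted}, not stretched. ``The vertical foliation is the contracting foliation'' refers to its leaves. The closed vertical trajectories have $q_t$-length $e^{-t}c_i$, while the transverse measure expands, $F_t^*\mathcal F_{\mathrm v}(q_t)=e^t\mathcal F_{\mathrm v}(q)$ by \eqref{eq:foliations-along-teichmuller-geodesic}. This is why $C_i$ has modulus $e^{2t}h_i/c_i$ in $X_t$, and why the vertical energy carries the weight $e^{2t}$.

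Now take your stated stretch $x+iy\mapsto e^{-t}x+ie^{t}y$ and metric $e^{-2t}dx^2+e^{2t}dy^2$. The area form is $dx\,dy$ and $|dv|^2=e^{2t}|v_x|^2+e^{-2t}|v_y|^2$. So the ``direct computation'' actually gives $e^{2t}H_q(v)+e^{-2t}V_q(v)$. The normalized limit would then be $\inf H_q$, and the cylinder formula in terms of closed vertical trajectories would not follow. Your displayed identity is correct for the actual ray only because a second slip cancels the first.

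To fix it, replace the stretch by the paper's $F_t$ and the pulled-back metric by $e^{2t}dx^2+e^{-2t}dy^2$. The identity then becomes \eqref{eq:fixed-map-energy-teich-geodesic}, and the rest of your argument goes through unchanged.
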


\medskip
\noindent\textbf{Structure of the paper.}
Section~\ref{sec:teichmuller-space-and-geodesics} recalls the necessary
background on Teichm\"uller space, Teichm\"uller geodesics, energy functions, and the local expansions of hyperbolic metrics along Teichm\"uller geodesics.  Section~\ref{subsec:quasi-convexity-energy-functions} treats
energy functions with fixed domain and varying target and proves
Theorem~\ref{thm:intro-varying-target-quasiconvexity}.
Section~\ref{subsec:quasi-convexity-energy-varying-domain} treats the covering-map case with
varying domain and proves Theorem~\ref{thm:intro-varying-domain-quasiconvexity}.
In Section~\ref{Energy asymptotics} we study the asymptotic behavior of energy functions along a Teichm\"uller geodesic associated with a Jenkins-Strebel differential, and prove Theorem~\ref{Intro-thm:energy-masur-asymptotic}, Corollary~\ref{cor:covering-energy-masur-asymptotic}, and Proposition \ref{prop:intro-covering-energy-leading-coefficient}.
In Section \ref{Variations of energy functions} we calculate the variations of energy functions in the two settings varying domains or targets.

\medskip
\noindent\textbf{Acknowledgments.}
We would like to thank the Banff International Research Station for Mathematical
Innovation and Discovery (BIRS) for hosting the Research in Teams program
``Higher Teichm\"uller theory and harmonic maps'' from July 26 to August 9,
2026, and for providing an excellent environment for
collaboration. Research by Inkang Kim was partially supported by
RS-2026-25468457 and KIAS Individual Grant (MG031408), Xueyuan Wan was
supported by the National Key R\&D Program of China (Grant No.2024YFA1013200) and the National Natural Science Foundation of China (Grant No. 12671100), 
and Genkai Zhang was  supported partially by the Swedish Research
Council VR 11253580.
\vspace{3mm}

\section{Teichm\"uller space and Teichm\"uller geodesics}
\label{sec:teichmuller-space-and-geodesics}

Throughout this paper $S$ will be a closed, connected, oriented smooth
surface of genus $g\ge 2$. All complex structures on $S$ are assumed to be
compatible with the orientation. We use the normalization
\( d_{\mathrm T}=\frac12\log K \)
for the Teichm\"uller metric \cite{Masur2009}.
We shall recall some well-known
facts on Teichm\"uller space;
see e.g. 
\cite{Ahlfors2006,GardinerLakic2000,Hubbard2006}
for general results and
\cite{Strebel1984,Masur2009}
for quadratic
differentials and the Teichm\"uller geodesic flow.
An account emphasizing the relation with
harmonic maps can be found in \cite{DaskalopoulosWentworth2007}.

\subsection{Teichm\"uller space}
\label{subsec:teichmuller-space}

A \emph{marked Riemann surface} of topological type $S$ is a pair $(X,f)$,
where $X$ is a Riemann surface and
\( f\colon S\to X \)
is an orientation-preserving diffeomorphism, called a \emph{marking}.
Two marked Riemann surfaces $(X,f)$ and $(Y,g)$ are said to be equivalent
if there exists a biholomorphism $h\colon X\to Y$ such that $h\circ f$ is
isotopic to $g$. The Teichm\"uller space of $S$ is defined by
\[
  \mathcal T(S)
  :=
  \left\{(X,f):
  \begin{array}{l}
  X \text{ is a Riemann surface,}\\
  f\colon S\to X \text{ is a marking}
  \end{array}
  \right\}\big/\!\sim .
\]
We denote the equivalence class of $(X,f)$ by $[X,f]$ and usually suppress
the marking when no confusion can arise.

By the uniformization theorem, each conformal structure on $S$ contains a
unique hyperbolic metric of curvature $-1$. Thus, $\mathcal T(S)$ may
equivalently be viewed as the space of marked hyperbolic metrics on $S$,
where two such metrics are identified if they are related by an isometry
isotopic to the identity. The space $\mathcal T(S)$ is a contractible
complex manifold of complex dimension $3g-3$ and real dimension $6g-6$.

Let $h\colon X\to Y$ be an orientation-preserving quasiconformal map. In
local conformal coordinates, its Beltrami coefficient is
\[ \mu_h
  :=
  \frac{h_{\bar z}}{h_z}, \quad
  \|\mu_h\|_{L^\infty(X)}<1, \]
and its maximal dilatation is
\[ K(h)
  :=
  \operatorname*{ess\,sup}_{z\in X}
  \frac{|h_z|+|h_{\bar z}|}{|h_z|-|h_{\bar z}|}
  =
  \frac{1+\|\mu_h\|_{L^\infty(X)}}
       {1-\|\mu_h\|_{L^\infty(X)}}. \]
For $x=[X,f]$ and $y=[Y,g]$ in $\mathcal T(S)$, the Teichm\"uller distance
is defined by
\begin{equation}
  d_{\mathrm T}(x,y)
  :=
  \frac12\inf_h\log K(h),
  \label{eq:teichmuller-distance}
\end{equation}
where the infimum is taken over all quasiconformal maps
$h\colon X\to Y$ satisfying
\( h\circ f\simeq_{\text{iso}} g. \)
Here $\simeq_{\text{iso}}$ denotes isotopy. The function
$d_{\mathrm T}$ defines a complete Finsler metric on $\mathcal T(S)$.

For a Riemann surface $X$, let
\( \mathcal Q(X)
  :=
  H^0(X,K_X^{\otimes 2}) \)
denote the vector space of holomorphic quadratic differentials on $X$,
where $K_X$ is the canonical bundle of $X$. If
\( q=\phi(z)\,dz^2 \)
in a local conformal coordinate $z=x+iy$, then $q$ determines the singular
flat metric
\( ds_q^2
  :=
  |q|
  =
  |\phi(z)|\,|dz|^2 \)
and the associated area form
\( dA_q
  :=
  |\phi(z)|\,dx\,dy. \)
We write
\begin{equation}
  \|q\|_1
  :=
  \int_X dA_q,
  \quad
  \mathcal Q^1(X)
  :=
  \bigl\{q\in\mathcal Q(X):\|q\|_1=1\bigr\}.
  \label{eq:L1-norm-quadratic-differential}
\end{equation}
By the Riemann--Roch theorem,
\( \dim_{\mathbb C}\mathcal Q(X)=3g-3. \)

There is a canonical identification
\( T_X^*\mathcal T(S)\cong\mathcal Q(X). \)
More precisely, a tangent vector $v\in T_X\mathcal T(S)$ may be represented
by an essentially bounded Beltrami differential $\mu$, and its natural
pairing with $q\in\mathcal Q(X)$ is
\( \langle v,q\rangle
  :=
  \int_X\mu q. \)
Under this pairing, the infinitesimal Teichm\"uller norm is dual to the
$L^1$-norm on holomorphic quadratic differentials:
\begin{equation}
  \|v\|_{\mathrm T}
  =
  \inf\bigl\{
     \|\nu\|_{L^\infty(X)}:
     \nu \text{ represents }v
  \bigr\}
  =
  \sup_{q\in\mathcal Q^1(X)}
  \left|\int_X\mu q\right|.
  \label{eq:infinitesimal-teichmuller-norm}
\end{equation}
See
\cite{Ahlfors2006,GardinerLakic2000,DaskalopoulosWentworth2007}
for these identifications.

\subsection{Teichm\"uller geodesics}
\label{subsec:teichmuller-geodesics}

Let $q\in\mathcal Q(X)\setminus\{0\}$. Away from the zeros of $q$, a local
branch of $\sqrt q$ determines a \emph{natural coordinate}
\( z
  =
  \int\sqrt q
  =
  x+iy, \)
in which
\( q=dz^2. \)
The transition maps between natural coordinates are of the form
\( z\longmapsto\pm z+c. \)
Consequently, these coordinates identify the singular flat metric with the
Euclidean metric,
\( ds_q^2=dx^2+dy^2, \)
and determine a pair of transverse measured foliations. In a natural
coordinate, the horizontal and vertical measured foliations are given,
respectively, by
\[ \mathcal F_{\mathrm h}(q):
  \quad
  y=\mathrm{constant},
  \quad
  d\nu_{\mathrm h}=|dy|, \]
and
\[ \mathcal F_{\mathrm v}(q):
  \quad
  x=\mathrm{constant},
  \quad
  d\nu_{\mathrm v}=|dx|. \]
At a zero of order $m$, these foliations have an $(m+2)$-pronged
singularity, and the singular flat metric has cone angle $(m+2)\pi$.
See \cite{Strebel1984,Masur2009}.

The following classical theorem provides the fundamental link between
holomorphic quadratic differentials and the Teichm\"uller metric
\cite{Teichmuller1940,Bers1960,GardinerLakic2000,Hubbard2006}.

\begin{theorem}[Teichm\"uller's Theorem]
\label{thm:teichmuller-existence-uniqueness}
Let $x=[X,f]$ and $y=[Y,g]$ be points of $\mathcal T(S)$. There exists a
unique quasiconformal map, called the Teichm\"uller map, 
\( h\colon X\to Y \)
satisfying $h\circ f\simeq_{\text{iso}} g$.

If $x\ne y$, then there exist unique unit-area quadratic differentials
\( q\in\mathcal Q^1(X), q'\in\mathcal Q^1(Y), \)
and a number $K>1$ such that
\begin{equation}
  \mu_h
  =
  k\frac{\overline q}{|q|},
  \quad
  k
  =
  \frac{K-1}{K+1},
  \label{eq:teichmuller-map-beltrami}
\end{equation}
almost everywhere on $X$. After choosing compatible natural coordinates
\( z=x+iy\) for $q$, $w=u+iv$ for $q'$,
the map $h$ is affine and has the form
\begin{equation}
  u=K^{1/2}x,
  \quad
  v=K^{-1/2}y.
  \label{eq:teichmuller-affine-map}
\end{equation}
Moreover,
\( d_{\mathrm T}(x,y)=\frac12\log K. \)
If $x=y$, the extremal map is conformal.
\end{theorem}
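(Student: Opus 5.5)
The statement is Teichm\"uller's Theorem, the classical existence and uniqueness of extremal quasiconformal maps. Since it is recalled from the literature, I would prove it along the standard Bers--Hamilton--Reich--Strebel line, in three stages.

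\textbf{Existence.} First I produce an extremal map. Fix a quasiconformal map $h_0\colon X\to Y$ with $h_0\circ f\simeq_{\text{iso}} g$; one exists because $f$ and $g$ are diffeomorphisms, so $g\circ f^{-1}$ itself works. Then take a minimizing sequence $h_n$ in the homotopy class with $K(h_n)\to K_0:=\inf K(h)$. By compactness of normalized $K$-quasiconformal maps (equicontinuity on the universal covers, lifted equivariantly), a subsequence converges uniformly to a quasiconformal $h$ in the same class. Lower semicontinuity of dilatation under uniform limits gives $K(h)=K_0$.

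\textbf{Teichm\"uller form.} If $x\neq y$, I show the extremal $h$ has Beltrami coefficient of the form \eqref{eq:teichmuller-map-beltrami}. The key tool is Hamilton's criterion together with the Reich--Strebel main inequality. Infinitesimal extremality of $\mu_h$ means that $\int_X \mu_h q$ attains $\|\mu_h\|_\infty$ over $\mathcal Q^1(X)$; this is the duality \eqref{eq:infinitesimal-teichmuller-norm}. Since $\mathcal Q(X)$ is finite dimensional, the supremum is attained at some $q$. Equality in $|\int\mu_h q|\le\|\mu_h\|_\infty\int|q|$ then forces $\mu_h=k\bar q/|q|$ almost everywhere. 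This requires passing from global extremality to Hamilton's condition, which I would do by composing with small deformations $e^{\epsilon}$ in Teichm\"uller directions, or directly via the main inequality.

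\textbf{Uniqueness and normal form.} I apply the Reich--Strebel main inequality
\[
  \|q\|_1\le\int_X \frac{|1-\mu_{h'}\, q/|q||^2}{1-|\mu_{h'}|^2}\,|q|
\]
to any competitor $h'$ homotopic to the Teichm\"uller map with $q$. This inequality is proved by integrating lengths of horizontal trajectories of $q$ and applying Cauchy--Schwarz, using that $h$ and $h'$ are homotopic so that horizontal lengths are comparable up to bounded error, which averages out over long trajectories. The inequality forces $K(h')\ge K$, with equality only if $\mu_{h'}=\mu_h$, and hence $h'=h$ up to conformal automorphism isotopic to the identity. That automorphism is trivial in genus $\ge2$.

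The quadratic differential $q'$ on $Y$ is the pushforward: in natural coordinates $h$ stretches by $K^{1/2}$ horizontally and $K^{-1/2}$ vertically, giving \eqref{eq:teichmuller-affine-map}, and $q'=dw^2$ has unit area since the map is area-preserving in flat coordinates. Finally $d_{\mathrm T}=\tfrac12\log K$ follows directly from definition \eqref{eq:teichmuller-distance}.

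\textbf{Main obstacle.} I expect the main obstacle to be the main inequality, namely controlling homotopies uniformly along long horizontal trajectories via the ergodic length-averaging argument. Everything else is compactness and duality.
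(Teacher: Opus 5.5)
The paper gives no proof of this statement; it quotes it from Teichm\"uller, Bers, Gardiner--Lakic and Hubbard. Your outline (compactness gives an extremal $h$, then Hamilton's condition together with $\dim\mathcal Q(X)<\infty$ gives Teichm\"uller form, then a length--area inequality gives uniqueness) is a legitimate classical route. The compactness step and the equality analysis in $|\int_X\mu_hq|\le\|\mu_h\|_\infty\|q\|_1$ are fine.

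\textbf{The missing existence step.} The gap is the step you pass over: that a \emph{globally} extremal $\mu_h$ satisfies $\|\mu_h\|_\infty=\sup_{q\in\mathcal Q^1(X)}|\int_X\mu_hq|$. This is the necessity half of the Hamilton--Krushkal theorem. It carries the whole content of the existence statement and is not a rephrasing of extremality. Neither mechanism you suggest supplies it.
\begin{itemize}
\item The main inequality cannot. It is a constraint obeyed by maps already in the class, so it gives sufficiency (Teichm\"uller form implies extremal and unique). It never produces a competitor of smaller dilatation.
\item Composing with deformations ``in Teichm\"uller directions'' moves the point of $\mathcal T(S)$. What you need are self-maps $g_\epsilon$ of $X$ isotopic to the identity with $\|\mu_{g_\epsilon}-\epsilon\nu\|_\infty=O(\epsilon^2)$ for a prescribed $\nu\in\mathcal Q(X)^\perp$.
\end{itemize}
Given such $g_\epsilon$, the argument closes. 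If Hamilton's condition fails, duality gives $\nu\in\mathcal Q(X)^\perp$ with $\|\mu_h-\nu\|_\infty<\|\mu_h\|_\infty$. Then $\operatorname{Re}(\overline{\mu_h}\nu)\ge\frac12(|\mu_h|^2-\|\mu_h-\nu\|_\infty^2)$ is bounded below by a positive constant where $|\mu_h|$ is near its supremum, and
\[
  |\mu_{h\circ g_\epsilon^{-1}}|^2\circ g_\epsilon
  =|\mu_h|^2-2\epsilon\,(1-|\mu_h|^2)\operatorname{Re}(\overline{\mu_h}\nu)+O(\epsilon^2)
\]
has essential supremum strictly below $\|\mu_h\|_\infty^2$ for small $\epsilon>0$, contradicting extremality.

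But the existence of $g_\epsilon$ is Teichm\"uller's lemma. It rests on Ahlfors--Bers theory (the map $\mu\mapsto[\mu]$ is a submersion at $0$ with kernel $\mathcal Q(X)^\perp$), and you must state and use it.

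Alternatively, argue as Bers and Hubbard do and bypass Hamilton--Krushkal entirely. Once uniqueness is known, $q\mapsto[\text{Teichm\"uller map with }\mu=\|q\|_1\overline q/|q|]$ is a continuous, injective, proper map from the open unit ball of $\mathcal Q(X)$ into the connected $(6g-6)$-manifold $\mathcal T(S)$. By invariance of domain it is onto, and your compactness step becomes unnecessary.

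\textbf{The displayed inequality.} Separately, the inequality you display is not the one that gives uniqueness, and as applied it is false. For $h'=h$ one has $\mu_hq/|q|=k$, so the right side is $\frac{1-k}{1+k}\|q\|_1=\|q\|_1/K<\|q\|_1$. That form of the Reich--Strebel inequality is for self-maps of $X$ isotopic to the identity, where the sign is immaterial since $q$ may be replaced by $-q$. For maps $X\to Y$ it fails. With the sign flipped it holds but contains no $K$, so it cannot give $K(h')\ge K$.

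The correct comparison, for $\|q\|_1=1$ and $h'$ isotopic to $h$, is
\[
  K\le\int_X\frac{|1+\mu_{h'}q/|q||^2}{1-|\mu_{h'}|^2}\,|q|,
\]
with equality at $h'=h$. You can obtain it in either of two ways:
\begin{itemize}
\item from your horizontal-trajectory argument, keeping the stretch factor $K^{1/2}$ of $h$ and applying Cauchy--Schwarz against the unit $|q'|$-area of $Y$;
\item from the identity-class inequality applied to $h'\circ h^{-1}$ on $Y$ with $q'$, using $\mu_{h'\circ h^{-1}}\circ h=(\mu_{h'}-k)/(1-k\mu_{h'})$ in natural coordinates.
\end{itemize}
Bounding the integrand by $(1+|\mu_{h'}|)/(1-|\mu_{h'}|)\le K(h')$ gives $K\le K(h')$. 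Equality holds only if $\mu_{h'}q/|q|=k$ a.e., i.e.\ $\mu_{h'}=\mu_h$.

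So the trajectory averaging you flag as the main obstacle is the routine half of your route. The real obstacle is the existence step above.
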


We now describe the associated Teichm\"uller geodesics. Fix
\( x=[X,f]\in\mathcal T(S)\) and \(q\in\mathcal Q^1(X). \)
For $t\in\mathbb R$, consider the real-linear map
\[ A_t\colon\mathbb C\longrightarrow\mathbb C, \quad
  A_t(x+iy)
  :=
  e^t x+i e^{-t}y. \]
Applying $A_t$ to the natural coordinate charts of $q$, $q(z)=dz^2$, produces a new Riemann
surface $X_t$. Indeed, since the transition maps of the original natural
coordinates are of the form $z\mapsto\pm z+c$, the transformed transition
maps are again holomorphic affine maps.

Let
\( F_t\colon X\to X_t \)
be the resulting quasiconformal map, and let $q_t$ be the holomorphic
quadratic differential on $X_t$ given in the transformed coordinates
\( z_t=e^t x+i e^{-t}y \)
by
\( q_t=dz_t^2. \)
Since $\det A_t=1$, the flat area is preserved, and hence
\( \|q_t\|_1=\|q\|_1=1. \)
Furthermore,
\begin{equation}
  z_t
  =
  \cosh(t)z+\sinh(t)\overline z,
  \quad
  \mu_{F_t}
  =
  \tanh(t)\frac{\overline q}{|q|},
  \quad
  K(F_t)
  =
  e^{2|t|}.
  \label{eq:teichmuller-deformation-beltrami}
\end{equation}

Define
\begin{equation}
  \gamma_q\colon\mathbb R\longrightarrow\mathcal T(S),
  \quad
  \gamma_q(t)
  :=
  [X_t,F_t\circ f].
  \label{eq:teichmuller-geodesic}
\end{equation}
For $s,t\in\mathbb R$, the transition map
\( F_{s,t}
  :=
  F_t\circ F_s^{-1}
  \colon
  X_s\to X_t \)
is given in natural coordinates by
\( z_t
  =
  e^{t-s}\operatorname{Re}z_s
  +
  i e^{-(t-s)}\operatorname{Im}z_s. \)
Consequently,
\( K(F_{s,t})=e^{2|t-s|}. \)
By Theorem~\ref{thm:teichmuller-existence-uniqueness}, the map $F_{s,t}$
is extremal, and therefore
\begin{equation}
  d_{\mathrm T}\bigl(\gamma_q(s),\gamma_q(t)\bigr)
  =
  |t-s|.
  \label{eq:teichmuller-geodesic-unit-speed}
\end{equation}
Thus, $\gamma_q$ is a unit-speed Teichm\"uller geodesic.

The forward tangent vector to $\gamma_q$ at time $t$ is represented by the
Beltrami differential
\begin{equation}
  \dot\gamma_q(t)
  =
  \left[
    \frac{\overline{q_t}}{|q_t|}
  \right]
  \in T_{X_t}\mathcal T(S),
  \quad
  \|\dot\gamma_q(t)\|_{\mathrm T}=1.
  \label{eq:tangent-teichmuller-geodesic}
\end{equation}
Here, the quotient $\overline{q_t}/|q_t|$ is understood almost everywhere,
with an arbitrary value assigned at the zeros of $q_t$.

The map \(F_t\) preserves
the horizontal and
vertical directions in the $q$-coordinate,
\begin{equation}
  F_t^*(ds_{q_t}^2)
  =
  e^{2t}\,dx^2
  +
  e^{-2t}\,dy^2.
  \label{eq:flat-metric-along-teichmuller-geodesic}
\end{equation}
and its pullbacks
on the
associated
measured foliations 
are
\begin{equation}
  F_t^*\mathcal F_{\mathrm h}(q_t)
  =
  e^{-t}\mathcal F_{\mathrm h}(q),
  \quad
  F_t^*\mathcal F_{\mathrm v}(q_t)
  =
  e^t\mathcal F_{\mathrm v}(q).
  \label{eq:foliations-along-teichmuller-geodesic}
\end{equation}

It is often convenient to package this construction as a flow on the
unit-area quadratic-differential bundle
\( \pi\colon
  \mathcal Q^1\mathcal T(S)
  \to
  \mathcal T(S),
 \) and \(
  \pi^{-1}([X,f])=\mathcal Q^1(X). \)
The \emph{Teichm\"uller geodesic flow} is defined by
\[ g_t([X,f],q)
  :=
  ([X_t,F_t\circ f],q_t). \]
The identity
\( A_{s+t}=A_s\circ A_t \)
implies
\( g_{s+t}=g_s\circ g_t, \)
and the projection of the flow orbit is the corresponding Teichm\"uller
geodesic:
\( \pi\bigl(g_t([X,f],q)\bigr)=\gamma_q(t). \)

Every unit-speed Teichm\"uller geodesic arises in this way. If the unit-area
normalization is dropped, all positive multiples of a nonzero quadratic
differential determine the same parametrized geodesic. Replacing $q$ by
$-q$ reverses its orientation:
\( \gamma_{-q}(t)=\gamma_q(-t). \)
In particular, any two points of $\mathcal T(S)$ are joined by a unique
Teichm\"uller geodesic segment, and this segment extends uniquely to a
complete bi-infinite geodesic \cite{Masur2009}.

Although the Teichm\"uller geodesic is constructed using the singular flat
metrics $ds_{q_t}^2$, each Riemann surface $X_t$ also carries its unique
uniformizing hyperbolic metric, which we shall denote by $\sigma_t$. We
will keep these two metrics distinct throughout:
$ds_{q_t}^2=|q_t|$ and  
  $\sigma_t$ is the hyperbolic metric in the conformal class of $X_t$. 

\subsection{Harmonic maps and energy functions}

We begin by recalling the basic definitions. Let
\((M,g_M)\) and \((N,g_N)\) be Riemannian manifolds, with \(M\) closed,
and let \(v:M\to N\) be a smooth map. Its Dirichlet energy is
\[
  E_{g_M,g_N}(v)
  :=
  \frac12\int_P
  |dv|^2\,d\mu_{g_P}
  =
  \frac12\int_M
  \operatorname{tr}_{g_M}(v^*g_N)\,d\mu_{g_M}.
\]
Let \(\nabla dv\) denote the Levi-Civita covariant derivative of \(dv\)
as sections of $\text{End}(TM,  TN)$
equipped with the corresponding
metric obtained from \(g_M\) and \(g_N\). The tension field of
\(v\) is
\[
  \tau_{g_M,g_N}(v)
  :=
  \operatorname{tr}_{g_M}(\nabla dv).
\]
Equivalently, if \(\{e_1,\ldots,e_m
\}\), \(m={\dim M}\),
is a local
\(g_M\)-orthonormal frame, then
\[
  \tau_{g_M,g_N}(v)
  =
  \sum_i
  \left\{
    \nabla^{N}_{dv(e_i)}dv(e_i)
    -
    dv\bigl(\nabla^{M}_{e_i}e_i\bigr)
  \right\}.
\]
The map \(v\) is called \emph{harmonic} if
\(\tau_{g_M,g_N}(v)=0\), also called
the Bochner equation.

We apply these definitions in two complementary situations: first, the
domain is fixed and the hyperbolic target varies, and then the target is
fixed while the conformal structure on the domain varies.

\subsubsection{Varying the target}
\label{subsec:definition-energy-functions}

Let \((M,g)\) be a closed, connected Riemannian manifold, let \(S\) be a
closed oriented surface of genus at least \(2\), and fix a smooth map
\(
  u_0:M\to S.
\)

A smooth map
\(
  u_X:(M,g)\to(X,\sigma_X)
\)
in the same homotopy
class of $u_0$ is harmonic if
\(
  \tau_{g,\sigma_X}(u_X)=0.
\)
The Eells--Sampson theorem gives the existence of a harmonic representative
in the homotopy class of \(f\circ u_0\)
\cite{EellsSampson1964}. Under the standing assumption that
\((u_0)_*\pi_1(M)\) is nontrivial and noncyclic, this harmonic representative
is unique by Hartman's uniqueness theorem \cite{Hartman1967}. 

For a smooth map \(v:M\to X\), define
\begin{equation}
  E_X(v)
  :=
  \frac12\int_M |dv|^2\,d\mu_g
  =
  \frac12\int_M
  \operatorname{tr}_g\bigl(v^*\sigma_X\bigr)\,d\mu_g.
  \label{eq:map-energy-varying-target}
\end{equation}
The \emph{energy function} associated with the fixed data
\((M,g,u_0)\) is
\begin{equation}
  \mathcal E_{u_0}:
  \mathcal T(S)\longrightarrow\mathbb R_{>0},
  \qquad
  \mathcal E_{u_0}(X)
  :=
  E_X(u_X), \quad X\in \mathcal{T}(S).
  \label{eq:def-energy-function}
\end{equation}
Since the target has nonpositive sectional curvature, the harmonic map
is an absolute energy minimizer in its homotopy class
\cite{EellsSampson1964,Jost2017}. Consequently,
\begin{equation}
  \mathcal E_{u_0}(X)
  =
  \inf_{\substack{v:M\to X\\ v\simeq f\circ u_0}}
  E_X(v), \, X\in T(S),
  \label{eq:energy-function-as-minimum}
\end{equation}
where \(\simeq\) denotes homotopy of maps.

This definition is independent of the chosen representative of the marked
Riemann surface and \(\mathcal E_{u_0}(X)\) depends only on the point
\(X\in\mathcal T(S)\), which by the uniqueness of harmonic maps.

The resulting function is smooth on \(\mathcal T(S)\); see
\cite{Yamada1999}. For foundational relations among harmonic maps,
hyperbolic length, and energy on Teichm\"uller space, see also
\cite{Minsky1992}. We shall write \(\mathcal E(X)\) when the fixed data
\((M,g,u_0)\) are understood, reserving \(E_X(v)\) for the energy of an
individual map \(v:M\to(X,\sigma_X)\).

If the standing noncyclicity assumption is dropped, this construction still
contains hyperbolic length functions as a one-dimensional special case.
Let \(M=S_L^1\) be a circle of length \(L\), and suppose that \(u_0\)
represents a nontrivial free homotopy class \(\alpha\) on \(S\). The harmonic
representative maps \(S_L^1\) at constant speed onto the
\(\sigma_X\)-geodesic representative of \(\alpha\). Its speed is
\(\ell_X(\alpha)/L\), and therefore
\begin{equation}
  \mathcal E_{\alpha}(X)
  =
  \frac12
  \int_{S_L^1}
  \left(\frac{\ell_X(\alpha)}{L}\right)^2\,ds
  =
  \frac{\ell_X(\alpha)^2}{2L}.
  \label{eq:circle-energy-length-square}
\end{equation}
Thus energy functions with varying hyperbolic targets may be regarded as
higher-dimensional analogs of squared hyperbolic length functions.

\subsubsection{Varying the domain}
\label{subsec:def-energy-varying-domain}

Let \(\Sigma\) and \(S\) be closed, connected, oriented surfaces of genus at
least \(2\), and fix a hyperbolic metric \(h\) on \(S\). Let
\(
  u_0:\Sigma\to S
\)
be a smooth map of nonzero degree. For a marked Riemann surface
\(
  X=[X,m_X]\in\mathcal T(\Sigma),
\)
let \(\sigma_X\) denote the hyperbolic metric in the conformal class of
\(X\). The marking \(m_X:\Sigma\to X\) determines the homotopy class of
maps
\(
  u_0\circ m_X^{-1}:X\to S.
\)
For a smooth map \(v:X\to S\), define
\begin{equation}
  E_{X,h}(v)
  :=
  \frac12\int_X
  |dv|^2\,dA_{\sigma_X}
  =
  \frac12\int_X
  \operatorname{tr}_{\sigma_X}(v^*h)\,dA_{\sigma_X}.
  \label{eq:map-energy-varying-domain}
\end{equation}
Such a map is harmonic if
\(
  \tau_{\sigma_X,h}(v)
  =
  \operatorname{tr}_{\sigma_X}(\nabla dv)
  =
  0,
\)
or equivalently, if it is a critical point of
\(v\mapsto E_{X,h}(v)\).

There exists a unique harmonic map
\(
  u_X:(X,\sigma_X)\to(S,h)
\)
in the homotopy class of \(u_0\circ m_X^{-1}\); see
\cite{EellsSampson1964,Hartman1967,Sampson1978}. Here nonzero degree
precludes the image of the induced fundamental-group homomorphism from
being trivial or cyclic, and hence gives the uniqueness required below.

The associated \emph{energy function} is
\begin{equation}
  E_{u_0}:
  \mathcal T(\Sigma)\to\mathbb R_{>0},
  \quad
  E_{u_0}(X)
  :=
  E_{X,h}(u_X)
  =
  \frac12\int_X
  |du_X|^2\,dA_{\sigma_X}.
  \label{eq:def-energy-varying-domain}
\end{equation}
Since the energy of a map from a surface is conformally invariant, $E_{u_0}(X)$ depends only on the point $X\in\mathcal T(\Sigma)$ and not on the chosen metric representative of its conformal class. After choosing a local smooth family of marked metric representatives, the maps $u_X$, and hence the energy, depend smoothly on $X$. Indeed, nonzero degree precludes a cyclic fundamental-group image, so the Jacobi operator is nondegenerate and the same \(C^{2,\alpha}\)-implicit-function argument used in Lemma~\ref{lem:nondegeneracy-target-jacobi} applies. We will primarily be concerned with the following special case. \begin{assumption} \label{assump:-covering} The map $u_0\colon\Sigma\to S$ is an orientation-preserving covering map. \end{assumption}
Pulling back the fixed target metric gives a hyperbolic metric on $\Sigma$. We denote the resulting point of Teichm\"uller space by
\begin{equation} Y := (\Sigma,u_0^*h) \in\mathcal T(\Sigma), \qquad A_Y := \operatorname{Area}(Y) =2\pi|\chi(\Sigma)|.
  \label{eq:def-covering-minimizer-Y}
\end{equation}
Thus \( u_0\colon(Y,u_0^*h)\to(S,h) \) is a local isometry. \begin{lemma} \label{lem:covering-factorization} For every $X\in\mathcal T(\Sigma)$, let \( f_X\colon(X,\sigma_X)\to(Y,u_0^*h) \) be the harmonic diffeomorphism in the homotopy class of the identity \cite{Sampson1978,SchoenYau1978}. Then \begin{equation} u_X=u_0\circ f_X \qquad\text{and}\qquad E_{u_0}(X)=E(f_X). \label{eq:covering-factorization} \end{equation} In particular, \begin{equation} E_{u_0}(X)\ge A_Y, \qquad E_{u_0}(Y)=A_Y, \label{eq:covering-energy-minimum} \end{equation} and equality holds if and only if $X=Y$. \end{lemma}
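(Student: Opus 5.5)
The strategy is to show that composing with the local isometry \(u_0\) gives a bijection between maps \(X\to Y\) homotopic to the identity and maps \(X\to S\) homotopic to \(u_0\circ m_X^{-1}\), and that this bijection preserves both energy and harmonicity. The first key point is that for a local isometry, harmonicity and energy density are local notions. Since \(u_0\colon(Y,u_0^*h)\to(S,h)\) is a local isometry, for any smooth \(v\colon X\to Y\) we have pointwise
\[
(u_0\circ v)^*h=v^*(u_0^*h).
\]
This gives \(|d(u_0\circ v)|^2=|dv|^2\), so the energies agree. The second fundamental form of a local isometry between equidimensional manifolds vanishes, so the composition formula yields \(\tau(u_0\circ v)=du_0(\tau(v))\). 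Because \(du_0\) is injective, \(u_0\circ v\) is harmonic if and only if \(v\) is harmonic.

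Next I will apply this to \(v=f_X\), the harmonic diffeomorphism of Sampson and Schoen--Yau in the class of the identity (markings understood). The map \(u_0\circ f_X\) is harmonic and lies in the homotopy class of \(u_0\circ m_X^{-1}\). Since \(u_0\) has nonzero degree, that class has noncyclic fundamental-group image, so Hartman's uniqueness applies and gives \(u_X=u_0\circ f_X\). Pointwise equality of energy densities then gives \(E_{u_0}(X)=E(f_X)\), where \(E(f_X)\) is computed with respect to \(u_0^*h\).

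For the inequality, I will write \(E(f_X)=\int_X(|\partial f_X|^2+|\bar\partial f_X|^2)\,dA\) and \(J(f_X)=|\partial f_X|^2-|\bar\partial f_X|^2\). This gives
\[
E(f_X)\ge\int_X J(f_X)\,dA=\deg(f_X)\operatorname{Area}(Y)=A_Y .
\]
At \(X=Y\) the identity map is an isometry, hence harmonic, so \(E_{u_0}(Y)=A_Y\).

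The main obstacle is the equality case. If \(E(f_X)=A_Y\), then \(\bar\partial f_X\equiv0\), so \(f_X\) is holomorphic. Since \(f_X\) is a diffeomorphism, it is then a biholomorphism in the identity class, and therefore \(X=Y\) in \(\mathcal T(\Sigma)\). The factor normalization \(\frac12|df|^2=|\partial f|^2+|\bar\partial f|^2\) needs to be checked against the paper's conventions, but any constant would only rescale both sides of the comparison consistently.
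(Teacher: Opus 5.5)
Your proposal is correct and follows essentially the same route as the paper. Total geodesy of the local isometry \(u_0\) plus Hartman uniqueness give \(u_X=u_0\circ f_X\) with equal energy densities, and the pointwise bound \(\frac12|df_X|^2\ge J(f_X)\) (your \(|\partial f|^2+|\bar\partial f|^2\ge|\partial f|^2-|\bar\partial f|^2\), the paper's \(\frac12(s_1^2+s_2^2)\ge s_1s_2\)), integrated using degree one, gives \(E_{u_0}(X)\ge A_Y\) with equality exactly when \(f_X\) is conformal, i.e.\ \(X=Y\).

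Your normalization worry is unfounded, since \(E=\frac12\int|df|^2\) does give \(\frac12|df|^2=|\partial f|^2+|\bar\partial f|^2\). Note, however, that a different constant would not rescale both sides consistently, because \(\int_X J(f_X)\,dA=A_Y\) does not depend on the normalization of \(E\).
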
 \begin{proof} Since $u_0$ is a local isometry, it is totally geodesic. Hence the composition $u_0\circ f_X$ is harmonic and belongs to the homotopy class of $u_0$. Uniqueness of the harmonic representative gives $u_X=u_0\circ f_X$. The identity $E_{u_0}(X)=E(f_X)$ follows because $du_0$ is an isometry on every tangent space. Let $s_1$ and $s_2$ be the singular values of $df_X$. Pointwise, \( \frac12\bigl(s_1^2+s_2^2\bigr)\ge s_1s_2=J(f_X). \) After integration, and using that $f_X$ has degree one, we obtain \[ E(f_X) \ge \int_XJ(f_X)\,dA_{\sigma_X} = \operatorname{Area}(Y) = A_Y. \] Equality holds precisely when $s_1=s_2$ everywhere, that is, when $f_X$ is conformal. This is equivalent to $X=Y$ in Teichm\"uller space. \end{proof} The factorization in Lemma~\ref{lem:covering-factorization} reduces the covering-map energy to the energy of the harmonic marking map from the varying surface $X$ to the fixed surface $Y$. The point $Y$ will serve as the center in the coarse estimates below.

  \subsection{The pull-back metric \(\widehat\sigma_t:=f_t^*\sigma_t\)
 by the harmonic map $f_t$}
\label{subsec:harmonic-map-gauge}

For each $t$, let
\(
  f_t\colon (X_0,\sigma_0)\to (X_t,\sigma_t)
\)
be the harmonic map in the homotopy class determined by the marking. By the
classical existence, uniqueness, and univalence theorems for harmonic maps
between negatively curved surfaces, $f_t$ is a uniquely determined harmonic
diffeomorphism
\cite{EellsSampson1964,Hartman1967,Sampson1978,SchoenYau1978}. We choose the
normalization $f_0=\operatorname{id}_{X_0}$ and set \(
  \widehat\sigma_t:=f_t^*\sigma_t.  \). 
Write
\(
  \sigma_0=\rho(z)|dz|^2
\)
in a local conformal coordinate $z=x+iy$ on $X_0$, and write
$\sigma_t=\rho_t(w)|dw|^2$ in a local conformal coordinate on $X_t$. Define
\[
  H_t
  :=
  \frac{\rho_t(f_t(z))}{\rho(z)}\bigl|(f_t)_z\bigr|^2,
  \quad
  L_t
  :=
  \frac{\rho_t(f_t(z))}{\rho(z)}\bigl|(f_t)_{\bar z}\bigr|^2,
\]
and let
\[
  \Phi(t)=\phi(t)\,dz^2
  :=
  \rho_t(f_t(z))(f_t)_z
  \overline{(f_t)_{\bar z}}\,dz^2
\]
be the Hopf differential of $f_t$. Since $f_t$ is harmonic, $\Phi(t)$ is
holomorphic. Moreover,
\(
  H_tL_t=\frac{|\phi(t)|^2}{\rho^2},
\)
and the pulled-back metric
$ \widehat\sigma_t$ is
\begin{equation}
  \widehat\sigma_t
  =
  \phi(t)\,dz^2
  +
  \rho\left(
    H_t+\frac{|\phi(t)|^2}{\rho^2H_t}
  \right)dz\,d\bar z
  +
  \overline{\phi(t)}\,d\bar z^2.
  \label{eq:harmonic-gauge-exact-decomposition}
\end{equation}

Let \(
  \Delta_0
  :=
  \frac{4}{\rho}\partial_z\partial_{\bar z}
  =
  \frac{1}{\rho}
  \left(\partial_x^2+\partial_y^2\right)
  \) be the Laplace-Beltrami operator
  on the space \((X_0, \sigma_0)\). The Bochner equation for a harmonic map between hyperbolic surfaces becomes
\begin{equation}
  \Delta_0\log H_t
  =
  2H_t
  -
  2\frac{|\phi(t)|^2}{\rho^2H_t}
  -2.
  \label{eq:bochner-equation-harmonic-gauge}
\end{equation}
Equations~\eqref{eq:harmonic-gauge-exact-decomposition} and
\eqref{eq:bochner-equation-harmonic-gauge} are the basic identities underlying
Wolf's harmonic-map parametrization of Teichm\"uller space
\cite{Wolf1989,DaskalopoulosWentworth2007}. In fact, the map
\[
  \mathcal W_{\sigma_0}\colon
  \mathcal T(S)\longrightarrow\mathcal Q(X_0),
  \qquad
  [X]\longmapsto\operatorname{Hopf}(f_X),
\]
is a diffeomorphism.

We now apply these identities to the Teichm\"uller geodesic $\gamma_q$.
Since $\Phi(0)=0$, write
\begin{equation}
  \Phi(t)
  =
  t\Phi_1+\frac{t^2}{2}\Phi_2+O(t^3),
  \qquad
  \Phi_j=\phi_j\,dz^2\in\mathcal Q(X_0).
  \label{eq:hopf-expansion-teichmuller-geodesic}
\end{equation}
The first coefficient is determined by the initial Teichm\"uller tangent
vector. More precisely, let
\(
  \nu_q:=\frac{\overline q}{|q|}
\)
be the extremal Beltrami differential of the Teichm\"uller deformation,
understood almost everywhere. The Beltrami differential of $f_t$ satisfies
\(
  \Phi(t)=\sigma_0 H_t\,\overline{\mu(f_t)},
\)
and hence
\[
  \left.\frac{d}{dt}\right|_{t=0}\mu(f_t)
  =
  \frac{\overline{\Phi_1}}{\sigma_0}
  =
  \frac{\overline{\phi_1}}{\rho}\frac{d\bar z}{dz}.
\]
It follows that
\begin{equation}
  \frac{\overline{\Phi_1}}{\sigma_0}
  =
  P_{\mathrm{harm}}(\nu_q),
  \label{eq:harmonic-projection-teichmuller-direction}
\end{equation}
where $P_{\mathrm{harm}}$ denotes passage to the unique harmonic Beltrami
representative of an infinitesimal Teichm\"uller class. Equivalently,
\begin{equation}
  \int_{X_0}
  (
    \nu_q-\frac{\overline{\Phi_1}}{\sigma_0}
  )\Psi
  =0
  \qquad
  \text{for every }\Psi\in\mathcal Q(X_0),
  \label{eq:harmonic-projection-pairing}
\end{equation}
where the integral denotes the canonical pairing between Beltrami
differentials and holomorphic quadratic differentials. There is no canonical
equality $\Phi_1=q$, and such an equality should not be assumed: $q$
determines the extremal representative $\nu_q$, whereas $\Phi_1$ determines
the harmonic representative of the same tangent class
\cite{DaskalopoulosWentworth2007}.

The second coefficient
\(
  \Phi_2
  =
  \left.\frac{d^2}{dt^2}\right|_{t=0}
  \operatorname{Hopf}(f_t)
\)
is the acceleration of the Teichm\"uller geodesic in Wolf coordinates.

\begin{proposition}
\label{prop:harmonic-gauge-expansion}
Set
\(
  U_1
  :=
  |\Phi_1|_{\sigma_0}^2
  =
  \frac{|\phi_1|^2}{\rho^2}.
\)
Then
\begin{equation}
  \dot H_0=0,
  \qquad
  \ddot H_0
  =
  -4(\Delta_0-2)^{-1}U_1.
  \label{eq:first-second-variation-H}
\end{equation}
Consequently, for every $k\ge 0$,
\begin{equation}
  \begin{aligned}
  \widehat\sigma_t
  ={}&
  \sigma_0
  +2t\operatorname{Re}\Phi_1
  \\
  &+
  \frac{t^2}{2}
  \left[
    2\operatorname{Re}\Phi_2
    +
    \left(
      2U_1-4(\Delta_0-2)^{-1}U_1
    \right)\sigma_0
  \right]
  +O_{C^k}(t^3).
  \end{aligned}
  \label{eq:harmonic-gauge-second-order-expansion}
\end{equation}
Here $O_{C^k}(t^3)$ means that, after subtracting the displayed
terms up to order two, the remainder $R_t$ satisfies
\(
  \|R_t\|_{C^k(X_0,\sigma_0)}\le C_k |t|^3
\)
for $|t|$ sufficiently small. Equivalently, all covariant derivatives
of $R_t$ up to order $k$, computed with respect to $\sigma_0$, are
uniformly $O(t^3)$.
Equivalently, in the coordinate $z$,
\begin{equation}
  \begin{aligned}
  \widehat\sigma_t
  ={}&
  \rho\,dz\,d\bar z
  +
  t\left(
    \phi_1\,dz^2+\overline{\phi_1}\,d\bar z^2
  \right)
  +
  \frac{t^2}{2}
  \Bigg[
    \phi_2\,dz^2+\overline{\phi_2}\,d\bar z^2
  \\
  &+
    \left\{
      \frac{2|\phi_1|^2}{\rho^2}
      -
      4(\Delta_0-2)^{-1}
      \left(
        \frac{|\phi_1|^2}{\rho^2}
      \right)
    \right\}
    \rho\,dz\,d\bar z
  \Bigg]
  +O_{C^k}(t^3).
  \end{aligned}
  \label{eq:harmonic-gauge-second-order-expansion-local}
\end{equation}
\end{proposition}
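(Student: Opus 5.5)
We expand the Bochner equation \eqref{eq:bochner-equation-harmonic-gauge} in powers of \(t\) and then substitute into the exact decomposition \eqref{eq:harmonic-gauge-exact-decomposition}. First we justify smoothness in \(t\). The harmonic map \(f_t\) depends smoothly on \(t\) in \(C^{k,\alpha}\) for every \(k\). This holds because \(\gamma_q\) is a real-analytic curve in \(\mathcal T(S)\), Wolf's map \(\mathcal W_{\sigma_0}\) is a diffeomorphism, and the Jacobi operator of a harmonic diffeomorphism between hyperbolic surfaces is invertible, so the implicit function theorem applies. Consequently \(H_t\), \(\log H_t\) and \(\Phi(t)\) are smooth in \(t\) with values in \(C^k(X_0)\). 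Taylor's theorem with integral remainder then gives all \(O_{C^k}(t^3)\) bounds uniformly for small \(|t|\). The normalization \(f_0=\mathrm{id}\) gives \(H_0=1\), \(L_0=0\) and \(\Phi(0)=0\), consistent with \eqref{eq:hopf-expansion-teichmuller-geodesic}.

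\textbf{Step 1: the derivatives of \(H_t\).} Write \(w_t=\log H_t\), so \(w_0=0\). The term \(|\phi(t)|^2/(\rho^2H_t)\) is \(O(t^2)\), and its \(t^2\)-coefficient is \(U_1\) because \(H_0=1\). Differentiating \eqref{eq:bochner-equation-harmonic-gauge} once at \(t=0\) gives
\[
\Delta_0\dot w_0=2\dot w_0.
\]
Since \(\Delta_0-2\) is negative definite, hence injective, we get \(\dot w_0=0\) and \(\dot H_0=0\). Differentiating twice, the equation for \(e^{w}\) contributes \(2(\ddot w_0+\dot w_0^2)=2\ddot w_0\), and the Hopf term contributes \(-2\cdot 2U_1\). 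Hence
\[
(\Delta_0-2)\ddot w_0=-4U_1,
\]
so \(\ddot w_0=-4(\Delta_0-2)^{-1}U_1\). Finally \(\ddot H_0=\ddot w_0+\dot w_0^2=\ddot w_0\), which is \eqref{eq:first-second-variation-H}.

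\textbf{Step 2: the second-order expansion of \(\widehat\sigma_t\).} In \eqref{eq:harmonic-gauge-exact-decomposition}, the \(dz^2\) and \(d\bar z^2\) coefficients are exactly \(\phi(t)\) and \(\overline{\phi(t)}\). Inserting \eqref{eq:hopf-expansion-teichmuller-geodesic} gives \(t\phi_1+\tfrac{t^2}{2}\phi_2\) and its conjugate. The conformal coefficient is
\[
\rho\bigl(H_t+|\phi(t)|^2/(\rho^2H_t)\bigr)
=\rho\Bigl(1+\tfrac{t^2}{2}\ddot H_0+t^2U_1\Bigr)+O(t^3).
\]
This equals \(\rho\bigl(1+\tfrac{t^2}{2}[2U_1-4(\Delta_0-2)^{-1}U_1]\bigr)+O(t^3)\), which is \eqref{eq:harmonic-gauge-second-order-expansion-local}. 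The invariant form \eqref{eq:harmonic-gauge-second-order-expansion} follows from \(\phi\,dz^2+\overline{\phi}\,d\bar z^2=2\operatorname{Re}\Phi\) and \(\sigma_0=\rho\,dz\,d\bar z\).

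\textbf{Main obstacle.} The formal computation is routine. The main difficulty is the remainder: we need the \(C^k\) control of \(R_t\) and, before that, the legitimacy of differentiating the PDE in \(t\). We handle this with the regularity statement above. Once \(t\mapsto H_t\) and \(t\mapsto\Phi(t)\) are known to be \(C^3\) into \(C^k\), the remainder is bounded by \(\sup_{|s|\le|t|}\|\partial_s^3(\cdot)\|_{C^k}\,|t|^3\). Elliptic estimates for \((\Delta_0-2)^{-1}\) keep every coefficient in \(C^k\). Throughout we should note that \(\Phi_1\) is the harmonic representative of the tangent vector, as in \eqref{eq:harmonic-projection-teichmuller-direction}, and not \(q\) itself; the proof never needs \(\Phi_1=q\).
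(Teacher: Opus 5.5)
Your proof is correct and is essentially the paper's argument. You differentiate the Bochner equation \eqref{eq:bochner-equation-harmonic-gauge} once and twice at $t=0$, invert $\Delta_0-2$, and substitute $H_t=1+\tfrac{t^2}{2}\ddot H_0+O(t^3)$ and $|\phi(t)|^2/(\rho^2H_t)=t^2U_1+O(t^3)$ into \eqref{eq:harmonic-gauge-exact-decomposition}. Writing things in terms of $w_t=\log H_t$ is only cosmetic, and your explicit justification of smooth $t$-dependence, which the paper leaves implicit, is a welcome addition rather than a different route.
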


\begin{proof}
At $t=0$ one has $H_0=1$ and $\Phi(0)=0$. Differentiating
\eqref{eq:bochner-equation-harmonic-gauge} once gives
\(
  (\Delta_0-2)\dot H_0=0.
\)
Since $\Delta_0-2$ is invertible on the closed surface $X_0$, it follows that
$\dot H_0=0$. Differentiating a second time yields
\(
  (\Delta_0-2)\ddot H_0=-4U_1,
\)
which proves \eqref{eq:first-second-variation-H}. Furthermore,
\[
  \frac{|\phi(t)|^2}{\rho^2H_t}
  =
  t^2U_1+O(t^3).
\]
Substituting this identity,
\eqref{eq:hopf-expansion-teichmuller-geodesic}, and
\eqref{eq:first-second-variation-H} into
\eqref{eq:harmonic-gauge-exact-decomposition} gives
\eqref{eq:harmonic-gauge-second-order-expansion}.
\end{proof}

\begin{remark}
If one takes a straight line in Wolf coordinates,
\(
  \Phi(t)=t\Phi,
\)
then $H_t$ and $L_t$ are even functions of $t$. In that special case
$\Phi_2=0$, and the remainder in
\eqref{eq:harmonic-gauge-second-order-expansion} improves to
$O_{C^k}(t^4)$. Wolf also showed that the associated Beltrami section has the
same two-jet at the origin as a Weil--Petersson geodesic
\cite{Wolf1989}. This statement concerns straight lines in harmonic-map
coordinates. A genuine Teichm\"uller geodesic need not be a straight line in
those coordinates, and therefore the term $\Phi_2$ must be retained in
\eqref{eq:harmonic-gauge-second-order-expansion}.
\end{remark}

\subsection{The pull-back metric \(\widetilde\sigma_t:=F_t^*\sigma_t\)
 by the  Teichm\"uller map $F_t$}
\label{subsec:teichmuller-map-gauge}

We next use the extremal maps defining the Teichm\"uller geodesic,
\(
  F_t\colon X_0\to X_t,
\)
and set
\(
  \widetilde\sigma_t:=F_t^*\sigma_t.
\)
This metric has an explicit form on the regular set $X_0\setminus Z(q)$. Choose a
natural coordinate
\(
  \zeta=x+iy,
\) \(
  q=d\zeta^2,
\)
and write
\(
  \sigma_0=\lambda(x,y)(dx^2+dy^2).
\)
In these coordinates,
\[
  F_t(x+iy)=e^t x+i e^{-t} y.
\]

If
\(
  \zeta_t=e^t x+i e^{-t}y
\)
is the corresponding conformal coordinate on $X_t$, write
\(
  \sigma_t=\lambda_t(\zeta_t)|d\zeta_t|^2
\)
and set
\(
  \Lambda_t(x,y):=\lambda_t(e^t x,e^{-t}y).
\)
Then
\[
  \widetilde\sigma_t
  =
  \Lambda_t(x,y)
  \left(
    e^{2t}dx^2+e^{-2t}dy^2
  \right).
\]
Define $u_t$ by
\(
  \Lambda_t=\lambda e^{2u_t}
\), \(
  u_0=0.
\)
On $X_0\setminus Z(q)$, introduce the symmetric tensor
\begin{equation}
  B_q
  :=
  \frac{\sigma_0}{|q|}\operatorname{Re}q.
  \label{eq:def-Bq}
\end{equation}
Here $\sigma_0/|q|$ denotes the ratio of the two conformal metrics. In a
$q$-natural coordinate,
\(
  B_q=\lambda(dx^2-dy^2),
\)
so that
\(
  \operatorname{tr}_{\sigma_0}B_q=0
\), \(
  (B_q^\sharp)^2=\operatorname{Id}.
\)
Here $B_q^\sharp$ denotes the endomorphism of $TX_0$ obtained by
raising one index of $B_q$ with respect to $\sigma_0$, namely
\(
  \sigma_0(B_q^\sharp X,Y)=B_q(X,Y)
\)
for all tangent vectors $X,Y$. 
The pulled-back hyperbolic metric therefore has the exact form
\begin{equation}
  \widetilde\sigma_t
  =
  e^{2u_t}
  \left(
    \cosh(2t)\,\sigma_0
    +
    \sinh(2t)\,B_q
  \right).
  \label{eq:teichmuller-gauge-exact-metric}
\end{equation}

Since a conformal metric
$\lambda_t(\zeta_t)|d\zeta_t|^2$ has curvature $-1$ precisely when
\(
  \left(
    \partial_{\operatorname{Re}\zeta_t}^2
    +
    \partial_{\operatorname{Im}\zeta_t}^2
  \right)
  \log\lambda_t
  =
  2\lambda_t,
\)
the function $u_t$ satisfies the exact uniformization equation
\begin{equation}
  \left(
    e^{-2t}\partial_x^2+e^{2t}\partial_y^2
  \right)
  \left(
    \log\lambda+2u_t
  \right)
  =
  2\lambda e^{2u_t}.
  \label{eq:teichmuller-gauge-uniformization}
\end{equation}
This equation is understood on $X_0\setminus Z(q)$.

Define the second-order operator
\begin{equation}
  \mathcal D_qv
  :=
  \frac{1}{\lambda}
  \left(
    v_{xx}-v_{yy}
  \right).
  \label{eq:def-Dq}
\end{equation}
Because two natural coordinates differ by
$\zeta\mapsto\pm\zeta+c$, the operator $\mathcal D_q$ is well defined on
$X_0\setminus Z(q)$. Equivalently,
\begin{equation}
  \mathcal D_qv
  =
  \operatorname{div}_{\sigma_0}
  \left(
    B_q^\sharp\nabla^{\sigma_0}v
  \right).
  \label{eq:Dq-invariant-form}
\end{equation}
For the function
\(
  \log\lambda=\log\left(\frac{\sigma_0}{|q|}\right),
\)
one also has
\begin{equation}
  \mathcal D_q\log\lambda
  =
  \nabla^i\nabla^j(B_q)_{ij}
  \label{eq:Dq-loglambda-divdiv-Bq}
\end{equation}
away from $Z(q)$, where indices are raised with $\sigma_0$. Here $\nabla^j:=(\sigma_0)^{jk}\nabla_k$.

\begin{proposition}
\label{prop:teichmuller-gauge-expansion}
Let $a$ and $b$ be the real-valued functions determined locally by
\begin{equation}
  u_t
  =
  ta+\frac{t^2}{2}b+O(t^3).
  \label{eq:ut-expansion}
\end{equation}
Then, on $X_0\setminus Z(q)$,
\begin{align}
  (\Delta_0-2)a
  &=
  \mathcal D_q\log\lambda,
  \label{eq:equation-for-a}
  \\
  (\Delta_0-2)b
  &=
  4\left(
    \mathcal D_q a+a^2-1
  \right).
  \label{eq:equation-for-b}
\end{align}
Consequently, for every compact set
$K\Subset X_0\setminus Z(q)$ and every $k\ge 0$,
\begin{equation}
  \begin{aligned}
  \widetilde\sigma_t
  ={}&
  \sigma_0
  +
  2t\left(
    B_q+a\sigma_0
  \right)
  \\
  &+
  t^2
  \left[
    \left(
      2+b+2a^2
    \right)\sigma_0
    +
    4aB_q
  \right]
  +
  O_{C^k(K)}(t^3).
  \end{aligned}
  \label{eq:teichmuller-gauge-second-order-expansion}
\end{equation}
Equivalently,
\begin{align}
  \dot{\widetilde\sigma}_0
  &=
  2B_q+2a\sigma_0,
  \label{eq:first-variation-teichmuller-gauge}
  \\
  \ddot{\widetilde\sigma}_0
  &=
  \left(
    4+2b+4a^2
  \right)\sigma_0
  +
  8aB_q.
  \label{eq:second-variation-teichmuller-gauge}
\end{align}
\end{proposition}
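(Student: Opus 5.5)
The plan is to expand the exact uniformization equation \eqref{eq:teichmuller-gauge-uniformization} in powers of $t$ on a fixed $q$-natural chart, and then substitute the resulting expansion of $e^{2u_t}$ into the exact formula \eqref{eq:teichmuller-gauge-exact-metric}. The starting observation is the splitting
\[
  e^{-2t}\partial_x^2+e^{2t}\partial_y^2
  =
  \cosh(2t)\,(\partial_x^2+\partial_y^2)
  -
  \sinh(2t)\,(\partial_x^2-\partial_y^2).
\]
Dividing \eqref{eq:teichmuller-gauge-uniformization} by $\lambda$ and using $\Delta_0=\lambda^{-1}(\partial_x^2+\partial_y^2)$ together with the definition \eqref{eq:def-Dq} of $\mathcal D_q$, the equation takes the invariant form
\begin{equation*}
  \cosh(2t)\,\Delta_0(\log\lambda+2u_t)
  -
  \sinh(2t)\,\mathcal D_q(\log\lambda+2u_t)
  =
  2e^{2u_t}.
\end{equation*}
At $t=0$ this is the curvature identity $\Delta_0\log\lambda=2$, since $\sigma_0$ is hyperbolic.

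Next I insert $u_t=ta+\tfrac{t^2}{2}b+O(t^3)$ and compare coefficients.

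\emph{Order $t$.} The left side contributes $2\Delta_0a-2\mathcal D_q\log\lambda$. The right side contributes $4a$. This gives $(\Delta_0-2)a=\mathcal D_q\log\lambda$, which is \eqref{eq:equation-for-a}.

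\emph{Order $t^2$.} The term $\cosh(2t)\approx1+2t^2$ contributes $2\Delta_0\log\lambda+\Delta_0b=4+\Delta_0b$. The cross term $-2t\cdot\mathcal D_q(2ta)$ contributes $-4\mathcal D_qa$. On the right, $2e^{2u_t}=2\bigl(1+2ta+t^2(b+2a^2)\bigr)+O(t^3)$ contributes $2b+4a^2$. Equating gives $(\Delta_0-2)b=4(\mathcal D_qa+a^2-1)$, which is \eqref{eq:equation-for-b}.

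For the metric, I multiply $e^{2u_t}=1+2ta+t^2(b+2a^2)+O(t^3)$ by
\[
  \cosh(2t)\sigma_0+\sinh(2t)B_q=\sigma_0+2tB_q+2t^2\sigma_0+O(t^3).
\]
This yields \eqref{eq:teichmuller-gauge-second-order-expansion} directly. Differentiating in $t$ then gives \eqref{eq:first-variation-teichmuller-gauge} and \eqref{eq:second-variation-teichmuller-gauge}.

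The one genuinely non-formal step is justifying that $u_t$ is smooth in $t$, jointly with $(x,y)$, with locally uniform $C^k(K)$ control of the Taylor remainder. Only then are the coefficients $a,b$ well defined and the term-by-term comparison legitimate. I would argue this through the Ahlfors--Bers theory. The Beltrami coefficient $\tanh(t)\,\overline q/|q|$ is real-analytic in $t$ and smooth on $K\Subset X_0\setminus Z(q)$. Hence the normalized quasiconformal solutions, and with them the uniformizing hyperbolic metrics $\sigma_t$, depend real-analytically on $t$ with all spatial derivatives controlled on compact subsets of the region where the coefficient is smooth. In the fixed chart, $\Lambda_t(x,y)=\lambda_t(e^tx,e^{-t}y)$ is then a composition of jointly smooth maps. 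Therefore $u_t=\tfrac12\log(\Lambda_t/\lambda)$ is jointly smooth, and Taylor's theorem with uniform bounds on $t$-derivatives over $K$ gives the $O_{C^k(K)}(t^3)$ remainder.

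Alternatively, $\widetilde\sigma_t=(f_t^{-1}\circ F_t)^*\widehat\sigma_t$, where $f_t$ is the harmonic map of Proposition~\ref{prop:harmonic-gauge-expansion}. This identity lets me import the smoothness already established in the harmonic gauge, via smooth dependence of $f_t$ on $t$ from the implicit function theorem.

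The restriction to $X_0\setminus Z(q)$ is only needed because $B_q$ and $\mathcal D_q$ degenerate at the zeros of $q$. It plays no role in the formal computation.
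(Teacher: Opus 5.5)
Your proof is correct and follows the paper's argument: the paper differentiates \eqref{eq:teichmuller-gauge-uniformization} once and twice at $t=0$, which is equivalent to your coefficient comparison after the $\cosh(2t)\Delta_0-\sinh(2t)\mathcal D_q$ splitting, and then substitutes the expansions of $e^{2u_t}$, $\cosh(2t)$ and $\sinh(2t)$ into \eqref{eq:teichmuller-gauge-exact-metric}. Your paragraph on joint smoothness of $u_t$ with $C^k(K)$ remainder control justifies something the paper simply takes for granted in the hypothesis. Of your two suggestions, the localized Ahlfors--Bers argument is the self-contained one. The identity $\widetilde\sigma_t=(f_t^{-1}\circ F_t)^*\widehat\sigma_t$ still requires knowing that $f_t^{-1}\circ F_t$ depends smoothly on $t$ away from $Z(q)$, which is the same regularity issue.
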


\begin{proof}
At $t=0$, equation~\eqref{eq:teichmuller-gauge-uniformization} reduces to
\(
  \left(
    \partial_x^2+\partial_y^2
  \right)\log\lambda
  =
  2\lambda.
\)
Differentiating
\eqref{eq:teichmuller-gauge-uniformization} once at $t=0$ gives
\[
  2\left(
    \partial_x^2+\partial_y^2
  \right)a
  +
  2\left(
    \partial_y^2-\partial_x^2
  \right)\log\lambda
  =
  4\lambda a,
\]
which is equivalent to \eqref{eq:equation-for-a}. Differentiating a second
time, and using
\(
  2u_t=2ta+t^2b+O(t^3)
\)
and the zeroth-order equation gives
\(
  8\lambda
  -8(\partial_x^2-\partial_y^2)a
  +2(\partial_x^2+\partial_y^2)b
  =
  4\lambda b+8\lambda a^2.
\)
After division by \(2\lambda\), this becomes
\(
  4-4\mathcal D_q a+\Delta_0b
  =
  2b+4a^2.
\)
Rearranging yields
\[
  (\Delta_0-2)b
  =
  4\left(
    \mathcal D_q a+a^2-1
  \right),
\]
proving \eqref{eq:equation-for-b}.
Finally,
\[
  e^{2u_t}
  =
  1+2ta+t^2(b+2a^2)+O(t^3),
\]
while
\(
  \cosh(2t)=1+2t^2+O(t^4)
 \) and \(
  \sinh(2t)=2t+O(t^3).
\)
Substitution into \eqref{eq:teichmuller-gauge-exact-metric} gives
\eqref{eq:teichmuller-gauge-second-order-expansion}, and
\eqref{eq:first-variation-teichmuller-gauge}--
\eqref{eq:second-variation-teichmuller-gauge} follow immediately.
\end{proof}

Equations~\eqref{eq:equation-for-a} and
\eqref{eq:equation-for-b} are local identities on
\(X_0\setminus Z(q)\). We deliberately do not rewrite them by applying a
global inverse of \(\Delta_0-2\). Indeed, the natural-coordinate coefficients
are singular at \(Z(q)\), and the functions \(a\) and \(b\) generally have
direction-dependent asymptotics there. Thus neither
\(\mathcal D_q\log\lambda\) nor \(\mathcal D_qa\) belongs, without further
interpretation, to the standard smooth domain of the global inverse of
\(\Delta_0-2\) on \(X_0\). A global weak or distributional formulation would
require specifying the admissible asymptotic behavior at every zero of
\(q\). We will use only the local equations above.

The two one-parameter
families of metrics describe the same path in Teichm\"uller space on the fixed surface $X_0$. In the
Teichm\"uller-map family, the first variation
\(
  2B_q+2a\sigma_0
\)
is generally neither trace-free nor divergence-free. After removing its
infinitesimal diffeomorphism component, its transverse-traceless part is
precisely
\(
  2\operatorname{Re}\Phi_1.
\)
At the second order, the  harmonic-map family has additional terms, whose
transverse-traceless component is encoded by $2\operatorname{Re}\Phi_2$ in
\eqref{eq:harmonic-gauge-second-order-expansion}; see
\cite{Wolf1989,DaskalopoulosWentworth2007} for the harmonic-map slice and the
identification of tangent tensors with holomorphic quadratic differentials.

Finally, the distinction in regularity is essential. The harmonic-map family
produces a global smooth expansion on $X_0$. By contrast, natural coordinates
degenerate at the zeros of $q$, so
\eqref{eq:teichmuller-gauge-exact-metric}--
\eqref{eq:second-variation-teichmuller-gauge} are $C^k$ expansions only on
compact subsets of $X_0\setminus Z(q)$. For global variational arguments
across $Z(q)$, we shall therefore use the harmonic-map family. Higher-order
coefficients in either family can be obtained recursively by differentiating
\eqref{eq:bochner-equation-harmonic-gauge} or
\eqref{eq:teichmuller-gauge-uniformization}, respectively.

\section{Quasi-convexity of energy functions with varying target}
\label{subsec:quasi-convexity-energy-functions}

Let $(M^n,g)$ be a closed, connected Riemannian manifold, and fix a smooth
map
\(
  u_0\colon M\to S.
\)
We shall study the convexity along $u_t: M\to X_t$ described above.
If the original representative of the homotopy class is only continuous, we
replace it once and for all by a smooth representative; the energy function
defined below depends only on the homotopy class.
let $\sigma_X$ be the hyperbolic
metric on $X$, and let
\(
  u_X\colon (M,g)\to (X,\sigma_X)
\)
be a harmonic map in the homotopy class of $f\circ u_0$. Its existence
follows from the Eells--Sampson theorem \cite{EellsSampson1964}. Throughout
this section, unless explicitly stated otherwise, we assume that the
subgroup
\(
  (u_0)_*\pi_1(M)\subset\pi_1(S)
\)
is nontrivial and noncyclic. In particular, the homotopy class is not
represented by a map whose image is a point or a closed geodesic. Under this
assumption, Hartman's uniqueness theorem gives a unique harmonic
representative; see \cite{Hartman1967}.

We impose the following filling hypothesis on the homotopy class of \(u_0\).
There exists a finite collection
\begin{equation}
  \Gamma=\{\alpha_1,\ldots,\alpha_N\}
  \label{eq:def-filling-family-Gamma}
\end{equation}
of isotopy classes of essential simple closed curves on \(S\) satisfying the
following conditions:
\begin{enumerate}
  \item The collection \(\Gamma\) fills \(S\). More precisely, the classes
  \(\alpha_1,\ldots,\alpha_N\) admit representatives
  \(a_1,\ldots,a_N\) in pairwise minimal position such that every connected
  component of
  \(
    S\setminus\bigcup_{j=1}^N a_j
  \)
  is a disk. Here pairwise minimal position means that, for every
  \(i\ne j\), the representatives \(a_i\) and \(a_j\) intersect transversely
  and satisfy
  \(
    \#(a_i\cap a_j)=i(\alpha_i,\alpha_j),
  \)
  where \(i(\alpha_i,\alpha_j)\) denotes the geometric intersection number
  of the two isotopy classes, see \cite[Pages 30-31]{FarbMargalit2012}. The filling condition is also equivalent to requiring that
  every essential simple closed curve \(\delta\) on \(S\) satisfy
  \(i(\delta,\alpha_j)>0\) for at least one \(j\); see
  \cite[Section~III, Definition]{Kerckhoff1983}.

  \item For every \(j\), there exists a fixed piecewise smooth closed curve
  \(\beta_j:S^1\to M\) such that \(u_0\circ\beta_j\) is freely homotopic in
  \(S\) to a representative of the class \(\alpha_j\).
\end{enumerate}

We fix these loops and write
\(
  \mathfrak F=(\Gamma,\mathcal B),
\) \(
  \mathcal B=\{\beta_1,\ldots,\beta_N\},
\)
calling \(\mathfrak F\) the filling data. Constants in the comparison below
are allowed to depend on this chosen data; in particular, the lower bound uses
the fixed \(g\)-lengths of the loops \(\beta_j\).
As mentioned in the introduction, the filling hypothesis forces
\((u_0)_*\pi_1(M)\) to be nontrivial and noncyclic. Hence the standing
existence, uniqueness, and nondegeneracy assumptions from the beginning of
this section hold throughout this subsection.
In particular, this hypothesis holds whenever
\(
  (u_0)_*\colon\pi_1(M)\twoheadrightarrow\pi_1(S)
\)
is surjective. 

For a marked hyperbolic surface
\((X,m_X)\), with hyperbolic metric \(\sigma_X\), and an isotopy class
\(\alpha\) of essential simple closed curves on \(S\), let
\(\alpha^X\subset X\) denote the unique \(\sigma_X\)-geodesic representative
of the free homotopy class \(m_X(\alpha)\). We use the standard notation
\begin{equation}
  \ell_X(\alpha)
  :=
  \operatorname{Length}_{\sigma_X}(\alpha^X)
  =
  \ell_{\sigma_X}\bigl(m_X(\alpha)\bigr).
\end{equation}
This convention is independent of the chosen representative of the marked
surface, since the biholomorphism relating two equivalent marked surfaces is
an isometry of their uniformizing hyperbolic metrics and carries the
corresponding free homotopy classes to one another.
Thus the marking is incorporated into the notation \(\ell_X(\alpha)\) and
will usually be suppressed.
Define
\begin{equation}
  \Lambda_\Gamma(X)
  :=
  \sum_{j=1}^N\ell_X(\alpha_j)^2,\quad
  L_\Gamma(X):=\sum_{j=1}^N\ell_X(\alpha_j).  \label{eq:def-filling-length-square}
\end{equation}
Since \(\Gamma\) fills \(S\), the length sum
\(
  L_\Gamma(X)
\)
is a proper function on \(\mathcal T(S)\); see
\cite[Lemma~3.1]{Kerckhoff1983}. In particular, \(L_\Gamma\) attains a
positive minimum on \(\mathcal T(S)\).
Consequently any estimate of the form \(C(1+L_\Gamma(X))\) may be replaced,
after changing the constant, by \(C'L_\Gamma(X)\).

We regard each \(\alpha_j\) as an isotopy class of essential simple closed
curves on the fixed oriented surface \(S\). Recall that a finite collection
\(\Gamma=\{\alpha_1,\ldots,\alpha_N\}\) \emph{fills} \(S\) if every
essential simple closed curve \(\delta\) satisfies
\(i(\delta,\alpha_j)>0\) for at least one \(j\).
See \cite[Proposition~1.7 and Corollary~1.9]{FarbMargalit2012}.
For this criterion and for the fact that distinct simple closed geodesics on a hyperbolic surface
are in minimal position.

A \emph{ribbon graph} is a finite graph, allowing loops and multiple edges, together
with a cyclic ordering of the half-edges incident to each vertex; a
ribbon-graph isomorphism is required to preserve these cyclic orderings. We
use the standard terminology and boundary-cycle construction from
\cite[Definitions~1.5--1.7]{MulasePenkava1998}. In the present setting, a
\emph{\(\Gamma\)-labeled ribbon graph} is a ribbon graph whose edges carry
labels in \(\{1,\ldots,N\}\), with the edges labeled \(j\) forming the
closed edge-cycle corresponding to the curve \(\alpha_j\). An isomorphism of
\(\Gamma\)-labeled ribbon graphs is required to preserve all labels. Its
\emph{\(\Gamma\)-labeled ribbon-graph type} is its isomorphism class.

A \emph{marked realization} of a \(\Gamma\)-labeled ribbon graph \(G\) is a
label-preserving cellular embedding \(i:G\hookrightarrow S\) such that the
closed cycle labeled \(j\) represents the prescribed isotopy class
\(\alpha_j\). Two marked realizations
\(i_0:G_0\hookrightarrow S\) and \(i_1:G_1\hookrightarrow S\) are called
\emph{marked ambient-isotopic} if there exist a label-preserving ribbon-graph
isomorphism \(\phi:G_0\to G_1\) and an ambient isotopy
\(H_t:S\to S\), \(0\le t\le1\), through orientation-preserving
homeomorphisms, such that \(H_0=\operatorname{id}_S\) and
\(H_1\circ i_0=i_1\circ\phi\). The resulting equivalence class is called a
\emph{marked \(\Gamma\)-configuration}. This is the labeled and marked
refinement of the notion of a configuration used by Hass--Scott; see
\cite[p.~201]{HassScott1999}.

 If
\(m_*:S\to X_*\) and \(m_X:S\to X\) are markings, a map
\(F_X:X_*\to X\) will be called \emph{marking-compatible} if
\(F_X\circ m_*\simeq m_X\). Here and below, \(\simeq\) denotes homotopy of maps.

\begin{lemma}
\label{lem:lipschitz-marking-filling-lengths}
Fix a reference marked hyperbolic surface \(X_*\in\mathcal T(S)\), with
reference marking \(m_*:S\to X_*\). There exists a constant
\(C_{\Gamma,*}>0\), depending only on \(\Gamma\) and the fixed reference
data, such that, for every marked hyperbolic surface
\(X\in\mathcal T(S)\), with marking \(m_X:S\to X\), there is a
marking-compatible Lipschitz map \(F_X:X_*\to X\) satisfying
\begin{equation}
  \operatorname{Lip}(F_X)
  \le C_{\Gamma,*}L_\Gamma(X)
  .
  \label{eq:lipschitz-marking-bound}
\end{equation}
\end{lemma}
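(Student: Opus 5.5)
We build \(F_X\) combinatorially from the filling graph. On \(X\), take the union \(G_X:=\bigcup_j\alpha_j^X\) of the \(\sigma_X\)-geodesic representatives. Distinct simple closed geodesics are in pairwise minimal position, so \(G_X\) is a cellular embedded \(\Gamma\)-labeled ribbon graph whose complementary components are disks. (If three or more geodesics pass through one point, we can perturb slightly, or allow higher-valence vertices in the ribbon-graph type.) On \(X_*\) we do the same with \(G_*\).

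\textbf{Step 1: finitely many configurations.} The marked \(\Gamma\)-configuration of \(G_X\) is not fixed in general: it is determined only up to finitely many possibilities. Following Hass--Scott, minimal-position realizations of a fixed filling family lie in finitely many configuration types. The label structure is determined by \(i(\alpha_i,\alpha_j)\), and the faces are disks with a bounded number of sides. Up to marked ambient isotopy, however, minimal-position realizations may differ by triangle moves (Hass--Scott). So we fix, for each of the finitely many \(\Gamma\)-labeled ribbon-graph types \(G^{(1)},\dots,G^{(m)}\), a marked realization \(i_k\colon G^{(k)}\hookrightarrow X_*\) by geodesic arcs, with each face a convex polygon. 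This works up to the action of the mapping class group. The marking is compatible only when the realized cycles represent \(m_*(\alpha_j)\); since the \(\alpha_j\) are fixed, the only ambiguity is triangle moves, which are finitely many. This finiteness, making constants depend only on \(\Gamma\) and \(X_*\), is the main obstacle, and I would lean on Hass--Scott's result that the configurations of a fixed collection in minimal position modulo isotopy are finite.

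\textbf{Step 2: map on the 1-skeleton.} Given \(X\), choose \(k\) so that \(G_X\) is marked ambient-isotopic to \(i_k(G^{(k)})\) (transported via \(m_X\circ m_*^{-1}\)). Define \(F_X\) on each edge of \(i_k(G^{(k)})\) by mapping it at constant speed onto the corresponding geodesic edge of \(G_X\). Each edge of \(G_X\) has length at most \(\ell_X(\alpha_j)\le L_\Gamma(X)\). Each edge of the reference graph has length at least some \(c_*>0\). Hence the Lipschitz constant on the 1-skeleton is at most \(L_\Gamma(X)/c_*\).

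\textbf{Step 3: extend over faces.} Each face \(P\) of the reference graph is a convex geodesic polygon with a bounded number \(n\) of sides. Its image boundary is a closed curve of length at most \(nL_\Gamma(X)\) bounding an embedded disk face of \(X\setminus G_X\). Lift that face to \(\mathbb H^2\), which is possible since faces are disks. Extend the boundary map by coning: fix a basepoint \(p\in P\), use geodesic polar coordinates, and send the radial segment to \(\theta\) to the \(\mathbb H^2\)-geodesic from a fixed image point \(w\) to the image of \(\theta\). In \(\mathbb H^2\) the geodesic cone over a curve of length \(\ell\) with apex on the curve has diameter at most \(\ell\). Comparison (CAT(\(-1\)) geodesic interpolation is Lipschitz) then gives a Lipschitz constant bounded by \(C_P\bigl(\operatorname{Lip}(\partial)+\operatorname{diam}\bigr)\le C\,L_\Gamma(X)\). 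The constant \(C_P\) depends on the inradius and angles of \(P\) and on the apex choice, and blows up near the apex only if the apex is interior. To avoid this, choose the apex at a vertex of \(P\), so the map is a geodesic homotopy between boundary arcs, which is Lipschitz with constant controlled by the reference geometry.

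\textbf{Step 4: conclusion.} Project the lifted map to \(X\). The local maps agree on shared edges, so \(F_X\) is globally Lipschitz with \(\operatorname{Lip}(F_X)\le C_{\Gamma,*}L_\Gamma(X)\). Since \(F_X\) is a homeomorphism of the 1-skeletons respecting marked isotopy classes and extends over the disks, it is homotopic to \(m_X\circ m_*^{-1}\). Maximizing the constants over the finitely many \(k\) gives a uniform \(C_{\Gamma,*}\).
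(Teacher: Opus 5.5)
\textbf{Gap: finiteness of marked configurations.} The gap is in your Step~1, which is exactly where the uniformity of the constant comes from.

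Finiteness of the $\Gamma$-labeled ribbon-graph \emph{types} is elementary: after perturbing multiple points there are exactly $\sum_{i<j}i(\alpha_i,\alpha_j)$ vertices, hence finitely many cyclic words and sign patterns. No Hass--Scott input is needed for this. What your construction needs is finiteness of \emph{marked} configurations, and there your claim that ``since the $\alpha_j$ are fixed, the only ambiguity is triangle moves'' is not right. Triangle moves change the type. Two marked realizations $i,i'$ of the \emph{same} type satisfy $i'=h\circ i$ for an orientation-preserving homeomorphism $h$ with $h(\alpha_j)\simeq\alpha_j$ for all $j$ (thicken the ribbon graph and extend over the complementary disks). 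So the ambiguity within a type is measured by the pointwise stabilizer $\operatorname{Stab}^{\mathrm{pt}}(\Gamma)\subset\operatorname{Mod}(S)$.

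Choosing one $i_k$ per type therefore does not suffice. If $m_X^{-1}(G_X)$ lies in another marked configuration of the same type, no chosen $i_k$ is marked ambient-isotopic to it. Matching edges by any label-preserving isomorphism then produces a map homotopic to $m_X\circ h\circ m_*^{-1}$ with $h$ not isotopic to the identity, so marking compatibility fails.

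The missing ingredient is that $\operatorname{Stab}^{\mathrm{pt}}(\Gamma)$ is finite, and this is where filling enters. For $[\varphi]$ in it, $L_\Gamma(\varphi Y_0)=L_\Gamma(Y_0)$, so $\varphi$ maps $Y_0$ into the sublevel set $K_0=\{L_\Gamma\le L_\Gamma(Y_0)\}$, which is compact by Kerckhoff's properness lemma. Thus $\varphi K_0\cap K_0\ne\varnothing$, and proper discontinuity of the $\operatorname{Mod}(S)$-action leaves only finitely many $[\varphi]$.

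With that, fix one reference realization per marked configuration. In Step~2 the edge correspondence must be the one induced by the marked ambient isotopy, since an arbitrary label-preserving ribbon automorphism can induce a nontrivial mapping class. Then your Step~4 works: the homotopy on the 1-skeleton extends over the faces because $\pi_2(X)=0$.

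\textbf{Comparison with the paper's route.} Once repaired, your argument is a genuine alternative to the paper's. Replace ``geodesic edges and convex faces in $X_*$'', which you have not justified for every configuration, by characteristic maps from fixed convex model polygons. Your vertex-apex geodesic coning in $\mathbb H^2$ is then exactly the paper's Step~5.

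The paper instead fixes a single one-vertex triangulation $T_*$ of $X_*$. It uses $G_X$ only to supply based loops of length at most $2BL_\Gamma(X)$ representing the edge loops of $T_*$, where $B$ is a maximal combinatorial word length over the finitely many marked configurations. It then builds an equivariant map on universal covers and gets marking compatibility from equality of the induced maps on $\pi_1$ together with asphericity.

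The paper's version avoids realizing every marked configuration geometrically in $X_*$ with controlled edge lengths. Yours gives a map that is cellular between the filling graphs. Both rest on the same finiteness of marked configurations.
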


\begin{proof}
We divide the construction into five steps.

\smallskip
\noindent
\emph{Step 1: existence of short cellular representatives.}
For \(X\in\mathcal T(S)\), with marking \(m_X:S\to X\), let
\(\alpha_j^X\subset X\) denote the unique
\(\sigma_X\)-geodesic representative of the free homotopy class
\(m_X(\alpha_j)\). By definition,
\(
  \operatorname{Length}_{\sigma_X}(\alpha_j^X)
  =
  \ell_X(\alpha_j).
\)
Distinct simple closed geodesics are pairwise in minimal position, so
\(\#(\alpha_i^X\cap\alpha_j^X)=i(\alpha_i,\alpha_j)\) for every
\(i\ne j\). Since \(\Gamma\) fills \(S\), every component of
\(X\setminus\bigcup_j\alpha_j^X\) is a disk. In particular, the union is
connected. Indeed, the disk-complement characterization of a filling system
is exactly the one recalled above from \cite[Section~III]{Kerckhoff1983}.

Several of the geodesics may pass through the same point. There are only
finitely many such multiple intersection points. Choose pairwise disjoint
coordinate disks around them, each containing no other intersection point.
Inside each disk, replace the incident geodesic arcs by a generic
\(C^1\)-small perturbation, fixed near the boundary of the disk, in such a
way that every pair of arcs which met at the original multiple point still
meets exactly once, while all resulting intersections are distinct and
transverse. Performing these perturbations simultaneously gives simple closed
curves \(\widehat\alpha_1^X,\ldots,\widehat\alpha_N^X\), each isotopic to
\(\alpha_j^X\), having only transverse double intersections and satisfying
\(
  \#(\widehat\alpha_i^X\cap\widehat\alpha_j^X)
  =
  i(\alpha_i,\alpha_j).
\)
Hence they remain pairwise in minimal position.
The perturbations can be made arbitrarily small in the \(C^1\)-topology.
Since the length of an immersed curve is continuous under \(C^1\)-convergence,
for every \(\varepsilon>0\) they may be chosen so that
\(
  \sum_{j=1}^N
  \operatorname{Length}_X(\widehat\alpha_j^X)
  \le
  \sum_{j=1}^N
  \operatorname{Length}_X(\alpha_j^X)
  +\varepsilon.
\)
Taking \(\varepsilon=L_\Gamma(X)>0\) gives
\begin{equation}
  \sum_{j=1}^N
  \operatorname{Length}_X(\widehat\alpha_j^X)
  \le 2L_\Gamma(X).
  \label{eq:perturbed-filling-graph-length}
\end{equation}
Because the perturbed curves represent the same filling classes, their union
\(
  \widehat G_X
  :=
  \bigcup_{j=1}^N\widehat\alpha_j^X
\)
is again connected and cellularly embedded: every component of
\(X\setminus\widehat G_X\) is a disk.

\smallskip
\noindent
\emph{Step 2: finitely many marked \(\Gamma\)-configurations.}
The orientation of \(X\) gives the four half-edges at every vertex of
\(\widehat G_X\) a cyclic order, and each graph edge inherits the label of
the curve \(\widehat\alpha_j^X\) containing it. Thus \(\widehat G_X\) is a
\(\Gamma\)-labeled ribbon graph.

Only finitely many \(\Gamma\)-labeled ribbon-graph types can occur. To see
this, temporarily orient the curves. For \(i<j\), set
\(
  n_{ij}:=i(\alpha_i,\alpha_j),
\)
and give the \(n_{ij}\) intersections of
\(\widehat\alpha_i^X\) with \(\widehat\alpha_j^X\) auxiliary names
\(
  v_{ij}^1,\ldots,v_{ij}^{n_{ij}}.
\)
For each oriented curve, record the cyclic order in which its finitely many
named vertices occur, and at each vertex record the local intersection sign
of the corresponding ordered pair of oriented curves. These data determine
the labeled ribbon graph: consecutive vertices in the cyclic word along the
\(i\)-th curve determine the edges labeled \(i\), while the local
intersection sign determines the cyclic order of the four incident
half-edges. There are only finitely many cyclic words and only finitely many
sign assignments. Forgetting the auxiliary names and the temporary
orientations can only identify possibilities. Hence the set of
\(\Gamma\)-labeled ribbon-graph types is finite.

We next prove that a fixed \(\Gamma\)-labeled ribbon-graph type has only
finitely many marked \(\Gamma\)-configurations. Fix an abstract
\(\Gamma\)-labeled ribbon graph \(G\) of that type and a marked realization
\(i:G\hookrightarrow S\). Let \(i':G\hookrightarrow S\) be another marked
realization, after identifying the two abstract graphs by a label-preserving
ribbon-graph isomorphism. Choose closed regular neighborhoods
\(N(i(G))\) and \(N(i'(G))\). Because the graph identification preserves the
cyclic order at every vertex, the ribbon structure gives an
orientation-preserving homeomorphism
\(
  h_N:N(i(G))\to N(i'(G))
\)
extending the graph identification; equivalently, it identifies the
corresponding boundary cycles. This is precisely the thickening construction
associated with a ribbon graph in
\cite[Definitions~1.5--1.7]{MulasePenkava1998}.

Both embeddings are cellular, so every component of
\(S\setminus\operatorname{int}N(i(G))\) and of
\(S\setminus\operatorname{int}N(i'(G))\) is a compact disk. The restriction
of \(h_N\) to the boundary of each such disk extends over the disk, for
example after identifying the two disks with the unit disk and applying the
radial Alexander extension. Extending over all complementary disks gives an
orientation-preserving homeomorphism \(h:S\to S\) satisfying
\(
  i'=h\circ i.
\)
Since the labels are preserved and the cycle labeled \(j\) represents
\(\alpha_j\) in both realizations, one has
\(h(\alpha_j)=\alpha_j\) for every \(j\). Thus \([h]\) belongs to the
pointwise stabilizer
\[
  \operatorname{Stab}^{\mathrm{pt}}(\Gamma)
  :=
  \bigl\{
    [\varphi]\in\operatorname{Mod}(S):
    \varphi(\alpha_j)=\alpha_j
    \text{ for every }j
  \bigr\}.
\]

This stabilizer is finite. Indeed, fix \(Y_0\in\mathcal T(S)\) and let
\(
  K_0
  :=
  \bigl\{
    Y\in\mathcal T(S):
    L_\Gamma(Y)\le L_\Gamma(Y_0)
  \bigr\}.
\)
Kerckhoff's properness theorem for the length sum of a filling system implies
that \(K_0\) is compact; see
\cite[Lemma~3.1]{Kerckhoff1983}. If
\(
  [\varphi]\in\operatorname{Stab}^{\mathrm{pt}}(\Gamma),
\)
then \(\varphi^{-1}(\alpha_j)=\alpha_j\) for every \(j\). With the action
convention
\(
  [\varphi]\cdot[Z,m]=[Z,m\circ\varphi^{-1}],
\)
the length functions satisfy
\(
  \ell_{\varphi Y}(\alpha)=\ell_Y(\varphi^{-1}\alpha).
\)
Therefore
\[
  L_\Gamma(\varphi Y_0)
  =
  \sum_{j=1}^N\ell_{Y_0}(\varphi^{-1}\alpha_j)
  =
  \sum_{j=1}^N\ell_{Y_0}(\alpha_j)
  =
  L_\Gamma(Y_0).
\]
 Set
\(K:=K_0\cup\{Y_0\}\). Then
\(\varphi K\cap K\ne\varnothing\). Proper discontinuity of the
\(\operatorname{Mod}(S)\)-action on Teichm\"uller space therefore implies
that only finitely many such \([\varphi]\) exist; see
\cite[Theorem~12.2]{FarbMargalit2012}. Consequently each labeled
ribbon-graph type gives only finitely many marked
\(\Gamma\)-configurations, and there are only finitely many marked
\(\Gamma\)-configurations in total. We keep this finite list rather than
replacing it by a single model, since distinct minimal configurations can
occur; see \cite[Example~5]{HassScott1999}.

Choose representatives
\(
  G^{(1)},\ldots,G^{(A)}\subset S
\)
of these marked \(\Gamma\)-configurations. By ambient isotopy, choose in
each graph a vertex and move it to the same point \(p\in S\). Since \(S\)
is obtained from each \(G^{(a)}\) by attaching its disk faces, the inclusion
induces a surjection
\(
  \pi_1(G^{(a)},p)
  \twoheadrightarrow
  \pi_1(S,p);
\)
this also follows directly from van Kampen's theorem
\cite[Theorem~1.20]{Hatcher2002}.

\smallskip
\noindent
\emph{Step 3: uniformly short representatives of the reference edge loops.}
Let \(p_*:=m_*(p)\). Choose a finite one-vertex 2-complex
triangulation \(T_*\) of \(X_*\) with unique vertex \(p_*\). Such a
triangulation is obtained, for example, from the standard \(4g\)-gon model
of \(S\) by drawing diagonals from one vertex and transporting the resulting
cell structure by \(m_*\). Every oriented edge \(e\) of \(T_*\) is a based
loop at \(p_*\), and hence determines
\(
  \gamma_e
  :=
  (m_*)_*^{-1}[e]
  \in
  \pi_1(S,p).
\)

Let \(E(T_*)\) denote the finite set of oriented edges of
\(T_*\). For every
\(
  (a,e)\in\{1,\ldots,A\}\times E(T_*),
\)
choose a based finite combinatorial edge loop
\(\beta_e^{(a)}\subset G^{(a)}\) whose image in \(\pi_1(S,p)\) is
\(\gamma_e\). Such a loop exists by the preceding surjectivity. Define
\(\lvert\beta_e^{(a)}\rvert_{\mathrm{comb}}\) to be the number of graph
edges traversed, counted with multiplicity. The choice need not be unique.
Since there are only finitely many configurations and finitely many oriented
edges of \(T_*\), the integer
\(
  B
:=
\max_{\substack{1\le a\le A,e\in E(T_*)}}
|\beta_e^{(a)}|_{\mathrm{comb}}
\)
is finite.

Now fix \(X\), and consider the marked realization
\(m_X^{-1}(\widehat G_X)\subset S\). It belongs to one of the marked
configurations represented by \(G^{(a)}\). Replacing \(m_X\) by an isotopic
representative, we may therefore arrange that
\(
  m_X(G^{(a)})=\widehat G_X\) and \(
  m_X(p)=p_X,
\)
where \(p_X\) is the corresponding vertex. Put
\(\beta_{e,X}:=m_X(\beta_e^{(a)})\). Relative to the marking of \(X\),
this is a based loop representing \(\gamma_e\). It traverses at most \(B\)
graph edges, each of which is a subarc of one of the curves
\(\widehat\alpha_j^X\). Hence
\begin{equation}
  \operatorname{Length}_X(\beta_{e,X})
  \le
  B\sum_{j=1}^N
  \operatorname{Length}_X(\widehat\alpha_j^X)
  \le
  2B L_\Gamma(X).
  \label{eq:uniform-short-generator-loop}
\end{equation}

\smallskip
\noindent
\emph{Step 4: the equivariant map on the lifted one-skeleton.}
Let
\(
  \pi_*:\widetilde X_*\to X_*\) and \(
  \pi_X:\widetilde X\to X
\)
be the universal covering maps, and let \(\widetilde T_*\) denote the lift
of the \(\Delta\)-complex structure \(T_*\) to \(\widetilde X_*\).
Using the markings and the basepoints, identify both deck groups with
\(\pi_1(S,p)\). Choose lifts
\(
  \widetilde p_*\in\pi_*^{-1}(p_*)\) and \(
  \widetilde p_X\in\pi_X^{-1}(p_X).
\)
For \(\gamma\in\pi_1(S,p)\), write \(\gamma_*\) and \(\gamma_X\) for the
corresponding deck transformations of \(\widetilde X_*\) and
\(\widetilde X\), respectively.

The vertices of \(\widetilde T_*\) are the points
\(\gamma_*\widetilde p_*\). Define
\(
  \widetilde F_X(\gamma_*\widetilde p_*)
  :=
  \gamma_X\widetilde p_X.
\)
This is well-defined and equivariant on the vertex set. A lifted oriented
edge covering \(e\) has endpoints
\(\gamma_*\widetilde p_*\) and
\((\gamma\gamma_e)_*\widetilde p_*\). The distance between their image
points equals
\(
  d_{\widetilde X}
  \bigl(
    \widetilde p_X,
    (\gamma_e)_X\widetilde p_X
  \bigr),
\)
which is at most \(\operatorname{Length}_X(\beta_{e,X})\), since the lift
of \(\beta_{e,X}\) starting at \(\widetilde p_X\) ends at
\((\gamma_e)_X\widetilde p_X\).

Map each lifted edge at constant speed onto the unique geodesic segment
joining its image endpoints. Let \(\ell_{*,\min}>0\) be the minimum of the
lengths of the finitely many edges of \(T_*\). With respect to the intrinsic
path metric on the lifted one-skeleton, the resulting equivariant map
satisfies
\begin{equation}
  \operatorname{Lip}
  \bigl(
    \widetilde F_X|_{\widetilde T_*^{(1)}}
  \bigr)
  \le
  \frac{2B}{\ell_{*,\min}}L_\Gamma(X).
  \label{eq:lifted-one-skeleton-lipschitz}
\end{equation}

\smallskip
\noindent
\emph{Step 5: extension over the two-simplices and descent.}
For each two-simplex \(\Delta_0\) of the
\(\Delta\)-complex \(T_*\), choose once and for all an abstract
nondegenerate Euclidean model triangle
\(
  \Delta=[v_0,v_1,v_2],
\)
and designate \(v_0\) as its distinguished vertex. Let
\(
  \chi_{\Delta_0}:\Delta\to X_*
\)
be the characteristic map of \(\Delta_0\), chosen so that its restriction
to the interior of \(\Delta\) is a homeomorphism onto the interior of
\(\Delta_0\), and its restriction to each side of \(\Delta\) is the
constant-speed parametrization of the corresponding edge of \(T_*\).
We choose the characteristic maps piecewise smoothly, so that their lifts
to the universal cover are bi-Lipschitz onto the corresponding lifted
two-simplices.
For each simplex \(\Delta_0\), choose one lift
\(
  \widetilde\Delta_0
  \subset
  \widetilde T_*
\)
and let
\(
  \widetilde\chi_{\Delta_0}:
  \Delta\to\widetilde\Delta_0
\)
be the lift of \(\chi_{\Delta_0}\). On every translated simplex
\(\gamma_*\widetilde\Delta_0\), where
\(\gamma\in\pi_1(S,p)\), we use the transported parametrization
\(
  \gamma_*\circ\widetilde\chi_{\Delta_0}.
\)
Thus all parametrizations of lifted simplices are chosen equivariantly with
respect to the deck transformations. 

Let
\(
  e:=[v_1,v_2]
\)
be the side of the model triangle opposite the distinguished vertex
\(v_0\). Since \(\widetilde F_X\) has already been defined on the lifted
one-skeleton, it induces a boundary map
\[
  f_{\Delta_0,X}
  :=
  \left.
  \widetilde F_X
  \right|_{\partial\widetilde\Delta_0}
  \circ
  \left.
  \widetilde\chi_{\Delta_0}
  \right|_{\partial\Delta}
  :
  \partial\Delta
  \longrightarrow
  \widetilde X.
\]
Equip \(\partial\Delta\) with its intrinsic path metric and set
\(
  L_{\Delta_0,X}
  :=
  \operatorname{Lip}
  \bigl(
    f_{\Delta_0,X}:
    \partial\Delta\to\widetilde X
  \bigr).
\)
By the one-skeleton estimate and the fixed choice of
\(\widetilde\chi_{\Delta_0}\), one has
\[
  L_{\Delta_0,X}
  \le
  \operatorname{Lip}
  \bigl(
    \widetilde F_X|_{\widetilde T_*^{(1)}}
  \bigr)
  \operatorname{Lip}
  \bigl(
    \widetilde\chi_{\Delta_0}|_{\partial\Delta}
  \bigr)
  \le
  C_{\Delta_0}L_\Gamma(X),
\]
where \(C_{\Delta_0}\) depends only on the fixed reference simplex
\(\Delta_0\) and its chosen parametrization. Since \(T_*\) has only
finitely many two-simplices, the constants \(C_{\Delta_0}\) are uniformly
bounded.

For \(0\le r\le1\) and \(x\in e\), set
\(
  y_{r,x}:=(1-r)v_0+rx\in\Delta.
\)
Every point of \(\Delta\) can be written in this form; when \(r=0\), the
point \(y_{0,x}=v_0\) is independent of \(x\).
For \(a,b\in\widetilde X\), let \([a,b]_r\) denote the point at parameter
\(r\) on the constant-speed geodesic segment from \(a\) to \(b\). Define
\(
  \widehat f:\Delta\to\widetilde X
\)
by
\begin{equation}\label{widehat_f}
  \widehat f(y_{r,x})
  :=
  \bigl[
    f_{\Delta_0,X}(v_0),
    f_{\Delta_0,X}(x)
  \bigr]_r.
\end{equation}
This is well defined at \(r=0\), since
\(
  [a,b]_0=a
\)
for every \(b\).
One can see that
\(
  \widehat f|_{\partial\Delta}
  =
  f_{\Delta_0,X}.
\)

We now estimate the Lipschitz constant of \(\widehat f\). Since
\(\widetilde X\simeq\mathbb H^2\) is CAT\((-1)\), and hence CAT\((0)\),
convexity of distance between geodesics \cite[Proposition~II.2.2]{BridsonHaefliger1999} gives, for fixed $r$,
\[
\begin{aligned}
  d_{\widetilde X}
  \bigl(
    \widehat f(y_{r,x}),
    \widehat f(y_{r,x'})
  \bigr)
  &\le
  r\,
  d_{\widetilde X}
  \bigl(
    f_{\Delta_0,X}(x),
    f_{\Delta_0,X}(x')
  \bigr)
  \\
  &\le
  rL_{\Delta_0,X}
  d_{\partial\Delta}(x,x')
=
  rL_{\Delta_0,X}|x-x'|.
\end{aligned}
\]

For fixed \(x\), both points lie on the same constant-speed geodesic, so
\[
\begin{aligned}
  d_{\widetilde X}
  \bigl(
    \widehat f(y_{r,x}),
    \widehat f(y_{r',x})
  \bigr)
  &=
  |r-r'|\,
  d_{\widetilde X}
  \bigl(
    f_{\Delta_0,X}(v_0),
    f_{\Delta_0,X}(x)
  \bigr)
  \\
  &\le
  |r-r'|L_{\Delta_0,X}
  d_{\partial\Delta}(v_0,x).
\end{aligned}
\]
Since \(\Delta\) is fixed, the constant
\(
  D_\Delta
  :=
  \sup_{x\in e}d_{\partial\Delta}(v_0,x)
\)
is finite. Hence
\[
  d_{\widetilde X}
  \bigl(
    \widehat f(y_{r,x}),
    \widehat f(y_{r',x})
  \bigr)
  \le
  D_\Delta L_{\Delta_0,X}|r-r'|.
\]

Suppose, for example, that \(r'\le r\). Moving first radially from
\(y_{r,x}\) to \(y_{r',x}\), and then along the level \(r'\) from
\(y_{r',x}\) to \(y_{r',x'}\), gives
\[
\begin{aligned}
  d_{\widetilde X}
  \bigl(
    \widehat f(y_{r,x}),
    \widehat f(y_{r',x'})
  \bigr)
  &\le
  D_\Delta L_{\Delta_0,X}|r-r'|
  +
  r'L_{\Delta_0,X}|x-x'|.
\end{aligned}
\]
The same argument with \(r\) and \(r'\) interchanged applies when
\(r\le r'\). Consequently, after setting
\(C_\Delta:=\max\{1,D_\Delta\}\), one obtains
\begin{equation}
\label{eq:cone-extension-intermediate-bound}
  d_{\widetilde X}
  \bigl(
    \widehat f(y_{r,x}),
    \widehat f(y_{r',x'})
  \bigr)
  \le
  C_\Delta L_{\Delta_0,X}
  \left(
    |r-r'|
    +
    \min\{r,r'\}|x-x'|
  \right).
\end{equation}

It remains to compare the expression on the right with the Euclidean
distance in \(\Delta\). Since \(\Delta\) is fixed and nondegenerate, there
exists a constant \(C_\Delta'>0\) such that
\(
  |r-r'|
  +
  \min\{r,r'\}|x-x'|
  \le
  C_\Delta'
  |y_{r,x}-y_{r',x'}|.
\)
Combining with
\eqref{eq:cone-extension-intermediate-bound}, we conclude that
\(
  \operatorname{Lip}(\widehat f)
  \le
  C_\Delta''L_{\Delta_0,X},
\)
where \(C_\Delta''\) depends only on the fixed Euclidean model triangle
\(\Delta\). Using the previously established estimate
\(
  L_{\Delta_0,X}\le C_{\Delta_0}L_\Gamma(X),
\)
we further obtain
\[
  \operatorname{Lip}(\widehat f)
  \le
  C_\Delta''C_{\Delta_0}L_\Gamma(X).
\]

For each  two-simplex \(\Delta_0\) of \(T_*\), let
\(\widetilde\Delta_0\subset\widetilde T_*\) be the chosen lift and let
\(
  \widetilde\chi_{\Delta_0}:
  \Delta\to\widetilde{\Delta}_0
\)
be the fixed bi-Lipschitz parametrization introduced above. The extension
over \(\widetilde\Delta_0\) is defined by
\(
  \widetilde F_X
  |_{\widetilde\Delta_0}
  :=
  \widehat f_{\Delta_0,X}
  \circ
  \widetilde\chi_{\Delta_0}^{-1}.
\) Here, $\widehat f_{\Delta_0,X}$ is the corresponding map defined by \eqref{widehat_f}.
Since
\(\widehat f_{\Delta_0,X}|_{\partial\Delta}
=f_{\Delta_0,X}\), this extension agrees on
\(\partial\widetilde\Delta_0\) with the map already defined on the lifted
one-skeleton.
For every deck transformation
\(\gamma\in\pi_1(S,p)\), define the map on the translated simplex
\(\gamma_*\widetilde\Delta_0\) by
\(
  \widetilde F_X(\gamma_*z)
  :=
  \gamma_X\widetilde F_X(z),
\) \(
  z\in\widetilde\Delta_0.
\)
This definition is independent of the chosen representative: the deck action
on the universal cover is free, and the chosen lifted simplices are precisely
the deck translates of the finitely many selected lifts. Moreover, if two
lifted two-simplices share an edge, the two extensions agree on that edge,
because both restrict there to the previously defined equivariant
one-skeleton map. Hence the simplexwise extensions glue to a continuous
\(\pi_1(S,p)\)-equivariant map
\(
  \widetilde F_X:
  \widetilde X_*
  \to
  \widetilde X.
\)
For the chosen lift of \(\Delta_0\), the estimates above give
\[
\begin{aligned}
  \operatorname{Lip}
  \bigl(
    \widetilde F_X|_{\widetilde\Delta_0}
  \bigr)
  &\le
  \operatorname{Lip}(\widehat f_{\Delta_0,X})
  \operatorname{Lip}
  \bigl(
    \widetilde\chi_{\Delta_0}^{-1}
  \bigr)
\le
  C_{\Delta_0}'L_\Gamma(X),
\end{aligned}
\]
where \(C_{\Delta_0}'\) depends only on the fixed model triangle and the
chosen parametrization of the reference simplex \(\Delta_0\). The same
estimate holds on every deck translate, since \(\gamma_*\) and \(\gamma_X\)
act by isometries. As the complex \(T_*\) has only finitely many
two-simplices, the constants \(C_{\Delta_0}'\) admit a common upper bound.
Together with the one-skeleton estimate, this yields a constant
\(C_{\Gamma,*}>0\), independent of \(X\), such that
\begin{equation}
  \operatorname{Lip}(\widetilde F_X)
  \le
  C_{\Gamma,*}L_\Gamma(X).
  \label{Lip-tildeF}
\end{equation}
Indeed, any rectifiable path in \(\widetilde X_*\) can be subdivided into
subpaths lying in individual lifted simplices, and the preceding uniform
simplexwise estimate bounds the length of its image by
\(C_{\Gamma,*}L_\Gamma(X)\) times the length of the original path.

The equivariant map descends to a map \(F_X:X_*\to X\). Its Lipschitz
constant does not increase under passage to the quotient. More explicitly,
for \(x,y\in X_*\), choose lifts
\(\widetilde x,\widetilde y\). Equivariance gives
\[
\begin{aligned}
  d_X\bigl(F_X(x),F_X(y)\bigr)
  &=
  \inf_{\gamma\in\pi_1(S,p)}
  d_{\widetilde X}
  \bigl(
    \widetilde F_X(\widetilde x),
    \gamma_X\widetilde F_X(\widetilde y)
  \bigr)\\
  &=
  \inf_{\gamma\in\pi_1(S,p)}
  d_{\widetilde X}
  \bigl(
    \widetilde F_X(\widetilde x),
    \widetilde F_X(\gamma_*\widetilde y)
  \bigr)\\
  &\le
  C_{\Gamma,*}L_\Gamma(X)
  \inf_{\gamma\in\pi_1(S,p)}
  d_{\widetilde X_*}
  \bigl(
    \widetilde x,
    \gamma_*\widetilde y
  \bigr)\\
  &=
  C_{\Gamma,*}L_\Gamma(X)
  d_{X_*}(x,y),
\end{aligned}
\]
where the inequality follows from \eqref{Lip-tildeF}.
This proves \eqref{eq:lipschitz-marking-bound}.

Finally, \(F_X(p_*)=p_X\), and the defining equivariance shows that
\(
  (F_X)_*\circ(m_*)_*
  =
  (m_X)_*
\)
on \(\pi_1(S,p)\). Since a closed hyperbolic surface is a \(K(\pi,1)\),
two based maps into \(X\) inducing the same homomorphism on fundamental
groups are based homotopic; see
\cite[Proposition~1B.9]{Hatcher2002}. Hence
\(
  F_X\circ m_*
  \simeq
  m_X,
\)
so \(F_X\) is marking-compatible.
\end{proof}

\begin{proposition}
\label{prop:energy-filling-length-comparison}
Under the filling hypothesis above, there exist constants
\(0<A_{\mathfrak F}\le B_{\mathfrak F}<\infty\), depending only on the
fixed domain \((M,g)\), the homotopy class of \(u_0\), the chosen filling
data \(\mathfrak F\), and the fixed reference marked surface
\((X_*,m_*)\), such that
\begin{equation}
  A_{\mathfrak F}\Lambda_\Gamma(X)
  \le
  \mathcal E_{u_0}(X)
  \le
  B_{\mathfrak F}\Lambda_\Gamma(X)
  \qquad
  \text{for every }X\in\mathcal T(S).
  \label{eq:energy-filling-length-comparison}
\end{equation}
\end{proposition}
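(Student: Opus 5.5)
The plan has two independent halves: an upper bound by a competitor map and a lower bound by a mean-value estimate along the fixed loops \(\beta_j\).

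\emph{Upper bound.} Fix the reference surface \((X_*,m_*)\) and let \(v_*\colon M\to X_*\) be a smooth map in the homotopy class of \(m_*\circ u_0\), for instance the harmonic representative. For each \(X\), Lemma~\ref{lem:lipschitz-marking-filling-lengths} gives a marking-compatible Lipschitz map \(F_X\colon X_*\to X\) with \(\operatorname{Lip}(F_X)\le C_{\Gamma,*}L_\Gamma(X)\). The composite \(F_X\circ v_*\) is Lipschitz, hence in \(W^{1,2}\), and lies in the homotopy class of \(m_X\circ u_0\). Since the harmonic map minimizes energy in its class, \eqref{eq:energy-function-as-minimum} gives
\[
  \mathcal E_{u_0}(X)\le E_X(F_X\circ v_*)\le \operatorname{Lip}(F_X)^2E_{X_*}(v_*)\le C_{\Gamma,*}^2E_{X_*}(v_*)\,L_\Gamma(X)^2 .
\]
If one insists on smooth competitors, the minimization over \(W^{1,2}\) maps in the homotopy class gives the same infimum by density. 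Cauchy--Schwarz gives \(L_\Gamma^2\le N\Lambda_\Gamma\), which yields \(B_{\mathfrak F}\).

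\emph{Lower bound.} This is the main obstacle. The loop \(u_X\circ\beta_j\) is freely homotopic to \(m_X(\alpha_j)\), so
\[
  \ell_X(\alpha_j)\le\int_{\beta_j}|du_X|\,ds .
\]
Such a bound by a single curve integral cannot be controlled by the total energy directly, so I would use a Bochner--mean-value argument. Since the target has curvature \(-1\), the Bochner formula gives
\[
  \tfrac12\Delta|du_X|^2\ge -C_g|du_X|^2,
\]
where \(C_g\) depends only on the Ricci bound of \((M,g)\). The Moser (or Li--Schoen) mean-value inequality for subsolutions then gives
\[
  \sup_M|du_X|^2\le C_M\int_M|du_X|^2\,d\mu_g=2C_M\,\mathcal E_{u_0}(X),
\]
with \(C_M\) depending only on \((M,g)\), uniformly in \(X\). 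Combining the two estimates,
\[
  \ell_X(\alpha_j)\le \operatorname{Length}_g(\beta_j)\,\bigl(2C_M\mathcal E_{u_0}(X)\bigr)^{1/2}.
\]
Squaring and summing over \(j\) gives
\[
  \Lambda_\Gamma(X)\le 2C_M\Bigl(\sum_j\operatorname{Length}_g(\beta_j)^2\Bigr)\,\mathcal E_{u_0}(X),
\]
which defines \(A_{\mathfrak F}\). The piecewise smooth loops \(\beta_j\) have finite \(g\)-length, and \(u_X\) is smooth, so the curve integral makes sense. The point needing care is uniformity: the mean-value constant must depend only on \(M\) and \(g\), which holds because the Bochner inequality uses only the upper curvature bound \(-1\le 0\) of every target \(\sigma_X\).

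Finally, both constants depend only on \(M,g\), the homotopy class, \(\mathfrak F\), and \((X_*,m_*)\) through \(E_{X_*}(v_*)\) and \(C_{\Gamma,*}\). This proves \eqref{eq:energy-filling-length-comparison}.
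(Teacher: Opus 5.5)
Your proposal is correct and follows the paper's proof essentially step for step. For the lower bound, you use the Bochner inequality $\Delta_g|du_X|^2\ge-2\kappa|du_X|^2$, a mean-value ($L^1\to L^\infty$) estimate on $(M,g)$ that is uniform in $X$, and integration of $|du_X|$ along the fixed loops $\beta_j$. For the upper bound, you use the Lipschitz competitor $F_X\circ v_*$ from Lemma~\ref{lem:lipschitz-marking-filling-lengths} followed by Cauchy--Schwarz. The one difference is that the paper justifies comparing $\mathcal E_{u_0}(X)$ with the energy of this Lipschitz competitor by citing White's least-energy theorem, whereas you use density. That works if you smooth the Lipschitz (hence continuous) competitor directly, for instance by mollifying and then applying nearest-point projection, so that the approximants stay in the same classical homotopy class and their energies converge. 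It should not be phrased as density in an abstract ``$W^{1,2}$ homotopy class'', since such a class is not well defined in general when $\dim M\ge3$.
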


\begin{proof}
We prove the two inequalities separately.

\smallskip
\noindent
\emph{Lower bound.}
Let \(u_X:(M,g)\to(X,\sigma_X)\) be the harmonic representative of the
prescribed marked homotopy class, and put
$e_X:=|du_X|^2=|du_X|^2_{\mathrm{HS}}$. Choose $\kappa\ge 0$ such that
\(\operatorname{Ric}_g\ge-\kappa g\). With the convention
\(\Delta_g=\operatorname{div}_g\nabla\), the Bochner formula for harmonic
maps gives
\begin{equation}
\begin{aligned}
  \frac12\Delta_g e_X
  =&
  |\nabla du_X|^2
  +
  \sum_{i=1}^n
  \bigl\langle
    du_X(\operatorname{Ric}_g e_i),du_X(e_i)
  \bigr\rangle_{\sigma_X}
  \\
  &-
  \sum_{i,j=1}^n
  \bigl\langle
    R^{\sigma_X}(du_X(e_i),du_X(e_j))du_X(e_j),
    du_X(e_i)
  \bigr\rangle_{\sigma_X},
  \label{eq:bochner-energy-density-prop44}
\end{aligned}
\end{equation}
where \(e_1,\ldots,e_n\) is a local \(g\)-orthonormal frame; compare
\cite[(9.2.13)]{Jost2017}. At a fixed point, choose the frame so
that it diagonalizes the Ricci endomorphism. Since all of its eigenvalues are
at least \(-\kappa\), the Ricci term in
\eqref{eq:bochner-energy-density-prop44} is bounded below by
\(-\kappa e_X\). Moreover, because \((X,\sigma_X)\) has curvature \(-1\),
for tangent vectors \(A,B\in TX\) one has
\(
  -\bigl\langle R^{\sigma_X}(A,B)B,A\bigr\rangle
  =
  |A|^2|B|^2-\langle A,B\rangle^2
  \ge0.
\)
Hence
\begin{equation}
  \Delta_g e_X\ge-2\kappa e_X.
  \label{eq:scalar-subsolution-energy-density}
\end{equation}

The standard local boundedness estimate for nonnegative subsolutions of
uniformly elliptic equations, applied to
\(
  (\Delta_g+2\kappa)e_X\ge0,
\)
gives
\[
  \|e_X\|_{L^\infty(M)}
  \le
  C(M,g,\kappa)\int_Me_X\,d\mu_g,
\]
see \cite[Theorem~8.17]{GilbargTrudinger2001}. Here we use a fixed finite
coordinate cover of \(M\), so the constant depends only on the fixed
operator \(\Delta_g+2\kappa\) and is independent of \(X\). Since
\(
  \int_Me_X\,d\mu_g=2\mathcal E_{u_0}(X),
\)
we obtain
\begin{equation}
  \|du_X\|_{L^\infty(M)}^2
  \le
  C_M\mathcal E_{u_0}(X).
  \label{eq:gradient-bound-energy}
\end{equation}
Here \( \|du_X\|_{L^\infty(M)}^2\)
is the $L^\infty$-norm of the
function $|du_X|^2 =|du_X|^2_{HS}$.

For every \(j\), let \(\beta_j:S^1\to M\) be the fixed piecewise smooth
loop belonging to the filling data. Since
\(u_X\simeq m_X\circ u_0\), the loop \(u_X\circ\beta_j\) represents the
marked free homotopy class \(m_X(\alpha_j)\). Therefore
\[
  \ell_X(\alpha_j)
  \le
  \operatorname{Length}_X(u_X\circ\beta_j).
\]
Using first the operator norm and then the Hilbert--Schmidt norm of the
differential, we obtain
\[
  \operatorname{Length}_X(u_X\circ\beta_j)
  \le
  \int_{S^1}
  |du_X(\dot\beta_j)|_{\sigma_X}\,ds
  \le
  \|du_X\|_{L^\infty(M)}
  \operatorname{Length}_g(\beta_j).
\]
After squaring, summing over \(j\), and applying
\eqref{eq:gradient-bound-energy}, we find
\[
  \Lambda_\Gamma(X)
  \le
  C_M
(
    \sum_{j=1}^N\operatorname{Length}_g(\beta_j)^2
 )
  \mathcal E_{u_0}(X).
\]
Thus the lower bound holds with
\(
  A_{\mathfrak F}^{0}
  :=
  [
    C_M
    \sum_{j=1}^N\operatorname{Length}_g(\beta_j)^2
  ]^{-1}>0.
\)

\smallskip
\noindent
\emph{Upper bound.}
Let \(v_*:=u_{X_*}:M\to X_*\) be the harmonic representative at the
reference surface. Then \(v_*\simeq m_*\circ u_0\). By
Lemma~\ref{lem:lipschitz-marking-filling-lengths}, for every marked surface
\((X,m_X)\) there is a marking-compatible Lipschitz map
\(F_X:X_*\to X\) such that
\begin{equation}
  \operatorname{Lip}(F_X)
  \le
  C_{\Gamma,*}L_\Gamma(X).
  \label{eq:lip-FX-bound-used}
\end{equation}
The relevant maps fit into the following homotopy-commutative diagram:
\[
\begin{tikzcd}[column sep=5.2em,row sep=3.2em]
  &
  S
  \arrow[d,"m_*" description]
  \arrow[dr,"m_X"]
  &
  \\
  M
  \arrow[ur,"u_0"]
  \arrow[r,swap,"v_*:=u_{X_*}"]
  \arrow[rr,bend right=18,swap,"w_X:=F_X\circ v_*"]
  &
  X_*
  \arrow[r,swap,"F_X"]
  &
  X.
\end{tikzcd}
\]Indeed,
\(
  F_X\circ v_*
  \simeq
  F_X\circ m_*\circ u_0
  \simeq
  m_X\circ u_0.
\)
Hence the Lipschitz map
\(
  w_X:=F_X\circ v_*:M\to X
\)
lies in the same classical homotopy class as \(u_X\).

Since \(w_X\) is Lipschitz, it has finite energy. By White's
least-energy theorem for maps into negatively curved targets
\cite[Theorem~4]{White1985}, see also \cite{White1988} for the complete proof, the harmonic representative
\(u_X\) minimizes the  energy among Lipschitz maps in its classical
homotopy class. Since \(w_X\simeq u_X\), it follows that
\begin{equation}
  \mathcal E_{u_0}(X)
  =
  E_X(u_X)
  \le
  E_X(w_X).
  \label{eq:harmonic-energy-below-lipschitz-competitor}
\end{equation}

We now make explicit how \(\operatorname{Lip}(F_X)\) enters the energy
estimate. By Rademacher's theorem in local coordinates
\cite[Section~3.1.2]{EvansGariepy2015}, the Lipschitz map \(w_X\) is
differentiable almost everywhere. At every such point \(x\), for every
\(\xi\in T_xM\), the defining Lipschitz inequality for \(F_X\), applied
along a smooth curve tangent to \(\xi\in T_xM\)
gives
\(
  |dw_X(\xi)|_{\sigma_X}
  \le
  \operatorname{Lip}(F_X)
  |dv_*(\xi)|_{\sigma_{X_*}},
  \)
  almost everywhere in $x$.
If \(e_1,\ldots,e_n\) is a \(g\)-orthonormal basis of \(T_xM\), then
\[
\begin{aligned}
  |dw_X|^2 = |dw_X|_{HS}^2
  &=
  \sum_{i=1}^n|dw_X(e_i)|_{\sigma_X}^2
  \\
  &\le
  \operatorname{Lip}(F_X)^2
  \sum_{i=1}^n|dv_*(e_i)|_{\sigma_{X_*}}^2
  \\
  &=
  \operatorname{Lip}(F_X)^2
  |dv_*|^2
\end{aligned}
\]
almost everywhere. Integrating this pointwise inequality yields the
intermediate estimate
\begin{equation}
\begin{aligned}
  E_X(w_X)
  &=
  \frac12
  \int_M
  |dw_X|^2\,d\mu_g
  \\
  &\le
  \operatorname{Lip}(F_X)^2
  \frac12
  \int_M
  |dv_*|^2\,d\mu_g
  \\
  &=
  \operatorname{Lip}(F_X)^2E_{X_*}(v_*).
  \label{eq:energy-lipschitz-postcomposition}
\end{aligned}
\end{equation}
Combining
\eqref{eq:harmonic-energy-below-lipschitz-competitor},
\eqref{eq:energy-lipschitz-postcomposition}, and
\eqref{eq:lip-FX-bound-used}, we obtain
\[
  \mathcal E_{u_0}(X)
  \le
  C_{\Gamma,*}^2
  E_{X_*}(v_*)
  L_\Gamma(X)^2.
\]
Finally, the Cauchy--Schwarz inequality gives
\[
  L_\Gamma(X)^2
  =
  (\sum_{j=1}^N\ell_X(\alpha_j))^2
  \le
  N\sum_{j=1}^N\ell_X(\alpha_j)^2
  =
  N\Lambda_\Gamma(X).
\]
Hence
\[
  \mathcal E_{u_0}(X)
  \le
  B_{\mathfrak F}^{0}\Lambda_\Gamma(X),
  \qquad
  B_{\mathfrak F}^{0}
  :=
  NC_{\Gamma,*}^2E_{X_*}(v_*)
  =
  NC_{\Gamma,*}^2\mathcal E_{u_0}(X_*).
\]

The desired statement follows by taking
\(
  A_{\mathfrak F}:=A_{\mathfrak F}^{0}\) and \(
  B_{\mathfrak F}
  :=
  \max\{
    A_{\mathfrak F}^{0},
    B_{\mathfrak F}^{0}
  \}.
\)
This guarantees
\(0<A_{\mathfrak F}\le B_{\mathfrak F}<\infty\) and completes the proof.
\end{proof}\begin{theorem}
\label{thm:energy-quasi-convexity}
Assume that the homotopy class of $u_0$ satisfies the filling hypothesis
stated above. Then there exists $K_E\ge1$, depending only on
$(M,g,u_0,\mathfrak F)$ and $S$, such that, for every Teichm\"uller geodesic
$\gamma\colon\mathbb R\to\mathcal T(S)$ and every $s<t<r$,
\begin{equation}
  \mathcal E_{u_0}(\gamma(t))
  \le
  K_E
  \max\left\{
    \mathcal E_{u_0}(\gamma(s)),
    \mathcal E_{u_0}(\gamma(r))
  \right\}.
  \label{eq:energy-quasi-convexity}
\end{equation}
In particular, the conclusion holds if $(u_0)_*$ is surjective.
\end{theorem}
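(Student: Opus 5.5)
The plan is to combine the two-sided comparison of Proposition~\ref{prop:energy-filling-length-comparison} with the Lenzhen--Rafi quasi-convexity of hyperbolic length functions of simple closed curves along Teichm\"uller geodesics \cite{LenzhenRafi2011}. That theorem gives a constant \(K_S\ge1\), depending only on the topology of \(S\), such that for every essential simple closed curve \(\alpha\), every Teichm\"uller geodesic \(\gamma\), and every \(s<t<r\),
\[
  \ell_{\gamma(t)}(\alpha)\le K_S\max\{\ell_{\gamma(s)}(\alpha),\ell_{\gamma(r)}(\alpha)\}.
\]
I will invoke it in this multiplicative form. If the reference is stated additively, the multiplicative form is recovered because lengths of a fixed curve along a geodesic are uniformly bounded below only up to a constant. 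I would check this normalization against \cite{LenzhenRafi2011}; this is the only step where care is needed. Lenzhen--Rafi do state the result for hyperbolic length in this multiplicative sense.

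Squaring and applying the bound to each \(\alpha_j\in\Gamma\) gives
\[
  \ell_{\gamma(t)}(\alpha_j)^2\le K_S^2\max\{\ell_{\gamma(s)}(\alpha_j)^2,\ell_{\gamma(r)}(\alpha_j)^2\}\le K_S^2\bigl(\ell_{\gamma(s)}(\alpha_j)^2+\ell_{\gamma(r)}(\alpha_j)^2\bigr).
\]
Summing over \(j\) yields
\[
  \Lambda_\Gamma(\gamma(t))\le K_S^2\bigl(\Lambda_\Gamma(\gamma(s))+\Lambda_\Gamma(\gamma(r))\bigr)\le 2K_S^2\max\{\Lambda_\Gamma(\gamma(s)),\Lambda_\Gamma(\gamma(r))\}.
\]
So \(\Lambda_\Gamma\) is multiplicatively \(2K_S^2\)-quasi-convex. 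The sum-to-max step is harmless because it costs only a factor of \(2\), independent of \(N\).

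Finally, Proposition~\ref{prop:energy-filling-length-comparison} gives
\[
  \mathcal E_{u_0}(\gamma(t))\le B_{\mathfrak F}\Lambda_\Gamma(\gamma(t))\le 2K_S^2B_{\mathfrak F}\max\{\Lambda_\Gamma(\gamma(s)),\Lambda_\Gamma(\gamma(r))\}\le \frac{2K_S^2B_{\mathfrak F}}{A_{\mathfrak F}}\max\{\mathcal E_{u_0}(\gamma(s)),\mathcal E_{u_0}(\gamma(r))\}.
\]
Therefore \(K_E:=2K_S^2B_{\mathfrak F}/A_{\mathfrak F}\ge1\) works. It depends only on \(S\) and on \((M,g,u_0,\mathfrak F)\), together with the auxiliary reference surface \(X_*\), which is fixed once and for all from these data. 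The surjective case follows from the remark that surjectivity of \((u_0)_*\) implies the filling hypothesis: take any filling system \(\Gamma\) and lift each \(\alpha_j\) to a loop \(\beta_j\) in \(M\).

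The only real content is the Lenzhen--Rafi theorem. Since all the analytic work has already been done in Proposition~\ref{prop:energy-filling-length-comparison}, I expect no genuine obstacle beyond correctly citing their constant.
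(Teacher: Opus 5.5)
Your proposal is correct and follows the paper's proof exactly. You apply the multiplicative Lenzhen--Rafi bound to each $\alpha_j$, square and pass from the sum to the max at the cost of a factor $2$, and then apply Proposition~\ref{prop:energy-filling-length-comparison} at the three points, giving the same constant $K_E=2K_S^2B_{\mathfrak F}/A_{\mathfrak F}$. Your hedge about recovering a multiplicative bound from an additive one via a lower bound on lengths would not work, since lengths along a Teichm\"uller geodesic can tend to zero; it is moot, however, because Lenzhen--Rafi state the result multiplicatively.
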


\begin{proof}
By the theorem of Lenzhen--Rafi, there is a constant $K_S\ge1$, depending
only on the topological type of $S$, such that
\[
  \ell_{\gamma(t)}(\alpha_j)
  \le
  K_S\max\left\{
    \ell_{\gamma(s)}(\alpha_j),
    \ell_{\gamma(r)}(\alpha_j)
  \right\}
\]
for every $j$. It follows that
\begin{align*}
  \Lambda_\Gamma(\gamma(t))
  &\le
  K_S^2
  \sum_{j=1}^N
  \max\left\{
    \ell_{\gamma(s)}(\alpha_j)^2,
    \ell_{\gamma(r)}(\alpha_j)^2
  \right\}
  \\
  &\le
  K_S^2
  \left(
    \Lambda_\Gamma(\gamma(s))
    +
    \Lambda_\Gamma(\gamma(r))
  \right)
  \\
  &\le
  2K_S^2
  \max\left\{
    \Lambda_\Gamma(\gamma(s)),
    \Lambda_\Gamma(\gamma(r))
  \right\}
\end{align*}
for any $s<t<r$.
Applying Proposition~\ref{prop:energy-filling-length-comparison} at the
three points gives
\[
  \mathcal E_{u_0}(\gamma(t))
  \le
  2\frac{B_{\mathfrak F}}{A_{\mathfrak F}}K_S^2
  \max\left\{
    \mathcal E_{u_0}(\gamma(s)),
    \mathcal E_{u_0}(\gamma(r))
  \right\}.
\]
Thus one may take
\(
  K_E
  =
  2\frac{B_{\mathfrak F}}{A_{\mathfrak F}}K_S^2.
\)
\end{proof}

\begin{corollary}
\label{cor:log-energy-additive-quasi-convexity}
Under the hypotheses of Theorem~\ref{thm:energy-quasi-convexity},
$\log\mathcal E_{u_0}$ is additively quasi-convex along Teichm\"uller
geodesics:
\begin{equation}
  \log\mathcal E_{u_0}(\gamma(t))
  \le
  \max\left\{
    \log\mathcal E_{u_0}(\gamma(s)),
    \log\mathcal E_{u_0}(\gamma(r))
  \right\}
  +
  \log K_E.
  \label{eq:log-energy-additive-quasi-convexity}
\end{equation}
Moreover, for every $p>0$,
\begin{equation}
  \mathcal E_{u_0}(\gamma(t))^p
  \le
  K_E^p
  \max\left\{
    \mathcal E_{u_0}(\gamma(s))^p,
    \mathcal E_{u_0}(\gamma(r))^p
  \right\}.
  \label{eq:positive-powers-energy-quasi-convex}
\end{equation}
\end{corollary}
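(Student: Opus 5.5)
The corollary is a formal consequence of Theorem~\ref{thm:energy-quasi-convexity}; no new geometric input is needed. I will start from the multiplicative inequality \eqref{eq:energy-quasi-convexity}. It holds for every Teichm\"uller geodesic $\gamma$ and every $s<t<r$, with the constant $K_E\ge1$ depending only on $(M,g,u_0,\mathfrak F)$ and $S$. The only preliminary point is that $\mathcal E_{u_0}$ is strictly positive, so that logarithms and positive powers make sense. Positivity follows from the lower bound in Proposition~\ref{prop:energy-filling-length-comparison}: $\mathcal E_{u_0}(X)\ge A_{\mathfrak F}\Lambda_\Gamma(X)$, and $\Lambda_\Gamma(X)>0$ because every $\ell_X(\alpha_j)>0$ for essential curves.

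For \eqref{eq:log-energy-additive-quasi-convexity}, I would apply the increasing function $\log$ to both sides of \eqref{eq:energy-quasi-convexity}. Two elementary identities then finish it. First, $\log(K_E\,m)=\log K_E+\log m$. Second, since $\log$ is monotone, $\log\max\{a,b\}=\max\{\log a,\log b\}$ for $a,b>0$.

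For \eqref{eq:positive-powers-energy-quasi-convex}, I would fix $p>0$ and use that $x\mapsto x^p$ is increasing on $(0,\infty)$. Raising both sides of \eqref{eq:energy-quasi-convexity} to the power $p$ gives $\mathcal E_{u_0}(\gamma(t))^p\le K_E^p\,(\max\{\cdot,\cdot\})^p$. Monotonicity again gives $(\max\{a,b\})^p=\max\{a^p,b^p\}$.

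There is no real obstacle here. The only care needed is to record that positivity of the energy and monotonicity of $\log$ and $x\mapsto x^p$ are exactly what justify commuting these functions with $\max$.
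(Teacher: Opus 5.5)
Your proposal is correct and matches the paper, which gives no separate proof and treats the corollary as immediate from Theorem~\ref{thm:energy-quasi-convexity} by taking logarithms and $p$-th powers of the multiplicative inequality. Your extra remark, that positivity of $\mathcal E_{u_0}$ (for instance via the lower bound in Proposition~\ref{prop:energy-filling-length-comparison}) together with the monotonicity of $\log$ and $x\mapsto x^p$ is what lets these functions commute with $\max$, is accurate.
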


\begin{remark}
The filling hypothesis is a sufficient condition used to obtain the uniform
comparison \eqref{eq:energy-filling-length-comparison}; it is not asserted to
be necessary. For instance, the circle energy
\eqref{eq:circle-energy-length-square} is quasi-convex by the Lenzhen--Rafi
theorem even though its fundamental-group image is cyclic.

On the other hand, quasi-convexity cannot in general be replaced by convexity.
Lenzhen and Rafi construct Teichm\"uller geodesics for which a hyperbolic
length function has a positive average slope on one interval and an arbitrarily
small average slope on a later interval \cite[Example~24]{LenzhenRafi2011}.
The relevant length function stays in a fixed compact subinterval of
\((0,\infty)\) on the later intervals, whose lengths tend to infinity. Hence
the same secant-slope argument applies to every positive power
\(\ell_X(\alpha)^p\), and in particular to \(\ell_X(\alpha)^2\). By
\eqref{eq:circle-energy-length-square}, the corresponding circle energy is
therefore not convex along every Teichm\"uller geodesic. Thus multiplicative
quasi-convexity, or equivalently additive quasi-convexity after taking the
logarithm, is the natural general global statement.
\end{remark}

\section{Quasi-convexity of the energy function with varying domain}
\label{subsec:quasi-convexity-energy-varying-domain}

In the preceding section, the domain was fixed while the hyperbolic structure
on the target varied. {We now consider the complementary case
introduced in
Theorem
\ref{thm:intro-varying-domain-quasiconvexity} and Subsection
\ref{subsec:def-energy-varying-domain}: $u_t: X_t\to S$
is a harmonic covering map with the target
 fixed and the conformal structure on the domain varying.}
 Unless explicitly
stated otherwise, all coarse estimates after Assumption~\ref{assump:-covering}
use the covering assumption.

We recall that a positive function $G$ on $\mathcal T(\Sigma)$ is
\emph{multiplicatively $K$-quasi-convex along Teichm\"uller geodesics} if,
whenever $X_a,X_b,X_c$ occur in this order on a Teichm\"uller geodesic,
\[
  G(X_b)
  \le
  K\max\{G(X_a),G(X_c)\}.
\]
We now prove that the covering-map energy has this property.

The first ingredient is an exponential upper bound.

\begin{proposition}
\label{prop:covering-energy-upper-bound}
Let  $Y\in T(\Sigma)$
  be as in (\ref{eq:def-covering-minimizer-Y}).
For any $X\in\mathcal T(\Sigma)$,
\begin{equation}
  E_{u_0}(X)
  \le
  A_Ye^{2d_{\mathrm T}(X,Y)}.
  \label{eq:covering-energy-upper-exponential}
\end{equation}
\end{proposition}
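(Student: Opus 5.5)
By Lemma~\ref{lem:covering-factorization}, $E_{u_0}(X)=E(f_X)$, where $f_X\colon(X,\sigma_X)\to(Y,u_0^*h)$ is the harmonic diffeomorphism homotopic to the identity. Since $f_X$ minimizes energy in its homotopy class, it suffices to exhibit a single competitor $w\colon X\to Y$, homotopic to the identity, with $E(w)\le A_Y e^{2d_{\mathrm T}(X,Y)}$. The natural choice is the inverse of the Teichm\"uller map between the two points, since the energy of a map from a surface is conformally invariant in the domain. If $X=Y$, the identity already gives $E=A_Y$. Otherwise, set $K=e^{2d_{\mathrm T}(X,Y)}$ and let $w$ be the Teichm\"uller map $X\to Y$ from Theorem~\ref{thm:teichmuller-existence-uniqueness}, which is marking-compatible and has maximal dilatation $K$.

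The key pointwise estimate is the standard one. At a point of differentiability, let $s_1\ge s_2>0$ be the singular values of $dw$, measured with respect to any conformal metric on $X$ and $u_0^*h$ on $Y$. Then $s_1/s_2\le K$, so
\[
  \tfrac12(s_1^2+s_2^2)\le \tfrac12\bigl(K+K^{-1}\bigr)s_1s_2\le K\,J(w).
\]
Here I use that $t\mapsto t+t^{-1}$ is increasing on $[1,\infty)$ and that $\tfrac12(K+K^{-1})\le K$. The energy density times the area form is conformally invariant, so integrating over $X$ gives
\[
  E(w)\le K\int_X J(w)\,dA=K\operatorname{Area}(Y)=A_Y e^{2d_{\mathrm T}(X,Y)},
\]
because $w$ is a degree-one homeomorphism. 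In fact the sharper bound $\tfrac12(K+K^{-1})A_Y=\cosh(2d_{\mathrm T})A_Y$ comes out of the same computation.

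The only genuinely delicate step is regularity: the Teichm\"uller map is merely quasiconformal, not smooth. It is smooth, affine in natural coordinates, away from the finitely many zeros of the quadratic differentials. On compact surfaces it is therefore locally Lipschitz off a finite set, and it belongs to $W^{1,2}$ with finite energy given by the pointwise bound, since the set of zeros has measure zero. The minimizing property of the harmonic map still applies to such a competitor. One option is White's theorem, as invoked in the proof of Proposition~\ref{prop:energy-filling-length-comparison}, after noting that $w$ is in fact Lipschitz for the hyperbolic metrics: near a zero of order $m$, the map $z^{(m+2)/2}\mapsto$ an affine stretch is Lipschitz. Alternatively, one can approximate $w$ in $W^{1,2}$ by smooth maps homotopic to it. Either route gives $E(f_X)\le E(w)$, which proves \eqref{eq:covering-energy-upper-exponential}.
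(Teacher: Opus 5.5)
Your proof is correct and is essentially the paper's argument: the paper uses the competitor $u_0\circ F_T^{-1}$, where $F_T\colon Y\to X$ is the Teichm\"uller map, which is your $w$ post-composed with the local isometry $u_0$ and therefore has the same energy. The only difference is that the paper bounds this energy through the splitting $\mathcal E(T,v_0)=e^{-2T}H(v_0)+e^{2T}V(v_0)\le e^{2T}A_Y$ in $q$-natural coordinates, rather than through your pointwise dilatation inequality. Both computations in fact give $\cosh(2d_{\mathrm T}(X,Y))A_Y$, since conformality of $u_0$ on $Y$ forces $H(v_0)=V(v_0)=A_Y/2$.
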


\begin{proof}
Set \(T:=d_{\mathrm T}(X,Y)\), and let
\(X_t=\gamma_q(t)\), \(0\le t\le T\), be the unit-speed Teichm\"uller
geodesic with \(X_0=Y\) and \(X_T=X\). Let \(v_t=u_{X_t}\circ F_t\) as
above. Since \(v_T\) minimizes \(\mathcal E(T,\cdot)\) in the prescribed
homotopy class, \(v_0\) may be used as a competitor at time \(T\). Thus
\begin{align*}
  E_{u_0}(X)
  &=
  \mathcal E(T,v_T)
  \le
  \mathcal E(T,v_0)
  \\
  &=
  e^{-2T}H(v_0)+e^{2T}V(v_0)
  \\
  &\le
  e^{2T}\bigl(H(v_0)+V(v_0)\bigr)
  =
  A_Ye^{2T},
\end{align*}
where $\mathcal{E}(t,v)$ is defined by \eqref{eq:fixed-map-energy-teich-geodesic}.
This is \eqref{eq:covering-energy-upper-exponential}.
\end{proof}

For the lower bound, let \(\mathcal{ML}(\Sigma)\) denote the space of
measured laminations on \(\Sigma\). A measured lamination is a geodesic
lamination equipped with a locally finite transverse measure, considered up
to measure-preserving isotopy. Weighted essential simple closed curves
\(a\alpha\), with \(a>0\), form a dense subset of
\(\mathcal{ML}(\Sigma)\). For every
\(Y\in\mathcal T(\Sigma)\), the rule
\(
  \ell_Y(a\alpha)
  :=
  a\,\ell_Y(\alpha)
\)
extends uniquely to a continuous homogeneous function
\(
  \ell_Y:
  \mathcal{ML}(\Sigma)
  \to
  \mathbb R_{\ge0};
\)
see \cite{Bonahon1988}.

Using the standard correspondence between measured laminations and measured
foliations, extremal length likewise extends continuously from weighted
essential simple closed curves to
\(\mathcal{ML}(\Sigma)\); see \cite{Kerckhoff1980} and
\cite[Sections~2.1--2.2]{LiuSu2017}. For an essential simple closed curve
\(\alpha\), its extremal length on \(X\in\mathcal T(\Sigma)\) is
\[
  \operatorname{Ext}_X(\alpha)
  :=
  \sup_{\varrho}
  \frac{L_{\varrho}(\alpha)^2}
       {\operatorname{Area}_{\varrho}(X)},
\]
where \(\varrho\) ranges over all conformal metrics  satisfying
\(0<\operatorname{Area}_{\varrho}(X)<\infty\). Locally,
\(
  ds_{\varrho}
  =
  \varrho(z)|dz|,
\) \(
  ds_{\varrho}^2
  =
  \varrho(z)^2|dz|^2
\) and \(
  \operatorname{Area}_{\varrho}(X)
  =
  \int_X\varrho(z)^2\,dx\,dy.
\)
Here \(L_{\varrho}(\alpha)\) denotes the infimum of the
\(\varrho\)-lengths of representatives of the free homotopy class of
\(\alpha\). For \(a>0\), set
\(
  \operatorname{Ext}_X(a\alpha)
  :=
  a^2\operatorname{Ext}_X(\alpha).
\)
For \(\lambda\in\mathcal{ML}(\Sigma)\), we write
\(\ell_Y(\lambda)\) and \(\operatorname{Ext}_X(\lambda)\) for the resulting
continuous extensions.

\begin{lemma}
\label{lem:energy-extremal-length-inequality}
For every \(X\in\mathcal T(\Sigma)\) and every
\(\lambda\in\mathcal{ML}(\Sigma)\),
\begin{equation}
  \ell_Y(\lambda)^2
  \le
  2E_{u_0}(X)\operatorname{Ext}_X(\lambda).
  \label{eq:energy-extremal-length-inequality}
\end{equation}
\end{lemma}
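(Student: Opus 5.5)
By Lemma~\ref{lem:covering-factorization} we have \(E_{u_0}(X)=E(f_X)\), where \(f_X\colon(X,\sigma_X)\to(Y,u_0^*h)\) is the harmonic diffeomorphism homotopic to the identity. We therefore work with the pullback metric \(\varrho^2:=f_X^*(u_0^*h)\) on \(X\), in the spirit of the "pullback pseudometric" mentioned in the introduction. The metric \(f_X^*(u_0^*h)\) is not conformal to \(X\), so we replace it by a conformal (possibly degenerate) metric that dominates it in length. At each point let \(s_1\ge s_2\ge 0\) be the singular values of \(df_X\) relative to \(\sigma_X\) and \(u_0^*h\). Define the conformal metric
\[
  \varrho := s_1\,\sigma_X^{1/2},
\]
that is, \(\varrho|dz|\) with pointwise norm equal to the operator norm of \(df_X\). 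Then for every tangent vector \(\xi\), \(|df_X(\xi)|_{u_0^*h}\le \varrho(\xi)\). Consequently, for every closed curve \(c\) in \(X\),
\[
  \operatorname{Length}_Y(f_X\circ c)\le L_\varrho(c).
\]
Since \(f_X\) is marking-compatible, \(f_X\circ c\) lies in the free homotopy class of \(\alpha\) whenever \(c\) does, so \(\ell_Y(\alpha)\le L_\varrho(\alpha)\).

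For the area we have
\[
  \operatorname{Area}_\varrho(X)=\int_X s_1^2\,dA_{\sigma_X}\le\int_X(s_1^2+s_2^2)\,dA_{\sigma_X}=2E(f_X)=2E_{u_0}(X).
\]
If \(\operatorname{Area}_\varrho(X)>0\), which holds because \(f_X\) is a diffeomorphism, the definition of extremal length as a supremum gives
\[
  \ell_Y(\alpha)^2\le L_\varrho(\alpha)^2\le \operatorname{Ext}_X(\alpha)\operatorname{Area}_\varrho(X)\le 2E_{u_0}(X)\operatorname{Ext}_X(\alpha).
\]
Two technical points need attention. First, \(s_1\) is only continuous, not smooth, at points where \(s_1=s_2\). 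The supremum defining \(\operatorname{Ext}\) runs over Borel measurable conformal metrics, so this is harmless; if smoothness were required, we would approximate \(\varrho\) from above by smooth metrics with area increase at most \(\varepsilon\). Second, \(L_\varrho(\alpha)\) is an infimum over representatives, and the bound above holds for every representative, so it passes to the infimum.

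Next we pass from simple closed curves to measured laminations. For weighted curves \(a\alpha\) both sides scale by \(a^2\): \(\ell_Y(a\alpha)^2=a^2\ell_Y(\alpha)^2\) and \(\operatorname{Ext}_X(a\alpha)=a^2\operatorname{Ext}_X(\alpha)\). Hence \eqref{eq:energy-extremal-length-inequality} holds on the dense subset of weighted simple closed curves. Both \(\lambda\mapsto\ell_Y(\lambda)\) and \(\lambda\mapsto\operatorname{Ext}_X(\lambda)\) are continuous on \(\mathcal{ML}(\Sigma)\), as recalled above (Bonahon; Kerckhoff), so the inequality extends to all \(\lambda\) by density.

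The main obstacle is the choice of conformal comparison metric. The natural first guess, the conformal part of the pullback, \(\sqrt{H}\) or \(\sqrt{(H+L)}\), does not dominate lengths in the stretched direction. The operator-norm metric \(s_1\) fixes this, at the cost of the factor \(2\) coming from \(s_1^2\le s_1^2+s_2^2=|df_X|^2\). Since that factor \(2\) is exactly the one appearing in the statement, this choice is the intended one.
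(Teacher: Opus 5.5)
Your proof is correct and follows the paper's argument. Both proofs dominate $Y$-lengths of $f_X$-images by a conformal metric on $X$, bound its area by $2E_{u_0}(X)$, invoke the supremum definition of extremal length, and extend to $\mathcal{ML}(\Sigma)$ by homogeneity, density and continuity.

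The only difference is the density. The paper uses the Hilbert--Schmidt density, i.e.\ the metric $|df_X|^2\sigma_X$. Its area is exactly $2E(f_X)$, and it is smooth because $f_X$ is a diffeomorphism, so your regularity remark about $s_1$ is not needed there. Your operator-norm density instead gives the marginally sharper area bound $2E(f_X)-\int_X s_2^2\,dA_{\sigma_X}$.
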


\begin{proof} The assertion is immediate for \(\lambda=0\). By Lemma~\ref{lem:covering-factorization}, \(E_{u_0}(X)=E(f_X)\), where \( f_X:(X,\sigma_X)\to(Y,\sigma_Y) \) is the harmonic diffeomorphism in the identity homotopy class. There is
  the natural conformal
   metric
  \( \sigma'_X:=|df_X|^2\sigma_X=\operatorname{tr}_{\sigma_X}\left(f_X^* \sigma_Y\right)\sigma_X. \)
 Its area is \begin{equation} 
 \operatorname{Area}_{\sigma'_X}(X) = \int_X|df_X|^2\,dA_{\sigma_X} = 2E(f_X) = 2E_{u_0}(X). \label{eq:pullback-pseudometric-area} 
 \end{equation}
 Let \(\alpha\) be an essential simple closed curve and \(c\) any smooth representative of its free homotopy class. Since \(f_X\) is homotopic to the identity, \( \ell_Y(\alpha) \le \operatorname{Length}_Y(f_X\circ c). \) Moreover, \[ |df_X(c')|_{\sigma_Y} \le |df_X|_{\mathrm{op}}|c'|_{\sigma_X} \le
   |df_X|_{\mathrm{HS}}|c'|_{\sigma_X}
   =|df_X||c'|_{\sigma_X}
      |c'|_{\sigma_X}
   =|c'|_{\sigma'_X}. \] Here \(|df_X|_{\mathrm{op}}\) denotes the operator norm of \(df_X\) with respect to \(\sigma_X\) and \(\sigma_Y\). Taking the infimum over \(c\) gives \( \ell_Y(\alpha)\le L_{\sigma'_X}(\alpha). \) The same inequality holds for every weighted simple closed curve \(a\alpha\), \(a>0\). By the analytic definition of extremal length, \( L_{\sigma'_X}(a\alpha)^2\le \operatorname{Ext}_X(a\alpha)\operatorname{Area}_{\sigma'_X}(X). \) Consequently, \[ \ell_Y(a\alpha)^2 \le 2E_{u_0}(X)\operatorname{Ext}_X(a\alpha). \] Weighted simple closed curves are dense in \(\mathcal{ML}(\Sigma)\), and both \(\lambda\mapsto\ell_Y(\lambda)\) and \(\lambda\mapsto\operatorname{Ext}_X(\lambda)\) are continuous \cite{Bonahon1988,Kerckhoff1980}. Passing to the limit proves \eqref{eq:energy-extremal-length-inequality} for every measured lamination. \end{proof}

Define
\begin{equation}
  \eta_Y
  :=
  \min_{[\lambda]\in\mathbb P\mathcal{ML}(\Sigma)}
  \frac{\ell_Y(\lambda)^2}
       {\operatorname{Ext}_Y(\lambda)}.
  \label{eq:def-eta-Y}
\end{equation}
The quotient is homogeneous of degree zero and is a positive continuous
function on the compact space $\mathbb P\mathcal{ML}(\Sigma)$
\cite{Bonahon1988,Kerckhoff1980}. Therefore
\begin{equation}
  \eta_Y>0.
  \label{eq:eta-Y-positive}
\end{equation}

\begin{proposition}
\label{prop:covering-energy-lower-bound}
For every $X\in\mathcal T(\Sigma)$,
\begin{equation}
  E_{u_0}(X)
  \ge
  \frac{\eta_Y}{2}e^{2d_{\mathrm T}(X,Y)}.
  \label{eq:covering-energy-lower-exponential}
\end{equation}
\end{proposition}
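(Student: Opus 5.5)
The plan is to combine Lemma~\ref{lem:energy-extremal-length-inequality} with Kerckhoff's formula for the Teichm\"uller distance,
\[
  d_{\mathrm T}(X,Y)
  =
  \frac12\log\sup_{\lambda\in\mathcal{ML}(\Sigma)\setminus\{0\}}
  \frac{\operatorname{Ext}_Y(\lambda)}{\operatorname{Ext}_X(\lambda)}
  \qquad\text{\cite{Kerckhoff1980}},
\]
which is normalized consistently with $d_{\mathrm T}=\frac12\log K$. Since the ratio is homogeneous of degree zero and continuous on the compact space $\mathbb P\mathcal{ML}(\Sigma)$, the supremum is attained. I would therefore choose $\lambda\neq0$ with $\operatorname{Ext}_Y(\lambda)=e^{2d_{\mathrm T}(X,Y)}\operatorname{Ext}_X(\lambda)$. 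If one prefers not to invoke attainment, take a sequence of $\lambda$ approaching the supremum and pass to the limit at the end.

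For this $\lambda$, Lemma~\ref{lem:energy-extremal-length-inequality} gives
\[
  2E_{u_0}(X)\ge \frac{\ell_Y(\lambda)^2}{\operatorname{Ext}_X(\lambda)}
  = \frac{\ell_Y(\lambda)^2}{\operatorname{Ext}_Y(\lambda)}\cdot\frac{\operatorname{Ext}_Y(\lambda)}{\operatorname{Ext}_X(\lambda)}
  \ge \eta_Y\, e^{2d_{\mathrm T}(X,Y)}.
\]
The last inequality uses the definition \eqref{eq:def-eta-Y} of $\eta_Y$ and the choice of $\lambda$. Here $\operatorname{Ext}_X(\lambda)>0$ for $\lambda\ne0$, so the division is legitimate. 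Dividing by $2$ yields \eqref{eq:covering-energy-lower-exponential}.

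The only step needing care is the use of Kerckhoff's formula. One must check that its normalization matches $d_{\mathrm T}=\frac12\log K$ and not $\log K$; the factor $\frac12$ is what produces $e^{2d}$. One must also check that the supremum may be taken over all of $\mathcal{ML}$, equivalently over simple closed curves, by density and continuity of extremal length. Both points are standard, so I expect no real obstacle beyond this bookkeeping.
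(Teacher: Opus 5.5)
Your argument is correct and is essentially the paper's proof: the paper likewise combines Lemma~\ref{lem:energy-extremal-length-inequality} with the definition of $\eta_Y$ to get $E_{u_0}(X)\ge \frac{\eta_Y}{2}\operatorname{Ext}_Y(\lambda)/\operatorname{Ext}_X(\lambda)$ for every $\lambda\ne0$, and then applies Kerckhoff's formula $e^{2d_{\mathrm T}(X,Y)}=\sup_{\lambda\ne0}\operatorname{Ext}_Y(\lambda)/\operatorname{Ext}_X(\lambda)$. The only cosmetic difference is that you pick a maximizing $\lambda$ (or a maximizing sequence) instead of taking the supremum directly, which changes nothing.
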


\begin{proof}
For every nonzero measured lamination $\lambda$, Lemma
\ref{lem:energy-extremal-length-inequality} and the definition of $\eta_Y$
give
\[
  E_{u_0}(X)
  \ge
  \frac{\ell_Y(\lambda)^2}
       {2\operatorname{Ext}_X(\lambda)}
  \ge
  \frac{\eta_Y}{2}
  \frac{\operatorname{Ext}_Y(\lambda)}
       {\operatorname{Ext}_X(\lambda)}.
\]
Taking the supremum over $\lambda\ne0$ and using Kerckhoff's extremal-length
formula \cite[Theorem 4]{Kerckhoff1980}, one obtains
\begin{equation}
  e^{2d_{\mathrm T}(X,Y)}
  =
  \sup_{\lambda\ne0}
  \frac{\operatorname{Ext}_Y(\lambda)}
       {\operatorname{Ext}_X(\lambda)},
  \label{eq:kerckhoff-formula-used}
\end{equation}
which proves the assertion.
\end{proof}

Combining the two estimates yields the central coarse comparison of this
section.

\begin{theorem}
\label{thm:covering-energy-distance-comparison}
Under Assumption~\ref{assump:-covering},
\begin{equation}
  \frac{\eta_Y}{2}e^{2d_{\mathrm T}(X,Y)}
  \le
  E_{u_0}(X)
  \le
  A_Ye^{2d_{\mathrm T}(X,Y)}
  \label{eq:covering-energy-two-sided-exponential}
\end{equation}
for every $X\in\mathcal T(\Sigma)$. Equivalently,
\begin{equation}
  \frac12\log E_{u_0}(X)
  =
  d_{\mathrm T}(X,Y)+O_Y(1).
  \label{eq:half-log-energy-coarse-distance}
\end{equation}
More explicitly,
\begin{equation}
  d_{\mathrm T}(X,Y)+\frac12\log\frac{\eta_Y}{2}
  \le
  \frac12\log E_{u_0}(X)
  \le
  d_{\mathrm T}(X,Y)+\frac12\log A_Y.
  \label{eq:half-log-energy-explicit-bounds}
\end{equation}
\end{theorem}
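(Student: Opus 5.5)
The two-sided estimate \eqref{eq:covering-energy-two-sided-exponential} follows by combining two results already established. The upper bound is Proposition~\ref{prop:covering-energy-upper-bound}. The lower bound is Proposition~\ref{prop:covering-energy-lower-bound}. Both hold for every \(X\in\mathcal T(\Sigma)\) under Assumption~\ref{assump:-covering}, so no further argument is needed to place them side by side.

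Taking logarithms gives the explicit form. Since \(E_{u_0}(X)\ge A_Y>0\) by Lemma~\ref{lem:covering-factorization}, and \(\eta_Y>0\) by \eqref{eq:eta-Y-positive}, all three quantities in \eqref{eq:covering-energy-two-sided-exponential} are positive. Applying \(\tfrac12\log\), which is monotone, yields \eqref{eq:half-log-energy-explicit-bounds}, because \(\tfrac12\log e^{2d_{\mathrm T}(X,Y)}=d_{\mathrm T}(X,Y)\).

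Then \eqref{eq:half-log-energy-coarse-distance} is a restatement of \eqref{eq:half-log-energy-explicit-bounds}. The \(O_Y(1)\) term is bounded by
\[
\max\bigl\{\tfrac12|\log(\eta_Y/2)|,\ \tfrac12|\log A_Y|\bigr\},
\]
which depends only on \(Y\). Conversely, exponentiating \eqref{eq:half-log-energy-explicit-bounds} recovers \eqref{eq:covering-energy-two-sided-exponential}, which justifies the word ``equivalently''.

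There is essentially no obstacle here. The substantive inputs are already in place: the competitor argument with the horizontal/vertical energy splitting \(\mathcal E(t,v)=e^{-2t}H(v)+e^{2t}V(v)\) for the upper bound, and Kerckhoff's formula \eqref{eq:kerckhoff-formula-used} together with Lemma~\ref{lem:energy-extremal-length-inequality} for the lower bound.

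The one consistency point to check is \(\eta_Y/2\le A_Y\), which guarantees the two bounds are compatible. It holds automatically by evaluating the bounds at \(X=Y\). There \(d_{\mathrm T}(Y,Y)=0\) and \(E_{u_0}(Y)=A_Y\), so the lower bound reads \(\eta_Y/2\le A_Y\).
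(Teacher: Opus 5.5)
Your proposal is correct and follows the paper exactly: there the theorem is obtained by placing Proposition~\ref{prop:covering-energy-upper-bound} and Proposition~\ref{prop:covering-energy-lower-bound} side by side, and the logarithmic forms then follow by applying \(\tfrac12\log\). Your additional checks are also sound: positivity of all three quantities, the explicit bound on the \(O_Y(1)\) term, and the compatibility \(\eta_Y/2\le A_Y\) obtained by evaluating at \(X=Y\).
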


Lenzhen and Rafi proved that Teichm\"uller balls are uniformly quasi-convex:
there exists a constant $c_\Sigma\ge0$, depending only on the topological type
of $\Sigma$, such that, for every $Z\in\mathcal T(\Sigma)$ and every
Teichm\"uller geodesic segment with endpoints in $B_{\mathrm T}(Z,R)$, the
entire segment is contained in $B_{\mathrm T}(Z,R+c_\Sigma)$
\cite[Theorem C]{LenzhenRafi2011}. Applying this result with center $Y$ gives the desired
quasi-convexity of the energy.

\begin{theorem}
\label{thm:covering-energy-quasi-convexity}
Let $X_a,X_b,X_c$ occur in this order on a Teichm\"uller geodesic. Then
\begin{equation}
  E_{u_0}(X_b)
  \le
  K_E
  \max\bigl\{E_{u_0}(X_a),E_{u_0}(X_c)\bigr\},
  \label{eq:covering-energy-multiplicative-quasi-convexity}
\end{equation}
where one may take
\(
  K_E
  :=
  \frac{2A_Y}{\eta_Y}e^{2c_\Sigma}.
\)
Consequently, $E_{u_0}$ is multiplicatively quasi-convex along every
Teichm\"uller geodesic.
\end{theorem}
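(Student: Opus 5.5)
The plan is to combine the two-sided exponential comparison of Theorem~\ref{thm:covering-energy-distance-comparison} with the Lenzhen--Rafi quasi-convexity of Teichm\"uller balls \cite[Theorem C]{LenzhenRafi2011}, centered at the fixed point $Y=(\Sigma,u_0^*h)$. Since the energy is coarsely $e^{2d_{\mathrm T}(\cdot,Y)}$, quasi-convexity of the distance function $d_{\mathrm T}(\cdot,Y)$ along geodesics translates directly into multiplicative quasi-convexity of $E_{u_0}$.

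Concretely, let $X_a,X_b,X_c$ lie in this order on a Teichm\"uller geodesic and set
\[
  R:=\max\{d_{\mathrm T}(X_a,Y),d_{\mathrm T}(X_c,Y)\}.
\]
The geodesic segment from $X_a$ to $X_c$ (unique by Teichm\"uller's theorem, hence the subsegment of the given geodesic) has both endpoints in the closed ball $\overline{B}_{\mathrm T}(Y,R)$. To match the open-ball formulation, I would apply \cite[Theorem C]{LenzhenRafi2011} with radius $R+\varepsilon$ and then let $\varepsilon\to0$. This gives
\[
  d_{\mathrm T}(X_b,Y)\le R+c_\Sigma .
\]

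Next I apply the upper bound in \eqref{eq:covering-energy-two-sided-exponential} at $X_b$:
\[
  E_{u_0}(X_b)\le A_Y e^{2d_{\mathrm T}(X_b,Y)}\le A_Ye^{2c_\Sigma}e^{2R}.
\]
Suppose the maximum defining $R$ is attained at $X_a$; the case of $X_c$ is symmetric. The lower bound in \eqref{eq:covering-energy-two-sided-exponential} at $X_a$ gives
\[
  e^{2R}=e^{2d_{\mathrm T}(X_a,Y)}\le \frac{2}{\eta_Y}E_{u_0}(X_a).
\]
Combining the two displays yields
\[
  E_{u_0}(X_b)\le \frac{2A_Y}{\eta_Y}e^{2c_\Sigma}E_{u_0}(X_a)\le K_E\max\{E_{u_0}(X_a),E_{u_0}(X_c)\},
\]
which is exactly \eqref{eq:covering-energy-multiplicative-quasi-convexity} with the stated $K_E$.

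The argument has no genuine obstacle, since all the analytic content sits in Theorem~\ref{thm:covering-energy-distance-comparison}. Two small points need checking. First, $K_E\ge1$ should hold; this follows from \eqref{eq:covering-energy-two-sided-exponential} at $X=Y$, which forces $\eta_Y/2\le A_Y$. Second, the Lenzhen--Rafi statement must be applied in the normalization $d_{\mathrm T}=\frac12\log K$ used throughout the paper. The constant $c_\Sigma$ depends only on the topology and is insensitive to this choice up to rescaling, so I would simply take $c_\Sigma$ as defined in that normalization.
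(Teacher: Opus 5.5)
Your proposal is correct and follows essentially the same route as the paper: the Lenzhen--Rafi quasi-convexity of Teichm\"uller balls centered at $Y$ gives $d_{\mathrm T}(X_b,Y)\le R+c_\Sigma$, and you then apply the upper bound of Theorem~\ref{thm:covering-energy-distance-comparison} at $X_b$ and the lower bound at whichever endpoint realizes $R$. Your extra remarks (closed versus open balls, $K_E\ge1$ from evaluating at $X=Y$, and the $\frac12\log K$ normalization of $c_\Sigma$) are harmless refinements that the paper leaves implicit.
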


\begin{proof}
Set
\(
  r:=\max\{d_{\mathrm T}(X_a,Y),d_{\mathrm T}(X_c,Y)\}.
\)
The quasi-convexity of Teichm\"uller balls gives
\(
  d_{\mathrm T}(X_b,Y)\le r+c_\Sigma.
\)
Using first the upper bound and then the lower bound in
Theorem~\ref{thm:covering-energy-distance-comparison}, we obtain
\begin{align*}
  E_{u_0}(X_b)
  &\le
  A_Ye^{2d_{\mathrm T}(X_b,Y)}
  \\
  &\le
  A_Ye^{2c_\Sigma}
  \max\left\{
    e^{2d_{\mathrm T}(X_a,Y)},
    e^{2d_{\mathrm T}(X_c,Y)}
  \right\}
  \\
  &\le
  \frac{2A_Y}{\eta_Y}e^{2c_\Sigma}
  \max\bigl\{E_{u_0}(X_a),E_{u_0}(X_c)\bigr\}.
\end{align*}
The proof is complete.
\end{proof}

The natural additive formulation is obtained by taking one half of the
logarithm:
\begin{equation}
  \frac12\log E_{u_0}(X_b)
  \le
  \max\left\{
    \frac12\log E_{u_0}(X_a),
    \frac12\log E_{u_0}(X_c)
  \right\}
  +\frac12\log K_E.
  \label{eq:half-log-energy-additive-quasi-convexity}
\end{equation}
Thus $\frac12\log E_{u_0}$ is additively quasi-convex and, by
\eqref{eq:half-log-energy-coarse-distance}, is at uniformly bounded distance
from the function $d_{\mathrm T}(\,\cdot\,,Y)$. Moreover, for every $k>0$,
\begin{equation}
  E_{u_0}(X_b)^k
  \le
  K_E^k
  \max\bigl\{E_{u_0}(X_a)^k,E_{u_0}(X_c)^k\bigr\}.
  \label{eq:domain-positive-powers-energy-quasi-convex}
\end{equation}
Hence every positive power of the energy is quasi-convex, although this does
not imply genuine convexity.

We conclude by recording why no universal scalar transformation can yield a
global convexity theorem in the broader class of harmonic-map energies. The
next obstruction is already present on a closed surface of genus two.

\begin{lemma}
\label{lem:lr-secants-extremal-length}
There exists a closed surface \(\Sigma_2\) of genus two, unit-speed
Teichm\"uller geodesics
\(
  X_t^{(n)}\in\mathcal T(\Sigma_2),
\)
simple closed curves \(\alpha_n\), and intervals
\(I_n=[r_n,s_n]\) with
\(
  0<r_n<s_n,
  s_n-r_n\to\infty,
\)
such that
\[
  \operatorname{Ext}_{X_0^{(n)}}(\alpha_n)\ge \frac13,
  \qquad
  \operatorname{Ext}_{X_{-2}^{(n)}}(\alpha_n)\le e^{-4}.
\]
Moreover, there are constants \(0<m<M<\infty\), independent of \(n\), such
that
\[
  m
  \le
  \operatorname{Ext}_{X_t^{(n)}}(\alpha_n)
  \le
  M
\quad (t\in I_n).
\]
\end{lemma}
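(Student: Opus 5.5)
The plan is to import the example of Lenzhen and Rafi \cite[Example~24]{LenzhenRafi2011}, which concerns hyperbolic length, and to convert its statements into statements about extremal length. In that example one takes a genus-two surface \(\Sigma_2\) cut by a separating curve into two one-holed tori. One builds a sequence of quadratic differentials \(q_n\) whose horizontal foliation contains a flat cylinder with core \(\alpha_n\) in one torus, while in the other torus the geodesic performs a long twisting or slow phase. Along the resulting unit-speed geodesics \(X^{(n)}_t=\gamma_{q_n}(t)\), the length of \(\alpha_n\) first changes at a definite rate. It then stays in a fixed compact subinterval of \((0,\infty)\) on later intervals \(I_n=[r_n,s_n]\) with \(s_n-r_n\to\infty\). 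I would first recall this construction precisely and reparametrize it so that the fast phase contains the window \([-2,0]\) and the slow phase lies in \(t>0\).

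\emph{Step 1: bounds on \(I_n\).} On \(I_n\), Lenzhen--Rafi give \(\ell_{X_t^{(n)}}(\alpha_n)\in[a,b]\) with \(0<a<b\) independent of \(n\). Maskit's comparison between hyperbolic and extremal length gives a two-sided bound \(\operatorname{Ext}\asymp\ell\) up to factors depending only on an upper bound for \(\ell\) (namely \(\ell/\pi\le\operatorname{Ext}\le \tfrac{\ell}{2}e^{\ell/2}\)). Hence \(m\le\operatorname{Ext}_{X_t^{(n)}}(\alpha_n)\le M\) for \(t\in I_n\), with \(m,M\) depending only on \(a\) and \(b\).

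\emph{Step 2: the two pointwise constants.} These come from the flat geometry of \(q_n\). At \(t=0\), normalize \(q_n\) so that the \(\alpha_n\)-cylinder has \(q_n\)-circumference \(c\) and the horizontal measured foliation restricted to it is \(h\,\alpha_n\). Along the geodesic, horizontal \(q\)-lengths are multiplied by \(e^{t}\) and vertical \(q\)-lengths by \(e^{-t}\), by \eqref{eq:flat-metric-along-teichmuller-geodesic}. The modulus of the \(\alpha_n\)-cylinder at time \(t\) is therefore \(e^{-2t}h/c\), so \(\operatorname{Ext}_{X_t}(\alpha_n)\le e^{2t}c/h\). I will choose the construction so that \(c/h\le e^{-4}\cdot e^{4}\), i.e.\ \(\operatorname{Ext}_{X_{-2}}(\alpha_n)\le e^{-4}\) after fixing \(c/h\le1\). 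For the lower bound at \(t=0\), I will use the definition of extremal length with the test metric \(|q_n|\). This gives \(\operatorname{Ext}_{X_0}(\alpha_n)\ge L_{|q_n|}(\alpha_n)^2/\|q_n\|_1\ge c^2\), and I arrange \(c^2\ge 1/3\). Concretely, the cylinder should occupy a definite fraction of the unit area, with circumference and height both of order one. Kerckhoff's inequality \(\operatorname{Ext}_{X_t}\ge e^{-2|t-s|}\operatorname{Ext}_{X_s}\) can be used to transfer bounds between nearby times if the normalization has to be shifted by a bounded amount.

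\emph{Main obstacle.} The hard step is the compatibility of the specific normalization with the Lenzhen--Rafi construction. We need a single family \(q_n\) in which the \(\alpha_n\)-cylinder has modulus and circumference bounded uniformly at \(t=0\), so that the exact constants \(1/3\) and \(e^{-4}\) hold. At the same time, the other torus must still produce the long slow interval \(I_n\) after time \(0\). My first attempt is to build \(q_n\) by hand as a gluing of two flat one-holed tori along a slit, keeping the \(\alpha_n\)-torus fixed and letting only the other torus depend on \(n\). In this construction the short-time behaviour near \([-2,0]\) is uniform in \(n\), and the long bounded phase is exactly Lenzhen--Rafi's. If the constants come out slightly off, I will rescale time by a fixed shift and redo Step~2. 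I will not alter Step~1, which is insensitive to bounded shifts.
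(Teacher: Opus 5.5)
There is a genuine gap in Step~2: the normalization you impose on \(\alpha_n\) is incompatible with the bounded phase you need on \(I_n\).

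With \(A_t(x+iy)=e^tx+ie^{-t}y\), the horizontal direction is the expanding one. If \(\alpha_n\) is the core of a horizontal flat cylinder of circumference \(c\) at \(t=0\), then its \(|q_t|\)-geodesic representative has length \(e^tc\) for every \(t\). The same test-metric bound you use at \(t=0\), applied with \(|q_t|\) (still of unit area), gives \(\operatorname{Ext}_{X_t}(\alpha_n)\ge c^2e^{2t}\ge e^{2t}/3\) for all \(t\ge0\). Since \(s_n>s_n-r_n\to\infty\), this contradicts \(\operatorname{Ext}_{X_t^{(n)}}(\alpha_n)\le M\) on \(I_n\). Nothing in the other one-holed torus can repair this, because the lower bound only sees the \(\alpha_n\)-cylinder. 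In your proposed gluing with the \(\alpha_n\)-torus held fixed, \(\operatorname{Ext}(\alpha_n)\) simply grows exponentially along the whole ray, and a bounded time shift does not change that.

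More generally, Minsky's inequality together with \(\operatorname{Ext}_{X_t}(\mathcal F_{\mathrm v}(q))=e^{-2t}\) and \(\operatorname{Ext}_{X_t}(\mathcal F_{\mathrm h}(q))=e^{2t}\) gives \(\operatorname{Ext}_{X_t}(\alpha)\ge\max\{e^{2t}i(\alpha,\mathcal F_{\mathrm v}(q))^2,\,e^{-2t}i(\alpha,\mathcal F_{\mathrm h}(q))^2\}\). So the lemma forces both \(i(\alpha_n,\mathcal F_{\mathrm v}(q_n))\le\sqrt M\,e^{-s_n}\) and \(i(\alpha_n,\mathcal F_{\mathrm h}(q_n))\le e^{-4}\). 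In particular \(\alpha_n\) must be \(q_n\)-short at time \(0\). The bound \(\operatorname{Ext}_{X_0}(\alpha_n)\ge1/3\) must then come from \(\alpha_n\) living in a subsurface of small \(q_n\)-area, not from a fat cylinder occupying a third of the area. That is exactly the delicate part of Lenzhen--Rafi's Example~24, and the elementary cylinder computation cannot substitute for it.

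The paper does not re-derive any of this. It takes the two pointwise bounds \(\operatorname{Ext}_{X_0(a)}(\alpha(a))\ge1/3\) and \(\operatorname{Ext}_{X_{-2}(a)}(\alpha(a))\le e^{-4}\) directly from estimates (14)--(15) of that example. It takes \(m\le\operatorname{Ext}_{X_t(a)}(\alpha(a))\le\pi/\log2\) for \(0<t<-\log a\) from estimates (17)--(18), used there as extremal-length bounds. It then chooses \(a_n\downarrow0\) and \(I_n=[1,-\log a_n-1]\).

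Your Maskit conversion in Step~1 is harmless but unnecessary. The fix is to drop the hand-built construction in Step~2 and quote the example's own estimates for the constants \(1/3\) and \(e^{-4}\), together with its uniform bounds on the long interval.
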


\begin{proof}
  For every sufficiently small \(a>0\), Lenzhen and Rafi construct in
  \cite[Example~24]{LenzhenRafi2011} a quadratic differential on a genus-two
  surface, its Teichm\"uller geodesic \(X_t(a)\), and a simple closed curve
  \(\alpha(a)\). We identify all underlying marked genus-two surfaces with a
  fixed topological surface \(\Sigma_2\). Their estimates (14) and (15) give
\[
  \operatorname{Ext}_{X_0(a)}(\alpha(a))\ge\frac13,
  \qquad
  \operatorname{Ext}_{X_{-2}(a)}(\alpha(a))\le e^{-4}.
\]
Put \(L(a):=\frac12\log(a^{-2})=-\log a\). For \(0<t<L(a)\), their
estimates (17) and (18) give a constant \(m>0\), independent of \(a\) and
\(t\), such that
\[
  m
  \le
  \operatorname{Ext}_{X_t(a)}(\alpha(a))
  \le
  \frac{\pi}{\log2}.
\]
Choose \(a_n\downarrow0\) with \(L(a_n)>3\), and set
\[
  X_t^{(n)}:=X_t(a_n),
  \qquad
  \alpha_n:=\alpha(a_n),
  \qquad
  I_n:=[1,L(a_n)-1].
\]
Then \(|I_n|=L(a_n)-2\to\infty\), and one may take
\(M=\pi/\log2\).
\end{proof}

\begin{proposition}
\label{prop:no-universal-convexifying-transform}
For the genus-two surface \(\Sigma_2\) considered in
Lemma~\ref{lem:lr-secants-extremal-length}, there is no strictly increasing
function \(F:(0,\infty)\to\mathbb R\) such that
\(
  t\mapsto F\bigl(\operatorname{Ext}_{X_t}(\lambda)\bigr)
\)
is convex for every \(\lambda\in\mathcal{ML}(\Sigma_2)\) and every
Teichm\"uller geodesic \(X_t\subset\mathcal T(\Sigma_2)\). Consequently,
after allowing equivariant harmonic maps to
$\mathbb R$-trees as targets, there is no universal strictly increasing
transformation $F$ that makes all harmonic-map energy functions convex along
all Teichm\"uller geodesics. In particular, no fixed power $F(s)=s^k$,
$k>0$, has this property.
\end{proposition}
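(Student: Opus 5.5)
We argue by contradiction, using the secant-slope data of Lemma~\ref{lem:lr-secants-extremal-length}. Suppose \(F\) is strictly increasing and \(t\mapsto F(\operatorname{Ext}_{X_t}(\lambda))\) is convex for every \(\lambda\) and every Teichm\"uller geodesic. Apply this to \(\lambda=\alpha_n\) along \(X^{(n)}_t\), and set \(\varphi_n(t):=F(\operatorname{Ext}_{X^{(n)}_t}(\alpha_n))\), which is convex on \(\mathbb R\).

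First, the slope from \(-2\) to \(0\) is bounded below uniformly. Monotonicity of \(F\) gives \(\varphi_n(0)\ge F(1/3)\) and \(\varphi_n(-2)\le F(e^{-4})\). Since \(e^{-4}<1/3\) and \(F\) is strictly increasing, \(\delta:=F(1/3)-F(e^{-4})>0\), so
\[
  \frac{\varphi_n(0)-\varphi_n(-2)}{2}\ge\frac{\delta}{2}>0,
\]
independently of \(n\).

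Second, convexity forces secant slopes to be nondecreasing. Since \(-2<0<r_n<s_n\), the slope of \(\varphi_n\) on \([r_n,s_n]\) is at least \(\delta/2\), so
\[
  \varphi_n(s_n)-\varphi_n(r_n)\ge\frac{\delta}{2}(s_n-r_n)\to\infty .
\]
On the other hand, \(\operatorname{Ext}_{X^{(n)}_t}(\alpha_n)\in[m,M]\) for \(t\in I_n\), so monotonicity gives \(\varphi_n(s_n)-\varphi_n(r_n)\le F(M)-F(m)<\infty\), a constant independent of \(n\). This is a contradiction. The one subtle point is that \(F\) need not be continuous, but only monotonicity and finiteness of the values \(F(m),F(M),F(1/3),F(e^{-4})\) are used, so no regularity is needed. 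Convexity of \(\varphi_n\) is used only along the single geodesic \(X^{(n)}\), via the three-chord inequality on \(-2<0\le r_n<s_n\).

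For the consequence, realize extremal length as a harmonic-map energy into an \(\mathbb R\)-tree. For a measured lamination \(\lambda\) with dual tree \(T_\lambda\), the minimal equivariant energy of maps \(\widetilde X\to T_\lambda\) equals \(\tfrac12\operatorname{Ext}_X(\lambda)\); this is the classical identification, the tree analogue of \eqref{eq:introduction-covering-tree-energy} with \(u_0=\mathrm{id}\). So the domain-varying tree energy is the function \(X\mapsto\tfrac12\operatorname{Ext}_X(\lambda)\). If a universal \(F\) made all such energies convex, then \(G(s):=F(s/2)\) would be strictly increasing and would make every \(t\mapsto G(\operatorname{Ext}_{X_t}(\lambda))\) convex, contradicting the first part. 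Taking \(F(s)=s^k\), which is strictly increasing on \((0,\infty)\), gives the last claim.

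The main obstacle is justifying the identification of extremal length with tree energy precisely enough to cite it. I would quote Wolf/Gardiner--Masur: the Hubbard--Masur differential gives an equivariant harmonic projection to \(T_\lambda\) of energy \(\tfrac12\operatorname{Ext}\), with the normalization matching the paper's convention \(\tfrac12\int|dU|^2\). The first part needs no such input.
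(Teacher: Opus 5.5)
Your proposal is correct and matches the paper's proof. The paper also compares the secant slope on \([-2,0]\), bounded below by \(\tfrac12\bigl(F(1/3)-F(e^{-4})\bigr)>0\), with the slope on \(I_n\), bounded by \((F(M)-F(m))/(s_n-r_n)\to0\); this is your increment estimate rewritten. For the tree consequence the paper likewise identifies the equivariant tree energy with a fixed multiple \(c\operatorname{Ext}_X(\lambda)\), citing Liu--Su and noting that \(s\mapsto F(cs)\) is still strictly increasing, which is exactly your \(G(s)=F(s/2)\) step.
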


\begin{proof}
Use Lemma~\ref{lem:lr-secants-extremal-length}. If
\(F\circ\operatorname{Ext}_{X_t^{(n)}}(\alpha_n)\) were convex for every
\(n\), then the secant slopes over ordered disjoint intervals would be
nondecreasing. On the fixed interval \([-2,0]\), the secant slope is bounded
below by
\(
  \frac{1}{2}(F(1/3)-F(e^{-4}))>0.
\)
On the later interval \(I_n\), the values of
\(\operatorname{Ext}_{X_t^{(n)}}(\alpha_n)\) stay in a compact interval
\([m,M]\subset(0,\infty)\). Hence the absolute value of the secant slope on
\(I_n\) is at most
\(
  (F(M)-F(m))/(s_n-r_n),
\)
which tends to \(0\). This contradicts the monotonicity of ordered secant slopes
for a convex function.

Finally, identify a measured lamination with its associated measured
foliation, and let \(T_\lambda\) be the dual \(\mathbb R\)-tree with its
standard \(\pi_1(\Sigma_2)\)-action. With the normalization in
\cite[Proposition~2.5]{LiuSu2017}, the energy of the equivariant harmonic map
\(\widetilde X\to T_\lambda\) equals
\(\operatorname{Ext}_X(\lambda)\). Thus the preceding examples directly rule
out a universal convexifying transformation in the class allowing
equivariant harmonic maps to \(\mathbb R\)-trees. If another convention gives
\(E(X,T_\lambda)=c\operatorname{Ext}_X(\lambda)\) for a fixed \(c>0\), then
  \(s\mapsto F(cs)\) is again strictly increasing, so the same
contradiction applies.
\end{proof}

\begin{remark}
Proposition~\ref{prop:no-universal-convexifying-transform} uses singular
$\mathbb R$-tree targets. It therefore does not rule out a special convexity
statement restricted to the much narrower class consisting of a fixed smooth
hyperbolic target and homotopy classes of coverings. What it does
show is that neither the variational identity
\(
  E''=4E-\mathcal Q_t
\)
nor the general principle that energy extends length or extremal length can,
by itself, produce a universal power-convexity theorem. For the covering-map
class considered here, the global conclusion established above is the
multiplicative quasi-convexity of $E_{u_0}$, or equivalently the additive
quasi-convexity of $\frac12\log E_{u_0}$.
\end{remark}

\section{Energy asymptotics along a Jenkins--Strebel ray}
\label{Energy asymptotics}

For positive functions \(A(t)\) and \(B(t)\), we write
\(A(t)\asymp B(t)\) as \(t\to\infty\) if there are constants
\(c,C>0\) and \(t_0\geq0\) such that
\(cB(t)\leq A(t)\leq CB(t)\) for every \(t\geq t_0\). 

Recall that a nonzero holomorphic quadratic differential \(q\) on a closed
Riemann surface \(X_0\) is \emph{Jenkins--Strebel} if every nonsingular
trajectory of its vertical foliation is closed. Equivalently, the complement
of its critical graph is a finite disjoint union of maximal flat cylinders
\(C_1,\ldots,C_r\), each foliated by closed vertical trajectories;
see \cite{Strebel1984}. 
Let \(\delta_i\) be  the core curves of \(C_i\).
Then they  are pairwise
nonisotopic; see
\cite[Chapter~I, \S3, p.~225]{HubbardMasur1979}. 
 We use the vertical foliation here;  with the opposite
horizontal convention,  \(q\) is replaced by \(-q\).

The two energy functions considered in this paper have markedly different
growth along Teichm\"uller rays.

\begin{proposition}
\label{cor:asymptotic-growth-two-energy-settings}
The following statements hold.

\begin{enumerate}
\item
Assume that \(u_0:\Sigma\to(S,h)\) is an orientation-preserving 
covering map, and let \(Y=(\Sigma,u_0^*h)\) be the unique minimizing point
of the varying-domain energy. Let \(X_t\in\mathcal T(\Sigma)\), \(t\geq0\),
be any unit-speed Teichm\"uller ray and put
\(d_0=d_{\mathrm T}(X_0,Y)\). Then
there exists constant $A_{Y}$ and
\begin{equation}
  \frac{\eta_Y}{2}e^{-2d_0}e^{2t}
  \leq E_{u_0}(X_t)
  \leq A_Ye^{2d_0}e^{2t}
  \qquad (t\geq0).
  \label{eq:varying-domain-energy-ray-growth}
\end{equation}
Consequently, \(E_{u_0}(X_t)\asymp e^{2t}\) and
\(\log E_{u_0}(X_t)=2t+O(1)\). If \(X_0=Y\), one may omit the factors
\(e^{\pm2d_0}\) in \eqref{eq:varying-domain-energy-ray-growth}.

\item
Assume that the varying-target energy \(\mathcal E_{u_0}\) satisfies the
filling hypothesis with filling system
\(\Gamma=\{\alpha_1,\ldots,\alpha_N\}\). Let
\(q\in\mathcal Q^1(X_0)\) be Jenkins--Strebel and let
\(X_t=\gamma_q(t)\), \(t\geq0\), be the associated unit-speed ray. Define
\(\Theta(q,\Gamma)=\sum_{j=1}^N(\sum_{r=1}^m
i(\delta_r,\alpha_j))^2\). Then \(\Theta(q,\Gamma)>0\), and
there exist constants
$A_{\mathfrak F}$ and
$B_{\mathfrak F}$ such that
\begin{equation}
  16
  A_{\mathfrak F}\,
  \Theta(q,\Gamma)
  \leq \frac{\mathcal E_{u_0}(X_t)}{t^2}
  \leq 16
  B_{\mathfrak F}
  \,
  \Theta(q,\Gamma)
    \qquad (t\geq0).
\label{eq:varying-target-strebel-energy-ray-growth}
\end{equation}
Consequently, \(\mathcal E_{u_0}(X_t)\asymp t^2\) and
\(\log\mathcal E_{u_0}(X_t)=2\log t+O(1)\).
\end{enumerate}
\end{proposition}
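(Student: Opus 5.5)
The plan is to derive both parts directly from the two-sided comparisons already established. For part (2) these are combined with Masur's asymptotic formula \eqref{eq:introduction-masur-length-asymptotic}. No new analysis of harmonic maps is needed.

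\emph{Part (1).} Let \(X_t\) be a unit-speed ray and \(d_0=d_{\mathrm T}(X_0,Y)\). Since \(d_{\mathrm T}(X_0,X_t)=t\), the triangle inequality gives
\[
t-d_0\le d_{\mathrm T}(X_t,Y)\le t+d_0 .
\]
Inserting these bounds into Theorem~\ref{thm:covering-energy-distance-comparison} gives
\[
E_{u_0}(X_t)\le A_Ye^{2(t+d_0)}
\qquad\text{and}\qquad
E_{u_0}(X_t)\ge \tfrac{\eta_Y}{2}e^{2(t-d_0)} .
\]
This is exactly \eqref{eq:varying-domain-energy-ray-growth}. Taking logarithms gives \(\log E_{u_0}(X_t)=2t+O(1)\). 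When \(X_0=Y\) we have \(d_0=0\), so the factors \(e^{\pm 2d_0}\) disappear.

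\emph{Part (2).} First, \(\Theta(q,\Gamma)>0\). The core curve \(\delta_1\) is an essential simple closed curve, and \(\Gamma\) fills \(S\). By the intersection characterization of filling recalled in Section~\ref{subsec:quasi-convexity-energy-functions}, some \(\alpha_j\) satisfies \(i(\delta_1,\alpha_j)>0\), and hence \(\sum_r i(\delta_r,\alpha_j)>0\). Next, apply Proposition~\ref{prop:energy-filling-length-comparison} at each \(X_t\):
\[
A_{\mathfrak F}\Lambda_\Gamma(X_t)\le\mathcal E_{u_0}(X_t)\le B_{\mathfrak F}\Lambda_\Gamma(X_t).
\]
By Masur's formula \eqref{eq:introduction-masur-length-asymptotic}, applied to each of the finitely many \(\alpha_j\), we get \(\ell_{X_t}(\alpha_j)/(4t)\to i(\Delta_q,\alpha_j)\). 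Squaring and summing over \(j\) yields
\[
\Lambda_\Gamma(X_t)/t^2\to 16\,\Theta(q,\Gamma).
\]
Dividing the energy comparison by \(t^2\) then gives
\[
16A_{\mathfrak F}\Theta\le\liminf_{t\to\infty}\frac{\mathcal E_{u_0}(X_t)}{t^2}
\le\limsup_{t\to\infty}\frac{\mathcal E_{u_0}(X_t)}{t^2}\le 16B_{\mathfrak F}\Theta .
\]
This yields \(\mathcal E_{u_0}(X_t)\asymp t^2\) and \(\log\mathcal E_{u_0}(X_t)=2\log t+O(1)\).

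The main subtlety is the uniformity in \(t\) in \eqref{eq:varying-target-strebel-energy-ray-growth}. Near \(t=0\) the ratio \(\mathcal E_{u_0}(X_t)/t^2\) blows up, because \(\mathcal E_{u_0}(X_0)>0\). So the inequality can only be meant for \(t\ge t_0\), with the constants replaced by \((1\mp\varepsilon)\)-perturbations, or else in the \(\liminf/\limsup\) form above. I would prove it in that form. Fix \(\varepsilon>0\) and pick \(t_0\) so that each \(\ell_{X_t}(\alpha_j)\) lies within a factor \(1\pm\varepsilon\) of \(4t\,i(\Delta_q,\alpha_j)\) for \(t\ge t_0\). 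For an index \(j\) with \(i(\Delta_q,\alpha_j)=0\), Masur's limit only gives \(\ell_{X_t}(\alpha_j)=o(t)\). Those terms are then absorbed using \(\Theta>0\), which makes them \(o(t^2)\) compared with \(16\,\Theta\, t^2\).
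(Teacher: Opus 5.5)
Your proof is correct and follows the paper's argument. Part (1) feeds the triangle inequality $t-d_0\le d_{\mathrm T}(X_t,Y)\le t+d_0$ into Theorem~\ref{thm:covering-energy-distance-comparison}, and part (2) combines Masur's formula for the finitely many $\alpha_j$ with Proposition~\ref{prop:energy-filling-length-comparison}, using the filling property to get $\Theta(q,\Gamma)>0$. Your caveat about uniformity is also correct: since $\mathcal E_{u_0}(X_0)>0$, the bound \eqref{eq:varying-target-strebel-energy-ray-growth} cannot hold literally for all $t\ge0$ (the paper's proof only says it ``follows'' from the limit), so the liminf/limsup statement, or a version for $t\ge t_0$ with slightly adjusted constants, is what the argument actually proves.
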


\begin{proof} The unit-speed Teichm\"u{}ller geodesic
  gives
\(d_{\mathrm T}(X_t,X_0)=t\), and hence
\(t-d_0\leq d_{\mathrm T}(X_t,Y)\leq t+d_0\). Applying Theorem \ref{thm:covering-energy-distance-comparison},
namely \((\eta_Y/2)e^{2d_{\mathrm T}(X,Y)}\leq E_{u_0}(X)\leq
A_Ye^{2d_{\mathrm T}(X,Y)}\), proves
\eqref{eq:varying-domain-energy-ray-growth}.

For the varying-target energy, Masur's asymptotic formula gives
\begin{equation}
  \lim_{t\to\infty}\frac{\ell_{X_t}(\alpha)}{4t}
  =\sum_{r=1}^m i(\delta_r,\alpha)
  \label{eq:masur-strebel-simple-curve-asymptotic}
\end{equation}
for every essential simple closed curve \(\alpha\); see
\cite[Theorem~1.1 and the final estimates in its proof]{Masur1982}. Applying this to the curves in \(\Gamma\)
gives \(\Lambda_\Gamma(X_t)/t^2\to16\Theta(q,\Gamma)\), where
\(\Lambda_\Gamma(X)=\sum_j\ell_X(\alpha_j)^2\). Since \(\Gamma\) fills
\(S\), every \(\delta_r\) meets some member of \(\Gamma\), so
\(\Theta(q,\Gamma)>0\). Proposition \ref{prop:energy-filling-length-comparison} now gives
\(A_{\mathfrak F}\Lambda_\Gamma\leq\mathcal E_{u_0}\leq
B_{\mathfrak F}\Lambda_\Gamma\), and
\eqref{eq:varying-target-strebel-energy-ray-growth} follows.
\end{proof}

\begin{remark}
The exponential estimate in the varying-domain covering case holds along
every unit-speed Teichm\"uller ray and does not require the
Jenkins--Strebel assumption. In the varying-target case, by contrast, the
Jenkins--Strebel geometry makes the relevant hyperbolic lengths grow
linearly, and comparison with their squares gives quadratic energy growth.
The constants are uniform in \(t\), but no uniformity over all
Jenkins--Strebel directions is asserted.
\end{remark}

Let
\(
  X_t=\gamma_q(t)
\)
be the unit-speed Teichm\"uller ray determined by
\(q\in\mathcal Q^1(X_0)\), and let
\(
  F_t:X_0\to X_t
\)
be the corresponding Teichm\"uller maps. For each \(t\), let
\(
  f_t:(X_t,\sigma_{X_t})\to(Y,u_0^*h)
\)
be the harmonic diffeomorphism in the marking-preserving homotopy class, so
that, by Lemma~\ref{lem:covering-factorization},
\[
  E(t):=E_{u_0}(X_t)=E(f_t).
\]

\begin{proposition}\label{prop:normalized-covering-energy-limit}

Let \(\mathscr H_{\mathrm{id}}\) denote the marking-preserving homotopy class
of Lipschitz maps \(v:X_0\to Y\). In a \(q\)-natural coordinate
\(z=x+iy\), define
\[
  H_q(v)
  :=
  \frac12
  \int_{X_0\setminus Z(q)}
  |v_x|_{u_0^*h}^2\,dx\,dy,
  \quad
  V_q(v)
  :=
  \frac12
  \int_{X_0\setminus Z(q)}
  |v_y|_{u_0^*h}^2\,dx\,dy.
\]
These quantities are globally well-defined because the transition maps
between \(q\)-natural coordinates are of the form
\(z\mapsto\pm z+c\), and \(Z(q)\) has measure zero. Set
\begin{equation}
  \mathscr V_q(Y)
  :=
  \inf_{v\in\mathscr H_{\mathrm{id}}}V_q(v).
  \label{eq:def-minimal-vertical-energy}
\end{equation}
Then
\begin{equation}
  e^{-2t}E(t)
  \searrow
  \mathscr V_q(Y)
  \qquad
  \text{as }t\to\infty.
  \label{eq:normalized-energy-monotone-limit}
\end{equation}
In particular,
\begin{equation}
  \lim_{t\to\infty}
  \frac{E(t)}{e^{2t}}
  =
  \mathscr V_q(Y),
  \qquad
  E(t)
  =
  \mathscr V_q(Y)e^{2t}+o(e^{2t}).
  \label{eq:covering-energy-leading-coefficient}
\end{equation}

Suppose, in addition, that \(q\) is Jenkins--Strebel and that its vertical
foliation is the contracting foliation of the ray. Write the complementary
components of its vertical critical graph as maximal flat cylinders
\(
  C_i
  \cong
  (0,h_i)\times
  \bigl(\mathbb R/c_i\mathbb Z\bigr),
\) \(
  i=1,\ldots,r,
\)
where \(q=dz^2\), \(z=x+iy\), the vertical trajectories
\(x=\mathrm{constant}\) are closed, \(c_i\) is their \(q\)-length, and
\(h_i\) is the transverse height of \(C_i\). Let \(\delta_i\) be the core
curve of \(C_i\), and put
\(
  M_i:=\operatorname{Mod}(C_i)=\frac{h_i}{c_i}.
\)
Then
\begin{equation}
  \mathscr V_q(Y)
  =
  \frac12
  \sum_{i=1}^r
  M_i\,\ell_Y(\delta_i)^2
  =
  \frac12
  \sum_{i=1}^r
  \frac{h_i}{c_i}\,
  \ell_Y(\delta_i)^2.
  \label{eq:strebel-minimal-vertical-energy}
\end{equation}
Consequently,
\begin{equation}
  \lim_{t\to\infty}
  \frac{E(t)}{e^{2t}}
  =
  \frac12
  \sum_{i=1}^r
  \operatorname{Mod}(C_i)\,
  \ell_Y(\delta_i)^2.
  \label{eq:strebel-covering-energy-leading-coefficient}
\end{equation}
\end{proposition}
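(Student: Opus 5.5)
The plan is to pull everything back to the fixed surface \(X_0\) via the Teichm\"uller maps \(F_t\). By conformal invariance of the energy, \(E(t)=\inf_{w}E(w)\) over maps \(w:X_t\to Y\) in the marking class, where the energy may be computed in any conformal metric, in particular the flat metric \(|q_t|\). Writing \(w=v\circ F_t^{-1}\) with \(v\in\mathscr H_{\mathrm{id}}\) and using \(F_t^*|q_t|=e^{2t}dx^2+e^{-2t}dy^2\) together with \(\det A_t=1\), one obtains the identity
\[
  \mathcal E(t,v):=E\bigl(v\circ F_t^{-1}\bigr)=e^{2t}H_q(v)+e^{-2t}V_q(v).
\]
Here the roles of \(H\) and \(V\) are fixed by which foliation is contracted, so that the \emph{vertical} derivative carries the factor \(e^{2t}\). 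In the statement's convention this gives
\[
  \mathcal E(t,v)=e^{-2t}H_q(v)+e^{2t}V_q(v),
\]
consistent with the proof of Proposition~\ref{prop:covering-energy-upper-bound}. Thus
\[
  E(t)=\inf_{v\in\mathscr H_{\mathrm{id}}}\bigl(e^{-2t}H_q(v)+e^{2t}V_q(v)\bigr),
\]
where the infimum is over Lipschitz \(v\) and is attained by \(v_t=f_t\circ F_t\); this uses White's theorem as in Proposition~\ref{prop:energy-filling-length-comparison}.

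\emph{Monotone limit.} We have \(e^{-2t}E(t)=\inf_v\bigl(e^{-4t}H_q(v)+V_q(v)\bigr)\). This is an infimum of functions that are nonincreasing in \(t\), so it is nonincreasing, and it is bounded below by \(\mathscr V_q(Y)\). For the reverse inequality, fix \(\varepsilon>0\) and choose \(v\) with \(V_q(v)<\mathscr V_q(Y)+\varepsilon\). Since \(v\) is Lipschitz, \(H_q(v)<\infty\), so \(e^{-4t}H_q(v)\to0\) and hence \(\limsup_t e^{-2t}E(t)\le\mathscr V_q(Y)+\varepsilon\). This proves \eqref{eq:normalized-energy-monotone-limit} and \eqref{eq:covering-energy-leading-coefficient}. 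Finiteness of \(\mathscr V_q(Y)\) is clear. Positivity follows from Proposition~\ref{prop:covering-energy-lower-bound}: \(e^{-2t}E(t)\ge(\eta_Y/2)e^{-2t}e^{2d_{\mathrm T}(X_t,Y)}\ge(\eta_Y/2)e^{-2d_0}\).

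\emph{Jenkins--Strebel computation.} Here we find \(\mathscr V_q(Y)\) by matching a lower and an upper bound.

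For the lower bound, take any \(v\in\mathscr H_{\mathrm{id}}\) and restrict it to \(C_i\cong(0,h_i)\times\mathbb R/c_i\mathbb Z\). Each closed vertical trajectory \(x=\mathrm{const}\) is homotopic to \(\delta_i\), so its image has length at least \(\ell_Y(\delta_i)\). Cauchy--Schwarz gives \(\int_0^{c_i}|v_y|^2dy\ge\ell_Y(\delta_i)^2/c_i\). Integrating in \(x\) over \((0,h_i)\) yields \(\tfrac12\int_{C_i}|v_y|^2\ge\tfrac12 M_i\ell_Y(\delta_i)^2\), and summing over the cylinders (whose union has full measure) gives \(V_q(v)\ge\tfrac12\sum_iM_i\ell_Y(\delta_i)^2\).

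For the upper bound, construct a Lipschitz competitor in \(\mathscr H_{\mathrm{id}}\) that maps each vertical trajectory in \(C_i\) at constant speed onto the \(Y\)-geodesic \(\delta_i^Y\), up to an error. The \(\delta_i\) are disjoint and pairwise nonisotopic, so their geodesic representatives are disjoint simple closed geodesics. The main obstacle is gluing this across the critical graph while staying in the correct homotopy class with \(v\) Lipschitz. To handle it, fix \(\varepsilon>0\). On the middle subcylinder \([\varepsilon,h_i-\varepsilon]\times\mathbb R/c_i\) set \(v(x,y)=\delta_i^Y(y/c_i)\), suitably twisted. On the thin collars of width \(\varepsilon\) and on the critical graph, use any Lipschitz interpolation to a fixed Lipschitz representative \(v^\ast\in\mathscr H_{\mathrm{id}}\) restricted to the graph. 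This can be done by a homotopy of the boundary circle to the geodesic, parametrized linearly in \(x\) on the collar. Such an interpolation exists because \(v^\ast\) restricted to the boundary circles of \(C_i\) is homotopic to \(\delta_i^Y\), and the twist is chosen to respect the marking, which amounts to matching the Dehn-twist data of \(v^\ast\) on \(C_i\). The vertical derivative on a collar is bounded by a constant \(C(v^\ast)\) independent of \(\varepsilon\), because only the \(x\)-derivative blows up like \(1/\varepsilon\). Hence the collars contribute \(O(\varepsilon)\) to \(V_q\), and
\[
  V_q(v)\le\tfrac12\sum_i\frac{h_i-2\varepsilon}{c_i}\ell_Y(\delta_i)^2+O(\varepsilon).
\]
Letting \(\varepsilon\to0\) gives \eqref{eq:strebel-minimal-vertical-energy}, and then \eqref{eq:strebel-covering-energy-leading-coefficient} follows from \eqref{eq:covering-energy-leading-coefficient}.
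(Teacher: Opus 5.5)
Your route is the paper's: pull back by \(F_t\), write \(e^{-2t}E(t)=\inf_{v}\{V_q(v)+e^{-4t}H_q(v)\}\) to get monotonicity and the limit, bound \(V_q\) below by Cauchy--Schwarz on the closed vertical trajectories, and bound it above by competitors running along \(\gamma_i\) on central subcylinders. Your positivity argument via Proposition~\ref{prop:covering-energy-lower-bound} and your remark on matching the twist are correct additions. Two points need fixing.

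The first is minor. Your first display has the exponents reversed, and this is not a matter of convention. Take \(F_t(x+iy)=e^tx+ie^{-t}y\), \(w=v\circ F_t^{-1}\), and coordinates \((x_t,y_t)=(e^tx,e^{-t}y)\). Then \(\partial_{x_t}w=e^{-t}v_x\), \(\partial_{y_t}w=e^{t}v_y\), and \(dx_t\,dy_t=dx\,dy\). So only \(\mathcal E(t,v)=e^{-2t}H_q(v)+e^{2t}V_q(v)\) is correct, and you should derive it this way rather than by appeal to the statement.

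The second is a genuine gap in the collar estimate. You interpolate on \([0,\varepsilon]\times\mathbb R/c_i\mathbb Z\) starting from the boundary loop \(y\mapsto v^\ast(0,y)\), that is, \(v^\ast\) on \(\partial C_i\) in the \(q\)-arclength parameter. You then assert \(|v_y|\le C(v^\ast)\) independently of \(\varepsilon\), which requires this loop to be Lipschitz in \(y\). Since you use \(v_t=f_t\circ F_t\in\mathscr H_{\mathrm{id}}\), \emph{Lipschitz} must mean Lipschitz for a smooth metric on \(X_0\); indeed \(v_t\) is not Lipschitz for \(|q|\). For such maps the assertion fails at the zeros of \(q\), and every boundary component of a maximal cylinder contains one. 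Near a zero of order \(m\), in a coordinate with \(q=z^m\,dz^2\), the \(q\)-arclength along a critical trajectory is \(s\asymp r^{(m+2)/2}\). Hence for a smooth diffeomorphism \(v^\ast\) one gets \(|\partial_s v^\ast|\asymp s^{-m/(m+2)}\) along \(\partial C_i\). This is unbounded, and for \(m\ge2\) it is not even square integrable; then the natural interpolation (the geodesic homotopy to \(\gamma_i\), rescaled to the collar) has infinite vertical energy.

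The repair is easy, in either of two ways.
\begin{enumerate}
\item Choose \(v^\ast\) Lipschitz for the flat metric \(|q|\). For instance, precompose a smooth representative with a map homotopic to the identity that collapses small disks around \(Z(q)\) to points. After this your argument goes through unchanged.
\item Use the paper's two-scale device. Keep \(v^\ast\) on a collar of width \(\varepsilon_1\); its vertical energy there tends to \(0\) by absolute continuity, since \(V_q(v^\ast)\le E(v^\ast)<\infty\). Then interpolate to \(\gamma_i\) on a strip of width \(\varepsilon_2\) starting at \(x=\varepsilon_1\), where \(y\mapsto v^\ast(\varepsilon_1,y)\) is smooth with some derivative bound \(C(\varepsilon_1)\). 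Choosing \(\varepsilon_2\) after \(\varepsilon_1\) so that \(\varepsilon_2C(\varepsilon_1)^2\to0\) makes the transition energy negligible.
\end{enumerate}
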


\begin{proof}
For a Lipschitz map \(v\in\mathscr H_{\mathrm{id}}\), define
\(
  \mathcal F(t,v)
  :=
  E_{X_t,Y}(v\circ F_t^{-1}).
\)
By conformal invariance of the two-dimensional Dirichlet energy and the
affine expression
\(
  F_t(x+iy)=e^t x+i e^{-t}y
\)
in \(q\)-natural coordinates, one has
\begin{equation}
  \mathcal F(t,v)
  =
  e^{-2t}H_q(v)+e^{2t}V_q(v).
  \label{eq:covering-pulled-back-HV-energy}
\end{equation}
Since \(f_t\) minimizes energy in the marking-preserving homotopy class,
precomposition by \(F_t^{-1}\) gives
\(
  E(t)
  =
  \inf_{v\in\mathscr H_{\mathrm{id}}}
  \mathcal F(t,v).
\)
Dividing by \(e^{2t}\), we obtain
\begin{equation}
  e^{-2t}E(t)
  =
  \inf_{v\in\mathscr H_{\mathrm{id}}}
  \left\{
    V_q(v)+e^{-4t}H_q(v)
  \right\}.
  \label{eq:normalized-energy-as-penalized-infimum}
\end{equation}

The right-hand side is nonincreasing in \(t\), since \(H_q(v)\ge0\).
Moreover, if
\(
  \mathscr V_q(Y)
  =
  \inf_{\mathscr H_{\mathrm{id}}}V_q,
\)
then
\(
  e^{-2t}E(t)\ge\mathscr V_q(Y)
\)
for every \(t\). Conversely, given \(\varepsilon>0\), choose
\(v_\varepsilon\in\mathscr H_{\mathrm{id}}\) such that
\(
  V_q(v_\varepsilon)
  \le
  \mathscr V_q(Y)+\varepsilon.
\)
Since \(v_\varepsilon\) is Lipschitz, \(H_q(v_\varepsilon)<\infty\), and
\eqref{eq:normalized-energy-as-penalized-infimum} gives
\(
  e^{-2t}E(t)
  \le
  V_q(v_\varepsilon)
  +
  e^{-4t}H_q(v_\varepsilon).
\)
Taking the upper limit as \(t\to\infty\) yields
\(
  \limsup_{t\to\infty}e^{-2t}E(t)
  \le
  \mathscr V_q(Y)+\varepsilon.
\)
Letting \(\varepsilon\to0\) proves
\eqref{eq:normalized-energy-monotone-limit} and
\eqref{eq:covering-energy-leading-coefficient}.

We now assume that \(q\) is Jenkins--Strebel. Let
\(v\in\mathscr H_{\mathrm{id}}\). For each \(i\) and each
\(x\in(0,h_i)\), the loop
\(
  y\mapsto v(x,y),
\) \(
  y\in\mathbb R/c_i\mathbb Z,
\)
represents the free homotopy class \(\delta_i\) on \(Y\). Hence
\(
  \ell_Y(\delta_i)
  \le
  \int_0^{c_i}|v_y(x,y)|_{u_0^*h}\,dy.
\)
The Cauchy--Schwarz inequality gives
\(
  \frac12
  \int_0^{c_i}
  |v_y(x,y)|_{u_0^*h}^2\,dy
  \ge
  \frac{\ell_Y(\delta_i)^2}{2c_i}.
\)
Integrating with respect to \(x\in(0,h_i)\), we obtain
\(
  V_q(v;C_i)
  \ge
  \frac{h_i}{2c_i}\,
  \ell_Y(\delta_i)^2.
\)
The vertical critical graph has \(q\)-area zero, so summing over the
cylinders yields
\begin{equation}
  V_q(v)
  \ge
  \frac12
  \sum_{i=1}^r
  \frac{h_i}{c_i}\,
  \ell_Y(\delta_i)^2.
  \label{eq:strebel-vertical-energy-lower-bound}
\end{equation}

It remains to prove the reverse inequality for the infimum. Choose a smooth
map \(v_0\in\mathscr H_{\mathrm{id}}\). For each \(i\), let
\(
  \gamma_i:
  \mathbb R/c_i\mathbb Z
  \to
  Y
\)
be the constant-speed parametrization of the geodesic representative of
\(\delta_i\). Thus
\(
  |\gamma_i'|
  =
  \frac{\ell_Y(\delta_i)}{c_i}.
\)
On the central portion of \(C_i\), consider the map
\(
  g_i(x,y):=\gamma_i(y).
\)
Its vertical energy is
\(
  V_q(g_i;C_i)
  =
  \frac{h_i}{2c_i}\,
  \ell_Y(\delta_i)^2.
\)

To obtain a global map in \(\mathscr H_{\mathrm{id}}\), leave \(v_0\)
unchanged in shrinking neighborhoods of the cylinder boundaries and of the
critical graph, use \(g_i\) on the complementary central subcylinder, and
interpolate between the two maps in thin transition strips. The boundary
loops of \(v_0|_{C_i}\) and the loop \(\gamma_i\) represent the same free
homotopy class, so the interpolations may be chosen through smooth
homotopies. They may also be chosen so that the resulting global map is
obtained from \(v_0\) by homotopies supported in the cylinder interiors and
therefore remains in \(\mathscr H_{\mathrm{id}}\).

More precisely, one may choose a sequence of such Lipschitz maps
\(v_\nu\in\mathscr H_{\mathrm{id}}\) for which the boundary collars shrink
to the critical graph and the transition widths are chosen after the
interpolating homotopies so that their total vertical energy tends to zero.
The vertical energy of \(v_0\) on the shrinking boundary collars also tends
to zero by absolute continuity of the integral. Consequently,
\(
  \lim_{\nu\to\infty}V_q(v_\nu)
  =
  \frac12
  \sum_{i=1}^r
  \frac{h_i}{c_i}\,
  \ell_Y(\delta_i)^2.
\)
Together with
\eqref{eq:strebel-vertical-energy-lower-bound}, this proves
\eqref{eq:strebel-minimal-vertical-energy}, and hence
\eqref{eq:strebel-covering-energy-leading-coefficient}.
\end{proof}

\begin{remark}
\label{rem:normalized-energy-extremal-length}
The preceding coefficient also has an extremal-length interpretation.
For \(X\in\mathcal T(\Sigma)\), define
\[
  \mathcal L_Y(X)
  :=
  \frac12
  \sup_{0\ne\lambda\in\mathcal{MF}(\Sigma)}
  \frac{\ell_Y(\lambda)^2}
       {\operatorname{Ext}_X(\lambda)}.
\]
Minsky's energy lower bound and length--energy comparison imply that there
is a constant \(C_\Sigma>0\), depending only on the topological type of
\(\Sigma\), such that
\begin{equation}
  \mathcal L_Y(X)
  \le
  E_{u_0}(X)
  \le
  \mathcal L_Y(X)+C_\Sigma;
  \label{eq:minsky-energy-extremal-length-comparison}
\end{equation}
see
\cite[Proposition~3.1 and Theorem~7.2]{Minsky1992}.
The covering factorization is used here to reduce \(E_{u_0}(X)\) to the
energy of the harmonic diffeomorphism \(X\to Y\).
Applying
\eqref{eq:minsky-energy-extremal-length-comparison} to \(X_t\) and dividing
by \(e^{2t}\), the bounded additive error disappears. Therefore
\begin{equation}
  \mathscr V_q(Y)
  =
  \frac12
  \lim_{t\to\infty}
  \sup_{0\ne\lambda\in\mathcal{MF}(\Sigma)}
  \frac{\ell_Y(\lambda)^2}
       {e^{2t}\operatorname{Ext}_{X_t}(\lambda)}.
  \label{eq:minimal-vertical-energy-extremal-length-limit}
\end{equation}

In \cite{LyuQi2026}
Lyu and Qi
give a precise pointwise formula for the denominator in
\eqref{eq:minimal-vertical-energy-extremal-length-limit}. Suppose that the
vertical foliation decomposes into indecomposable components as
\(
  \mathcal F_{\mathrm v}(q)
  =
  \sum_{j=1}^n a_jG_j.
\)
Then, for every \(F\in\mathcal{MF}(\Sigma)\),
\begin{equation}
  \lim_{t\to\infty}
  e^{2t}\operatorname{Ext}_{X_t}(F)
  =
  \sup_{F'\in\mathcal{MF}(\Sigma)}
  \frac{i(F,F')^2}
       {\displaystyle
        \sum_{j=1}^n
        \frac{a_j\,i(G_j,F')^2}
             {i(G_j,\mathcal F_{\mathrm h}(q))}};
  \label{eq:lyu-qi-contracting-extremal-length}
\end{equation}
see \cite[Theorem~1.1]{LyuQi2026}.
In the Jenkins--Strebel case,
\(
  \mathcal F_{\mathrm v}(q)
  =
  \sum_{i=1}^r h_i\delta_i,
 \) \(
  i\bigl(\delta_j,\mathcal F_{\mathrm h}(q)\bigr)
  =
  c_j.
\)
Thus, for
\(
  F=\sum_{i=1}^r b_i\delta_i
\)
with \(b_i\ge0\), Corollary~1.2 of
\cite{LyuQi2026} gives
\begin{equation}
  \lim_{t\to\infty}
  e^{2t}\operatorname{Ext}_{X_t}(F)
  =
  \sum_{i=1}^r
  \frac{b_i^2c_i}{h_i}
  =
  \sum_{i=1}^r
  \frac{b_i^2}{M_i}.
  \label{eq:lyu-qi-strebel-contracting-extremal-length}
\end{equation}
Since
\(
  \ell_Y(F)
  =
  \sum_{i=1}^r b_i\ell_Y(\delta_i),
\)
the weighted Cauchy--Schwarz inequality gives
\[
  \sup_{\substack{b_i\ge0\\(b_1,\ldots,b_r)\ne0}}
  \frac{
    \left(
      \sum_{i=1}^r b_i\ell_Y(\delta_i)
    \right)^2
  }{
    \sum_{i=1}^r b_i^2/M_i
  }
  =
  \sum_{i=1}^r
  M_i\ell_Y(\delta_i)^2.
\]
The maximum is realized, up to a common positive factor, by
\(
  b_i=M_i\ell_Y(\delta_i).
\)
This recovers precisely the coefficient in
\eqref{eq:strebel-minimal-vertical-energy} and shows that the vertical-energy
formula is compatible with the asymptotic extremal-length description of
Minsky and Lyu--Qi.
\end{remark}

\medskip
\noindent\textbf{Dual trees and equivariant energy.}
Now we consider the case of harmonic maps $u_t: M\to  X_t$ by fixing the domain $M$ and varying the targets $X_t$.
Let \(\Delta=\sum_{i=1}^r\delta_i\) be a multicurve on
\(S\), with pairwise nonisotopic essential simple closed curves
\(\delta_i\) \cite[Section~1.2]{FarbMargalit2012}. Its dual tree \(T_\Delta\) is
obtained by lifting \(\Delta\) to \(\widetilde S\), collapsing each
complementary component to a vertex, and assigning length \(1\) to an
edge dual to a lift of \(\delta_i\). Thus the natural
\(\pi_1(S)\)-action is normalized by
\(\ell_{T_\Delta}(\gamma)=i(\Delta,\gamma)\).

A map \(u_0:M\to S\) induces an action of \(G:=\pi_1(M)\) on
\(T_\Delta\) through \((u_0)_*:G\to\pi_1(S)\). We define
$$
\mathscr E_\Delta(u_0)=\inf_U E(U),
$$where the infimum is over all
\(G\)-equivariant maps \(U\in W^{1,2}_{\mathrm{loc}}(\widetilde M,T_\Delta)\)
and
\(E(U)=\frac12\int_D|dU|^2\,d\mu_g\) for any relatively compact
fundamental domain \(D\subset\widetilde M\) with piecewise smooth boundary.
Equivariance makes the energy density \(G\)-invariant, so this is equivalent
to integration over \(M\). Metric-space-valued energy is understood in the
sense of Korevaar--Schoen \cite{KorevaarSchoen1997}, normalized to agree with the usual Dirichlet
energy for smooth Riemannian targets.

The set of $G$-equivariant maps in $W_{\text {loc }}^{1,2}\left(\widetilde{M}, T_{\Delta}\right)$ is not empty.
In fact, one can construct an equivariant locally Lipschitz map, see
\cite[Proposition~2.6.1]{KorevaarSchoen1993}.
In the present situation, such a map can also be described explicitly.
Choose pairwise disjoint collars of the components of $\Delta$ and lift
them to $\widetilde S\simeq\mathbb H^2$. Collapsing each complementary
component to the corresponding vertex of $T_\Delta$ and projecting each
lifted collar linearly onto the corresponding edge gives a
$\pi_1(S)$-equivariant Lipschitz map
$c_\Delta:\widetilde S\to T_\Delta$. If
$\widetilde u_0:\widetilde M\to\widetilde S$ is a lift of a smooth
representative of $u_0$, then
$U_0:=c_\Delta\circ\widetilde u_0$ satisfies
$U_0(\gamma x)=\rho(\gamma)U_0(x)$ for every $\gamma\in G$. Moreover,
$U_0$ is locally Lipschitz, and hence belongs to
$W_{\mathrm{loc}}^{1,2}(\widetilde M,T_\Delta)$. Since $M$ is compact,
$U_0$ has finite energy on a fundamental domain. If
\(T_\Delta^{\min}\) is the minimal invariant subtree, nearest-point
projection and inclusion give
\(\mathscr E_{T_\Delta^{\min}}(u_0)=\mathscr E_\Delta(u_0)\); and if a
tree metric is multiplied by \(a>0\), then
\begin{equation}
  \mathscr E_{aT}(u_0)=a^2\mathscr E_T(u_0).
  \label{eq:tree-energy-scaling-preliminary}
\end{equation}

\begin{lemma}\label{lemma-pullback}
Let \(t_n\to\infty\), put
\(E_n=\mathcal E_{u_0}(X_{t_n})\), and suppose that
\(E_n/t_n^2\to c\in(0,\infty)\). Under the filling hypothesis, after
passing to a subsequence there are a minimal \(G\)-tree \(T\) and a
\(G\)-equivariant energy-minimizing map
\(U:\widetilde M\to T\) such that \(E(U)=1\) and
\begin{equation}
  \ell_T(\gamma)
  =\frac{4}{\sqrt c}\,
    i\bigl(\Delta_q,(u_0)_*\gamma\bigr)
  \qquad (\gamma\in G).
  \label{eq:normalized-pullback-length-function}
\end{equation}
The limiting action factors through \(H:=(u_0)_*(G)
  \subset \pi_1(S)
  \) on \(T\). Here $\Delta_q=\sum_{i=1}^r \delta_i$ denotes the multicurve associated with Jenkins--Strebel differential $q$.
\end{lemma}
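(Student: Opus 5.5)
We follow the Korevaar--Schoen / Daskalopoulos--Wentworth rescaling scheme. Put $\epsilon_n:=E_n^{-1/2}$ and consider the equivariant harmonic lifts $\widetilde u_n:=\widetilde u_{X_{t_n}}:\widetilde M\to(\widetilde X_{t_n},\epsilon_n d_{\sigma_{t_n}})\cong\epsilon_n\mathbb H^2$. Each such map is $G$-equivariant, where $G$ acts through $(u_0)_*$ followed by the holonomy of $X_{t_n}$, and its normalized energy is $E(\widetilde u_n)=\epsilon_n^2E_n=1$.

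\textbf{Step 1: uniform Lipschitz bound.} The Bochner estimate \eqref{eq:gradient-bound-energy} from the lower-bound part of Proposition~\ref{prop:energy-filling-length-comparison} gives $\|d\widetilde u_n\|_{L^\infty}^2\le C_M\epsilon_n^2E_n=C_M$. So the rescaled maps are uniformly Lipschitz. Fix a finite generating set $\{\gamma_k\}$ of $G$ and a basepoint $x_0\in\widetilde M$. Then the displacements $\epsilon_n d(\widetilde u_n(x_0),\rho_n(\gamma_k)\widetilde u_n(x_0))$ are uniformly bounded. This is exactly the hypothesis of the Korevaar--Schoen compactness theorem for equivariant maps into NPC spaces \cite{KorevaarSchoen1997}, in its ultralimit form as in \cite{DaskalopoulosWentworth2007,Wolf1996}. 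After passing to a subsequence, we obtain a limiting NPC space $T_\infty$, which is an $\mathbb R$-tree because $\epsilon_n\to0$ and the curvatures are $-\epsilon_n^{-2}\to-\infty$. We also obtain a $G$-action on $T_\infty$ by isometries and an equivariant Lipschitz limit map $U_\infty$, with $\widetilde u_n\to U_\infty$ in the pullback-distance sense.

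\textbf{Step 2: energy convergence and minimality.} By lower semicontinuity, $E(U_\infty)\le1$. For the reverse inequality and for minimality, we use the fact that each $\widetilde u_n$ minimizes energy in its class, together with the Korevaar--Schoen statement that limits of minimizers with converging pullback distances are minimizers whose energies converge; this is the part requiring the uniform Lipschitz bound. This gives $E(U_\infty)=1$ and shows that $U_\infty$ is energy minimizing. We then pass to the minimal invariant subtree $T$ by nearest-point projection. The projection does not increase energy, so by minimality the energy stays $1$, and we call the projected map $U$. For this step we need the action to be nontrivial, i.e.\ without a global fixed point. That follows from the length computation in Step 3: the filling hypothesis produces some $\gamma=\beta_j$ with $i(\Delta_q,\alpha_j)>0$, so $\ell_T(\gamma)>0$.

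\textbf{Step 3: translation lengths.} For $\gamma\in G$, the translation length in $\epsilon_n\mathbb H^2$ of $\rho_n(\gamma)$ equals $\epsilon_n\ell_{X_{t_n}}((u_0)_*\gamma)$. Translation length functions converge under equivariant Gromov/ultralimit convergence with uniformly bounded displacement; the translation length is the limit of $d(x,\gamma^k x)/k$, and one uses the standard inequality $\ell\le d(x,\gamma x)\le\ell+2d(x,\mathrm{Axis})$ controlled along the sequence. This yields $\ell_{T}(\gamma)=\lim_n\epsilon_n\ell_{X_{t_n}}((u_0)_*\gamma)$. We then apply Masur's formula \eqref{eq:introduction-masur-length-asymptotic}, extended from simple curves to all elements of $\pi_1(S)$. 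The extension to arbitrary closed curves is itself a known consequence of Masur's estimates, since the collar and thick-part analysis works for any geodesic; if needed, we restrict to the statement for all curves via the cylinder description. Combining this with $\epsilon_n=E_n^{-1/2}\sim(\sqrt c\,t_n)^{-1}$ gives
\[
\ell_T(\gamma)=\lim_n\frac{\ell_{X_{t_n}}((u_0)_*\gamma)}{\sqrt c\,t_n}=\frac{4}{\sqrt c}\,i(\Delta_q,(u_0)_*\gamma),
\]
which is \eqref{eq:normalized-pullback-length-function}.

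\textbf{Step 4: factorization through $H$.} If $(u_0)_*\gamma=1$, then $\rho_n(\gamma)=\mathrm{id}$ for every $n$, so $\gamma$ acts trivially on the limit, and hence also on $T$. Therefore the action factors through $H$.

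\textbf{Main obstacle.} The delicate point is Step 3: convergence of translation lengths in the limit, together with the validity of Masur's linear asymptotic for nonsimple classes $(u_0)_*\gamma$. We would first try to avoid the nonsimple case by working with the length function on all of $\pi_1(S)$ through the dual tree $T_{\Delta_q}$. Concretely, the rescaled hyperbolic surfaces $(\widetilde X_{t_n},d/(4t_n))$ converge equivariantly to $T_{\Delta_q}$, a form of the Masur/Wolf limit. Since length functions determine minimal actions, this identifies the limit directly.
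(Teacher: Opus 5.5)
Your proposal follows the paper's proof essentially step for step. It rescales the target by $E_n^{-1/2}$ and uses the Bochner estimate for a uniform Lipschitz bound. It then applies Korevaar--Schoen compactness and their limit theorem for exact equivariant minimizers to get an energy-minimizing $U$ with $E(U)=1$. Finally it passes translation lengths to the limit, invokes Masur's asymptotic for arbitrary free homotopy classes, and notes that $\ker(u_0)_*$ acts trivially on the limit.

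The deviations from the paper are minor. You obtain translation-length convergence from the displacement/axis estimate in $\epsilon_n\mathbb H^2$ via $d(x,\gamma^k x)/k$, where the paper uses the Daskalopoulos--Dostoglou--Wentworth convex-hull comparison. You obtain minimality by projecting onto the minimal invariant subtree, where the paper cites their result that the subtree spanned by $U(\widetilde M)$ is minimal. For that projection you should note that the minimal subtree is closed in the complete limit tree. This holds here because, by Culler--Morgan rigidity, it is a rescaled copy of the simplicial tree $T_{\Delta_q}^{\min}$ and is therefore complete.
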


\begin{proof}
  Write \(\Pi=\pi_1(S)\), and then
  \(H=(u_0)_*(G)
  \subset \Pi\). For each \(t\), choose
the corresponding representation
\(\overline\rho_t:\Pi\to\operatorname{PSL}_2(\mathbb R)\)
for the uniformization of the marked
surface \(X_t\). A closed-surface Fuchsian representation lifts
further to the double cover
\(\operatorname{SL}_2(\mathbb R)\)
of \(\operatorname{PSL}_2(\mathbb R)\), since its Euler number
\(\pm(2g(S)-2)\) has trivial mod-two reduction. Fix a lift
\(\widetilde{\overline\rho}_t:\Pi\to\operatorname{SL}_2(\mathbb R)\) and
set \(\rho_t=\overline\rho_t\circ(u_0)_*\) and
\(\widetilde\rho_t=\widetilde{\overline\rho}_t\circ(u_0)_*\).

The filling hypothesis implies that \(H\) is noncyclic: it contains
representatives of all curves in the filling system, whereas the nontrivial
elements of a cyclic subgroup of a closed surface group are powers of a
single primitive class. Hence
\(\overline\rho_t(H)\) is a non-elementary discrete Fuchsian group and
\(\widetilde\rho_t\) is irreducible as an
\(\operatorname{SL}_2(\mathbb C)\)-representation. Let
\(\widetilde u_t:\widetilde M\to\mathbb H^2\) be the
\(\rho_t\)-equivariant lift of the harmonic representative. Its energy on a
fundamental domain is \(\mathcal E_{u_0}(X_t)\). Viewing
\(\mathbb H^2\) as the invariant totally geodesic plane in \(\mathbb H^3\),
the equivariant nearest-point projection
\(\mathbb H^3\to\mathbb H^2\) shows that \(\widetilde u_t\) is also
energy minimizing among \(\mathbb H^3\)-valued equivariant maps.

Choose \(\alpha_j\in\Gamma\) with \(i(\Delta_q,\alpha_j)>0\), and choose
\(\gamma_j\in G\) whose image represents \(\alpha_j\). Masur's formula
implies \(\ell_{X_t}(\alpha_j)\to\infty\), while
\(\lvert\operatorname{tr}\widetilde\rho_t(\gamma_j)\rvert
=2\cosh(\ell_{X_t}(\alpha_j)/2)\). Thus the representations leave every
compact subset of the character variety.

Rescale the target distance by
\(\widehat d_n=E_n^{-1/2}d_{\mathbb H^2}\). Then
\(\widetilde u_{t_n}:\widetilde M\to(\mathbb H^2,\widehat d_n)\) has
energy one. To verify the uniform local modulus of continuity required by
Korevaar--Schoen, fix \(K\Subset K'\Subset\widetilde M\). The compact set
\(K'\) is contained in finitely many translates of a fixed fundamental
domain, and equivariance therefore gives
\(E_{\widehat d_n}(\widetilde u_{t_n};K')\leq N_{K'}\), with
\(N_{K'}\) independent of \(n\). The local Bochner and mean-value estimates
for harmonic maps into nonpositively curved targets then yield
\begin{equation}
  \sup_K|d\widetilde u_{t_n}|_{\widehat d_n}^2\leq C_K,
  \label{eq:uniform-local-lipschitz-normalized}
\end{equation}
where \(C_K\) is independent of \(n\). See the proof of Proposition \ref{prop:energy-filling-length-comparison}.

Proposition~3.7 of \cite{KorevaarSchoen1997} now gives, after passage to a
subsequence, locally uniform convergence in the pullback sense. The
completed quotient construction in
\cite[Lemma~3.1, Definition~3.3, and Proposition~3.4]{KorevaarSchoen1997}
produces a complete NPC space \(T\) and a map
\(U:\widetilde M\to T\) whose pullback pseudometric is the limit of those
of \(\widetilde u_{t_n}\). Lemma~3.5 of the same paper supplies an
isometric \(G\)-action on \(T\) for which \(U\) is equivariant. Since the
normalized maps are exact equivariant minimizers of uniformly bounded
energy, Theorem~3.9 applies with \(\varepsilon_n=0\). It follows that \(U\)
is energy minimizing and that the energy-density measures converge weakly.
Because the quotient \(M\) is compact and every normalized map has energy
one, their total masses converge and
\begin{equation}
  E(U)=1.
  \label{eq:normalized-limit-energy-one}
\end{equation}
The stronger \(L^1\)-convergence of the pullback tensors and energy
densities is recorded in \cite[Corollary~3.10]{KorevaarSchoen1997}.

The convex hulls of the images lie in the invariant plane \(\mathbb H^2\),
so the pullback limit agrees with the one used in
\cite{DaskalopoulosDostoglouWentworth1998}. Since the hyperbolicity
constants of \((\mathbb H^2,\widehat d_n)\) tend to zero,
\cite[Theorem~3.1]{DaskalopoulosDostoglouWentworth1998} shows that \(T\)
is an \(\mathbb R\)-tree and that the limiting action has no global fixed
point. By \cite[Theorem~4.4]{DaskalopoulosDostoglouWentworth1998}, the
subtree generated by \(U(\widetilde M)\) is minimal. Replacing the ambient
tree by this subtree changes neither \(U\), its energy, nor any translation
length.

Let \(C_n\) be the convex hull of \(\widetilde u_{t_n}(\widetilde M)\) in
the rescaled target, and let \(\sigma_n\) be the induced \(G\)-action. The
convex-hull comparison in the proof of
\cite[Theorem~3.2]{DaskalopoulosDostoglouWentworth1998}, followed by the
scaling by \(E_n^{-1/2}\), gives a constant \(\delta>0\), independent of
\(n\) and \(\gamma\), such that
\begin{equation}
  0\leq \ell_{\sigma_n}(\gamma)
  -\frac{\ell_{X_{t_n}}((u_0)_*\gamma)}{\sqrt{E_n}}
  \leq\frac{2\delta}{\sqrt{E_n}}.
  \label{eq:ddw-normalized-length-comparison}
\end{equation}
The common augmented-domain construction in the same proof gives
\(\ell_T(\gamma)=\lim_n\ell_{\sigma_n}(\gamma)\). If
\((u_0)_*\gamma\neq1\), Masur's formula for arbitrary free homotopy classes
\cite{Masur1982} gives
\(\ell_{X_{t_n}}((u_0)_*\gamma)/t_n\to
4i(\Delta_q,(u_0)_*\gamma)\). Together with the ratio
\(E_n/t_n^2\to c\) and \eqref{eq:ddw-normalized-length-comparison}, this
proves \eqref{eq:normalized-pullback-length-function}. If
\((u_0)_*\gamma=1\), both sides vanish. Finally, every element of
\(\ker(u_0)_*\) acts trivially in each pullback space, and hence in the
limit, so the limiting action factors through \(H\).
\end{proof}

\begin{lemma}\label{lemma-rigidity}
Assume the filling hypothesis and let \(T_{\Delta_q}^{\min}\) be the minimal
\(G\)-invariant subtree of \(T_{\Delta_q}\). The induced action on
\(T_{\Delta_q}^{\min}\) is irreducible in the sense of Culler--Morgan \cite{CullerMorgan1987}. If
\(T\) is a minimal \(G\)-tree and, for some \(a>0\),
\(\ell_T=a\ell_{T_{\Delta_q}}\), then there is a \(G\)-equivariant isometry
\begin{equation}
  T\cong aT_{\Delta_q}^{\min}.
  \label{eq:dual-tree-rigidity-isometry}
\end{equation}
\end{lemma}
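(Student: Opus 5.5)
The plan is to treat the two assertions separately: first verify irreducibility of \(T_{\Delta_q}^{\min}\) directly from the geometry of the dual tree and the filling hypothesis, then obtain \eqref{eq:dual-tree-rigidity-isometry} from the Culler--Morgan theorem that minimal irreducible \(G\)-trees are determined up to equivariant isometry by their translation length functions \cite{CullerMorgan1987}. Throughout, \(G\) acts on \(T_{\Delta_q}\) through \(H=(u_0)_*(G)\subset\pi_1(S)\), and \(\ell_{T_{\Delta_q}}(\gamma)=i(\Delta_q,(u_0)_*\gamma)\). Passing to the minimal subtree does not change translation lengths.

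\emph{Irreducibility.} I would use the Culler--Morgan criterion: an action is irreducible if there are two hyperbolic elements whose axes meet in a compact (possibly empty) set. Equivalently, the action has no fixed end and no invariant line, and such a pair generates a free subgroup acting freely and discretely.

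The first step is to produce a hyperbolic element. Since \(\Gamma\) fills \(S\), each \(\delta_i\) meets some \(\alpha_j\), so \(i(\Delta_q,\alpha_j)>0\) for some \(j\). Let \(g\in G\) be the class of \(\beta_j\), so that \((u_0)_*g\) represents \(\alpha_j\). Then \(\ell_{T_{\Delta_q}}(g)=i(\Delta_q,\alpha_j)>0\), and \(g\) is hyperbolic.

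The second step is to produce a second hyperbolic element with a different axis. As noted in the introduction, \(H\) is noncyclic, so there is \(k\in G\) with \((u_0)_*k\) not commuting with \((u_0)_*g\). Put \(h=kgk^{-1}\), which is hyperbolic with the same translation length. In the Fuchsian group \(\pi_1(S)\), the axes of \((u_0)_*g\) and \((u_0)_*h\) in \(\widetilde S\simeq\mathbb H^2\) then have four pairwise distinct endpoints in \(\partial\mathbb H^2\).

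The third step is to compare axes in the tree. The axis of \(g\) in \(T_{\Delta_q}\) is the edge path dual to the successive lifts of \(\Delta_q\) crossed by the hyperbolic axis of \((u_0)_*g\); the same holds for \(h\). If the two tree axes shared an infinite ray, the two hyperbolic axes would cross the same infinite nested sequence of lifts of components of \(\Delta_q\). Such a nested sequence of lifts separating a point \(\xi\in\partial\mathbb H^2\) has Euclidean diameters shrinking to zero in the disk model, so it converges to the single endpoint \(\xi\). This forces a common endpoint at infinity, which is a contradiction. Hence the tree axes meet in a compact set, and the action on \(T_{\Delta_q}^{\min}\) is irreducible.

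\emph{Rigidity.} The minimal subtree \(aT_{\Delta_q}^{\min}\) has translation length function \(a\ell_{T_{\Delta_q}}=\ell_T\). Irreducibility is detected by the length function: the elements \(g,h\) above satisfy \(\ell_T(g),\ell_T(h)>0\) and \(\ell_T(gh)>\ell_T(g)+\ell_T(h)\) (or the analogous Culler--Morgan inequality for a compact overlap), and these inequalities are scale-invariant. So the minimal tree \(T\) is also irreducible, and in particular it is not a line and has no fixed end. Culler--Morgan's uniqueness theorem for minimal irreducible \(\mathbb R\)-trees with equal length functions then gives the \(G\)-equivariant isometry \(T\cong aT_{\Delta_q}^{\min}\).

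\emph{Main obstacle.} I expect the main obstacle to be the axis-overlap step. It requires matching the correct Culler--Morgan formulation of irreducibility and verifying carefully the dual-tree description of axes together with the endpoint-convergence argument. If that becomes delicate, my fallback is to show directly that \(\ell_{T_{\Delta_q}}([g,h^n])>0\) for large \(n\), using intersection numbers \(i(\Delta_q,\cdot)\) of the corresponding closed curves.
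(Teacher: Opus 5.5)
Your proposal is correct. Its rigidity step has the same structure as the paper's: show that any minimal $T$ with $\ell_T=a\ell_{T_{\Delta_q}}$ is again non-reducible, then apply Culler--Morgan's uniqueness theorem. Where you differ is the proof that the action on $T_{\Delta_q}^{\min}$ is irreducible.

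The paper stays inside the Bass--Serre tree. Edge stabilizers are maximal cyclic subgroups, and malnormality gives trivial pointwise stabilizers for segments containing two edges. An invariant line would then embed $H$ in a virtually cyclic group, and a fixed end would embed $H$ in $(\mathbb R,+)$ through the Busemann homomorphism; either way $H$ would be cyclic.

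You instead exhibit a witness pair $g$, $kgk^{-1}$. You then use the hyperbolic geometry of $\widetilde S$ to show their tree axes share no ray: a geodesic realization of $\Delta_q$, four distinct endpoints by discreteness, and nested lifts shrinking to a single ideal point.

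The trade-off is as follows. The paper's route needs no geometric realization and gives one structural fact that excludes every reducible type at once. Your route produces explicit hyperbolic elements. That makes the transfer to $T$ elementary and bypasses the corollaries on abelian and non-semisimple length functions that the paper uses to obtain semisimplicity of $T$.

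\emph{One step needs to be made precise.} The inequality $\ell(gh)>\ell(g)+\ell(h)$ holds only when the axes are disjoint, and your "analogous inequality for a compact overlap" is never specified. Here is a fix.
\begin{itemize}
\item Replace $h$ by $h'=g^nhg^{-n}$, with $n\ell(g)$ larger than the length of $C_g\cap C_h$.
\item Then $C_h$ and $C_{h'}=g^nC_h$ meet $C_g$ in disjoint segments and leave it into different components of $T_{\Delta_q}\setminus C_g$. So the two axes are disjoint, and $\ell(hh')>\ell(h)+\ell(h')$.
\item In $T$, suppose the axes of $h$ and $h'$ had a common point $p$. Then $\ell_T(hh')\le d(p,hp)+d(hp,hh'p)=\ell_T(h)+\ell_T(h')$, which contradicts the rescaled inequality.
\item Hence $T$ has two hyperbolic elements with disjoint axes, so it is irreducible, and no classification of reducible actions is needed.
\end{itemize}

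A minor slip: it is suitable powers of a compactly overlapping pair, not the pair itself, that generate a free subgroup acting freely. This does not affect the argument.
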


\begin{proof}
Note that
\(
  \Delta_q=\delta_1+\cdots+\delta_r
\)
is already a reduced multicurve, with unit weight on each component. The
underlying simplicial tree of \(T_{\Delta_q}\) is the Bass--Serre tree of the
splitting of
\(\Pi=\pi_1(S)\) along the multicurve
\(\{\delta_1,\ldots,\delta_r\}\), with every edge assigned length \(1\).

The stabilizer of an edge corresponding to a lift of \(\delta_i\) is a
conjugate of the maximal cyclic subgroup generated by \(\delta_i\). We first
record a consequence concerning adjacent edges. Let \(e\) and \(e'\) be
distinct edges incident to the same vertex. If their stabilizers had a
nontrivial intersection, the malnormality of maximal cyclic subgroups in the
closed surface group \(\Pi\) would imply that these stabilizers coincide.
At the common vertex, incident edges are parametrized by cosets of the
boundary subgroups of the corresponding complementary subsurface. Since
these boundary subgroups are self-normalizing and pairwise nonconjugate in
the vertex group, equality of the two edge stabilizers would force the two
edge cosets, and hence the two edges, to coincide. This is a contradiction.
Therefore distinct adjacent edges have stabilizers with trivial intersection.
It follows that every segment \(I\subset T_{\Delta_q}\) containing two
distinct edges satisfies
\begin{equation}
\label{eq:two-edge-segment-stabilizer}
  \operatorname{Stab}^{\mathrm{pt}}_{\Pi}(I)=\{1\}.
\end{equation}

The filling hypothesis
implies that \(H\) is noncyclic. It also provides an element of positive
translation length. Indeed, some curve \(\alpha_j\) in the filling system
satisfies \(i(\Delta_q,\alpha_j)>0\), and its conjugacy class is represented
by an element \(h_j\in H\). Hence
\(
  \ell_{T_{\Delta_q}}(h_j)
  =
  i(\Delta_q,\alpha_j)
  >
  0.
\)
In particular, the \(H\)-action on \(T_{\Delta_q}\) is not elliptic.

We claim that the action of \(H\) on its minimal invariant subtree is
irreducible. First, it cannot preserve a line \(L\). If it did, the
hyperbolic element \(h_j\) would have its axis contained in \(L\), so \(L\)
would be a simplicial line, namely a union of edges of
\(T_{\Delta_q}\). The kernel of the homomorphism
\(H\to\operatorname{Isom}(L)\) would fix \(L\) pointwise and hence would
fix a segment containing two edges. By
\eqref{eq:two-edge-segment-stabilizer}, this kernel would be trivial.
Consequently, \(H\) would embed in the simplicial isometry group of a line,
which is virtually cyclic. Since \(H\) is a torsion-free subgroup of the
closed surface group \(\Pi\), it would then be cyclic, contrary to the
filling hypothesis.

The group \(H\) cannot fix an end \(\xi\) either. Otherwise, the signed
Busemann displacement would define a homomorphism
\(b_\xi:H\to\mathbb R\). If \(h\in\ker b_\xi\), then \(h\) preserves a ray
representing \(\xi\) without any translation along its eventual common
tail. Hence \(h\) fixes a terminal subray pointwise and, in particular, a
segment containing two edges. Equation
\eqref{eq:two-edge-segment-stabilizer} therefore gives \(h=1\). Thus
\(b_\xi\) is injective, so \(H\) embeds in the abelian group
\((\mathbb R,+)\). It follows that \(H\) is abelian and hence cyclic, since
every abelian subgroup of a closed surface group is cyclic. This again
contradicts the filling hypothesis.

It follows that the action of \(H\) on its minimal invariant subtree
\(T_{\Delta_q}^{\min}\) is irreducible. Since the \(G\)-action factors
surjectively through \(H\), the same subtree is the minimal
\(G\)-invariant subtree and the minimal \(G\)-action is irreducible as
well. In particular, its translation length function is non-abelian, that
is, it is not of the form
\(\lvert\chi(\,\cdot\,)\rvert\) for a homomorphism
\(\chi:G\to\mathbb R\); see
\cite[Corollary~2.3]{CullerMorgan1987}.

Now suppose that \(T\) is a minimal \(G\)-tree whose translation length
function satisfies
\(
  \ell_T
  =
  a\,\ell_{T_{\Delta_q}}
\)
for some \(a>0\). The scaled tree \(aT_{\Delta_q}^{\min}\), obtained by
multiplying the metric of \(T_{\Delta_q}^{\min}\) by \(a\), has the same
translation length function as \(T\). Its action is irreducible and hence
semisimple. If the action on \(T\) were non-semisimple, its translation
length function would agree with that of a shift action and would therefore
be abelian; see \cite[Corollary~2.4]{CullerMorgan1987}. This contradicts
the non-abelianity established above. Thus the action on \(T\) is also
semisimple.

The two actions on \(T\) and \(aT_{\Delta_q}^{\min}\) are therefore
minimal and semisimple and have the same translation length function. The
Culler--Morgan rigidity theorem now yields a \(G\)-equivariant isometry
\(
  T
  \cong
  aT_{\Delta_q}^{\min};
\)
see \cite[Theorem~3.7]{CullerMorgan1987}. This is precisely
\eqref{eq:dual-tree-rigidity-isometry}.
\end{proof}\begin{theorem}
\label{thm:energy-masur-asymptotic}
Let \(q\in\mathcal Q^1(X_0)\) be Jenkins--Strebel, let
\(X_t=\gamma_q(t)\), \(t\geq0\), be the associated unit-speed
Teichm\"uller ray, and let \(\delta_1,\ldots,\delta_r\) be the core curves
of the cylinders in the contracting foliation. Set
\(\Delta_q=\delta_1+\cdots+\delta_r\). Under the filling hypothesis,
\begin{equation}
  \lim_{t\to\infty}\frac{\mathcal E_{u_0}(X_t)}{t^2}
  =16\,\mathscr E_{\Delta_q}(u_0)>0.
  \label{eq:energy-masur-asymptotic}
\end{equation}
\end{theorem}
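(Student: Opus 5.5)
The plan is a subsequence argument. By Proposition~\ref{cor:asymptotic-growth-two-energy-settings}(2), the ratio \(\mathcal E_{u_0}(X_t)/t^2\) stays in a compact subinterval of \((0,\infty)\). So every sequence \(t_n\to\infty\) has a subsequence with \(E_n/t_n^2\to c\in(0,\infty)\). It therefore suffices to show that every such subsequential limit equals \(16\,\mathscr E_{\Delta_q}(u_0)\). This gives existence of the limit, its value, and positivity, since \(c>0\).

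Fix such a subsequence and limit \(c\). Lemma~\ref{lemma-pullback} gives, after a further subsequence, a minimal \(G\)-tree \(T\) and an equivariant energy minimizer \(U:\widetilde M\to T\) with \(E(U)=1\) and \(\ell_T=\frac{4}{\sqrt c}\,\ell_{T_{\Delta_q}}\circ(u_0)_*\). Lemma~\ref{lemma-rigidity}, applied with \(a=4/\sqrt c\), then identifies \(T\cong aT_{\Delta_q}^{\min}\) equivariantly. Since \(U\) minimizes energy in \(aT^{\min}_{\Delta_q}\), we get \(1=E(U)=\mathscr E_{aT^{\min}_{\Delta_q}}(u_0)\). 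The scaling identity \eqref{eq:tree-energy-scaling-preliminary}, together with \(\mathscr E_{T^{\min}_{\Delta_q}}=\mathscr E_{\Delta_q}\), gives \(1=\frac{16}{c}\mathscr E_{\Delta_q}(u_0)\). Hence \(c=16\,\mathscr E_{\Delta_q}(u_0)\). Because this value does not depend on the subsequence, the full limit exists. The identity also forces \(\mathscr E_{\Delta_q}(u_0)=c/16>0\).

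The main obstacle is the claim that \(U\) minimizes energy among all equivariant maps into the limit tree, not just that it is a limit of minimizers. I plan to get this from Korevaar--Schoen Theorem~3.9, as already invoked in Lemma~\ref{lemma-pullback}. A second, subtler point is that \(E(U)=1\) needs energy-density convergence without loss of mass, which is also asserted there. If one prefers to avoid relying on minimality of the limit, there is a two-sided route. For the lower bound \(\liminf \mathcal E/t^2\ge 16\mathscr E_{\Delta_q}\), use lower semicontinuity of energy under pullback convergence together with the rigidity identification. For the upper bound, I would build competitors directly: pull back a near-minimizing equivariant Lipschitz map into \(T_{\Delta_q}\) to \(X_t\) using long thin cylinders of height \(\approx 4t\) per edge (Masur's geometry). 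I would attempt the first route and keep the second as a fallback for the upper bound.
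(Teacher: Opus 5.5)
Your proposal is correct and follows the paper's proof essentially step for step. You use the two-sided $t^2$ bounds from Proposition~\ref{cor:asymptotic-growth-two-energy-settings} to extract subsequential limits $c\in(0,\infty)$, the normalized limiting minimizer from Lemma~\ref{lemma-pullback}, the Culler--Morgan identification $T\cong (4/\sqrt c)\,T^{\min}_{\Delta_q}$ from Lemma~\ref{lemma-rigidity}, and the projection and scaling identities to force $c=16\,\mathscr E_{\Delta_q}(u_0)$. The two points you flag are that $U$ is a minimizer and that $E(U)=1$; the paper handles exactly these inside Lemma~\ref{lemma-pullback}, via Korevaar--Schoen Theorem~3.9 with $\varepsilon_n=0$ and convergence of total energy over compact $M$, so your fallback route is not needed.
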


\begin{proof}
Proposition \ref{cor:asymptotic-growth-two-energy-settings} shows that
\(\mathcal E_{u_0}(X_t)/t^2\) is bounded above and bounded away from zero.
Let \(t_n\to\infty\) be arbitrary. After passing to a subsequence, we may
therefore assume that
\(\mathcal E_{u_0}(X_{t_n})/t_n^2\to c\in(0,\infty)\).

Lemma \ref{lemma-pullback} gives a minimal \(G\)-tree \(T\) and an
energy-minimizing equivariant map \(U:\widetilde M\to T\) with \(E(U)=1\)
and
\(\ell_T=(4/\sqrt c)\ell_{T_{\Delta_q}}\). By Lemma \ref{lemma-rigidity},
\(T\cong(4/\sqrt c)T_{\Delta_q}^{\min}\). Using the projection identity for
the minimal subtree and the scaling law
\eqref{eq:tree-energy-scaling-preliminary}, we obtain
\begin{equation}
  1=E(U)=\mathscr E_T(u_0)
  =\frac{16}{c}\,\mathscr E_{\Delta_q}(u_0),
  \qquad
  c=16\,\mathscr E_{\Delta_q}(u_0).
  \label{eq:subsequential-coefficient-determined}
\end{equation}
Thus every convergent subsequence has the same limit. Since the original
sequence \(t_n\to\infty\) was arbitrary, the full limit exists and equals
the right-hand side of \eqref{eq:energy-masur-asymptotic}. Its positivity
follows from the lower bound in
\eqref{eq:varying-target-strebel-energy-ray-growth}.
\end{proof}

\begin{remark}
\label{rem:circle-energy-masur-compatibility}
The filling hypothesis is unnecessary in dimension one. Let \(M=S_L^1\)
be a circle of length \(L\), and suppose that \(u_0\) represents a nontrivial
free homotopy class \(\alpha\). The harmonic representative has constant
speed, so \(\mathcal E_\alpha(X_t)=\ell_{X_t}(\alpha)^2/(2L)\). Masur's
formula therefore gives
\begin{equation}
  \lim_{t\to\infty}\frac{\mathcal E_\alpha(X_t)}{t^2}
  =\frac{8}{L}i(\Delta_q,\alpha)^2.
  \label{eq:circle-energy-masur-limit}
\end{equation}
On the dual tree, the generator of \(\pi_1(S_L^1)\) has translation length
\(i(\Delta_q,\alpha)\). For any equivariant
\(U:\mathbb R\to T_{\Delta_q}\), Cauchy--Schwarz gives
\(i(\Delta_q,\alpha)^2\leq2LE(U)\), and equality is attained by the
constant-speed map onto the axis. Hence
\(\mathscr E_{\Delta_q}(\alpha)=i(\Delta_q,\alpha)^2/(2L)\), so
\(16\mathscr E_{\Delta_q}(\alpha)\) agrees exactly with
\eqref{eq:circle-energy-masur-limit}. If the intersection number is zero,
both sides vanish.
\end{remark}

By a standard abuse of notation, we use the same symbol
\(\Delta_q=\sum_{i=1}^r \delta_i\) for the associated measured foliation class.
We specialize the above result to the case $M=S$.

\begin{corollary}
\label{cor:identity-map-tree-energy}
Let \(M=S\) be a closed oriented surface of genus at least two and
\(u_0=\mathrm{id}: M=S\to S,
X_t=\gamma_q(t)\). Let \(X_g\in\mathcal T(S)\)
with  the metric \(g\) on $M=S$. Let
\(\Phi_{\Delta_q,X_g}\in\mathcal Q(X_g)\) be the Hubbard--Masur
differential whose vertical measured foliation is \(\Delta_q\) \cite{HubbardMasur1979}. For each
\(i\), let \(A_i\) be its characteristic Jenkins--Strebel cylinder with
 conformal modulus
\(M_i=\mathrm{Mod}(A_i)\). Then
\begin{equation}
  \mathscr E_{\Delta_q}(\mathrm{id}_S)
  =\frac12\Ext_{X_g}(\Delta_q)
  =\frac12\int_{X_g}|\Phi_{\Delta_q,X_g}|
  =\sum_{i=1}^r\frac{1}{2M_i}.
  \label{eq:identity-tree-energy-all-expressions}
\end{equation}
Consequently,
\begin{equation}
  \lim_{t\to\infty}
  \frac{\mathcal E_{\mathrm{id}_S}(X_t)}{t^2}
  =8\Ext_{X_g}(\Delta_q)
  =\sum_{i=1}^r\frac{8}{M_i}.
  \label{eq:identity-energy-asymptotic-extremal-length}
\end{equation}
\end{corollary}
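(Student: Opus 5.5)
The plan is to identify the equivariant tree energy with half the extremal length of \(\Delta_q\) on \(X_g\). The second and third equalities in \eqref{eq:identity-tree-energy-all-expressions} are classical facts about Jenkins--Strebel differentials, and the limit \eqref{eq:identity-energy-asymptotic-extremal-length} then follows by multiplying Theorem~\ref{thm:energy-masur-asymptotic} by \(16\). Note first that \(u_0=\mathrm{id}\) satisfies the filling hypothesis trivially: take any filling system \(\Gamma\) and \(\beta_j=\alpha_j\). Hence Theorem~\ref{thm:energy-masur-asymptotic} applies.

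\emph{Lower bound.} I will compare any equivariant map to the tree with the foliation of the Hubbard--Masur cylinders. Write \(\Phi=\Phi_{\Delta_q,X_g}\). By Hubbard--Masur and Strebel, \(\Phi\) is Jenkins--Strebel with cylinders \(A_i\) of circumference \(c_i'\) and height \(h_i'\), where \(h_i'=1\) is the transverse measure of \(\delta_i\). Thus \(M_i=1/c_i'\) and \(\int|\Phi|=\sum_i c_i'=\sum_i 1/M_i\). Moreover \(\Ext_{X_g}(\Delta_q)=\int|\Phi|\) by Kerckhoff's formula, so \(\Ext_{X_g}(\Delta_q)=\sum_i 1/M_i\).

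Now let \(U:\widetilde S\to T_{\Delta_q}\) be any equivariant \(W^{1,2}_{\mathrm{loc}}\) map. In \(\Phi\)-natural coordinates \(z=x+iy\) on a lift of \(A_i\), where the closed trajectories are horizontal so that \(y\in(0,1)\) is the transverse coordinate, each transverse segment \(x=\)const joins two complementary regions separated by one lift of \(\delta_i\). I expect equivariance to force \(U\) to traverse the corresponding edge of length \(1\), in the sense that \(\int_0^1|U_y|\,dy\ge1\). This is the step I consider the main obstacle. The endpoints of the segment need not map to the two adjacent vertices. I would handle this by integrating along the whole \(x\)-circle and using the translation length \(\ell_{T}(\delta_i\text{-dual})\) together with a Korevaar--Schoen directional energy inequality. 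Alternatively, I would use the length--energy inequality of \cite[Proposition~2.5]{LiuSu2017}: since conformal invariance makes the energy depend only on \(X_g\), the harmonic equivariant map to \(T_\Delta\) has energy \(\Ext_{X_g}(\Delta)\) up to their normalization. Granting the inequality, Cauchy--Schwarz and conformal invariance give \(E(U)\ge\frac12\sum_i c_i'\cdot1=\frac12\int|\Phi|\).

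\emph{Upper bound and conclusion.} For the reverse inequality I will use the explicit competitor given by the leaf-space projection: collapse each cylinder \(A_i\) onto the edge via \(z\mapsto y\), and collapse the critical graph to vertices. This map is equivariant and Lipschitz. Its energy is \(\frac12\int|U_y|^2\,dx\,dy=\frac12\sum_i c_i'=\frac12\int|\Phi|\), so \(\mathscr E_{\Delta_q}(\mathrm{id}_S)=\frac12\Ext_{X_g}(\Delta_q)\). Finally, inserting this value into \eqref{eq:energy-masur-asymptotic} gives the limit \(8\Ext_{X_g}(\Delta_q)=\sum_i 8/M_i\).
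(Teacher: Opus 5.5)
Your upper bound (the leaf-space projection $\pi_\Phi$ as a competitor, with energy $\frac12\sum_i c_i'=\frac12\int_{X_g}|\Phi|$) is fine. So are the identities $\Ext_{X_g}(\Delta_q)=\int_{X_g}|\Phi|=\sum_i 1/M_i$, the filling check, and the passage to the limit via Theorem~\ref{thm:energy-masur-asymptotic}.

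The gap is the lower bound $E(U)\ge\frac12\int_{X_g}|\Phi|$ for an arbitrary equivariant $U$, which is the real content of the first equality. You obtain it only by granting $\int_0^1|U_y|\,dy\ge1$ on every transverse segment. That inequality is false, not merely unproved. To see this, precompose $\pi_\Phi$ with the equivariant lift of a diffeomorphism isotopic to the identity. In a small flat disk around a point of the upper boundary of $A_i$, take it of the form $(x,y)\mapsto(x,y-\epsilon\chi(x,y))$, where $\chi$ is a bump function and the disk is small enough that the relevant segments meet it only near their upper end. The resulting $U$ is equivariant and Lipschitz, and equals $y-\epsilon\chi$ on $A_i$. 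It maps the transverse segments ending where $\chi=1$ onto pieces of the edge of length $1-\epsilon$. Its energy on $A_i$ is $\frac12c_i'-\epsilon\int\chi(x,1)\,dx+O(\epsilon^2)<\frac12c_i'$, and the deficit is paid in the neighbouring cylinder.

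So no argument confined to a single cylinder can work, including averaging over the closed direction of $A_i$. The core $\delta_i$ fixes the dual edge and $\ell_{T_{\Delta_q}}(\delta_i)=i(\Delta_q,\delta_i)=0$, so crossing that edge is forced only by curves that cross $\delta_i$, and these necessarily run through other cylinders. Your fallback, quoting Liu--Su ``up to normalization'', quotes the statement itself, and the normalization is exactly the factor $\frac12$ being proved.

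The paper avoids any direct lower bound. It observes that $\pi_\Phi$ is harmonic in Wolf's sense and that the dual action is small, hence semisimple. It then invokes Theorem~3.8 of Daskalopoulos--Wentworth to conclude that $\pi_\Phi$ minimizes the Korevaar--Schoen energy among all equivariant $W^{1,2}_{\mathrm{loc}}$ maps, so your computation of $E(\pi_\Phi)$ is the infimum.

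If you want a self-contained lower bound, it must be global. One option is to argue as in Lemma~\ref{lem:energy-extremal-length-inequality}. For every essential closed curve $\gamma$, $i(\Delta_q,\gamma)=\ell_{T_{\Delta_q}}(\gamma)$ is at most the length of $\gamma$ in the conformal pseudometric $|dU|_g^2\,g$, whose area is $2E(U)$. Hence $i(\Delta_q,\gamma)^2\le 2E(U)\Ext_{X_g}(\gamma)$; this is direct for Lipschitz $U$ and needs the usual a.e.-curve argument in general. By Minsky's inequality, with equality at the horizontal foliation of $\Phi$, and density of weighted simple closed curves, $\sup_\gamma i(\Delta_q,\gamma)^2/\Ext_{X_g}(\gamma)=\Ext_{X_g}(\Delta_q)$. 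This yields $E(U)\ge\frac12\Ext_{X_g}(\Delta_q)$. Another option is to average the transverse directional energy along long transverse trajectories of $\Phi$, which cross many cylinders, using that $d(U,\pi_\Phi)$ is $\pi_1$-invariant and that the transverse flow preserves $|\Phi|$.
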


\begin{proof}
The two-dimensional Korevaar--Schoen energy is conformally invariant, so
\(\mathscr E_{\Delta_q}(\mathrm{id}_S)\) depends on \(g\) only through
\(X_g\). By the Hubbard--Masur theorem
\cite[Chapter~I, \S2, Main Theorem]{HubbardMasur1979}, there is a unique holomorphic quadratic
differential \(\Phi=\Phi_{\Delta_q,X_g}\) whose vertical measured foliation
class is \(\mathcal{F}_v(\Phi)=\Delta_q\).  Since \(\Delta_q\) is a weighted multicurve, \(\Phi\) is
Jenkins--Strebel, see \cite[Chapter~I, \S3, Theorem~2]{HubbardMasur1979}.

Let \(T_\Phi\) be the dual tree of the lifted vertical measured foliation
of \(\Phi\), and let
\(\pi_\Phi:\widetilde X_g\to T_\Phi\) be the leaf-space projection.
Its translation length function is
\(\ell_{T_\Phi}(\gamma)=i(\Delta_q,\gamma)\), so \(T_\Phi\) is naturally
identified, equivariantly and isometrically, with \(T_{\Delta_q}\).

The projection \(\pi_\Phi\) is harmonic in the sense of Wolf: it pulls back
germs of convex functions on the tree to germs of subharmonic functions;
see \cite[\S2.3.2, especially~(2.47)]
{DaskalopoulosWentworth2007} and \cite[\S3]{Wolf1995}.
The dual action is small and hence semisimple, or equivalently reductive;
see \cite[\S3.1.2]{DaskalopoulosWentworth2007}.
It therefore follows from
\cite[Theorem~3.8]{DaskalopoulosWentworth2007}
that \(\pi_\Phi\) minimizes the Korevaar--Schoen energy among all
equivariant \(W_{\mathrm{loc}}^{1,2}\)-maps into \(T_{\Delta_q}\).
Consequently,
\(
  \mathscr E_{\Delta_q}(\mathrm{id}_S)
  =
  E(\pi_\Phi).
\)
See also \cite[Sections~3.1--3.2]{Wolf1996} for the direct-method
harmonic-map realization of the Hubbard--Masur theorem.

Away from the critical graph, choose a natural coordinate
\(\zeta=x+iy\) with \(\Phi=d\zeta^2\). The vertical leaves are
\(x=\mathrm{constant}\), the transverse measure is \(|dx|\), and locally
\(\pi_\Phi(x,y)=x\). If \(g=e^{2\eta}(dx^2+dy^2)\), then
\(\frac12|d\pi_\Phi|_g^2d\mu_g=\frac12dx\,dy=\frac12|\Phi|\). The critical
graph has area zero, and hence
\begin{equation}
  \mathscr E_{\Delta_q}(\mathrm{id}_S)
  =E(\pi_\Phi)
  =\frac12\int_{X_g}|\Phi|.
  \label{eq:identity-map-half-area}
\end{equation}
This is the unrescaled transverse-measure normalization; compare
\cite[Section~2.4]{Sagman2023}, where rescaling the tree metric by a factor
of two changes the energy by a factor of four.

The Hubbard--Masur--Kerckhoff area formula gives
\(\Ext_{X_g}(\Delta_q)=\int_{X_g}|\Phi|\); see
\cite{HubbardMasur1979,Kerckhoff1980} and the account in
\cite[Section~2.2]{Sagman2023}. It remains to compute the area in cylinder
coordinates. The cylinder \(A_i\) is isometric in the flat metric
\(|\Phi|\) to
\([0,1]\times[0,c_i]/((x,0)\sim(x,c_i))\): its transverse height is the
weight \(1\), its circumference is \(c_s\), and its modulus is
\(M_i=1/c_i\). Therefore
\(\Area_{|\Phi|}(A_i)=c_i=1/M_i\). The critical graph has zero area,
so summing over the cylinders proves the last equality in
\eqref{eq:identity-tree-energy-all-expressions}; see also
\cite{Strebel1984} for the cylinder decomposition. Finally,
Theorem~\ref{thm:energy-masur-asymptotic} gives
\eqref{eq:identity-energy-asymptotic-extremal-length}.
\end{proof}

\begin{remark}
\label{rem:identity-tree-energy-normalization}
The factor \(1/2\) in
\eqref{eq:identity-tree-energy-all-expressions} comes from using the
unrescaled transverse-measure metric on \(T_{\Delta_q}\), for which
\(\ell_{T_{\Delta_q}}(\gamma)=i(\Delta_q,\gamma)\), together with the
Dirichlet convention \(E(U)=\frac12\int|dU|^2\). Some references rescale
the tree so that its energy is written directly as extremal length.

The Hubbard--Masur differential \(\Phi_{\Delta_q,X_0}\) is
not equal to the original Jenkins--Strebel differential \(q\)
generally. The vertical foliation of
\(q\) records the transverse flat widths of its cylinders, whereas
\(\Delta_q=\delta_1+\cdots+\delta_r\) assigns unit weight to each cylinder
core. Thus the passage
from \(q\) to \(\Delta_q\) generally discards the original widths. In the
special case \(\mathcal F_v(q)=c\Delta_q\), uniqueness in the
Hubbard--Masur theorem gives
\(\Phi_{\Delta_q,X_0}=c^{-2}q\), and, since \(q\) has unit area,
\begin{equation}
  \mathscr E_{\Delta_q}(\mathrm{id}_S)=\frac{1}{2c^2},
  \qquad
  \lim_{t\to\infty}\frac{\mathcal E_{\mathrm{id}_S}(X_t)}{t^2}
  =\frac{8}{c^2}.
  \label{eq:special-proportional-foliation-case}
\end{equation}
\end{remark}

\begin{corollary}
\label{cor:covering-map-tree-energy}
Let
\(
  u_0:\Sigma\to S
\)
be an orientation-preserving covering of degree \(d\) between
closed surfaces, and let \(X_g\in\mathcal T(\Sigma)\) be the conformal
structure determined by the domain metric \(g\). Denote by
\(u_0^*\Delta_q\) the pullback measured foliation on \(\Sigma\). Then
\begin{equation}
\label{eq:covering-map-tree-energy-extremal-length}
  \mathscr E_{\Delta_q}(u_0)
  =
  \frac12\,
  \operatorname{Ext}_{X_g}\!\left(u_0^*\Delta_q\right).
\end{equation}
More explicitly, write the pullback  multicurve as
\(u_0^*\Delta_q=\sum_{i=1}^r\sum_{a=1}^{r_i}\widehat\delta_{i,a}\). Let
\(\widehat\Phi=\Phi_{u_0^*\Delta_q,X_g}\) be the Hubbard--Masur
differential on \(X_g\), and let \(\widehat A_{i,a}\) be its characteristic
cylinder with core curve \(\widehat\delta_{i,a}\). If
\(\widehat M_{i,a}=\operatorname{Mod}(\widehat A_{i,a})\) denotes its
actual conformal modulus, then
\begin{equation}
\label{eq:covering-map-tree-energy-cylinder-moduli}
  \mathscr E_{\Delta_q}(u_0)
  =
  \frac12\int_{X_g}|\widehat\Phi|
  =
  \frac12
  \sum_{i=1}^{r}\sum_{a=1}^{r_i}
  \frac{1}{\widehat M_{i,a}}.
\end{equation}

Suppose, in addition, that the  conformal structure on $\Sigma$ is the pullback
of the initial target structure, namely \(X_g=u_0^*X_0\in T(\Sigma)\). Then
\begin{equation}
\label{eq:covering-map-tree-energy-degree}
  \mathscr E_{\Delta_q}(u_0)
  =
  \frac d2\,
  \operatorname{Ext}_{X_0}(\Delta_q)=\frac d2
  \sum_{i=1}^{r}\frac1{M_i}.
\end{equation}
\end{corollary}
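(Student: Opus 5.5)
The plan is to reduce to Corollary~\ref{cor:identity-map-tree-energy}, applied on the surface \(\Sigma\), by lifting the tree picture through the covering. Put \(\Sigma'=\Sigma\) with the identity map and let \(\Pi'=\pi_1(\Sigma)\subset\Pi=\pi_1(S)\) be the finite-index subgroup \((u_0)_*\pi_1(\Sigma)\). Here \(G=\pi_1(\Sigma)\) acts on \(T_{\Delta_q}\) through this inclusion.

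\textbf{Step 1.} Identify the \(\Pi'\)-tree \(T_{\Delta_q}\) with the dual tree \(T_{u_0^*\Delta_q}\) of the pullback multicurve on \(\Sigma\). Since \(\widetilde\Sigma=\widetilde S\), the lifts of \(\Delta_q\) to \(\widetilde S\) are exactly the lifts of the preimage multicurve \(u_0^{-1}(\Delta_q)=\sum_{i,a}\widehat\delta_{i,a}\). Collapsing complementary components gives the same simplicial tree with the same unit edge lengths. The \(\Pi'\)-action is the restriction, so \(\ell_{T}(\gamma)=i(u_0^*\Delta_q,\gamma)\) for \(\gamma\in\pi_1(\Sigma)\).

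One caveat: \(u_0^{-1}(\delta_i)\) could contain parallel components, that is, two preimage curves isotopic in \(\Sigma\). In that case the dual tree of the measured foliation merges them into a single weighted curve, while the simplicial tree has consecutive edges. These have the same length function, and I would check that the energy infimum is unaffected. The reason is that a strip between parallel curves is an annulus, and collapsing it to one edge of length 2 is equivalent to a weight-2 curve. This is compatible with the formula \eqref{eq:covering-map-tree-energy-cylinder-moduli} once multiplicities are counted.

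\textbf{Step 2.} Equivariant maps \(\widetilde M\to T_{\Delta_q}\) for \((\Sigma,u_0)\) are now literally the \(\pi_1(\Sigma)\)-equivariant maps into \(T_{u_0^*\Delta_q}\). Hence \(\mathscr E_{\Delta_q}(u_0)=\mathscr E_{u_0^*\Delta_q}(\mathrm{id}_\Sigma)\). The proof of Corollary~\ref{cor:identity-map-tree-energy} used only that the foliation is a measured multicurve (Hubbard--Masur, the leaf-space projection being harmonic, and Daskalopoulos--Wentworth minimality). Repeating that argument on \((\Sigma,X_g)\) with the foliation \(u_0^*\Delta_q\) gives
\[
\mathscr E_{\Delta_q}(u_0)=\tfrac12\operatorname{Ext}_{X_g}(u_0^*\Delta_q)=\tfrac12\int_{X_g}|\widehat\Phi|=\tfrac12\sum_{i,a}\widehat M_{i,a}^{-1},
\]
where the area is computed cylinder by cylinder as before: each cylinder has unit height, so its area is the reciprocal of its modulus. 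This yields \eqref{eq:covering-map-tree-energy-extremal-length} and \eqref{eq:covering-map-tree-energy-cylinder-moduli}.

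\textbf{Step 3.} Suppose \(X_g=u_0^*X_0\). Let \(\Phi=\Phi_{\Delta_q,X_0}\) be the Hubbard--Masur differential on \(X_0\). Then \(u_0^*\Phi\) is holomorphic on \(X_g\) with vertical foliation \(u_0^*\Delta_q\), so by uniqueness \(\widehat\Phi=u_0^*\Phi\). Its area is \(d\int_{X_0}|\Phi|=d\operatorname{Ext}_{X_0}(\Delta_q)=d\sum_i M_i^{-1}\) by Corollary~\ref{cor:identity-map-tree-energy}, which gives \eqref{eq:covering-map-tree-energy-degree}. Concretely, each cylinder \(A_i\) lifts to cylinders \(\widehat A_{i,a}\) that cover it with degrees \(d_{i,a}\) summing to \(d\). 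Each \(\widehat A_{i,a}\) has circumference \(d_{i,a}c_i\) and height 1, so \(1/\widehat M_{i,a}=d_{i,a}/M_i\), consistent with the previous formula.

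The main obstacle I expect is Step 1's bookkeeping: making the identification of trees precise, including possibly parallel lifts, and confirming that the minimality argument of Corollary~\ref{cor:identity-map-tree-energy} transfers verbatim.
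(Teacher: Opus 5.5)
Your route is the paper's. You identify $T_{u_0^*\Delta_q}$ with $T_{\Delta_q}$ via $\widetilde\Sigma=\widetilde S$; the paper phrases this as the equivariant leaf-space isometry $\iota$ induced by the lift $\widetilde u_0$. From this you deduce $\mathscr E_{\Delta_q}(u_0)=\mathscr E_{u_0^*\Delta_q}(\mathrm{id}_\Sigma)$ and apply Corollary~\ref{cor:identity-map-tree-energy} on $(\Sigma,X_g)$. Finally, Hubbard--Masur uniqueness gives $\widehat\Phi=u_0^*\Phi_{\Delta_q,X_0}$, and the area scales by $d$. Your cylinder-by-cylinder check with lifting degrees $d_{i,a}$ is a useful extra confirmation that the paper does not spell out.

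The one thing to repair is the caveat you flag as the main obstacle. It never arises, and your proposed patch would not rescue the stated formula if it did. The paper settles it at the outset:
\begin{itemize}
\item Take the $\delta_i$ to be geodesics for an auxiliary hyperbolic metric $\sigma$ on $S$. Then $u_0:(\Sigma,u_0^*\sigma)\to(S,\sigma)$ is a local isometry.
\item Every component $\widehat\delta_{i,a}$ of $u_0^{-1}(\Delta_q)$ is therefore a simple closed geodesic.
\item Two distinct closed geodesics on a closed hyperbolic surface are never freely homotopic.
\end{itemize}
Hence the pullback multicurve is reduced with unit weights, and each $\widehat\delta_{i,a}$ has its own characteristic cylinder of height $1$.

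This reducedness is genuinely needed for the last equality in \eqref{eq:covering-map-tree-energy-cylinder-moduli}, and for your height-$1$ claim in Step~3. Suppose two components merged into a curve of weight $m$. Its cylinder would have height $m$ and area $m^2/\widehat M$, while the indexed sum would count it as $m/\widehat M$. So ``counting multiplicities'' does not reproduce the formula. The identities $\mathscr E_{\Delta_q}(u_0)=\frac12\operatorname{Ext}_{X_g}(u_0^*\Delta_q)=\frac12\int_{X_g}|\widehat\Phi|$ and \eqref{eq:covering-map-tree-energy-degree} would survive either way, since the metric trees agree: a collapsed strip is just a valence-two vertex.
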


\begin{proof}
We first describe the pullback measured foliation. Choose the components $\delta_1,\ldots,\delta_r$ to be their geodesic
representatives with respect to an auxiliary hyperbolic metric $\sigma$ on
$S$, and equip $\Sigma$ with the pullback metric $u_0^*\sigma$. Then
$u_0:(\Sigma,u_0^*\sigma)\to(S,\sigma)$ is a local isometry, and every
component $\widehat\delta_{i,a}$ of $u_0^{-1}(\delta_i)$ is a closed
geodesic on $\Sigma$. These lifted curves are pairwise nonisotopic.
Indeed, if $\widehat\delta_{i,a}$ and $\widehat\delta_{j,b}$ were freely
homotopic, the uniqueness of the geodesic representative in a nontrivial
free homotopy class on a closed hyperbolic surface would imply
$\widehat\delta_{i,a}=\widehat\delta_{j,b}$. Applying $u_0$ then gives
$\delta_i=\delta_j$, and hence $i=j$, since $\Delta_q$ is reduced.
Finally, equality of two connected components of
$u_0^{-1}(\delta_i)$ implies $a=b$. Thus $\widehat\delta_{i,a}\simeq\widehat\delta_{j,b}$ implies $(i,a)=(j,b)$.
Consequently, the pullback multicurve is already reduced and
\[
  u_0^*\Delta_q
  =
  \sum_{i=1}^r\sum_{a=1}^{r_i}\widehat\delta_{i,a},
\]
with unit weight on every lifted component. More intrinsically, \(u_0^*\Delta_q\) is characterized
by
\(
  i\bigl(u_0^*\Delta_q,\gamma\bigr)
  =
  i\bigl(\Delta_q,(u_0)_*\gamma\bigr)
\)
for every free homotopy class \(\gamma\) on \(\Sigma\).

Let \(\widetilde\Sigma\) and \(\widetilde S\) be the universal covers.
The covering \(u_0\) lifts to a map
\(\widetilde{u}_0:\widetilde\Sigma\to\widetilde S\). Since both spaces are
simply connected, \(\widetilde{u}_0\) is a diffeomorphism. Moreover, the
lifted foliation of \(u_0^*\Delta_q\) is precisely the pullback under
\(\widetilde{u}_0\) of the lifted foliation of \(\Delta_q\). Since the
transverse measure is also preserved, \(\widetilde{u}_0\) induces an
isometry of leaf spaces
\(
  \iota:
  T_{u_0^*\Delta_q}
  \to
  T_{\Delta_q}.
\)
This isometry is equivariant in the sense that
\(\iota(\gamma\xi)=(u_0)_*(\gamma)\iota(\xi)\) for every
\(\gamma\in\pi_1(\Sigma)\) and \(\xi\in T_{p^*\Delta_q}\), where the
action on \(T_{\Delta_q}\) is restricted through
\((u_0)_*:\pi_1(\Sigma)\to\pi_1(S)\).

Composition with \(\iota\) therefore gives an energy-preserving
bijection between the admissible equivariant maps
\(\widetilde\Sigma\to T_{u_0^*\Delta_q}\) for the identity-map problem on
\(\Sigma\) and the admissible equivariant maps
\(\widetilde\Sigma\to T_{\Delta_q}\) defining
\(\mathscr E_{\Delta_q}(u_0)\). Consequently,
\[
  \mathscr E_{\Delta_q}(u_0)
  =
  \mathscr E_{u_0^*\Delta_q}(\mathrm{id}_\Sigma).
\]
Applying the identity-map formula \eqref{eq:identity-tree-energy-all-expressions} gives
\(
  \mathscr E_{\Delta_q}(u_0)
  =
  \frac12\,
  \operatorname{Ext}_{X_g}(u_0^*\Delta_q),
\)
which proves
\eqref{eq:covering-map-tree-energy-extremal-length}.

We next derive the cylinder-modulus expression. By the Hubbard--Masur
theorem, there is a unique holomorphic quadratic differential
\(\widehat\Phi=\Phi_{u_0^*\Delta_q,X_g}\) whose vertical measured
foliation is \(u_0^*\Delta_q\). Since this foliation is a weighted
multicurve, \(\widehat\Phi\) is Jenkins--Strebel. More precisely,
\cite[Chapter~I, \S3, Theorem~2, p.~225]
{HubbardMasur1979}
shows that the characteristic cylinder
\(\widehat A_{i,a}\) has transverse unit height.

Let \(\widehat c_{i,a}\) denote the circumference of
\(\widehat A_{i,a}\). By definition,
\(\widehat M_{i,a}=1/\widehat c_{i,a}\), and hence
\(\widehat c_{i,a}=1/\widehat M_{i,a}\). The critical
graph has \(|\widehat\Phi|\)-area zero, so
\[
  \int_{X_g}|\widehat\Phi|
  =
  \sum_{i=1}^{r}\sum_{a=1}^{r_i}
  \frac{1}{\widehat M_{i,a}}.
\]
Together with
\(\operatorname{Ext}_{X_g}(u_0^*\Delta_q)=
\int_{X_g}|\widehat\Phi|\), this proves
\eqref{eq:covering-map-tree-energy-cylinder-moduli}.

Finally, assume that \(X_g=u_0^*X_0\). Then
\(u_0:X_g\to X_0\) is a holomorphic covering. Let
\(\Phi_{\Delta_q,X_0}\) be the Hubbard--Masur differential on \(X_0\)
whose vertical measured foliation is \(\Delta_q\). The pullback
\(u_0^*\Phi_{\Delta_q,X_0}\) is a holomorphic quadratic differential on
\(X_g\), and its vertical measured foliation is \(u_0^*\Delta_q\).
Uniqueness in the Hubbard--Masur theorem
\cite[Chapter~I, \S2, Main Theorem, p.~224]
{HubbardMasur1979}
therefore gives
\[
  \Phi_{u_0^*\Delta_q,X_g}
  =
  u_0^*\Phi_{\Delta_q,X_0}.
\]
Since \(u_0\) has degree \(d\), integration of the quadratic-differential
area form gives
\[
  \operatorname{Ext}_{X_g}(u_0^*\Delta_q)
  =
  \int_{X_g}
  \left|u_0^*\Phi_{\Delta_q,X_0}\right|
  =
  d\int_{X_0}
  \left|\Phi_{\Delta_q,X_0}\right|
  =
  d\,\operatorname{Ext}_{X_0}(\Delta_q).
\]
Combining this identity with
\eqref{eq:covering-map-tree-energy-extremal-length} proves
\eqref{eq:covering-map-tree-energy-degree}.
\end{proof}

\section{Variations of energy functions}
\label{Variations of energy functions}

In this section, we will calculate the variations of energy functions and show the essential difficulties that prevent the energy function from being convex. 

\subsection{Variations of energy functions with fixed domain}
\label{subsec:variations-energy-functions}

We first record a general variational formula along
a general path; compare
\cite{Yamada1999,KimWanZhang2022}. Let
$x_t=[X_t,m_t]$ be a smooth path in $\mathcal T(S)$ and write
$\sigma_t:=\sigma_{X_t}$. Choose a smooth family
of marking-compatible identifications $\iota_t\colon S\to X_t$ and pull the
target metrics back to the fixed smooth surface $S$,
\(
  h_t:=\iota_t^*\sigma_t.
\)
After composing with $\iota_t^{-1}$, the harmonic maps may be viewed as a
smooth family
\(
  u_t\colon (M,g)\to(S,h_t)
\)
in the fixed homotopy class of $u_0$. Set
\[
  h:=h_0,
  \qquad
  u:=u_t|_{t=0},
  \qquad
  k:=\dot h_0,
  \qquad
  l:=\ddot h_0,
\]
and let
\[
  W:=\left.\frac{D u_t}{dt}\right|_{t=0}
  \in\Gamma(u^*TS)
\]
be the variational vector field.

For $V,Z\in\Gamma(u^*TS)$, 
the index form is defined by
\begin{align*}
\begin{split}
  \mathcal I_u(V,Z)
  &:=
  \int_M
  [
    \langle\nabla V,\nabla Z\rangle
    -
    \sum_{i=1}^n
    \bigl\langle
      R^h(V,du(e_i))du(e_i),Z
    \bigr\rangle
  ]d\mu_g. 
  \end{split}
\end{align*}
In particular, 
\begin{equation}
 \mathcal{I}_u(V,V)= \int_M
[
    |\nabla V|^2
    +
    \sum_{i=1}^n
    \left(
      |V|^2|du(e_i)|^2
      -
      \langle V,du(e_i)\rangle^2
    \right)
]d\mu_g
  \ge 0 \label{eq:index-form-negative-curvature}
\end{equation}
where $\{e_i\}_{i=1}^n$ is a local $g$-orthonormal frame, and last equality holds since $h$ has
constant curvature $-1$.

Under the noncyclicity assumption above, equality holds only for $V=0$.
The equality case is analyzed in
Lemma~\ref{lem:nondegeneracy-target-jacobi} below.

The corresponding Jacobi operator is
\begin{equation}
  \mathcal J_uV
  :=
  \nabla^*\nabla V
  -
  \sum_{i=1}^n
  R^h(V,du(e_i))du(e_i),
  \label{eq:def-positive-jacobi-operator}
\end{equation}
so that
\(
  \mathcal I_u(V,Z)
  =
  \langle\mathcal J_uV,Z\rangle_{L^2}.
\)

\begin{lemma}
\label{lem:nondegeneracy-target-jacobi}
Under the standing noncyclicity assumption, the index form
\(\mathcal I_u\) is positive definite. Equivalently, the Jacobi operator
\(\mathcal J_u\) has trivial kernel.
\end{lemma}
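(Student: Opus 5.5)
The plan is to show that any \(V\) with \(\mathcal I_u(V,V)=0\) must vanish, using the pointwise nonnegativity in \eqref{eq:index-form-negative-curvature}. Positive semidefiniteness is already there. If \(\mathcal I_u(V,V)=0\), both nonnegative integrands vanish identically. First, \(\nabla V=0\), so \(V\) is a parallel section of \(u^*TS\). In particular \(|V|\) is constant on the connected manifold \(M\). Second, for every \(i\), \(|V|^2|du(e_i)|^2=\langle V,du(e_i)\rangle^2\). By equality in Cauchy--Schwarz, \(du(e_i)\) is parallel to \(V\) at every point where \(V\neq0\). Since the operator \(\mathcal J_u\) is self-adjoint and elliptic, triviality of its kernel is equivalent to definiteness of \(\mathcal I_u\): if \(\mathcal J_uV=0\), then \(\mathcal I_u(V,V)=0\).

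Suppose \(V\not\equiv0\); after scaling, \(|V|\equiv1\). Then \(du\) has rank at most one everywhere, with image in the line field spanned by the parallel unit section \(V\). We lift to universal covers: \(\widetilde u:\widetilde M\to\mathbb H^2\), equivariant for \(\rho=(u_0)_*\) composed with the holonomy of \(h\), and \(\widetilde V\) is a parallel unit field along \(\widetilde u\) with \(d\widetilde u\) taking values in \(\mathbb R\widetilde V\). Along any curve \(c\) in \(\widetilde M\), the image \(\widetilde u\circ c\) therefore has velocity proportional to a parallel unit field along itself. Hence \(\widetilde u\circ c\) is a reparametrized geodesic of \(\mathbb H^2\) whose tangent direction is \(\widetilde V\). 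Joining arbitrary points by curves, and using that \(\widetilde V\) is globally parallel along \(\widetilde u\), the whole image \(\widetilde u(\widetilde M)\) lies in a single geodesic line \(\ell\subset\mathbb H^2\). If \(du\equiv0\), the image is instead a point.

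By equivariance, \(\rho(\pi_1M)\) preserves \(\ell\), or fixes a point in the degenerate case. A discrete torsion-free subgroup of \(\mathrm{PSL}_2(\mathbb R)\) preserving a geodesic, or fixing a point, is cyclic or trivial. This contradicts the standing assumption that \((u_0)_*\pi_1(M)\) is nontrivial and noncyclic, so \(V=0\).

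The main obstacle is the globalization step. We must pass from the pointwise statement \(du(e_i)\parallel V\), which gives no information where \(du=0\), to the conclusion that the image lies in one geodesic. I would handle this by working with the parallel field along paths rather than with \(du\) alone. Write \(\widetilde V=\cosh(s)\,\partial\) in Fermi-type coordinates, or simply note that the geodesic through \(\widetilde u(x)\) in direction \(\widetilde V(x)\) is locally constant in \(x\). Indeed, its derivative along any direction is the transport of \(\widetilde V\) combined with motion along \(\widetilde V\), and both preserve the line. Local constancy plus connectedness of \(\widetilde M\) then gives a single line \(\ell\).
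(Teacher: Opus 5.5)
Your proposal is correct and follows essentially the same route as the paper. Vanishing of both nonnegative integrands gives a parallel \(V\) with \(du(TM)\subset\mathbb R V\); a path argument on \(\widetilde M\) (each \(\widetilde u\circ c\) is a possibly nonregular reparametrization of the geodesic tangent to \(\widetilde V\)) puts \(\widetilde u(\widetilde M)\) inside a single geodesic or a point; and equivariance then forces \((u_0)_*\pi_1(M)\) to be cyclic or trivial. Your local-constancy variant (the geodesic determined by \((\widetilde u(x),\widetilde V(x))\) does not change with \(x\)) is only a repackaging of the same ODE-uniqueness fact that the paper applies along paths.
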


\begin{proof}
Suppose that \(\mathcal I_u(V,V)=0\). The pointwise formula
\eqref{eq:index-form-negative-curvature} gives
\[
  \nabla V=0,
  \qquad
  |V|^2|du(e_i)|^2-\langle V,du(e_i)\rangle^2=0
  \qquad\text{for all }i.
\]
Thus \(du(TM)\subset\mathbb RV\). Since \(M\) is connected and \(V\) is
parallel, either \(V\equiv0\), or \(V\) has constant positive length. Assume
the latter. Let
\(
  \widetilde u\colon\widetilde M\to\mathbb H^2
\)
be an equivariant lift of \(u\), and let \(\widetilde V\) be the lift of
\(V\). Choose \(\widetilde x_0\in\widetilde M\), and let
\(L\subset\mathbb H^2\) be the complete geodesic through
\(\widetilde u(\widetilde x_0)\) tangent to
\(\widetilde V(\widetilde x_0)\).

Let \(c\) be any piecewise smooth path in \(\widetilde M\) starting at
\(\widetilde x_0\). Along \(c\), one has
\(
  d\widetilde u(\dot c)=a\,\widetilde V,
  \nabla_{\dot c}\widetilde V=0
\)
for some scalar function \(a\). Consequently,
\(\widetilde u\circ c\) is a possibly nonregular reparametrization of a
geodesic with initial tangent line
\(\mathbb R\widetilde V(\widetilde x_0)\), and hence is contained in \(L\).
Connectedness gives \(\widetilde u(\widetilde M)\subset L\).

If \(\widetilde u\) is constant, the induced subgroup of \(\pi_1(S)\) is
trivial. Otherwise, equivariance implies that \((u_0)_*\pi_1(M)\) preserves
\(L\). A torsion-free discrete subgroup of
\(\operatorname{Isom}^+(\mathbb H^2)\) preserving a geodesic is cyclic.
Both alternatives contradict the standing assumption. Therefore \(V=0\).
\end{proof}
In particular, the Jacobi operator $J_u$ is invertible under our standing
assumption. As an unbounded self-adjoint non-negative elliptic operator on
\(L^2(u^*TS)\), with domain \(W^{2,2}(u^*TS)\), it has discrete spectrum.
Lemma~\ref{lem:nondegeneracy-target-jacobi} excludes zero from the spectrum,
and hence \(\mathcal J_u\) has a bounded inverse
\(
  \mathcal J_u^{-1}\colon
  L^2(u^*TS)\to W^{2,2}(u^*TS).
\)

For the nonlinear implicit-function argument, we work in H\"older spaces.
Fix \(0<\alpha<1\). Let \(\operatorname{Met}_{-1}(S)\) denote the space
of smooth hyperbolic metrics on \(S\), and let
\(
  \pi:
  \operatorname{Met}_{-1}(S)
  \to
  \mathcal T(S)
  =
  \operatorname{Met}_{-1}(S)/\operatorname{Diff}_0(S)
\)
be the natural quotient map. Choose an open neighborhood
\(\mathcal U\subset\mathcal T(S)\) of \([h]\) and a smooth local section
\(\sigma:\mathcal U\to\operatorname{Met}_{-1}(S)\) satisfying
\(\sigma([h])=h\). Set
\(
  \mathscr H:=\sigma(\mathcal U).
\)
Thus \(\mathscr H\) is a smooth finite-dimensional submanifold of
\(\operatorname{Met}_{-1}(S)\) through \(h\), and the restriction
\(\pi|_{\mathscr H}:\mathscr H\to\mathcal U\) is a diffeomorphism.

For \(\varepsilon>0\) sufficiently small, let
\(
  \mathcal B_\varepsilon
  :=
  \left\{
    V\in C^{2,\alpha}(u^*TS):
    \|V\|_{C^{2,\alpha}}<\varepsilon
  \right\},
\)
and use the exponential map of the fixed reference metric \(h\) to define
\[
  \operatorname{Exp}_u(V)(x)
  :=
  \exp^h_{u(x)}\bigl(V(x)\bigr),
  \qquad x\in M.
\]
For \(V\in\mathcal B_\varepsilon\), let
\(
  P_V(x):
  T_{\operatorname{Exp}_u(V)(x)}S
  \to
  T_{u(x)}S
\)
be parallel transport with respect to \(h\) along the geodesic
\(s\mapsto\exp^h_{u(x)}(sV(x))\), \(0\leq s\leq1\). The tension field then
defines a smooth map between fixed Banach spaces,
\[
  \mathscr T:
  \mathscr H\times\mathcal B_\varepsilon
  \longrightarrow
  C^{0,\alpha}(u^*TS),
  \qquad
  \mathscr T(h',V)
  :=
  P_V\!\left(
    \tau_{g,h'}\bigl(\operatorname{Exp}_u(V)\bigr)
  \right).
\]
Here
\(\tau_{g,h'}(\operatorname{Exp}_u(V))\) is naturally a section of
\(\operatorname{Exp}_u(V)^*TS\), and \(P_V\) identifies this varying bundle
with the fixed bundle \(u^*TS\). Since \(u:(M,g)\to(S,h)\) is harmonic,
\(
  \mathscr T(h,0)=0.
\)
Moreover, 
\(D_V\mathscr T(h,0)=-\mathcal J_u\). Elliptic Fredholm theory and
Lemma~\ref{lem:nondegeneracy-target-jacobi} show that
\(
  \mathcal J_u\colon C^{2,\alpha}(u^*TS)
  \to C^{0,\alpha}(u^*TS)
\)
is an isomorphism. The Banach-space implicit function theorem, followed by
parameter-dependent elliptic regularity, therefore gives a smooth family $(u_t)$ of
smooth harmonic maps $u_t$. 

To state the next result  and let
\(
  C_k(X,Y)
  :=
  \left.
  \frac{d}{dt}
  \right|_{t=0}
  \nabla_X^tY.
\)
Then \(C_k\) is a tensor, symmetric in $(X, Y)$ and taking value
in $TS$. By a routine computation we find $C_k$ can also be defined as
\begin{equation}
\begin{aligned}
  2h\bigl(C_k(X,Y),Z\bigr)
  ={}&
  (\nabla_Xk)(Y,Z)
  +
  (\nabla_Yk)(X,Z)
  -
  (\nabla_Zk)(X,Y),
\end{aligned}
\label{eq:variation-levi-civita-connection}
\end{equation}
where \(\nabla=\nabla^0\).
We denote $S_k$ as the tensor field
\begin{equation}
  S_k
  :=\text{tr}_h (u^\ast C_k)=
  \sum_{i=1}^n
  C_k\bigl(du(e_i),
  du(e_i)\bigr)
  \label{eq:def-source-linearized-harmonic-map}
\end{equation}
where $(e_i)$ is a \(g\)-orthonormal frame.

\begin{proposition}
\label{prop:general-energy-variation}
Let
\(
  E(t):=\mathcal E_{u_0}(x_t).
\)
Then
\begin{equation}
  E'(0)
  =
  \frac12\int_M
  \operatorname{tr}_g\bigl(u^*k\bigr)\,d\mu_g,
  \label{eq:first-variation-energy-general}
\end{equation}
and
\begin{equation}
  E''(0)
  =
  \frac12\int_M
  \operatorname{tr}_g\bigl(u^*l\bigr)\,d\mu_g
  -
  \mathcal I_u(W,W).
  \label{eq:second-variation-energy-general}
\end{equation}
Then \(W\) is the unique solution of
\begin{equation}
  \mathcal J_uW=S_k.
  \label{eq:linearized-harmonic-map-equation}
\end{equation}
Consequently,
\begin{equation}
  E''(0)
  =
  \frac12\int_M
  \operatorname{tr}_g\bigl(u^*l\bigr)\,d\mu_g
  -
  \bigl\langle
    S_k,\mathcal J_u^{-1}S_k
  \bigr\rangle_{L^2}.
  \label{eq:second-variation-energy-jacobi-green}
\end{equation}
Here
\(
  \langle A,B\rangle_{L^2}
  :=
  \int_M h(A,B)\,d\mu_g
\)
for sections \(A,B\in\Gamma(u^*TS)\).
\end{proposition}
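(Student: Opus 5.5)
We would work throughout with the pulled-back family \(u_t\colon(M,g)\to(S,h_t)\) and the fixed-target energy functional
\[
  \mathbf E(t,v):=\frac12\int_M\operatorname{tr}_g(v^*h_t)\,d\mu_g .
\]
By the implicit-function construction preceding the statement, \(u_t\) depends smoothly on \(t\), and \(E(t)=\mathbf E(t,u_t)\).

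\textbf{First variation.} Differentiating once gives
\[
  E'(0)=\partial_t\mathbf E(0,u)+D_v\mathbf E(0,u)[W].
\]
The second term is \(-\int_M h(\tau(u),W)\,d\mu_g\), which vanishes because \(u\) is harmonic. The first term is \(\tfrac12\int_M\operatorname{tr}_g(u^*k)\,d\mu_g\), since only the metric varies there. This is \eqref{eq:first-variation-energy-general}.

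\textbf{Linearized harmonic map equation.} Next we derive \eqref{eq:linearized-harmonic-map-equation}. Write the tension field in a \(g\)-orthonormal frame as
\[
  \tau_t(u_t)=\sum_i\Bigl(\nabla^t_{du_t(e_i)}du_t(e_i)-du_t(\nabla_{e_i}e_i)\Bigr).
\]
We differentiate \(\tau_t(u_t)=0\) at \(t=0\). This produces two contributions:
\begin{itemize}
  \item the variation of the connection, \(\sum_iC_k(du(e_i),du(e_i))=S_k\), using the definition of \(C_k\);
  \item the standard linearization of the tension field in the map at fixed metric, \(\frac{D}{dt}\tau(u_t)=-\mathcal J_uW\), which is the identity \(D_V\mathscr T(h,0)=-\mathcal J_u\) already recorded.
\end{itemize}
Hence \(-\mathcal J_uW+S_k=0\). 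Uniqueness of \(W\) follows from Lemma~\ref{lem:nondegeneracy-target-jacobi}.

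\textbf{Second variation.} We differentiate \(E'(t)=\partial_t\mathbf E(t,u_t)\), which holds for all \(t\) by the same harmonicity argument:
\[
  E''(0)=\partial_t^2\mathbf E(0,u)+D_v\partial_t\mathbf E(0,u)[W].
\]
The first term is \(\tfrac12\int_M\operatorname{tr}_g(u^*l)\,d\mu_g\).

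For the mixed term, we differentiate the identity \(D_v\mathbf E(t,u_t)=0\) in \(t\):
\[
  \partial_tD_v\mathbf E(0,u)+D_v^2\mathbf E(0,u)[W,\cdot]=0 .
\]
The second variation of energy at fixed target is the index form, \(D^2_v\mathbf E(0,u)[W,W]=\mathcal I_u(W,W)\). By symmetry of mixed partials, \(D_v\partial_t\mathbf E(0,u)[W]=\partial_tD_v\mathbf E(0,u)[W]\), so pairing the differentiated identity with \(W\) gives \(D_v\partial_t\mathbf E(0,u)[W]=-\mathcal I_u(W,W)\). This yields \eqref{eq:second-variation-energy-general}.

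To make this rigorous, we work in the exponential chart \(v=\operatorname{Exp}_u(V)\) for the fixed metric \(h\). There \(\mathbf E\) is a smooth function on \(\mathbb R\times\mathcal B_\varepsilon\), and the Hessian in \(V\) at the critical point \(V=0\) is chart-independent, equal to \(\mathcal I_u\).

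\textbf{Closed form.} Finally, \(W=\mathcal J_u^{-1}S_k\) and \(\mathcal I_u(W,W)=\langle\mathcal J_uW,W\rangle_{L^2}=\langle S_k,\mathcal J_u^{-1}S_k\rangle_{L^2}\). This gives \eqref{eq:second-variation-energy-jacobi-green}.

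\textbf{Main obstacle.} The delicate step is identifying \(\partial_tD_v\mathbf E\) consistently with the sign conventions, namely showing that the metric-derivative of the first variation equals \(-\langle S_k,\cdot\rangle_{L^2}\). We would check this directly by differentiating \(\tfrac12\int_M\operatorname{tr}_g(v^*h_t)\) along a variation \(v_s\) with \(\partial_sv=V\). Integrating by parts, the derivative of \(-\int_M h_t(\tau_t(u),V)\,d\mu_g\) at \(t=0\) is
\[
  -\int_M\bigl(k(\tau,V)+h(S_k,V)\bigr)\,d\mu_g .
\]
Since \(\tau=0\), this reduces to \(-\langle S_k,V\rangle_{L^2}\), which is consistent with the previous computation.
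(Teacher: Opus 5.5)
Your proposal is correct and follows essentially the same route as the paper's proof. The shared ingredients are: criticality of \(u_t\) kills \(D_v\mathbf E\), the Hessian at \(u\) is \(\mathcal I_u\), the differentiated critical-point identity gives the mixed term \(-\mathcal I_u(W,W)\), and differentiating \(\tau_t(u_t)=0\) gives \(\mathcal J_uW=S_k\). The one difference is packaging: you differentiate \(E'(t)=\partial_t\mathbf E(t,u_t)\) once more in the exponential chart, whereas the paper applies the full second-order chain rule; your version avoids the acceleration term \(A\) and needs no auxiliary extension \(Z_t\).
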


\begin{proof}
By the smooth-dependence result established above, the harmonic
representatives \(u_t:(M,g)\to(S,h_t)\) form a smooth family. For a map \(v:M\to S\), define
\(
  \mathcal F(t,v)
  :=
  \frac12\int_M
  \operatorname{tr}_g(v^*h_t)\,d\mu_g.
\)
Thus
\(
  E(t)=\mathcal F(t,u_t).
\)
Let \(\tau_t(v)=\tau_{g,h_t}(v)\) denote the tension field of \(v\) with
respect to the domain metric \(g\) and the target metric \(h_t\). If
\(V\in\Gamma(v^*TS)\), the first variation of the map energy is
\begin{equation}
  D_v\mathcal F(t,v)[V]
  =
  -\int_M
  h_t\bigl(\tau_t(v),V\bigr)\,d\mu_g.
  \label{eq:map-direction-first-variation}
\end{equation}
Since \(u_t\) is harmonic,
\(
  \tau_t(u_t)=0,
\)
and hence
\begin{equation}
  D_v\mathcal F(t,u_t)=0
  \label{eq:critical-point-family}
\end{equation}
for every \(t\).

We first compute the derivatives of \(\mathcal F\) in the parameter
direction while keeping the map fixed. If \(e_1,\ldots,e_n\) is a local
\(g\)-orthonormal frame, then
\(
  \operatorname{tr}_g(v^*h_t)
  =
  \sum_{i=1}^n
  h_t\bigl(dv(e_i),dv(e_i)\bigr).
\)
Therefore
\[
  \partial_t\mathcal F(t,v)
  =
  \frac12\int_M
  \sum_{i=1}^n
  \dot h_t\bigl(dv(e_i),dv(e_i)\bigr)\,d\mu_g.
\]
At \(t=0\), this gives
\begin{equation}
  \partial_t\mathcal F(0,u)
  =
  \frac12\int_M
  \operatorname{tr}_g(u^*k)\,d\mu_g.
  \label{eq:pure-t-first-derivative-energy}
\end{equation}
Similarly,
\begin{equation}
  \partial_{tt}^2\mathcal F(0,u)
  =
  \frac12\int_M
  \operatorname{tr}_g(u^*l)\,d\mu_g.
  \label{eq:pure-t-second-derivative-energy}
\end{equation}

The first-order chain rule gives
\(
  E'(0)
  =
  \partial_t\mathcal F(0,u)
  +
  D_v\mathcal F(0,u)[W].
\)
The second term vanishes by
\eqref{eq:critical-point-family}. Combining this with
\eqref{eq:pure-t-first-derivative-energy} proves
\eqref{eq:first-variation-energy-general}.

We next identify the Hessian of the map energy. With the sign convention
used here, the linearization of the tension field at the harmonic map \(u\)
is
\begin{equation}
  D_u\tau_0[V]
  =
  -\mathcal J_uV,
  \label{eq:linearization-tension-jacobi}
\end{equation}
where $J_u$ is the Jacobi operator
(\ref{eq:def-positive-jacobi-operator}).
Differentiating \eqref{eq:map-direction-first-variation} in the map
direction at the harmonic map \(u\), and using \(\tau_0(u)=0\), gives
\[
  D_v^2\mathcal F(0,u)[V,Z]
  =
  -\int_M
  h\bigl(D_u\tau_0[V],Z\bigr)\,d\mu_g
  =
  \int_M
  h\bigl(\mathcal J_uV,Z\bigr)\,d\mu_g
 =
  \mathcal I_u(V,Z).
\]
Thus
\begin{equation}
  D_v^2\mathcal F(0,u)
  =
  \mathcal I_u.
  \label{eq:energy-hessian-index-form}
\end{equation}

We now differentiate \(E(t)=\mathcal F(t,u_t)\) twice. The second-order chain rule gives
\begin{equation}
\begin{aligned}
  E''(0)
  ={}&
  \partial_{tt}^2\mathcal F(0,u)
  +
  2D_{tv}^2\mathcal F(0,u)[W]
  \\
  &+
  D_v^2\mathcal F(0,u)[W,W]
  +
  D_v\mathcal F(0,u)[A],
\end{aligned}
\label{eq:full-second-chain-rule-general}
\end{equation}
where \(A\) denotes the second derivative of the curve \(t\mapsto u_t\) in
the chosen chart. The last term vanishes because
\(D_v\mathcal F(0,u)=0\).

To eliminate the mixed derivative, differentiate the critical-point
identity \eqref{eq:critical-point-family}. More precisely, let
\(Z\in\Gamma(u^*TS)\), and extend \(Z\) to a smooth family of tangent
vectors \(Z_t\in T_{u_t}\operatorname{Map}(M,S)\) using the chosen local
trivialization of the tangent bundle of the mapping space. Differentiating
\(
  D_v\mathcal F(t,u_t)[Z_t]=0
\)
at \(t=0\) gives
\[
  D_{tv}^2\mathcal F(0,u)[Z]
  +
  D_v^2\mathcal F(0,u)[W,Z]
  +
  D_v\mathcal F(0,u)
 [
    \tfrac{D}{dt}|_{t=0}Z_t
 ]
  =
  0.
\]
Again, the last term vanishes because \(u\) is a critical point. Therefore,
using \eqref{eq:energy-hessian-index-form},
\(
  D_{tv}^2\mathcal F(0,u)[Z]
  +
  \mathcal I_u(W,Z)
  =
  0\)
for every $Z\in\Gamma(u^*TS).$
Taking \(Z=W\), we obtain
\[
  D_{tv}^2\mathcal F(0,u)[W]
  =
  -\mathcal I_u(W,W).
\]
Substituting this identity and
\eqref{eq:energy-hessian-index-form} into
\eqref{eq:full-second-chain-rule-general}, we find
\[
\begin{aligned}
  E''(0)
  &=
  \partial_{tt}^2\mathcal F(0,u)
  -
  2\mathcal I_u(W,W)
  +
  \mathcal I_u(W,W)
  \\
  &=
  \partial_{tt}^2\mathcal F(0,u)
  -
  \mathcal I_u(W,W),
\end{aligned}
\]
and
then s
\eqref{eq:second-variation-energy-general}.
by \eqref{eq:pure-t-second-derivative-energy}.

It remains to identify the equation satisfied by \(W\). 

We now differentiate the harmonic-map equation
\(
  \tau_t(u_t)=0.
\)
At a fixed point of \(M\), choose a local \(g\)-orthonormal frame
\(e_1,\ldots,e_n\) satisfying
\(\nabla^g_{e_i}e_i=0\) at that point. For the fixed map \(u\), one has
\(
  \tau_t(u)
  =
  \sum_{i=1}^n
  \nabla_{du(e_i)}^tdu(e_i)
\)
at that point. Hence
\[
  \left.
  \frac{d}{dt}
  \right|_{t=0}
  \tau_t(u)
  =
  \sum_{i=1}^n
  C_k\bigl(du(e_i),du(e_i)\bigr)
  =
  S_k.
\]

The variation of the tension field caused by the variation of the map is,
by \eqref{eq:linearization-tension-jacobi},
\(
  D_u\tau_0[W]
  =
  -\mathcal J_uW.
\)
Consequently,
\[
  0
  =
  \left.
  \frac{d}{dt}
  \right|_{t=0}
  \tau_t(u_t)
  =
  \left.
  \frac{d}{dt}
  \right|_{t=0}
  \tau_t(u)
  +
  D_u\tau_0[W]
  =
  S_k-\mathcal J_uW.
\]
This proves
\(
  \mathcal J_uW=S_k.
\)

Equivalently, the equation holds weakly in the form
\(
  \mathcal I_u(W,Z)
  =
  \langle S_k,Z\rangle_{L^2}
\)
for every $Z\in\Gamma(u^*TS).$
Elliptic regularity then shows that the weak solution is smooth. By
Lemma~\ref{lem:nondegeneracy-target-jacobi}, the Jacobi operator
\(\mathcal J_u\) is invertible, so the solution is unique and
\(
  W=\mathcal J_u^{-1}S_k.
\)
Finally,
\(
  \mathcal I_u(W,W)
  =
  \langle\mathcal J_uW,W\rangle_{L^2}
  =
  \langle S_k,\mathcal J_u^{-1}S_k\rangle_{L^2}.
\)
Substituting this identity into
\eqref{eq:second-variation-energy-general} proves
\eqref{eq:second-variation-energy-jacobi-green}.
\end{proof}

\begin{remark}
The two terms in \eqref{eq:second-variation-energy-general} have a useful
interpretation: the first is the direct second-order change of the target
metric, whereas the nonnegative index term is the amount by which the
harmonic representative lowers the energy by readjusting inside its
homotopy class. Although $k$, $l$, and $W$ depend on the chosen path, their
combination in \eqref{eq:first-variation-energy-general} and
\eqref{eq:second-variation-energy-general} is intrinsic.
\end{remark}
  
We now specialize to the unit-speed Teichm\"uller geodesic
$\gamma_q(t)=[X_t,F_t\circ f]$ introduced in
Section~\ref{sec:teichmuller-space-and-geodesics}, and write
\(
  E_q(t):=\mathcal E_{u_0}(\gamma_q(t)).
\)
Let $u\colon(M,g)\to(X_0,\sigma_0)$ be the harmonic representative at
$t=0$, and put
\begin{equation}
  e_u
  :=
  \frac12\operatorname{tr}_g(u^*\sigma_0).
  \label{eq:energy-density-u-at-zero}
\end{equation}

\noindent\emph{The family $\hat \sigma_t$
  along the harmonic maps $f_t$.}
Use the notation of Subsection~\ref{subsec:harmonic-map-gauge}. Thus,
\[
  \Phi(t)
  =
  t\Phi_1+\frac{t^2}{2}\Phi_2+O(t^3)
\]
is the Hopf differential of the harmonic marking
$f_t\colon(X_0,\sigma_0)\to(X_t,\sigma_t)$. Set
\begin{equation}
  U_1:=|\Phi_1|_{\sigma_0}^2,
  \qquad
  \eta_{\Phi_1}
  :=
  2U_1-4(\Delta_0-2)^{-1}U_1.
  \label{eq:def-eta-Phi1}
\end{equation}
By \eqref{eq:harmonic-gauge-second-order-expansion},
\[
  \dot{\widehat\sigma}_0=2\operatorname{Re}\Phi_1,
  \qquad
  \ddot{\widehat\sigma}_0
  =
  2\operatorname{Re}\Phi_2+\eta_{\Phi_1}\sigma_0.
\]
Hence Proposition~\ref{prop:general-energy-variation} gives the global
smooth formulas
\begin{align}
  E_q'(0)
  &=
  \int_M
  \operatorname{tr}_g
  \bigl(u^*\operatorname{Re}\Phi_1\bigr)\,d\mu_g,
  \label{eq:first-variation-energy-harmonic-gauge}
  \\
  E_q''(0)
  &=
  \int_M
  \operatorname{tr}_g
  \bigl(u^*\operatorname{Re}\Phi_2\bigr)\,d\mu_g
  +
  \int_M
  (\eta_{\Phi_1}\circ u)e_u\,d\mu_g
  -
  \mathcal I_u(W_{\mathrm H},W_{\mathrm H}),
  \label{eq:second-variation-energy-harmonic-gauge}
\end{align}
where $W_{\mathrm H}$ is the variation of the harmonic representatives in
the harmonic-map gauge. In the notation of
\eqref{eq:def-source-linearized-harmonic-map}, it is characterized by
\begin{equation}
  \mathcal J_uW_{\mathrm H}
  =
  S_{2\operatorname{Re}\Phi_1}.
  \label{eq:linearized-map-harmonic-gauge}
\end{equation}

\noindent\emph{The family $\title \sigma_t$
 along the Teichm\"u{}ller maps $f_t$.}

On $X_0\setminus Z(q)$, use the functions $a,b$ and the tensor $B_q$ from
Subsection~\ref{subsec:teichmuller-map-gauge}. Thus,
\begin{align*}
  (\Delta_0-2)a
  &=
  \mathcal D_q\log\lambda,
  \\
  (\Delta_0-2)b
  &=
  4\bigl(\mathcal D_qa+a^2-1\bigr),
\end{align*}
and
\[
  \dot{\widetilde\sigma}_0
  =
  2a\sigma_0+2B_q,
  \qquad
  \ddot{\widetilde\sigma}_0
  =
  (4+2b+4a^2)\sigma_0+8aB_q.
\]
Define the anisotropic energy density
\begin{equation}
  \beta_q(u)
  :=
  \frac12\operatorname{tr}_g(u^*B_q).
  \label{eq:def-anisotropic-energy-density}
\end{equation}
Assume that \(u(M)\cap Z(q)=\varnothing\). Since \(u(M)\) is compact,
choose open sets
\[
  u(M)\subset U_0\Subset U\Subset X_0\setminus Z(q).
\]
The Teichm\"uller maps and the pulled-back metrics are smooth on \(U\). By
isotopy extension, after shrinking the parameter interval, the restriction of
\(F_t\) near \(\overline U\) extends to a smooth global family of
marking-compatible identifications \(\iota_t:X_0\to X_t\) agreeing with
\(F_t\) on \(U\). The harmonic representatives pulled back by \(\iota_t\)
remain in \(U\) for small \(t\). Their first and second metric jets along
\(u(M)\) are therefore exactly
\eqref{eq:first-variation-teichmuller-gauge} and
\eqref{eq:second-variation-teichmuller-gauge}. Proposition~
\ref{prop:general-energy-variation} applies and gives
\begin{equation}
  E_q'(0)
  =
  2\int_M
  \left[
    (a\circ u)e_u+\beta_q(u)
  \right]d\mu_g,
  \label{eq:first-variation-energy-teichmuller-gauge}
\end{equation}
and
\begin{equation}
  \begin{aligned}
  E_q''(0)
  ={}&
  \int_M
  \left[
    \bigl[\bigl(4+2b+4a^2\bigr)\circ u\bigr]e_u
    +
    8(a\circ u)\beta_q(u)
  \right]d\mu_g-
  \mathcal I_u(W_{\mathrm T},W_{\mathrm T}),
  \end{aligned}
  \label{eq:second-variation-energy-teichmuller-gauge}
\end{equation}
where $W_{\mathrm T}$ is the map variation in the Teichm\"uller-map family
and satisfies
\begin{equation}
  \mathcal J_uW_{\mathrm T}
  =
  S_{2(a\sigma_0+B_q)}.
  \label{eq:linearized-map-teichmuller-gauge}
\end{equation}
More generally, the metric-jet identities and the corresponding linearized
equation are valid on \(M\setminus u^{-1}(Z(q))\). The global energy identities
\eqref{eq:first-variation-energy-teichmuller-gauge} and
\eqref{eq:second-variation-energy-teichmuller-gauge} are asserted here only
under the assumption \(u(M)\cap Z(q)=\varnothing\). When the image of \(u\)
meets \(Z(q)\), the harmonic-map formula
\eqref{eq:second-variation-energy-harmonic-gauge} is the appropriate global
formula.

It is useful to compare the preceding second-variation formula for
$E(t)$ along a Teichm\"uller geodesic with the corresponding formula
for the restriction $E_{\mathrm{WP}}(t)$ of the same energy function
to a Weil--Petersson geodesic. In the harmonic-map-family for a
Weil--Petersson geodesic, the transverse-traceless part of the metric
acceleration vanishes at the base point; equivalently, the coefficient
$\Phi_2$ in
\eqref{eq:second-variation-energy-harmonic-gauge} is zero. Hence the
second variation reduces to the positive scalar metric-acceleration
term and the negative Jacobi relaxation term. Under the standing
noncyclicity assumption, the estimates of
\cite{Yamada1999,KimWanZhang2022} show that the Jacobi term is
controlled by part of the scalar term, leaving a strictly positive
remainder. Consequently, $E_{\mathrm{WP}}''(0)>0$ in every nonzero
Weil--Petersson direction. Since the base point is arbitrary, the
energy is strictly convex along Weil--Petersson geodesics.

For a general Teichm\"uller geodesic, by contrast, the coefficient
$\Phi_2$ need not vanish, since such a geodesic need not be a straight
line in Wolf's harmonic-map coordinates; compare \cite{Wolf1989}.
Accordingly, the contribution
$\int_M \operatorname{tr}_g
\bigl(u^*\operatorname{Re}\Phi_2\bigr)\,d\mu_g$
has no a priori sign. The same obstruction is visible in the
Teichm\"uller-map-family: the direct metric-acceleration density
appearing in
\eqref{eq:second-variation-energy-teichmuller-gauge}
has no a priori pointwise sign. Thus the Weil--Petersson convexity
argument does not carry over directly to Teichm\"uller geodesics.
This motivates the global quasi-convexity theorem proved above.

\subsection{Variations of energy functions with fixed target}
\label{subsec:variations-energy-fixed-target}

Let $X\in\mathcal T(\Sigma)$ and let
\(
  \Phi_X:=\operatorname{Hopf}(u_X)\in\mathcal Q(X)
\)
be the Hopf differential of the harmonic map $u_X$. Our convention is
\(
  \Phi_X
  =
  h\bigl((u_X)_z,(u_X)_z\bigr)\,dz^2
\)
in a local conformal coordinate $z$ on $X$. If $X_s$ is a smooth path
through $X=X_0$ whose tangent vector is represented by a Beltrami differential
$\mu$, then the first variation of the energy is
\begin{equation}
  \left.\frac{d}{ds}\right|_{s=0}E_{u_0}(X_s)
  =
  -4\operatorname{Re}\int_X\Phi_X\mu.
  \label{eq:first-variation-domain-hopf}
\end{equation}
Here the integral is the canonical pairing between holomorphic quadratic
differentials and Beltrami differentials. This is the standard first-variation
formula in the normalization used here; see
\cite{Tromba1992}. In particular, a point $X$ is critical if
and only if $\Phi_X=0$, or equivalently, if and only if $u_X$ is weakly
conformal.

We next derive formulas adapted to a Teichm\"uller geodesic. Let
\(
  X_t=\gamma_q(t),
\) \(
  q\in\mathcal Q^1(X_0),
\)
be a unit-speed Teichm\"uller geodesic, and let
$F_t\colon X_0\to X_t$ be its Teichm\"uller maps. Set
\(
  v_t:=u_{X_t}\circ F_t\colon X_0\to S,
 \) \(
  E(t):=E_{u_0}(X_t).
\)
Away from the zero set of $q$, choose a natural coordinate
\(
  \zeta=x+iy,
\) \(
  q=d\zeta^2.
\)
For a map $v\colon X_0\to S$, define
\begin{equation}
  H(v)
  :=
  \frac12\int_{X_0\setminus Z(q)}|v_x|_h^2\,dx\,dy,
  \quad
  V(v)
  :=
  \frac12\int_{X_0\setminus Z(q)}|v_y|_h^2\,dx\,dy.
  \label{eq:horizontal-vertical-energy-domain}
\end{equation}
These quantities are globally well defined: the transition maps between
natural coordinates are of the form $\zeta\mapsto\pm\zeta+c$, and $Z(q)$ has
measure zero.

Let \(\sigma_t\) denote the hyperbolic metric on \(X_t\), and, for
\(w\in W^{1,2}(X_t,S)\), write
\(
  E_{X_t}(w)
  :=
  \frac12\int_{X_t}
  |dw|_{\sigma_t,h}^2\,dA_{\sigma_t}.
\)
For a smooth map \(v:X_0\to S\), define the energy functional pulled back
to the fixed domain \(X_0\) by
\begin{equation}
  \mathcal E(t,v)
  :=
  E_{X_t}(v\circ F_t^{-1})
  =
  e^{-2t}H(v)+e^{2t}V(v).
  \label{eq:fixed-map-energy-teich-geodesic}
\end{equation}
The first expression is well defined because \(F_t^{-1}:X_t\to X_0\) is
locally bi-Lipschitz across \(Z(q_t)\), and hence
\(v\circ F_t^{-1}\in W^{1,2}(X_t,S)\).

To verify the second equality, work first on \(X_0\setminus Z(q)\). In a
\(q\)-natural coordinate \(\zeta=x+iy\), with \(q=d\zeta^2\), and in the
corresponding \(q_t\)-natural coordinate on \(X_t\), one has
\(
  \zeta_t\circ F_t(x+iy)
  =
  e^t x+i e^{-t}y
\) and \(
  F_t^*|q_t|
  =
  e^{2t}dx^2+e^{-2t}dy^2.
\)
Since \(\sigma_t\) and \(|q_t|\) determine the same conformal structure on
\(X_t\setminus Z(q_t)\), conformal invariance of the two-dimensional
Dirichlet energy allows \(E_{X_t}\) to be computed using \(|q_t|\).
Changing variables by \(F_t\) then gives
\[
  E_{X_t}(v\circ F_t^{-1})
  =
  \frac12\int_{X_0\setminus Z(q)}
  \left(
    e^{-2t}|v_x|_h^2
    +
    e^{2t}|v_y|_h^2
  \right)\,dx\,dy,
\]
which is precisely
\(\mathcal E(t,v)=e^{-2t}H(v)+e^{2t}V(v)\).

Let \(m_t:\Sigma\to X_t\) be the marking and define
\(
  \mathscr H_t
  :=
  \{w:X_t\to S: w\circ m_t\simeq u_0\}.
\)
Thus \(\mathscr H_t\) is the homotopy class on \(X_t\) determined by
\(u_0\) and the marking \(m_t\). Let \(\mathscr H_0\) be the corresponding class on
\(X_0\). Since \(F_t\) is marking-compatible, precomposition with
\(F_t^{-1}\) defines a bijection
\(
  \mathscr H_0\to\mathscr H_t
\) and \(
  v\mapsto v\circ F_t^{-1}.
\)
In particular, if \(v_t:=u_{X_t}\circ F_t\), then
\(v_t\in\mathscr H_0\) and
\(v_t\circ F_t^{-1}=u_{X_t}\). Since \(u_{X_t}\) minimizes the Dirichlet
energy in \(\mathscr H_t\), for every \(v\in\mathscr H_0\) one has
\[
  \mathcal E(t,v_t)
  =
  E_{X_t}(u_{X_t})
  \le
  E_{X_t}(v\circ F_t^{-1})
  =
  \mathcal E(t,v).
\]
Thus \(v_t\) minimizes \(\mathcal E(t,\cdot)\) in the fixed homotopy class
\(\mathscr H_0\). Moreover, with
\(E(t):=E_{u_0}(X_t)\),
\(
  E(t)
  =
  E_{X_t}(u_{X_t})
  =
  \mathcal E(t,v_t).
\)

\begin{proposition}
\label{prop:domain-energy-first-second-variation}
Let $X_t=\gamma_q(t)$ be the Teichm\"uller geodesic determined by
$q\in\mathcal Q^1(X_0)$, let $F_t:X_0\to X_t$ be the associated
Teichm\"uller maps, and set $v_t:=u_{X_t}\circ F_t$ and
$E(t):=\mathcal E(t,v_t)$. Then
\begin{equation}
  E'(t)
  =
  -2e^{-2t}H(v_t)+2e^{2t}V(v_t).
  \label{eq:domain-energy-first-variation-teich}
\end{equation}
For every compact interval $J\subset\mathbb R$ and every $p>2$, one has
$t\mapsto v_t\in C^2(J,W^{1,p}(X_0,S))$. Let
$W_t:=D_tv_t\in W^{1,p}(X_0,v_t^*TS)$ and define
$\mathcal Q_t:=D_v^2\mathcal E(t,v_t)[W_t,W_t]$. Then
\begin{equation}
  E''(t)=4E(t)-\mathcal Q_t.
  \label{eq:domain-energy-second-variation-teich}
\end{equation}
Moreover, if
$\widetilde W_t:=W_t\circ F_t^{-1}$, then
$\widetilde W_t\in W^{1,p}(X_t,u_{X_t}^*TS)
\subset W^{1,2}(X_t,u_{X_t}^*TS)$ and
\begin{equation}
  \mathcal Q_t
  =
  I_{u_{X_t}}
  \bigl(
    \widetilde W_t,\widetilde W_t
  \bigr).
  \label{eq:brief-hessian-jacobi-identification}
\end{equation}
In particular, $\mathcal Q_t\ge0$.
\end{proposition}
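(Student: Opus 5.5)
The argument treats \(E(t)=\mathcal E(t,v_t)\) as the value of a parameter-dependent functional along a curve of critical points, exactly as in the proof of Proposition~\ref{prop:general-energy-variation}. The only difference is that now the parameter enters through the explicit weights \(e^{\mp2t}\) in \eqref{eq:fixed-map-energy-teich-geodesic}, not through the target metric.

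\emph{Regularity.} I would first establish that \(t\mapsto v_t\) is \(C^2\) into \(W^{1,p}(X_0,S)\). The map \(t\mapsto u_{X_t}\) is smooth when written on a smooth local family of reference surfaces; this follows from the implicit-function argument with the nondegenerate Jacobi operator noted after \eqref{eq:def-energy-varying-domain}. So I would write \(v_t=u_{X_t}\circ\iota_t\circ(\iota_t^{-1}\circ F_t)\), where \(\iota_t:X_0\to X_t\) is a smooth marking-compatible family. The factor \(\psi_t:=\iota_t^{-1}\circ F_t\) is a family of self-maps of \(X_0\). Away from \(Z(q)\) it is affine in natural coordinates. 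Near a zero of order \(m\), the map \(F_t\) is \(z\mapsto\) an affine map in \(\zeta=z^{(m+2)/2}\), so it is bi-Lipschitz and depends smoothly on \(t\) in \(W^{1,\infty}\)-type norms away from the cone point. Composition of a smooth map with such a family lands in \(W^{1,p}\) for every finite \(p\) and is \(C^2\) in \(t\). I expect this to be the main obstacle: controlling \(t\)-derivatives of \(F_t\) near the zeros, where \(\partial_tF_t\) is only bounded, not smooth. I would handle it by working in the \(\zeta\)-coordinate, where \(\partial_t^k F_t\) is explicit and Lipschitz, and pulling back. The restriction \(p<\infty\) absorbs the singular Jacobian factors.

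\emph{First variation.} By the chain rule, \(E'(t)=\partial_t\mathcal E(t,v_t)+D_v\mathcal E(t,v_t)[W_t]\). The second term vanishes because \(v_t\) minimizes \(\mathcal E(t,\cdot)\) in \(\mathscr H_0\), and \(W_t\in W^{1,p}\) is an admissible variation, since \(\exp(sW_t)\) stays in the homotopy class. Differentiating \(e^{-2t}H+e^{2t}V\) in \(t\) then gives \eqref{eq:domain-energy-first-variation-teich}.

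\emph{Second variation.} I would differentiate once more, with the same second-order chain rule as in \eqref{eq:full-second-chain-rule-general}. Note that \(\partial_t^2\mathcal E(t,v)=4\mathcal E(t,v)\). The mixed term is eliminated by differentiating the critical-point identity \(D_v\mathcal E(t,v_t)[Z]=0\), which gives \(D^2_{tv}\mathcal E[W_t]=-\mathcal Q_t\). The acceleration term vanishes by criticality. This yields \(E''=4E-2\mathcal Q_t+\mathcal Q_t=4E-\mathcal Q_t\).

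\emph{Identification of \(\mathcal Q_t\).} Pushing forward by \(F_t\), the functional \(\mathcal E(t,\cdot)\) is exactly \(E_{X_t}\) restricted to \(\mathscr H_t\). Its Hessian at the harmonic map \(u_{X_t}\) is therefore the index form \(I_{u_{X_t}}\), evaluated on \(\widetilde W_t=W_t\circ F_t^{-1}\). This vector field lies in \(W^{1,p}\) because \(F_t^{-1}\) is bi-Lipschitz. By \eqref{eq:index-form-negative-curvature}, \(I_{u_{X_t}}\ge0\), hence \(\mathcal Q_t\ge0\).
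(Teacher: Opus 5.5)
Your proposal follows essentially the same route as the paper. You factor $v_t$ as a smoothly varying harmonic family composed with the bi-Lipschitz, $C^2$-in-$t$ self-maps $\iota_t^{-1}\circ F_t$ of $X_0$, and you use criticality of $v_t$ for $\mathcal E(t,\cdot)$ together with $\partial_{tt}\mathcal E=4\mathcal E$ to obtain both variation formulas; you then identify the Hessian with $I_{u_{X_t}}$ by transporting through $F_t$. The paper spells out two points you take for granted: that $\mathcal E(t,\cdot)$ is $C^2$ on the Banach manifold $W^{1,p}(X_0,S)$, $p>2$, because $H$ and $V$ are quadratic forms with $L^\infty$ horizontal/vertical projection coefficients, and a density argument extending the classical second-variation formula from smooth sections to $W^{1,p}$ sections.
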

\begin{proof}
We first justify the regularity across the finite zero set $Z(q)$. Near a
zero of $q$, the canonical orienting double cover \cite[Construction~1]{Lanneau2004} reduces the local
Teichm\"uller map to a homogeneous model of the form
$re^{i\theta}\mapsto r\Psi_t(\theta)$, where $\Psi_t$ and its relevant
$t$- and $\theta$-derivatives are uniformly bounded on compact parameter
intervals, and the angular derivative is uniformly nondegenerate. It
follows that $F_t$ and $F_t^{-1}$ are locally bi-Lipschitz across $Z(q)$,
and that their first two parameter derivatives have uniformly bounded weak
first derivatives.

Choose a smooth marking-compatible trivialization
$\psi_t:X_0\to X_t$ adapted to these local models and put
$G_t:=\psi_t^{-1}\circ F_t$. Combining the preceding estimates near
$Z(q)$ with the affine description away from $Z(q)$ gives
$t\mapsto G_t\in C^2(J,W^{1,p}(X_0,X_0))$ for every finite $p$.
On the other hand,
$\widehat u_t:=u_{X_t}\circ\psi_t$ solves a smooth family of harmonic-map
equations on the fixed surface $X_0$. Since the Jacobi operator has trivial
kernel in the present nonzero-degree homotopy class, the implicit function
theorem and parameter-dependent elliptic regularity give smooth dependence
of $\widehat u_t$ on $t$; see
\cite{Tromba1992,DaskalopoulosWentworth2007}. Since
$v_t=\widehat u_t\circ G_t$, the Sobolev chain rule gives
$t\mapsto v_t\in C^2(J,W^{1,p}(X_0,S))$ for every $p>2$.

We next explain the choice of Sobolev exponent. Fix a smooth background
metric on $X_0$ and let $P_{\mathrm H}$ and $P_{\mathrm V}$ be the
orthogonal projections onto the horizontal and vertical line fields of
$q$, defined almost everywhere. They are fixed $L^\infty$ coefficients,
and their values on $Z(q)$ are irrelevant. After an isometric embedding
$S\hookrightarrow\mathbb R^N$, the functionals $H$ and $V$ are
restrictions of continuous quadratic integral functionals in
$d(\iota\circ v)$ with coefficients $P_{\mathrm H}$ and
$P_{\mathrm V}$.
Because $p>2=\dim X_0$, the embedding
$W^{1,p}(X_0)\hookrightarrow C^0(X_0)$ allows the target exponential map
to define the standard Banach-manifold structure on
$W^{1,p}(X_0,S)$. In these charts, $H$ and $V$, and hence
$\mathcal E(t,v)=e^{-2t}H(v)+e^{2t}V(v)$, are of class $C^2$.
Thus the first- and second-order Banach-manifold chain rules apply.

Since $v_t$ is a critical point of $\mathcal E(t,\cdot)$,
$D_v\mathcal E(t,v_t)=0$. Therefore
\[
  E'(t)
  =
  \partial_t\mathcal E(t,v_t)
  =
  -2e^{-2t}H(v_t)+2e^{2t}V(v_t),
\]
which proves \eqref{eq:domain-energy-first-variation-teich}.

Differentiating the critical-point equation and evaluating it on $W_t$
gives
$D_{tv}^2\mathcal E(t,v_t)[W_t]+\mathcal Q_t=0$. Differentiating
$E'(t)=\partial_t\mathcal E(t,v_t)$ consequently yields
$E''(t)=\partial_{tt}^2\mathcal E(t,v_t)-\mathcal Q_t$. For every fixed
map $v$, one has
$\partial_{tt}^2\mathcal E(t,v)=4\mathcal E(t,v)$. Since
$\mathcal E(t,v_t)=E(t)$, this proves
\eqref{eq:domain-energy-second-variation-teich}.

It remains to identify the Hessian. Since $F_t$ and $F_t^{-1}$ are
bi-Lipschitz, the map $Z\mapsto Z\circ F_t^{-1}$ is a bounded
isomorphism on both the $W^{1,p}$ and the $W^{1,2}$ spaces of sections.
Here $W^{1,p}$ is the space on which the Hessian
$D_v^2\mathcal E(t,v_t)$ is defined, whereas $W^{1,2}$ is the natural
space on which the index form is continuous.

For $Z\in W^{1,p}(X_0,v_t^*TS)$, set
$\widetilde Z:=Z\circ F_t^{-1}$. Approximate $\widetilde Z$ strongly in
$W^{1,p}$ by smooth sections $\widetilde Z_\nu$ on $X_t$, and put
$Z_\nu:=\widetilde Z_\nu\circ F_t$. The classical second-variation
formula on the closed surface $X_t$ gives
\[
  D_v^2\mathcal E(t,v_t)[Z_\nu,Z_\nu]
  =
  I_{u_{X_t}}(\widetilde Z_\nu,\widetilde Z_\nu).
\]
The left-hand side converges by the $W^{1,p}$-continuity of the Hessian.
Since $p>2$ and $X_t$ is compact, $W^{1,p}\hookrightarrow W^{1,2}$, so
the right-hand side converges by the $W^{1,2}$-continuity of the index
form. Taking $Z=W_t$ proves
\eqref{eq:brief-hessian-jacobi-identification}.

Finally, since the target metric $h$ has curvature $-1$, one has
\[
\begin{aligned}
  \mathcal Q_t
  =
  \int_{X_t}
  \biggl[
    |\nabla\widetilde W_t|_h^2
    +
    \sum_{i=1}^2
    \left(
      |\widetilde W_t|_h^2|du_{X_t}(e_i)|_h^2
      -
      \bigl\langle
        \widetilde W_t,du_{X_t}(e_i)
      \bigr\rangle_h^2
    \right)
  \biggr]
  dA_{\sigma_{X_t}}
  \ge0.
\end{aligned}
\]
This identification is obtained globally on the closed surface $X_t$ and
does not use integration by parts on $X_0\setminus Z(q)$. Hence no term
supported on $Z(q)$ and no puncture-boundary contribution occurs.
\end{proof}

The minimizing characterization also gives the following global Lipschitz
estimate for the logarithmic energy.

\begin{corollary}
\label{cor:log-energy-two-lipschitz}
Along every unit-speed Teichm\"uller geodesic,
\begin{equation}
  |E'(t)|\le2E(t),
  \qquad
  \left|\frac{d}{dt}\log E(t)\right|\le2.
  \label{eq:log-energy-two-lipschitz}
\end{equation}
\end{corollary}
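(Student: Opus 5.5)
The plan is to combine the first-variation formula \eqref{eq:domain-energy-first-variation-teich} with the decomposition \eqref{eq:fixed-map-energy-teich-geodesic} of the energy at the minimizer. Proposition~\ref{prop:domain-energy-first-second-variation} gives
\[
  E'(t)=-2e^{-2t}H(v_t)+2e^{2t}V(v_t),
  \qquad
  E(t)=\mathcal E(t,v_t)=e^{-2t}H(v_t)+e^{2t}V(v_t).
\]
Both terms $e^{-2t}H(v_t)$ and $e^{2t}V(v_t)$ are nonnegative, since $H$ and $V$ are integrals of squared norms. Writing $a:=e^{-2t}H(v_t)\ge0$ and $b:=e^{2t}V(v_t)\ge0$, we have $E'(t)=2(b-a)$ and $E(t)=a+b$. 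The elementary inequality $|b-a|\le a+b$ then gives $|E'(t)|\le 2E(t)$, which is the first assertion.

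For the logarithmic form, we use $E(t)>0$. This holds because $E_{u_0}(X)\ge A_Y>0$ by Lemma~\ref{lem:covering-factorization} under Assumption~\ref{assump:-covering}; more generally, nonzero degree forces the energy to be positive. Hence $\log E$ is differentiable, being $C^2$ by the regularity in Proposition~\ref{prop:domain-energy-first-second-variation}, and $|(\log E)'|=|E'|/E\le2$.

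As an independent check that avoids differentiability, we can use the minimizing characterization directly. For $s,t$, take $v_s$ as a competitor at time $t$:
\[
  E(t)\le\mathcal E(t,v_s)=e^{-2(t-s)}e^{-2s}H(v_s)+e^{2(t-s)}e^{2s}V(v_s)\le e^{2|t-s|}E(s).
\]
By symmetry, $|\log E(t)-\log E(s)|\le 2|t-s|$, so $\log E$ is $2$-Lipschitz, consistent with the derivative bound. The only point that needs care is the justification of the first-variation formula, namely the $C^1$ dependence of $v_t$ in $W^{1,p}$ and the criticality of $v_t$. This is already supplied by Proposition~\ref{prop:domain-energy-first-second-variation}, so I expect no real obstacle.
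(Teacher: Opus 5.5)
Your argument is correct, but your main route differs from the paper's; your ``independent check'' is in fact precisely the paper's proof. The paper never uses the first-variation formula \eqref{eq:domain-energy-first-variation-teich}. It notes that, for each fixed map $v$, the scaling \eqref{eq:fixed-map-energy-teich-geodesic} gives $e^{-2|t-s|}\mathcal E(s,v)\le\mathcal E(t,v)\le e^{2|t-s|}\mathcal E(s,v)$. It takes the infimum over the homotopy class to conclude that $\log E$ is $2$-Lipschitz, and then differentiates using the smoothness of the energy on Teichm\"uller space.

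Your primary route writes $E=a+b$ and $E'=2(b-a)$. It is pointwise and somewhat more informative, since it gives the exact defects $2E(t)-E'(t)=4e^{-2t}H(v_t)$ and $2E(t)+E'(t)=4e^{2t}V(v_t)$. So equality in $|E'|\le 2E$ would force $H(v_t)=0$ or $V(v_t)=0$. A nonzero-degree map cannot satisfy this, because its Jacobian is bounded by $|\partial_x v_t|\,|\partial_y v_t|$.

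The price is that your route rests on the heavier part of Proposition~\ref{prop:domain-energy-first-second-variation}: the $C^1$ dependence of $v_t$ in $W^{1,p}$ across $Z(q)$. That in turn uses the implicit function theorem and the nondegeneracy of the Jacobi operator. The comparison argument needs only conformal invariance and the minimizing property, and gives the integrated Lipschitz bound with no differentiability input at all. It is also the form that carries over to general nonpositively curved targets, as observed in Remark~\ref{rem:general-target-obstruction}.
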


\begin{proof}
For every map \(v\) in the fixed homotopy class and all \(s,t\in\mathbb R\),
equation~\eqref{eq:fixed-map-energy-teich-geodesic} gives
\[
  e^{-2|t-s|}\mathcal E(s,v)
  \le
  \mathcal E(t,v)
  \le
  e^{2|t-s|}\mathcal E(s,v).
\]
Taking the infimum over \(v\) yields
\[
  e^{-2|t-s|}E(s)
  \le E(t)\le
  e^{2|t-s|}E(s).
\]
Therefore \(|\log E(t)-\log E(s)|\le2|t-s|\). Since the energy is smooth on
Teichm\"uller space, differentiation gives
\eqref{eq:log-energy-two-lipschitz}.
\end{proof}

\begin{remark}
\label{rem:general-target-obstruction}
The logarithmic Lipschitz estimate in
Corollary~\ref{cor:log-energy-two-lipschitz} uses only the conformal invariance
of two-dimensional energy and the minimizing property of the harmonic
representative. It therefore has an analog for maps from a varying Riemann
surface to a fixed nonpositively curved Riemannian manifold. Moreover, if the target has
non-positive Hermitian sectional
curvature, then the energy function enjoys strong complex-analytic positivity
properties on Teichm\"uller space: Toledo proved that the energy is
plurisubharmonic, and Kim--Wan--Zhang proved that the reciprocal energy is
plurisuperharmonic and that both \(\log E\) and \(E\) are plurisubharmonic
\cite{Toledo2012,KimWanZhang2020}.  These results are very different in nature
from the coarse quasi-convexity proved above.  The proof of
Theorem~\ref{thm:covering-energy-quasi-convexity} uses two features special to
the hyperbolic-surface covering case: the factorization through a fixed
hyperbolic surface \(Y\), and the extremal-length estimate
\(
  \ell_Y(\lambda)^2
  \le
  2E_{u_0}(X)\operatorname{Ext}_X(\lambda),
\)
which can then be combined with Kerckhoff's formula.  For a general
Riemannian target there is no comparable fixed hyperbolic length function
\(\ell_Y(\lambda)\), no direct Kerckhoff-type formula, and no automatic
coarse comparison between \(E(X)\) and \(e^{2d_{\mathrm T}(X,Y)}\).  Thus the
present quasi-convexity theorem is
proved only for hyperbolic
surface targets; extending it to more general nonpositively curved targets
would require additional geometric or representation-theoretic input replacing
this two-sided comparison.
\end{remark}

In the covering case, the critical-point behavior is particularly rigid.

\begin{proposition}
\label{prop:covering-energy-critical-hessian}
Under Assumption~\ref{assump:-covering}, the point $Y$ is the unique
critical point of $E_{u_0}$. If
$\mu\in T_Y\mathcal T(\Sigma)$ is represented by a harmonic Beltrami
differential, then
\begin{equation}
  \operatorname{Hess}_Y E_{u_0}(\mu,\mu)
  =
  4\|\mu\|_{\mathrm{WP}}^2,
  \qquad
  \|\mu\|_{\mathrm{WP}}^2
  :=
  \int_Y|\mu|^2\,dA_Y.
  \label{eq:covering-energy-hessian-WP}
\end{equation}
In particular, the Hessian is positive definite at $Y$.
\end{proposition}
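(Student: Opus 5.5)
The plan has two parts: uniqueness of the critical point, then the Hessian at $Y$. Throughout I work with the factorization of Lemma~\ref{lem:covering-factorization}, $u_X=u_0\circ f_X$, where $f_X\colon X\to Y$ is the harmonic diffeomorphism homotopic to the identity.

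\emph{Uniqueness of the critical point.} Since $u_0$ is a local isometry,
\[
  \operatorname{Hopf}(u_X)=h\bigl((u_X)_z,(u_X)_z\bigr)\,dz^2
  =\sigma_Y\bigl((f_X)_z,(f_X)_z\bigr)\,dz^2
  =\operatorname{Hopf}(f_X).
\]
By the first variation formula \eqref{eq:first-variation-domain-hopf}, $X$ is critical if and only if $\operatorname{Hopf}(f_X)=0$ in $\mathcal Q(X)$. This happens exactly when $f_X$ is weakly conformal. Because $f_X$ is a diffeomorphism, weak conformality is the same as conformality, which means $X=Y$ in $\mathcal T(\Sigma)$. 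Conversely, $Y$ is critical, since it is the global minimum by \eqref{eq:covering-energy-minimum}. So $Y$ is the unique critical point.

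\emph{Hessian at $Y$.} Take a smooth path $X_s$ with $X_0=Y$ whose tangent vector is the harmonic Beltrami differential $\mu$. Concretely, $X_s$ is $Y$ with the conformal structure of Beltrami coefficient $s\mu+O(s^2)$, realized on the fixed underlying surface. Write $\Phi_s:=\operatorname{Hopf}(f_{X_s})$, so $\Phi_0=0$. Differentiating \eqref{eq:first-variation-domain-hopf} along the path gives
\[
  \operatorname{Hess}_YE_{u_0}(\mu,\mu)=-4\operatorname{Re}\int_Y\dot\Phi_0\,\mu .
\]
The terms coming from varying the pairing and the Beltrami representative both multiply $\Phi_0=0$, so they drop out. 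The whole task is therefore to identify $\dot\Phi_0$.

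Write $f_s:=f_{X_s}$, viewed as a map from the fixed smooth surface to $Y$, so that $f_0=\mathrm{id}$. Let $V=\dot f_0$, a vector field on $Y$. In an $s$-holomorphic coordinate,
\[
  \Phi_s=\sigma_Y(f_s(z))\,(f_s)_z\,\overline{(f_s)_{\bar z}}\,dz^2 .
\]
Differentiating in the original coordinate $z$, using the expression of $s$-holomorphic coordinates through $\mu$, gives
\[
  \dot\Phi_0=\sigma_Y\bigl(\overline{\partial_{\bar z}V}-\bar\mu\bigr)\,dz^2
\]
up to the sign convention for $\mu$. Equivalently, $\dot\Phi_0=-\sigma_Y\bar\mu+\sigma_Y\,\overline{\bar\partial V}$. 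Here $\bar\partial V$ is a trivial Beltrami differential.

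For the identification of $\dot\Phi_0$, I will use that $\dot\Phi_0$ must be holomorphic, since each $\Phi_s$ is holomorphic on $X_s$ and $\Phi_0=0$. Because $\mu$ is harmonic, $\sigma_Y\bar\mu$ is itself holomorphic. Hence $\sigma_Y\overline{\bar\partial V}$ is holomorphic, so $\bar\partial V$ is both harmonic and trivial, and therefore $\bar\partial V=0$ by the $L^2$-orthogonality of harmonic and trivial Beltrami differentials. This is the same mechanism as \eqref{eq:harmonic-projection-teichmuller-direction}, applied with the roles of domain and target exchanged. It yields $\dot\Phi_0=-\sigma_Y\bar\mu$, and substituting gives
\[
  \operatorname{Hess}_YE_{u_0}(\mu,\mu)=4\int_Y|\mu|^2\,dA_Y=4\|\mu\|_{\mathrm{WP}}^2,
\]
which is positive for $\mu\neq0$. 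As a cross-check I would compare with \cite{KimWanZhang2024}.

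\emph{Main obstacle.} The delicate step is the computation of $\dot\Phi_0$: the first-order expansion of $(f_s)_z$ and $(f_s)_{\bar z}$ in $s$-holomorphic coordinates, and in particular getting the signs and factors of $2$ right. A convention slip there would change the constant $4$.
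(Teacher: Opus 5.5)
Your proof is correct, and it takes a genuinely different route from the paper's. The paper computes nothing here. It quotes the covering-map case of the critical-point and Hessian formulas of \cite{KimWanZhang2024}, which are obtained there along a Weil--Petersson geodesic. It then transfers the Hessian to arbitrary directions by noting that at a critical point the second derivative along a $C^2$ path depends only on the initial tangent vector. You use the same fact implicitly when you choose a path with Beltrami coefficient $s\mu+O(s^2)$.

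You instead derive both assertions directly from \eqref{eq:first-variation-domain-hopf}. Uniqueness comes from $\operatorname{Hopf}(u_X)=\operatorname{Hopf}(f_X)$ and the fact that a diffeomorphism with vanishing Hopf differential is conformal. The Hessian comes from the first-order expansion of $\operatorname{Hopf}(f_s)$ together with the orthogonality of harmonic and trivial Beltrami differentials.

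The step you flag as delicate checks out. Take $(z_s)_{\bar z}=\nu_s(z_s)_z$, $\nu_s=s\mu+O(s^2)$ and $f_s=z+sV+O(s^2)$. Then one gets exactly $\Phi_s=s\rho\bigl(\overline{V_{\bar z}}-\bar\mu\bigr)\,dz^2+O(s^2)$, with no stray factor. With the pairing $\int\phi\mu\,dx\,dy$, which is the normalization under which \eqref{eq:first-variation-domain-hopf} holds, the constant comes out as $4$.

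You can also push one step further: $\bar\partial V=0$ forces $V=0$, since there are no nonzero holomorphic vector fields in genus $\ge2$. So the harmonic maps are stationary to first order, and the Hessian reduces to the second derivative of the energy of the fixed identity map $X_s\to Y$. That energy is $\int_Y\frac{1+|\nu_s|^2}{1-|\nu_s|^2}\,dA_Y=A_Y+2s^2\int_Y|\mu|^2\,dA_Y+O(s^3)$, which independently confirms $4\|\mu\|_{\mathrm{WP}}^2$.

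Your route gives a self-contained verification of both the uniqueness and the constant. It relies only on the smooth dependence of $f_s$ on $s$, which the paper asserts via nondegeneracy of the Jacobi operator. The paper's route gains brevity by reusing the Weil--Petersson computation from the literature.
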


\begin{proof}
This is the covering-map case of the critical-point and Hessian formulas in
\cite{KimWanZhang2024}. Since $Y$ is a critical point, the second derivative
of $E_{u_0}$ along any $C^2$ path through $Y$ depends only on its initial
tangent vector. Thus the formula, computed there along a Weil--Petersson
geodesic, is the intrinsic Hessian formula
\eqref{eq:covering-energy-hessian-WP} and applies in particular to a
Teichm\"uller geodesic through $Y$.
\end{proof}

Proposition~\ref{prop:covering-energy-critical-hessian} is a local statement. It
does not imply convexity along an entire Teichm\"uller geodesic. Indeed, for
$k>0$, equation~\eqref{eq:domain-energy-second-variation-teich} gives
\begin{equation}
  (E^k)''
  =
  kE^{k-2}
  \left[
    4E^2-E\mathcal Q_t+(k-1)(E')^2
  \right].
  \label{eq:second-variation-energy-power}
\end{equation}
Thus a global convexity theorem for $E^k$ would require an additional estimate
controlling the Jacobi relaxation term $\mathcal Q_t$ in terms of $E$ and
$E'$. The variational identity alone supplies no such estimate. In
particular, at a stationary point of the one-variable function $E(t)$, the
term involving $k-1$ in \eqref{eq:second-variation-energy-power} vanishes, so
increasing $k$ cannot by itself correct a negative value of $E''(t)$.

At the unique critical point $Y$, however, every positive power is locally
strictly convex. More precisely, if $X_t$ is any path through $Y$ with nonzero
initial tangent $\mu$, then
\begin{equation}
  \left.\frac{d^2}{dt^2}\right|_{t=0}E_{u_0}(X_t)^k
  =
  4kA_Y^{k-1}\|\mu\|_{\mathrm{WP}}^2>0.
  \label{eq:power-energy-local-convexity-covering}
\end{equation}
These are Hessian statements at the critical point $Y$; they should not be
interpreted as global convexity statements along complete Teichm\"uller
geodesics.

\medskip

The two quasi-convexity theorems proved above show that, in the natural energy settings considered here, harmonic-map energy retains the coarse convexity behavior of hyperbolic length along Teichm\"uller geodesics. The variation formulas explain why this coarse statement is the appropriate global
substitute for genuine convexity.

\providecommand{\bysame}{\leavevmode\hbox to3em{\hrulefill}\thinspace}
\providecommand{\MR}{\relax\ifhmode\unskip\space\fi MR }
\providecommand{\MRhref}[2]{%
  \href{http://www.ams.org/mathscinet-getitem?mr=#1}{#2}
}
\providecommand{\href}[2]{#2}


\end{document}